\documentclass[a4paper,11pt]{amsart}
\usepackage{graphicx} 
\input{Macros}

\usepackage[english]{babel}
\usepackage{tabu}
\usepackage{mathrsfs}

\newcommand{\fm}{\mathfrak{m}}

\newcommand{\cO}{\mathcal{O}}

\newcommand{\Lceil}{\Big\lceil}
\newcommand{\Rceil}{\Big\rceil}

\DeclareMathOperator{\totdis}{totaldiscrep}

\makeatletter
\let\save@mathaccent\mathaccent
\newcommand*\if@single[3]{%
  \setbox0\hbox{${\mathaccent"0362{#1}}^H$}%
  \setbox2\hbox{${\mathaccent"0362{\kern0pt#1}}^H$}%
  \ifdim\ht0=\ht2 #3\else #2\fi
  }
\newcommand*\rel@kern[1]{\kern#1\dimexpr\macc@kerna}
\newcommand*\widebar[1]{\@ifnextchar^{{\wide@bar{#1}{0}}}{\wide@bar{#1}{1}}}
\newcommand*\wide@bar[2]{\if@single{#1}{\wide@bar@{#1}{#2}{1}}{\wide@bar@{#1}{#2}{2}}}
\newcommand*\wide@bar@[3]{%
  \begingroup
  \def\mathaccent##1##2{%
    \let\mathaccent\save@mathaccent
    \if#32 \let\macc@nucleus\first@char \fi
    \setbox\z@\hbox{$\macc@style{\macc@nucleus}_{}$}%
    \setbox\tw@\hbox{$\macc@style{\macc@nucleus}{}_{}$}%
    \dimen@\wd\tw@
    \advance\dimen@-\wd\z@
    \divide\dimen@ 3
    \@tempdima\wd\tw@
    \advance\@tempdima-\scriptspace
    \divide\@tempdima 10
    \advance\dimen@-\@tempdima
    \ifdim\dimen@>\z@ \dimen@0pt\fi
    \rel@kern{0.6}\kern-\dimen@
    \if#31
      \overline{\rel@kern{-0.6}\kern\dimen@\macc@nucleus\rel@kern{0.4}\kern\dimen@}%
      \advance\dimen@0.4\dimexpr\macc@kerna
      \let\final@kern#2%
      \ifdim\dimen@<\z@ \let\final@kern1\fi
      \if\final@kern1 \kern-\dimen@\fi
    \else
      \overline{\rel@kern{-0.6}\kern\dimen@#1}%
    \fi
  }%
  \macc@depth\@ne
  \let\math@bgroup\@empty \let\math@egroup\macc@set@skewchar
  \mathsurround\z@ \frozen@everymath{\mathgroup\macc@group\relax}%
  \macc@set@skewchar\relax
  \let\mathaccentV\macc@nested@a
  \if#31
    \macc@nested@a\relax111{#1}%
  \else
    \def\gobble@till@marker##1\endmarker{}%
    \futurelet\first@char\gobble@till@marker#1\endmarker
    \ifcat\noexpand\first@char A\else
      \def\first@char{}%
    \fi
    \macc@nested@a\relax111{\first@char}%
  \fi
  \endgroup
}
\makeatother

\title{Base-change of locally stable families in positive characteristic}
\author{Marta Benozzo and Quentin Posva}
\date{}

\address{Laboratoire de Math\'ematiques d'Orsay, Universit\'e Paris--Saclay, Orsay, 91400, France}
\email{marta.benozzo@universite-paris-saclay.fr}

\address{Institut de Mathématiques, Université de Neuchâtel, Rue Émile-Argand 11, 2000 Neuchâtel, Switzerland} 
\email{quentin.posva@unine.ch}

\begin{document}

\maketitle

\begin{quote}
\textsc{Abstract:} We investigate the permanence of local stability for one-parameter families $X\to C$ under finite flat base-changes, when the base-field has positive characteristic $p>0$. Building on previous work of Hu--Zong, we show that it suffices to consider base-changes by Frobenius morphisms. In that case, we show that the situation is governed by the discrepancies of the pairs $(X,X_c)$, together with some differential invariants of the vertical divisors whose multiplicity in their fiber is divisible by $p$. 
While the behaviour of these invariants remains in general mysterious, we establish upper bounds under some $F$-splitting assumptions.
\end{quote}

\setcounter{tocdepth}{1}
\tableofcontents

\section{Introduction}
Let $k$ be an algebraically closed field of characteristic $p>0$. In the moduli theory of varieties over $k$, and in particular for the definition of moduli stacks and functors, one has to specify a notion of family over positive dimensional bases. The first important cases to consider are flat fibrations $X\to C$ where $C$ is a smooth $k$-curve. One usually asks that all pairs $(X,X_c)\to C$ be log canonical, where $c$ runs through the closed points of $C$: we say that $X\to C$ is \emph{locally stable}. This condition was originally introduced for the moduli theory of general type varieties in characteristic $0$: in that context it can be reformulated in several ways, and it is stable under base-change, see \cite{Kollar_Families_of_varieties_of_general_type}. In positive characteristic, much less is known, and the aim of this article is to study the following: 

\begin{question}\label{main_question}
    Let $X\to C$ be a locally stable family over a smooth $k$-curve $C$. Let $C'\to C$ be a finite flat morphism, where $C'$ is a smooth curve. Is the base-change $X'=X\times_C C'\to C'$ a locally stable family?
\end{question}

Some technicalities that are specific to positive characteristics already appear: it can happen that the base-change $X'$ is non-normal (\autoref{example:base-change_insep_node}), and it is not known whether the generic fiber of $X\to C$ is geometrically log canonical (see \autoref{rmk:test_only_vertical_divisors}). To avoid them, we impose conditions on the geometric fibers of $X\to C$: the closed ones must be demi-normal (with separable nodes when $p=2$), and the generic one must be geometrically log canonical. We call this stronger notion \emph{strong local stability} (\autoref{def:local_stability}). This is in accordance with the philosophy of moduli theory, as the geometric fibers should also belong to the relevant moduli stack. 

Strong local stability ensures that $X'$ is normal (\autoref{prop:demi_normality_loc_stable_family}) and that $K_{X'}$ is $\bQ$-Cartier (\autoref{lemma:Q_cartier_cond_base_changes}), so we can focus on the main question: what can we say about the discrepancies of the pairs $(X',X'_{c'})$?
This depends a lot on the morphism $C'\to C$. If it has only tame ramifications, then a well-known Riemann--Hurwitz computation shows that $X'\to C'$ is locally stable (\autoref{prop:tame_base_change}). If it is separable with wild ramifications, then the ramification divisor in the Riemann--Hurwitz formula becomes problematic. If $C'\to C$ is purely inseparable, then Riemann--Hurwitz does not apply: instead, the theory of $1$-foliations offers a good substitute.

In \cite{Hu_Zong_Base_change_local-stability_positive_char}, Hu and Zong show that permanence of (strong) local stability under base-change along Artin--Schreier covers $C'\to C$ (i.e.\ the extension of function fields is Galois of degree $p$) follows from permanence under base-change along purely inseparable covers (\footnote{
    The second author gives another proof of that statement in \cite{Posva_Local_stability_comparison}.
}). They mention, without proof, that \autoref{main_question} is therefore reduced to purely inseparable base-changes. We provide a proof of that statement:

\begin{theorem}[\autoref{p-purelyinsep_vs_wild}]
Assume that strong local stability is preserved by base-changes along purely inseparable morphisms of smooth curves. Then it is preserved by base-changes along arbitrary finite flat morphisms of smooth curves.
\end{theorem}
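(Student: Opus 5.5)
The plan is to reduce an arbitrary finite flat morphism $C'\to C$ of smooth curves to a composition of elementary steps, check that each step preserves strong local stability, and then use that strong local stability is local on $C$ and $C'$.

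\textbf{Reduction to a local Galois tower.} Strong local stability of $X'\to C'$ can be checked at each closed point $c'\in C'$ separately, and it is also compatible with étale base-change. So we may localize around $c'$ and its image $c$, pass to strict henselizations (or to local étale neighbourhoods), and assume $C'\to C$ is a finite morphism of local curves. At the level of function fields, $K(C')/K(C)$ factors as a separable extension followed by a purely inseparable one. Since $C$ and $C'$ are smooth curves, the purely inseparable part is a power of Frobenius, so $C'\to C$ factors as $C'\to C''\to C$ with $C'\to C''$ purely inseparable and $C''\to C$ separable, where $C''$ is the normalization of $C$ in the separable closure. By hypothesis the purely inseparable step preserves strong local stability, so it remains to treat separable morphisms.

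\textbf{Separable case.} Next, replace $C''\to C$ by its Galois closure $D\to C''\to C$. If stability is preserved under $D\to C$, it descends to $C''$. Indeed, $D\to C''$ is separable, finite and flat, and the pullback $X_D\to X_{C''}$ is finite, generically étale and surjective. Log canonicity then descends along such morphisms via the formula $K_{Y}+\Delta_Y=f^*(K_X+\Delta)$ for a suitable boundary: after the Riemann--Hurwitz correction, the pullback discrepancies bound the original ones from below. I would cite the standard statement that $(X,\Delta)$ is lc if $(Y,\Delta_Y)$ is lc, with $\Delta_Y$ possibly non-effective. Fibers descend as well, because they only need to be demi-normal, a property checked on geometric points, and the generic fiber is already geometrically lc.

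Now, after localizing at a point, the Galois group of $D\to C$ is the decomposition group. Over the henselization this equals the inertia group, which is an extension of a cyclic tame group by a $p$-group. The tame quotient gives a tame base-change, which is handled by \autoref{prop:tame_base_change}. A $p$-group is solvable with successive quotients $\mathbb{Z}/p$, so the wild part is a tower of Artin--Schreier covers of local curves. Each Artin--Schreier step is handled by Hu--Zong's result, which reduces it to purely inseparable base-change, and that case holds by assumption. Composing the steps gives the theorem.

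\textbf{Main obstacle.} The hardest part is the localization bookkeeping. Hu--Zong's Artin--Schreier result and \autoref{prop:tame_base_change} must apply over local or henselian bases, not just global smooth curves, and the intermediate normalizations must be smooth curves, which holds automatically in dimension one. The descent step in the Galois-closure argument also needs $X_D$ to be normal with $\mathbb{Q}$-Cartier $K$. This follows from \autoref{prop:demi_normality_loc_stable_family} and \autoref{lemma:Q_cartier_cond_base_changes}, applied once strong local stability of $X_D$ is known. If the henselization step is problematic, I would instead globalize the Artin--Schreier covers by choosing equations that extend over a Zariski neighbourhood.
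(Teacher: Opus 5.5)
Your reduction to the separable case matches the paper. So does your proof that the Galois closure $D\to C$ preserves local stability: the tame quotient of the inertia group via \autoref{prop:tame_base_change}, then a tower of Artin--Schreier covers handled by Hu--Zong together with the hypothesis. That is Step 1 of the paper's proof.

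\textbf{The gap is the descent from $X_D$ to $X_{C''}$.} The principle you cite (``$(X,\Delta)$ is lc if $(Y,\Delta_Y)$ is lc, where $K_Y+\Delta_Y=f^*(K_X+\Delta)$'') holds in characteristic zero or for tame covers, not in general. Let $E'$ be a divisor over the cover lying over a divisor $E$ over the base, with ramification index $r$ and different exponent $d$. Riemann--Hurwitz gives
\[
a(E';Y,\Delta_Y)+1=r\bigl(a(E;X,\Delta)+1\bigr)+(d-r+1),
\]
and $d-r+1>0$ as soon as $E'$ is wildly ramified. So log canonicity ascends along such covers but does not descend: from $a(E';Y,\Delta_Y)\geq -1$ you only get $a(E;X,\Delta)\geq -1-(d-r+1)/r$. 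This is not just a weak estimate. Wild $\mathbb{Z}/p$-quotients of smooth varieties that are \'etale in codimension one can fail to be log canonical. In general $\operatorname{Gal}(D/C'')$ has a non-trivial $p$-part. Over a henselian base with algebraically closed residue field, that part is wildly ramified along the closed fiber and typically along divisors over it. So your descent step fails as written.

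\textbf{What is missing is a descent statement that uses the structure of base-changed families.} The paper takes a Sylow $p$-subgroup $H_p$ of $H=\operatorname{Gal}(D/C'')$ and factors $D\to D^{H_p}\to C''$.
\begin{itemize}
\item The first map is a tower of Artin--Schreier covers. Local stability descends along each step by \autoref{prop:descent_loc_stab_along_AS}.
\item The second map is tame, and there \autoref{prop:tame_base_change} is an equivalence, so it descends too.
\end{itemize}
The Artin--Schreier descent is where the real work lies. For a step with group $G=\mathbb{Z}/p$, the fixed locus on the base-changed family is $(\mathfrak{r}+1)$ times the closed fiber, since it is pulled back from the curve. \autoref{prop:discrepancy_AS_qt} then turns the comparison into
\[
e(E')\,a(E;X,\Delta+X_{\mathbf{0}})=a(E';X',\Delta'+X'_{\mathbf{0}'})+(p-1)\bigl(\mathfrak{r}n-i(E')\bigr),
\]
where $n$ is the multiplicity of $E'$ in the fiber. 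One proves $i(E')\leq \mathfrak{r}n$ when $e(E')=1$ and $i(E')\leq \mathfrak{r}n+1$ when $e(E')=p$, which is exactly what is needed. This comes from comparing $I(t)$, of order $(\mathfrak{r}+1)n$, with $I(s)$ and $I(u)$ for $t=us^n$. The excess different is absorbed precisely because the boundary contains the whole fiber and the fixed locus is a multiple of that fiber. No general descent principle for finite covers supplies this.
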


The idea is as follows: given an arbitrary $C'\to C$, factor it as a sequence of purely inseparable extensions, tame covers and Artin--Schreier covers. In general, this is only possibly for the Galois closure $C''$ of $C'/C$, so deducing local stability of $X'\to C'$ from local stability of $X''\to C''$ requires some descent statement. We prove the following two cases, which are sufficient for our purpose:

\begin{theorem}[\autoref{prop:descent_along_pur_insep} and \autoref{prop:descent_loc_stab_along_AS}]
Suppose that $C'\to C$ is either the $k$-linear Frobenius, or an Artin--Schreier cover. If $X'\to C'$ is locally stable, then $X\to C$ is locally stable.
\end{theorem}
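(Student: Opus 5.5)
The plan is to compare discrepancies over $X$ and $X'$ through the finite morphism $\pi\colon X'\to X$, which is surjective of degree $p$ in both cases. The key input is a formula $K_{X'}+X'_{c'} = \pi^*(K_X+X_c) + (\text{correction})$ in which the correction term has the right sign for descent. Log canonicity of $(X,X_c)$ is local, and $X_c$ is already reduced up to the usual convention, so we may work near a closed point $c$ with uniformizer $t$. By strong local stability of $X\to C$ (part of the hypothesis when we descend within the paper's setting), $X'$ is normal and $K_{X'}$ is $\mathbb{Q}$-Cartier by \autoref{prop:demi_normality_loc_stable_family} and \autoref{lemma:Q_cartier_cond_base_changes}. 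The pullback $\pi^*(K_X+X_c)$ therefore makes sense.

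\emph{Artin--Schreier case.} Here $\pi$ is generically étale and Galois with group $\mathbb{Z}/p$. Riemann--Hurwitz gives $K_{X'}=\pi^*K_X+R$ with $R\geq 0$ supported on vertical divisors over the branch points. We compare $X'_{c'}$ with $\pi^*X_c = e\,X'_{c'}$ on the fiber, where $e\in\{1,p\}$. We then show $K_{X'}+X'_{c'}\geq \pi^*(K_X+X_c)$ as a divisor inequality along vertical components, using the different of the extension $k(C')/k(C)$ pulled back to $X'$.

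\emph{Frobenius case.} Here $X'\to X$ is purely inseparable, and $X'$ is the quotient of $X^{1/p}$ by a $1$-foliation (equivalently, $X$ is the quotient of $X'$ by the foliation induced by $\partial/\partial t^{1/p}$). The Rudakov--Shafarevich/Ekedahl formula gives $K_{X'}=\pi^*K_X+(p-1)\cdot(\text{foliation canonical divisor})$. The foliation is the pullback of the vertical vector field, so its canonical divisor is a sum of vertical divisors with nonnegative coefficients twisted by fiber multiplicities. This again yields an inequality $K_{X'}+X'_{c'} \ge \pi^*(K_X+X_c)$ with both boundaries vertical.

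Given such an inequality, the descent is the standard one: for any divisor $E$ over $X$, take a divisor $E'$ over $X'$ dominating it with ramification index $r$. The usual formula $a(E',X',\Delta')+1 = r\,(a(E,X,X_c)+1)$ holds for $\Delta'$ defined by $K_{X'}+\Delta'=\pi^*(K_X+X_c)$. Since $\Delta'\le X'_{c'}$, we get $a(E',X',\Delta')\ge a(E',X',X'_{c'})\ge -1$, hence $a(E,X,X_c)\ge -1$. In the inseparable case this formula must be justified without étaleness, using the normalized valuation and the fact that $\pi$ is a universal homeomorphism. The \textbf{main obstacle} I expect is establishing the sign of the correction term along vertical components whose multiplicity in $X_c$ is divisible by $p$. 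There the foliation/different contributions are subtle, which is precisely where the paper's differential invariants enter. To handle it, I would first try to reduce to codimension one, where $X$ is regular along the generic point of each vertical component and a local computation in a DVR with explicit equation $t=u\,s^{m}$ suffices.
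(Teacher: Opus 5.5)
\textbf{The main gap is the ``standard descent'' step.} The formula $a(E',X',\Delta')+1=r\,(a(E;X,X_c)+1)$ is the tame Riemann--Hurwitz computation for the induced finite morphism of models $Y'\to Y$. It requires $Y'\to Y$ to be tamely ramified along $E'$. Over the closed fibre this fails in both of your cases. So it is not a matter of justifying the formula ``without \'etaleness'': it is false in general, and whether its error has a favourable sign is the whole content of the proof.

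In the Artin--Schreier case, let $t$ be a uniformizer at $c'$ and write $\ord_{c'}I(t)=\mathfrak{r}+1$, so that $R=(p-1)(\mathfrak{r}+1)X'_{c'}$ and your $\Delta'$ is $(1-(p-1)\mathfrak{r})X'_{c'}$. Let $n$ be the multiplicity of $E'$ in the fibre of $Y'$, and let $(p-1)i(E')$ be the coefficient along $E'$ of the ramification divisor of $Y'\to Y$. Then
\[
e(E')\,a(E;X,X_c)=a(E';X',\Delta')-(p-1)\,i(E')=a(E';X',X'_{c'})+(p-1)\bigl(\mathfrak{r}n-i(E')\bigr).
\]
For $E'$ ramified, $i(E')\geq 2$ if $e(E')=p$ and $i(E')\geq 1$ if $e(E')=1$. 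So $(p-1)i(E')$ always exceeds the tame value $e(E')-1$, and $a(E';X',\Delta')\geq -1$ alone only gives $a(E;X,X_c)\geq (-1-(p-1)i(E'))/e(E')<-1$. By replacing $\Delta'$ with $X'_{c'}$ you also discard exactly the gain $(p-1)\mathfrak{r}n$ that is needed. The missing idea is the divisor-by-divisor estimate $i(E')\leq \mathfrak{r}n+1$ if $e(E')=p$, and $i(E')\leq\mathfrak{r}n$ if $e(E')=1$. The paper extracts it from $\ord_{E'}I(t)=(\mathfrak{r}+1)n$ and the expansion of $I(us^n)$ in $\mathcal{O}_{Y',E'}$.

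In the Frobenius case the analogue is the formula of \autoref{prop:discrep_qt_by_F}. There the correction is the foliation discrepancy of $\mathfrak{F}=\mathcal{O}\cdot\partial_t$ on the model, and the needed bound is $a(E';\mathfrak{F})\geq n-1$, with equality only if $E'$ is not $\mathfrak{F}$-invariant, and $a(E';\mathfrak{F})\geq n$ if $p\mid n$. It is obtained by applying $\partial_t=s^{-a}\psi$ to $t=us^n$. On $X'$ itself the comparison carries no information. Indeed $\pi^*K_X=K_{X'}+(p-1)K_{\mathfrak F}$, $\pi^*X_c=pX'_{c'}$, and $K_{\mathfrak F}$ is Cartier and locally trivial. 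With the representative $-K_{\mathfrak F}=\operatorname{div}(t\,\partial_t)=X'_{c'}$ one simply has $\pi^*(K_X+X_c)=K_{X'}+X'_{c'}$.

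Your closing idea of computing in a DVR with $t=us^m$ is the right tool. But it has to be run in the local ring of every exceptional divisor over the fibre. At codimension-one points of $X$ it sees nothing: the fibres are reduced, so every vertical prime divisor of $X$ has multiplicity one, and multiplicities divisible by $p$ only occur on birational models.

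\textbf{A second, more basic gap.} You use strong local stability of $X\to C$ to get $X'$ normal with $K_{X'}$ $\mathbb{Q}$-Cartier. This is circular, since it contains the conclusion, and it points the wrong way, since these properties of $X'$ are part of the hypothesis. What must be proved is that $X$ is normal (by faithful flatness of $X'\to X$) and that $K_X+\Delta$ is $\mathbb{Q}$-Cartier. Without this, neither $\pi^*(K_X+X_c)$ nor $a(E;X,X_c)$ makes sense. The paper obtains it from $F_{X/C}^*(K_{X^{(1)}}+\Delta^{(1)})+(p-1)f^*K_C=p(K_X+\Delta)$ in the Frobenius case. In the Artin--Schreier case it descends a $G$-invariant local generator of $\mathcal{O}_{X'}(m(K_{X'}+\Delta')-mR)$.

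Finally, in the Frobenius case, exceptional divisors dominating $C$ also need a separate argument, because the base change of a model $Y$ to $\kappa'=K(C)^{1/p}$ can be non-normal. The paper uses $\nu^*K_{Y_{\kappa'}}=K_{Y'}-G$ with $G\geq 0$, for the normalization $\nu\colon Y'\to Y_{\kappa'}$.
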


The Artin--Schreier case requires general formulas about discrepancies of $\mathbb{Z}/p$-quotients, which we write down in \autoref{appendix:Z/p_qt}.

With this reduction in place, we study the discrepancies of the base-change $X^{(1)}\to C$ of $X\to C$ along the Frobenius morphism of $C$. Since we assume the generic fiber of $X\to C$ to be geometrically log canonical, we only need to study divisors that are vertical over $C$. As already said, the Riemann--Hurwitz formula does not apply, and the theory of $1$-foliations steps in. The projection $X^{(1)}\to X$ factors the Frobenius of $X^{(1)}$, so we have an induced $C$-morphism $X\to X^{(1)}$ which is the quotient by the relative tangent sheaf $T_{X/C}$ (see \autoref{section:div_and_Frob_base_change}). The so-called adjunction formula (see for example \cite[Proposition 2.5.14]{Posva_Singularities_quotients_by_1_foliations}) relates the discrepancies of $X\to C$ and those of $X^{(1)}\to C$ in terms of the discrepancies of $T_{X/C}$. The latter discrepancies are examined in \autoref{section:Frob_base_change}: to explain the formula that we ultimately obtain, we need to introduce one additional notion.

Let $Y\to X$ be a normal birational model, and let $E$ be a prime divisor of $Y$ whose image $\mathbf{0}\in C$ is a closed point. Following \cite{Benozzo_CBF_in_pos_char}, we say that $E$ is \emph{wild} if $\ord_EY_\mathbf{0}$ is divisible by $p$, and \emph{tame} otherwise. If $E$ is wild, we introduce a non-negative integer $\gamma_w(E)$, called the \emph{wild coefficient} of $E$, defined as follows. If $t\in \sO_{C,\mathbf{0}}$ and $s\in \sO_{Y,E}$ are uniformizers, then we can write
$t=us^{\ord_EY_\mathbf{0}}$ with $u$ a unit in $\sO_{Y,E}$. We let 
        $$\gamma_w(E)=\max\left\{r\geq 0\mid s^r \text{ divides }
        \ du \ \text{ in }\ \Omega^1_{\sO_{Y,E}/k}\right\}.$$
See \autoref{section:fibrations} for details. Then we have:

\begin{theorem}[\autoref{prop:formula_discrep_qt} and \autoref{prop:discrepancy_tame_divisor}]\label{thm:formula_discrepancy}
Assume that $X\to C$ is strongly locally stable, and let $E$ be a divisor over $X$, with image a closed point $\mathbf{0}\in C$. Let $E^{(1)}$ be the corresponding divisor over $X^{(1)}$. If $E$ is tame, then
        $$a\left(E^{(1)};X^{(1)},X_\mathbf{0}^{(1)}\right)\geq a(E;X,X_\mathbf{0}).$$
If $E$ is wild, then
        $$a\left(E^{(1)};X^{(1)},X_\mathbf{0}^{(1)}\right)
        = \begin{cases}
        p\cdot a(E;X,X_\mathbf{0})-(p-1)\cdot \gamma_w(E) &
        \text{if }E\text{ is }T_{X/C}\text{-invariant;} \\
        a(E;X,X_\mathbf{0})-\frac{p-1}{p}\cdot\gamma_w(E) &
        \text{otherwise.}
        \end{cases}$$
\end{theorem}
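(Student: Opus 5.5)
We propose to compute everything locally at the generic point of $E$, using the purely inseparable morphism $\pi\colon X\to X^{(1)}$ of degree $p^{n-1}$ (where $n=\dim X$), which is the quotient of $X$ by the $1$-foliation $T_{X/C}$. Choose a normal birational model $Y\to X$ on which $E$ is a prime divisor, and let $Y^{(1)}$ be the normalization of $Y\times_C C^{(1)}$; it is the quotient of $Y$ by $T_{Y/C}$, so $E^{(1)}$ is the reduced image of $E$. The adjunction formula for $1$-foliation quotients \cite[Proposition 2.5.14]{Posva_Singularities_quotients_by_1_foliations} gives
$$\pi^*K_{X^{(1)}}\sim K_X-(p-1)\det T_{X/C},\qquad \text{and similarly on } Y,$$
together with $\pi^*E^{(1)}=E$ if $E$ is not invariant and $\pi^*E^{(1)}=pE$ if $E$ is invariant. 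The pullback of the fiber $X^{(1)}_\mathbf{0}$ is $pX_\mathbf{0}$, because the fiber is cut out by $t^{1/p}$ and $(t^{1/p})^p=t$. Comparing the two expressions of $K_{Y^{(1)}}+Y^{(1)}_{\mathbf{0}}$ pulled back to $Y$, the discrepancy $a(E^{(1)};X^{(1)},X^{(1)}_\mathbf{0})$ equals $\tfrac{1}{m}$ times the coefficient of $E$ in
$$a(E;X,X_\mathbf{0})E+(p-1)\bigl(\text{discrepancy divisor of } \det T_{Y/C}\text{ versus } \det T_{X/C}\bigr)-(p-1)\,\mathrm{ord}_E(Y_\mathbf{0})E,$$
up to the correction coming from the fiber. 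Here $m=p$ if $E$ is invariant and $m=1$ otherwise. The whole problem therefore reduces to computing the coefficient of $E$ in the "foliation discrepancy" of $T_{Y/C}$ relative to the pullback of $T_{X/C}$. This is a local computation in the DVR $\sO_{Y,E}$.

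Next we carry out that computation. Write $t=us^N$ with $N=\ord_EY_\mathbf{0}$. Along $E$ the saturation $T_{Y/C}$ is the annihilator of $dt$ in $T_Y$, and $dt=s^N\,du+Nus^{N-1}ds$.

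In the \textbf{tame} case ($p\nmid N$) the term $ds$ dominates, so $dt$ vanishes to order exactly $N-1$. This reproduces the classical Riemann--Hurwitz-type term, and the resulting contribution is nonnegative. It yields the inequality $a(E^{(1)})\geq a(E)$, since generic points of tame divisors contribute only through the log-pole comparison and, by strong local stability, $X^{(1)}$ is normal with $K$ $\bQ$-Cartier, so every term makes sense.

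In the \textbf{wild} case ($p\mid N$) we have $dt=s^N du$, and $du=s^{\gamma_w(E)}\omega$ with $\omega$ primitive. The relative cotangent sheaf then acquires extra torsion of length $N+\gamma_w(E)$ rather than $N-1$. Substituting this into the adjunction comparison gives a shift of $-(p-1)(\gamma_w(E)+1)$ relative to the log canonical divisor, where the "$+1$" is absorbed by the log term $E$. Dividing by $m$ gives the two stated formulas: $p\,a-(p-1)\gamma_w$ in the invariant case, after accounting that the pullback of $E^{(1)}$ is $pE$, which rescales $a$, and $a-\frac{p-1}{p}\gamma_w$ otherwise.

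The main obstacle is the bookkeeping in the wild case. We must identify precisely the coefficient of $E$ in $\det T_{Y/C}$ versus $\det T_{X/C}$ through the dual of $\Omega_{Y/C}/\mathrm{torsion}$. We must also correctly normalize by $\mathrm{ord}_E\pi^*E^{(1)}\in\{1,p\}$ and by $\ord_{E^{(1)}}Y^{(1)}_\mathbf{0}=N/p$. We would first check these normalizations on the toric example $t=x^py$, and only then run the general computation.
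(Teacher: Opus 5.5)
Your route is the paper's: pull $K_{X^{(1)}}+X^{(1)}_{\mathbf 0}$ back to $Y$ along the quotient $Y\to Y^{(1),\nu}$ by $T_{Y/C}$, and read off $K_{T_{Y/C}}$ along $E$ from $dt=s^N\,du$. However, two of the normalizations you state are wrong, and with them the computation does not give the theorem.

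First, $\pi=F_{X/C}$ is a morphism over $C$ (i.e.\ $f^{(1)}\circ F_{X/C}=f$). So $\pi^*X^{(1)}_{\mathbf 0}=X_{\mathbf 0}$, not $pX_{\mathbf 0}$; your argument with $(t^{1/p})^p=t$ concerns the other map $X^{(1)}\to X$.

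Second, you have the invariance dichotomy backwards. In fact $\pi^*E^{(1)}=E$ when $E$ is $T_{X/C}$-invariant, and $\pi^*E^{(1)}=pE$ otherwise. For the foliation $\langle\partial_y\rangle$ on $\mathbb{A}^2_{x,y}$, the invariant $V(x)$ pulls back with multiplicity one, while the non-invariant $V(y)$ has image $V(y^p)$. Hence $\ord_{E^{(1)}}Y^{(1),\nu}_{\mathbf 0}=N/\delta_{T_{X/C}}(E)$, which equals $N/p$ only in the non-invariant case.

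Equating the two pullbacks with your conventions gives $m\cdot a(E^{(1)};X^{(1)},X^{(1)}_{\mathbf 0})=p\,a(E;X,X_{\mathbf 0})-(p-1)(\gamma_w(E)+N)$, with the two values of $m$ attached to the wrong cases. The extra $-(p-1)\ord_E(Y_{\mathbf 0})$ in your display is what this error produces. Your last step, which divides by $m$ and lands on the stated formulas, is not consistent with the $m$ you defined. It would need numerators differing by a factor $p^2$ between the two cases, whereas the computation produces one and the same numerator.

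Here is what is actually needed. On $X$ one has $K_{T_{X/C}}=K_{X/C}$, because closed fibers are reduced and the generic fiber is geometrically normal (\autoref{prop:can_div_relative_tangent_sheaf}). Hence $\pi^*(K_{X^{(1)}}+X^{(1)}_{\mathbf 0})=pK_X-(p-1)f^*K_C+X_{\mathbf 0}$. You never state this identity, and it is where the factor $p$ in front of $a(E;X,X_{\mathbf 0})$ comes from. Near $E$, your torsion count gives $K_{T_{Y/C}}=K_Y-(f\circ\phi)^*K_C-(N+\gamma_w(E))E$. Compare the coefficients of $E$ in the two pullbacks to $Y$, using $\pi_Y^*E^{(1)}=\delta_{T_{X/C}}(E)\,E$. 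This gives $\delta_{T_{X/C}}(E)\cdot a(E^{(1)};X^{(1)},X^{(1)}_{\mathbf 0})=p\,a(E;X,X_{\mathbf 0})-(p-1)\gamma_w(E)$, which is the wild statement.

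Your tame case is only sketched. The same identity holds there with $\gamma_w(E)=-1$. After an \'etale extension making $u=1$, every $\theta\in T_{Y/C}$ satisfies $0=\theta(t)=Ns^{N-1}\theta(s)$, so $E$ is invariant and $a(E^{(1)};X^{(1)},X^{(1)}_{\mathbf 0})+1=p\,\bigl(a(E;X,X_{\mathbf 0})+1\bigr)$. The inequality then follows from $a(E;X,X_{\mathbf 0})\geq -1$, i.e.\ from local stability of $X\to C$ itself. It does not come from a nonnegative ``Riemann--Hurwitz contribution'', and without that input it is false.
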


This shows that the permanence of (strong) local stability along Frobenius base-changes is controlled by the relations between discrepancies and wild coefficients. To the best of our knowledge, wild coefficients have not been studied so far in the literature, and bounding them in terms of discrepancies seems a delicate task. The computations of \autoref{section:invariance} indicate that a ``general'' wild divisor should be non-$T_{X/C}$-invariant, and the arguments of \autoref{section:study_wild_coeff} show that the invariant ones are more complicated to study. Let us also mention that $\gamma_w(E)$ is necessarily divisible by $p$ when $E$ is non-$T_{X/C}$-invariant (\autoref{cor:wild_coeff_p_div_in_non_inv_case}).

In principle, the Minimal Model Program (MMP) provides another route to \autoref{main_question}: if $X\to C$ is strongly locally stable, then $X_\mathbf{0}$ is slc and inversion of adjunction would imply that $X^{(1)}\to C$ is locally stable (see \autoref{lemma:inversion}). Inversion of adjunction is usually established as a consequence of the birational MMP (see the proof of \cite[Theorem 4.9]{Kollar_Singularities_of_the_minimal_model_program}), but currently only a few cases of inversion of adjunction in positive characteristic are known (see \autoref{section:inv_adjunction}). The precise relation between the birational MMP over a DVR, and the differential properties of locally stable families, is not clear yet. Some surprising consequences are indicated in \autoref{prop:discrep_formula_at_lc_places}.

There is one situation where inversion of adjunction along $X^{(1)}_\mathbf{0}$ holds in every dimension and characteristic: when $X_\mathbf{0}$ is $F$-pure (see \autoref{prop:permanence_F_pure_case}, which is a direct application of \cite{Polstra_Simpson_Tucker_F-pure_inversion_of_adjunction} and \cite{Hara_Watanabe_F_sing_vs_lt_and_lc}). 
The $F$-pure case is characterized by the existence of special differential operators locally on $X$: in \autoref{section:study_wild_coeff} we study wild coefficients in the presence of such operators. We obtain:

\begin{theorem}[\autoref{prop:bound_wild_coeff_in_F_pure_case}]
Suppose that $X\to (\mathbf{0}\in C)$ is strongly locally stable, and that $X$ is $1$-$F$-split along $X_\mathbf{0}$ 
(\autoref{def:F-split_along_div}). Then, if $E$ is a wild divisor over $X$,
        $$\gamma_w(E)\leq \begin{dcases}
            \ \prescript{p}{}\Lceil
            (p-1)\cdot a(E;X,X_\mathbf{0})\Rceil
            & \text{if }E\text{ is not }T_{X/C}\text{-invariant}, \\
            \ \prescript{p}{}\Lceil
            (p-1)\cdot a(E;X,X_\mathbf{0})\Rceil +p-2 & \text{otherwise,}
        \end{dcases}$$
where $\prescript{p}{}\lceil x\rceil=
\min\{n\in p\mathbb{Z}\mid n\geq x \}$.
\end{theorem}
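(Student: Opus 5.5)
We work locally along $X_\mathbf{0}$ with a uniformizer $t$ of $\sO_{C,\mathbf{0}}$. The $1$-$F$-splitting hypothesis of \autoref{def:F-split_along_div} gives a $p^{-1}$-linear map $\phi\colon F_*\sO_X\to\sO_X$ compatible with $X_\mathbf{0}$. We rewrite it through the Cartier isomorphism as a section of $F_*\sO_X((1-p)(K_X+X_\mathbf{0}))$, i.e.\ as a splitting with $\phi(F_*t^{p-1})=1$ up to units. Take a normal model $Y\to X$ on which $E$ appears, with $s$ a uniformizer of $\sO_{Y,E}$ and $t=us^{m}$, $p\mid m$. The map $\phi$ extends to a rational $p^{-1}$-linear map $\phi_Y$ on $\sO_{Y,E}$. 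By the usual correspondence between Frobenius splittings and divisors, its order of pole or zero along $E$ is governed by $(p-1)\,a(E;X,X_\mathbf{0})$. Concretely, $\phi_Y$ is, up to a unit, $\Tr$ composed with multiplication by $s^{\ell}$, where $\ell\geq -(p-1)a(E;X,X_\mathbf{0})$, and the same inequality holds after rounding to the relevant integer.

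The key step is to evaluate $\phi_Y$ on monomials involving $u$ and $du$. Write $\Omega$ for the completion of $\sO_{Y,E}$, which is $K[[s]]$ for a coefficient field $K$ with $K^p\neq K$ (since $k$ has positive-dimensional residue field at $E$). Expand $u=\sum u_is^i$. By definition of $\gamma_w(E)$, $du$ vanishes to order exactly $\gamma=\gamma_w(E)$. The condition $\phi(F_*t^{p-1}f)\in\sO_X$ for all $f$, together with $\phi(F_*t^{p-1})=1$, gives a lower bound on how negative the $s$-adic valuation of $\phi_Y(F_*(u^{p-1}s^{m(p-1)}g))$ can be. Since $p\mid m$, the factor $s^{m(p-1)}$ is a $p$-th power of $s^{m(p-1)/p}$ and pulls out. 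We are therefore reduced to computing the $E$-order of $\phi_Y(F_*u^{p-1}g)$ for suitable test functions $g$.

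Expanding $u^{p-1}$ in a $p$-basis adapted to $s$ and to a coordinate $x\in K$ with $du\sim s^{\gamma}\,dx$ (or $s^{\gamma}ds$), the lowest non-$p$-th-power term of $u$ appears in degree $\gamma$. Choosing $g$ to kill the $p$-th power part, the output of $\Tr$ has valuation roughly $(\gamma-\ell)/p$ on the one hand, and must be $\geq 0$ on the other. This yields $\gamma\leq\ell'$, with $\ell'$ comparable to $(p-1)a(E;X,X_\mathbf{0})$.

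The case split then follows. If $E$ is not $T_{X/C}$-invariant, then $du$ has a non-$ds$ component, $\gamma$ is divisible by $p$ by \autoref{cor:wild_coeff_p_div_in_non_inv_case}, and rounding up to a multiple of $p$ gives the stated bound $\prescript{p}{}\lceil(p-1)a\rceil$. If $E$ is invariant, the leading term of $du$ is in the $ds$-direction. Then the relevant monomial is $s^{\gamma}\,ds$, and up to $p-2$ extra powers of $s$ may be absorbed before $\Tr$ detects it (the trace of $s^{j}$ vanishes unless $j\equiv p-1 \pmod p$). This loses the additive constant $p-2$.

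The main obstacle is the precise valuation bookkeeping in the invariant case: controlling the cross terms of $u^{p-1}$ so that the leading non-$p$-th-power monomial is not cancelled. I would first handle this by choosing $g$ to be a suitable power of $s$ times a $p$-basis monomial, and track the leading term only modulo higher order.
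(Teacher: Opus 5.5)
The proposal has a genuine gap at its core inequality; the approach can be repaired, but it also rests on a strengthened reading of the hypothesis.

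\textbf{The gap.} Your second and third paragraphs do not produce an upper bound on $\gamma_w(E)$. The expansion of $u^{p-1}$ gives a bound $\ord_E(\cdots)\geq\lfloor(\gamma_w(E)+\ell)/p\rfloor$ on the non-$p$-th-power contribution (with your convention it is $+\ell$, not $-\ell$). Regularity of $\phi$ gives ``$\geq 0$''. Both are \emph{lower} bounds on the same $E$-order, so nothing of the form $\gamma_w(E)\leq\ell'$ can follow from them. Moreover, regularity is only available on inputs from $\sO_X$, not on test functions $g\in\widehat{\sO}_{Y,E}$ chosen to cancel the $p$-th power part.

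The missing idea is an output of the splitting whose order along $E$ is known \emph{exactly}. The paper gets this from $\varsigma=\varphi-\id$:
\begin{itemize}
\item $\varsigma$ kills $A^p$, and $\varsigma(t^{p-1})=\lambda t^{p-1}$ with $\lambda$ a unit (\autoref{l-7.1.2b}).
\item Hence $D=s^{a(E;\varsigma)}\varsigma$ satisfies $\ord_E D(u^{p-1})=a(E;\varsigma)$ exactly (\autoref{lemma:N_is_p_div}).
\item \autoref{lemma:decomposition_diff_operator} writes $D$ as a polynomial without constant term in an adapted basis of derivations. So $D(u^{p-1})$ is divisible by $s^{\gamma_w(E)}$ in the non-invariant case, and by $s^{\gamma_w(E)-p+2}$ in the invariant bases of \autoref{prop:inv_case_I} and \autoref{prop:inv_case_II}. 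The loss of $p-2$ comes from $\partial_s^j$ with $j\leq p-1$, not from a trace computation.
\item Finally, $a(E;\varsigma)\in p\mathbb{Z}$ is bounded through \autoref{prop:bound_on_pole}.
\end{itemize}

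\textbf{How to repair your route.} Once fixed, your route is actually shorter than the paper's.
\begin{itemize}
\item From $\phi(F_*t^{p-1})=1$ and $p\mid m$ you get $\phi_Y(F_*u^{p-1})=s^{-m(p-1)/p}$ exactly.
\item By definition of $\gamma_w(E)$, $u\in\widehat{\sO}_{Y,E}^{\,p}+s^{\gamma_w(E)}\widehat{\sO}_{Y,E}$. Hence $u^{p-1}=V^p+R$ with $\ord_E R\geq\gamma_w(E)$.
\item Then $s^{-m(p-1)/p}=V\phi(F_*1)+\phi_Y(F_*R)$, with $\phi(F_*1)\in\sO_X$ regular along $E$. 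Therefore $-m(p-1)/p=\ord_E\phi_Y(F_*R)\geq\lfloor(\gamma_w(E)+\ell)/p\rfloor$.
\item Here $\ell=-(p-1)a(E;X,\Delta_\phi)$, where $(p-1)\Delta_\phi\geq 0$ is the divisor of $\phi$ viewed as a section of $\sO_X((1-p)K_X)$. For this $\phi$ the pole is governed by $a(E;X,\Delta_\phi)\leq a(E;X)$, not by $a(E;X,X_\mathbf{0})$ as you wrote. The shift by $X_\mathbf{0}$ comes from the exact value above.
\item This yields $\gamma_w(E)\leq(p-1)\bigl(a(E;X,X_\mathbf{0})+1\bigr)$.
\item In the non-invariant case $p\mid\gamma_w(E)$ (\autoref{cor:wild_coeff_p_div_in_non_inv_case}); in general $\gamma_w(E)\not\equiv-1\bmod p$ (\autoref{lemma:wild_coeff_not-1_mod_p}). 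These turn the bound into exactly the two stated bounds.
\end{itemize}
This needs no decomposition of differential operators and no invariant or non-invariant bases.

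\textbf{Caveat on the hypothesis.} Your normalization $\phi(F_*t^{p-1})=1$ means $(X,X_\mathbf{0})$ is sharply $1$-$F$-split. \autoref{def:F-split_along_div} with $D=X_\mathbf{0}$ only gives $\phi(F_*t)=1$, which is all \autoref{l-7.1.2b} uses. With that weaker splitting, the same count yields only $(p-1)\bigl(a(E;X,\tfrac{1}{p-1}X_\mathbf{0})+1\bigr)$.

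For $p>2$, the bound in terms of $a(E;X,X_\mathbf{0})$ needs the stronger splitting. So does the paper's appeal to \autoref{prop:bound_on_pole} to get $a(E;\varsigma)\leq\prescript{p}{}\lceil(p-1)a(E;X,\Delta+X_\mathbf{0})\rceil$. Here is a check:
\begin{itemize}
\item Take $p=3$ and $t=x$ on $\bA^2$.
\item Use the splitting $g\mapsto\Phi((x^2y^2+xy^2)g)$ of $A^3\hookrightarrow A(X_\mathbf{0})$, where $\Phi(x^2y^2)=1$ and $\Phi$ kills the other $x^ay^b$ with $0\leq a,b\leq 2$.
\item Take $E$ to be the divisor of \autoref{example:non_inv_wild_div}.
\item Then $a(E;X,X_\mathbf{0})=0$ but $a(E;\varsigma)=3$.
\end{itemize}
So you should state explicitly which splitting you assume.
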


If $\varphi$ is the splitting operator given by the assumption, then we obtain the above theorem by estimating the order of $\varphi(u^{p-1})-u^{p-1}$ along $E$ where $u$ is the unit introduced in the definition of $\gamma_w(E)$ (\autoref{lemma:bound_non_inv_case} and \autoref{lemma:bound_inv_case}), and by bounding from above the minimal integer $a$ such that $s^a\varphi$ is a regular operator at the generic point of $E$ (\autoref{prop:bound_on_pole}). The latter estimate is established using an argument contained in \cite{Hara_Watanabe_F_sing_vs_lt_and_lc}, and this is where the factor $p-1$ appears.

In view of \autoref{thm:formula_discrepancy}, this upper bound is in general too large by a factor of $p$. Our methods yield small enough bounds in a few cases, mostly for non-invariant divisors with discrepancies that are not too large: see \autoref{cor:application_non_inv_div} and \autoref{rmk:application_inv_div}. Nevertheless, substantial improvements of these arguments should be possible in view of \autoref{prop:permanence_F_pure_case}. Going beyond the $F$-split case appears to be more challenging.

In this article, we work more generally with families of pairs $(X,\Delta)\to C$. We also consider the case of \emph{demi-normal} families $(X,\Delta)\to C$: in that case we give a definition of strong local stability (\autoref{def:local_stability}) that is arguably quite restrictive, but which allows reduction to the normal case (\autoref{prop:red_to_normal_case}).

\begin{remark}
We only consider base-change by the Frobenius morphism. Base-change by iterated Frobenius can be addressed ``one step at the time" by decomposing the iterated Frobenius, or directly, granted that the necessary framework of higher foliations is in place---cf.\ the forthcoming
\cite{foliations_in_progress}.
See also \autoref{prop:loc_stability_lost_forever} and \autoref{rmk:F-splitting_higher_order}.
\end{remark}

\medskip
This article is organized as follows. In \autoref{section:definitions} we give our definitions related to local stability. 
In \autoref{section:prelim_observations} we gather preliminary technical results that are likely well-known to experts (Zariski and \'{e}tale localization on the base $C$, normality and $\bQ$-Gorensteiness of base-change, reduction of the slc to the log canonical case). Then we introduce wild divisors and wild coefficients in \autoref{section:fibrations}, and the $1$-foliations arising from Frobenius base-change in \autoref{section:div_and_Frob_base_change}. In \autoref{section:descent} we prove our descent results, using some material about $\mathbb{Z}/p$-quotients gathered in \autoref{appendix:Z/p_qt}. In \autoref{section:permanence_under_base_change} we study the discrepancies of base-changes: in \autoref{section:inv_adjunction} we discuss the cases in which inversion of adjunction is available; in \autoref{section:reduction_to_pur_insep} we reduce to Frobenius base-changes, building on \cite{Hu_Zong_Base_change_local-stability_positive_char, Posva_Local_stability_comparison}; and in \autoref{section:Frob_base_change} we establish the formulas of \autoref{thm:formula_discrepancy} and some general properties of wild coefficients. In \autoref{section:study_wild_coeff}, we start with some examples of families of non-$F$-pure surface singularities (\autoref{section:examples}), then we present some generalities about pullbacks of differential operators (\autoref{section:diff_operators}) which we apply to study wild coefficients in the $F$-split case in \autoref{section:wild_coeff_F-split_case}.

\subsection{Acknowledgments}
We thank Samuele Ciprietti, Stéphane Druel, Anne Fayolle and Alapan Mukhopadhyay for stimulating conversations and Tai-Hsuan Chung for numerous discussions. 
We thank Iacopo Brivio for comments on a previous draft.
The first author was supported by the European Union’s Horizon 2020 research and innovation programme under the Marie Skłodowska-Curie grant agreement No 101034255, Université Paris--Saclay and by the Simons Collaboration on Perfection in Algebra, Geometry, and Topology, award ID MP-SCMPS-00001529-10, Sorbonne Université. 
The second author is supported by the Ambizione grant num.\ 
PZ00-2-233513 from the Swiss National Science Foundation, and thanks the Université de Neuchâtel for its hospitality.

\section{Definitions}\label{section:definitions}

\subsection{Notations and conventions}
We work over an algebraically closed field $k$ of characteristic $p>0$.
We follow the standard terminology of birational geometry used in \cite{Kollar_Singularities_of_the_minimal_model_program}, with the caveat that varieties are reduced equidimensional but not necessarily irreducible (to accommodate semi-log canonical varieties). For basics on demi-normal varieties and on slc pairs in positive characteristics, we refer to \cite{Posva_Gluing_for_surfaces_and_threefolds}. For the theory of $1$-foliations, we refer to \cite{Posva_Singularities_quotients_by_1_foliations}.

\begin{definition}[see {\cite[Definition 2.2]{Kollar_Families_of_varieties_of_general_type}}]\label{def:family_of_pairs}
Let $C$ be a (germ of) smooth curve over $k$. A \emph{family of pairs} $f\colon (X,\Delta)\to C$ is the data of:
    \begin{itemize}
        \item a variety $X$ and a flat separated closed essentially of finite type morphism $f\colon X\to C$ with pure-dimensional, geometrically connected and geometrically reduced fibers (notice that this implies $f_*\sO_X=\sO_C$ if $f$ is proper);
        \item an effective $\bQ$-divisor $\Delta$ on $X$ whose support does not contain any irreducible component of any fiber $X_c$, and no irreducible component of $X_c\cap \Supp(\Delta)$ is contained in $\Sing(X_c)$; notice that this implies that there exists an open subset $U \subseteq X$ such that $\codim_X (X \setminus U) \geq 2$ and $\Delta|_{U}$ is $\bQ$-Cartier.
        \item We assume that $X$ is Gorenstein in codimension one (given the previous assumptions, this is a condition on the generic fiber), so it has a well-defined canonical divisor $K_X$.
    \end{itemize}
If $\Delta=0$, we also call $X\to C$ a \emph{family of varieties}.
\end{definition}

\begin{remark}
The fact that $f$ is closed is not always required in the literature. It is a technical assumption whose goal is to ensure that properties of closed fibers propagate to the generic one. It is not difficult in applications to do without it, but for general proofs it is easier to assume it from the start.
\end{remark}

\begin{remark}\label{rmk:relative_div_check_only_closed_fibers}
We claim that: the condition ``no irreducible component of $X_c\cap \Supp(\Delta)$ is contained in $\Sing(X_c)$" can be tested on closed points $c\in C(k)$ only.

Indeed, let $\eta$ be the generic point of $C$ and let $Z_\eta$ be a codimension one component of $\Sing(X_\eta)$. Then its closure $Z\subset X$ is a codimension one component of $\Sing(X)$, and it is flat over $C$. Since $f$ is closed and flat, for every $c\in C(k)$ the intersection $Z_c=Z\cap X_c$ has codimension one in $X_c$. The fiber $X_c$ cannot be regular at any point of $Z_c$, so $Z_c\subseteq \Sing(X_c)$. Therefore, if $Z_\eta\subseteq \Supp(\Delta_\eta)$, we obtain $Z\subseteq \Supp(\Delta)$ and thus $Z_c\subseteq X_c\cap \Supp(\Delta)$, which contradicts the condition.
\end{remark}

\begin{remark}
In \autoref{def:family_of_pairs} we do not assume that $K_X+\Delta$ is $\bQ$-Cartier. In particular, $(X,\Delta)$ is not a \emph{pair} in the usual sense. This is the convention of \cite{Kollar_Families_of_varieties_of_general_type}. A family of pairs with normal fibers, in our sense, is called a \emph{family of couples} in other sources, see e.g.\ \cite{Bernasconi_Brivio_Filipazzi_Deformations_of_3folds}.
\end{remark}

\begin{definition}[Tame ramifications]
Let $C$ be a (germ of) smooth curve over $k$, and let $\pi\colon C'\to C$ be a finite flat morphism from another (germ of) smooth curve over $k$. We say that $\pi$ has \emph{tame ramification} if the following holds: For every closed point $c'\in C'$ with image $\pi(c')=c\in C$, if $\xi\in \sO_{C,c}$ and $\xi'\in \sO_{C',c'}$ are uniformizers, we can write
$\xi=u\xi'^m$ where $u\in \sO_{C',c'}^\times$ and $m$ is coprime with the characteristic $p$. (Up to passing to the henselization of $\sO_{C',c'}$ and changing $\xi'$, we can assume that $u=1$.)
\end{definition}

\begin{definition}[Base-change of divisors]\label{def:base_change_divisors}
Let $f\colon (X,\Delta)\to C$ be a family of pairs, and let $C'\to C$ be a flat finite morphism of (germs of) smooth curves over $k$. We define $(X',\Delta')=(X,\Delta)\times_CC'$ by $X'=X\times_CC'\xrightarrow{g} X$ and $\Delta'=g^*\Delta$ (\footnote{
    To be more precise about the definition of $\Delta'$: let $U\subset X$ be the open subset on which $\Delta$ is $\bQ$-Cartier. By \autoref{rmk:relative_div_check_only_closed_fibers}, the codimension of $X\setminus U$ in $X$ is $\geq 2$. Let $U'=g^{-1}(U)$: then $U'$ is also big in $X'$. We pullback $\Delta|_U$ to $U'$ as $\bQ$-Cartier $\bQ$-Weil divisor, and we let $\Delta'$ be its closure on $X'$.
}).
\end{definition}

Note that if $(\mathbf{0}'\in C')\to (\mathbf{0}\in C)$ has only tame ramifications, then by Riemann--Hurwitz we have $K_{X'}+X'_{\mathbf{0}'}+\Delta'=g^*(K_X+X_\mathbf{0}+\Delta)$ 
\cite[2.41.4]{Kollar_Singularities_of_the_minimal_model_program}.
This equality does not hold for arbitrary base-changes, but the difference can be described: see the proof of \autoref{lemma:Q_cartier_cond_base_changes}.

\begin{remark}
With the notations of the above definition, let us verify that $(X',\Delta')\to C'$ is a family of pairs:
    \begin{itemize}
        \item The conditions on the morphism $f'\colon X'\to C'$ are clearly verified. 
        
        \item Clearly $g^{-1}\Supp(\Delta)$ does not contain any irreducible component of a fiber. If $c'\in C'$ is a closed point mapping to $c\in C$, then as closed subsets of $X'_{c'}$ the intersection $g^{-1}(\Supp(\Delta))\cap X'_{c'}$ is equal to the pullback of $\Supp(\Delta)\cap X_c$ along $X'_{c'}\cong X_c$. So $\Supp(\Delta')\cap X'_{c'}$ does not contain any irreducible component of $\Sing(X'_{c'})$. The case of closed fibers implies the case of the generic one, see \autoref{rmk:relative_div_check_only_closed_fibers}. 

        \item The morphism $C'\to C$ is Gorenstein \cite[0C12]{Stacks_Project}, so the finite morphism $X'\to X$ is Gorenstein \cite[0C07]{Stacks_Project}. It follows that $X'$ is Gorenstein in codimension one \cite[0C11]{Stacks_Project}.
    \end{itemize}
\end{remark}

Let us also record the following well-known fact:
\begin{lemma}\label{lemma:pur_insep_map_curves}
Let $\pi\colon C'\to C$ be a flat finite purely inseparable $k$-morphism of (germs of) smooth $k$-curves. Then $\pi$ is an iterate of the $k$-linear Frobenius morphism of $C$.
\end{lemma}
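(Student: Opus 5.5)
We reduce to a statement about function fields and then compare with the Frobenius. Since $\pi$ is finite and flat between normal curves, it is determined by the finite field extension $K(C)\subseteq K(C')$: indeed $C'$ is the normalization of $C$ in $K(C')$, since $C'$ is normal and finite over $C$. The extension is purely inseparable of some degree $p^e$. The key input is that $K(C)$ is a function field of transcendence degree one over the perfect field $k$, so $[K(C):K(C)^p]=p$. More generally, $K(C)^{p^{-n}}$ is the unique purely inseparable extension of degree $p^n$ of $K(C)$ inside a fixed algebraic closure, and it is obtained by adjoining $t^{p^{-n}}$ for any separating transcendence basis $t$.

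First, I show that any purely inseparable extension $L/K$ of degree $p^e$, with $K=K(C)$, equals $K^{p^{-e}}$. Every $x\in L$ satisfies $x^{p^e}\in K$, so $L\subseteq K^{p^{-e}}$. The field $K^{p^{-e}}$ has degree $p^e$ over $K$: it is isomorphic to $K$ via $x\mapsto x^{p^e}$, and $[K:K^{p^e}]=p^e$ because $K$ is finitely generated of transcendence degree $1$ over a perfect field. Comparing degrees gives $L=K^{p^{-e}}$.

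Next, I identify $K^{p^{-e}}\supseteq K$ geometrically with the $e$-th iterate of the $k$-linear Frobenius. Consider the Frobenius twist $C^{(p^{-e})}$, or equivalently view $F^e\colon C\to C^{(e)}$ the other way round, so that $C'$ plays the role of the source. Concretely, the $k$-linear Frobenius $F_{C'/k}^e\colon C'\to C'^{(e)}$ has function-field extension $K(C')^{p^e}\cdot k\subseteq K(C')$. Since $k$ is perfect, this is $K(C')^{p^e}\subseteq K(C')$, and it coincides with $K\subseteq L$ because $L^{p^e}=K$. The last equality holds since $L^{p^e}\subseteq K$ and both fields have degree $p^e$ below $L$.

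Finally, both $C$ and $C'^{(e)}$ are normal curves finite over $C'^{(e)}$, respectively $C$, with the same function field. By uniqueness of normalization, or of normal models in a given function field, this yields a $k$-isomorphism $C\cong C'^{(e)}$ under which $\pi$ becomes $F^e_{C'/k}$. Equivalently, $\pi$ is an iterate of the $k$-linear Frobenius, up to twist identification. For germs the same argument applies to the local rings, which are DVRs, using their fraction fields.

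The main obstacle is only bookkeeping: making precise which Frobenius (absolute versus $k$-linear, and on $C$ versus $C'$) is meant, and checking that $[K:K^p]=p$. The latter follows from $K$ being finite separable over $k(t)$ and $[k(t):k(t^p)]=p$.
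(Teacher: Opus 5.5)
Your proof is correct and takes essentially the same route as the paper, which simply refers to the proof of \cite[0CCY]{Stacks_Project}. That argument is the same function-field degree comparison using $[K:K^p]=p$, giving $K(C')=K(C)^{p^{-e}}$, followed by uniqueness of normal models, i.e.\ $\mathcal{O}_C=\mathcal{O}_{C'}\cap K=\mathcal{O}_{C'}^{p^e}$ inside $L$. The resulting description $\mathcal{O}_{C'}=\mathcal{O}_C^{1/p^e}$ is exactly the form in which the lemma is used in Case~1 of the proof of \autoref{lemma:Q_cartier_cond_base_changes}, so the twist identification you flag is harmless.
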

\begin{proof}
This follows from the proof of \cite[0CCY]{Stacks_Project}.
\end{proof}

Next we give our definitions of locally stable families over smooth $k$-curves. In characteristic $0$ there are several equivalent ways of defining them, thanks to (inversion of) adjunction. However the analoguous definitions in positive characteristic are in general not equivalent \cite{Kollar_Families_of_3folds_in_pos_char, Posva_Pathological_MMP_sing_alpha_p_qts}, so we differentiate them.

\begin{definition}[Local stability]\label{def:local_stability}
Let $C$ be a (germ of) smooth curve over $k$. A \emph{locally stable family of pairs} (resp.\ \emph{locally demi-stable family of pairs}) over $C$ is a family of pairs $f\colon (X,\Delta)\to C$ such that:
    \begin{enumerate}
        \item for each closed point $c\in C$, the pair $(X,\Delta+X_c)$ is log canonical (resp.\ semi-log canonical).
    \end{enumerate}
We say that $(X,\Delta)\to C$ is \emph{strongly locally stable}
(resp.\ \emph{strongly locally demi-stable}) if in addition:
    \begin{enumerate}\setcounter{enumi}{1}
        \item the generic fiber $(X_{K(C)},\Delta_{K(C)})$ is geometrically log canonical 
        (resp.\ geometrically semi-log canonical, see \autoref{def:geom_slc} below), and
        \item each closed fiber $X_c$ is demi-normal (i.e.\ $S_2$ and at worst nodal at codimension one points); and if $p=2$, the nodes of $X_c$ are all separable \cite[\S 3.1]{Posva_Gluing_for_surfaces_and_threefolds}.
    \end{enumerate}
\end{definition}

We mostly care about strong local (demi-)stability, as it appears to be better suited for moduli theory. Inseparable nodes cause additional technical problems in characteristic $2$, so for simplicity we exclude them. For base-change results, some restrictions on closed fibers seem necessary to control the singularities of the base-change of the total space; we will sometimes work with locally (demi-)stable families whose closed fibers are demi-normal with only separable nodes (i.e.\ families satisfying $(a)$ and $(c)$ in \autoref{def:local_stability}). By contrast, in several cases descent of local stability does not require any such additional assumptions.

\begin{remark}[Adjunction]\label{rmk:adjunction}
Assume that $(X,\Delta)\to C$ is locally stable, and pick a closed point $c\in C$. Then $(X_c^\nu, \Diff_{X^\nu_c}(\Delta))$ is log canonical, where $X^\nu_c$ is the normalization of $X_c$: this follows from \cite[Lemma 4.8]{Kollar_Singularities_of_the_minimal_model_program}. If $X_c$ is $S_2$, then in fact $(X_c,\Delta_c)$ is slc. 
\end{remark}

\begin{remark}
Assume that $(X,\Delta)\to C$ is strongly locally demi-stable. We do not know whether its normalization $(\widetilde{X},\widetilde{\Delta}+D)\to C$ is strongly locally stable, where $\widetilde{\Delta}$ is the strict transform of $\Delta$ and $D$ is the conductor divisor. The problem is whether the closed fibers of $\widetilde{X}\to C$ are $S_2$: this is true for families of surfaces in characteristic $p>5$ by
\cite{Arvidsson_Bernasconi_Patakfalvi_Properness_moduli_surfaces}, but the general case is open.
\end{remark}

\subsection{Geometrically slc singularities}
Let us fix a field $\Omega$ of characteristic $p>0$ with algebraic closure $\overline{\Omega}$. A \emph{variety} over $\Omega$ is a pure-dimensional, geometrically reduced scheme that is separated of finite type over $\Omega$, whose irreducible components are geometrically irreducible. 

\begin{definition}[see {\cite[\S 2.2]{Bernasconi_Tanaka_On_dP_fibrations_in_pos_char}, \cite[\S 2.1]{Benozzo_CBF_in_pos_char}}]\label{def:geom_slc}
Let $X$ be a demi-normal variety over $\Omega$ (see \cite[Definition 5.1]{Kollar_Singularities_of_the_minimal_model_program}), and let $\Delta$ be a $\bQ$-divisor on $X$. Assume that:
    \begin{itemize}
        \item $K_X+\Delta$ is $\bQ$-Cartier and $\Supp(\Delta)$ does not contain any irreducible component of $\Sing(X)$ that has codimension one in $X$, and
        \item $\Omega$ is algebraically closed in the function field of every irreducible component of $X$.
    \end{itemize}
We say that $(X,\Delta)$ is \emph{geometrically log canonical} if $(X_{\overline{\Omega}},\Delta_{\overline{\Omega}})$ is log canonical. 
We say that $(X,\Delta)$ is \emph{geometrically semi-log canonical} if:
    \begin{enumerate}
        \item $(X_{\overline{\Omega}},\Delta_{\overline{\Omega}})$ is semi-log canonical, and
        \item if $E\subset X$ is the conductor of the normalization of $X$, then $E_{\overline{\Omega}}$ is the conductor of the normalization of $X_{\overline{\Omega}}$.
    \end{enumerate}
\end{definition}

\begin{remark}
A log canonical singularity over $\Omega$ is not necessarily geometrically log canonical, even if it is geometrically normal. See \cite[Example 4.6]{Sato_General_hyperplane_section_of_lc_3folds_pos_char} for some examples. In the notations of \emph{op.cit.}, note that whenever we specialize $a$ to an element of a perfect field, the normal singularity we obtain is not log canonical: so these examples cannot appear as generic fibers in a locally stable family.
\end{remark}

\begin{remark}
We comment on the condition that $X_{\overline{\Omega}}$ is demi-normal with conductor $E_{\overline{\Omega}}$. For varieties, demi-normality is equivalent to being semi-normal (see \cite[Definition 10.11]{Kollar_Singularities_of_the_minimal_model_program}), $S_2$ and $G_1$ \cite[Remark 3.2.3, Lemma 3.2.4]{Posva_Gluing_for_surfaces_and_threefolds}. The latter two conditions are preserved by field extensions (by \cite[6.7.1]{EGA_IV.2} and \cite[0C03]{Stacks_Project} respectively). However semi-normality might not survive inseparable field extensions, see \cite[Remark 8.6]{Greco_Traverso_Seminormal_schemes}; note that, thanks to the $S_2$ condition, semi-normality of $X_{\overline{\Omega}}$ is decided in codimension one. Furthermore, even if $X_{\overline{\Omega}}$ is semi-normal, and thus demi-normal, it could happen that its conductor (which is reduced of pure codimension one) is larger than $\Supp(E_{\overline{\Omega}})$, or that $E_{\overline{\Omega}}$ is not reduced. 

In summary, for $X$ demi-normal with conductor $E$, the following two conditions are equivalent:
    \begin{itemize}
        \item $X_{\overline{\Omega}}$ is demi-normal with conductor $E_{\overline{\Omega}}$;
        \item the codimension one points of $X$ are geometrically semi-normal, the regular ones are geometrically regular, and $E$ is geometrically reduced.
    \end{itemize}
\end{remark}

\begin{lemma}\label{lemma:normal_of_geom_slc}
Assume that $(X,\Delta)$ is geometrically semi-log canonical. If $X^\nu$ is the normalization of $X$, with conductor $D\subset X^\nu$, then $(X^\nu,\nu^{-1}_*\Delta+D)$ is geometrically log canonical.
\end{lemma}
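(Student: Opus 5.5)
The plan is to compare the normalization of $X$ with the normalization of $X_{\overline{\Omega}}$. Everything is then reduced to the standard fact over the algebraically closed field $\overline{\Omega}$: if a demi-normal pair is slc, then its normalization, with the conductor added to the boundary, is log canonical. Since $(X,\Delta)$ is geometrically slc, $X_{\overline{\Omega}}$ is demi-normal and $(X_{\overline{\Omega}},\Delta_{\overline{\Omega}})$ is slc. Let $\mu\colon (X_{\overline{\Omega}})^{n}\to X_{\overline{\Omega}}$ be its normalization, with conductor $D'$. Then $((X_{\overline{\Omega}})^n,\mu^{-1}_*\Delta_{\overline{\Omega}}+D')$ is log canonical, essentially by definition of slc (see \cite[Definition 5.10]{Kollar_Singularities_of_the_minimal_model_program}).

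The first key step is to show that the base change $(X^\nu)_{\overline{\Omega}}$ is normal, and hence equal to $(X_{\overline{\Omega}})^n$. The morphism $(X^\nu)_{\overline{\Omega}}\to X_{\overline{\Omega}}$ is finite and birational, because normalization commutes with flat base change on the dense open normal locus. Normality of $(X^\nu)_{\overline{\Omega}}$ would follow from Serre's criterion. The $S_2$ condition is preserved by field extension (\cite[6.7.1]{EGA_IV.2}), so only $R_1$ is needed. The codimension one points of $X^\nu$ lie either over regular codimension one points of $X$, or over nodes of $X$, i.e.\ on $D$. At the first kind, geometric regularity is part of the equivalent condition in the remark preceding the lemma. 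At the second kind, I would use the hypothesis that $E_{\overline{\Omega}}$ is the conductor of $X_{\overline{\Omega}}$ (in particular reduced), together with the local structure of a node, $\mathcal{O}_{X^\nu}\supset \mathcal{O}_X$ with $\mathcal{O}_X/E\hookrightarrow \mathcal{O}_{X^\nu}/D$ of degree $2$, to deduce that $D$ is geometrically reduced and that $X^\nu$ is geometrically regular along $D$. This is the main obstacle. The cleanest route I would try first is to note that $(X^\nu)_{\overline{\Omega}}\to X_{\overline{\Omega}}$ is an isomorphism outside $E_{\overline{\Omega}}$ and factors the normalization. Comparing conductors, which are both cut out by $\operatorname{Hom}(\mathcal{O}_{X^\nu},\mathcal{O}_X)\otimes\overline{\Omega}$ by flatness, then shows that the conductor of $(X_{\overline{\Omega}})^n\to (X^\nu)_{\overline{\Omega}}$ is trivial, i.e.\ this map is an isomorphism.

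Once $(X^\nu)_{\overline{\Omega}}=(X_{\overline{\Omega}})^n$, the same flatness of the conductor ideal gives $D_{\overline{\Omega}}=D'$. The strict transform satisfies $(\nu^{-1}_*\Delta)_{\overline{\Omega}}=\mu^{-1}_*\Delta_{\overline{\Omega}}$, since both agree on the open set where $\nu$ is an isomorphism and contain no component of $D$, by the assumption on $\Supp(\Delta)$. Hence $(X^\nu,\nu^{-1}_*\Delta+D)_{\overline{\Omega}}$ is log canonical.

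It remains to check the side hypotheses of \autoref{def:geom_slc} for $(X^\nu,\nu^{-1}_*\Delta+D)$. $X^\nu$ is normal, so there is no singular codimension one locus. $K_{X^\nu}+\nu^{-1}_*\Delta+D=\nu^*(K_X+\Delta)$ is $\bQ$-Cartier. Finally, $\Omega$ is algebraically closed in the function fields of the components of $X^\nu$, because these function fields coincide with those of $X$.
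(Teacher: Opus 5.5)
Your overall strategy is the same as the paper's. You show that the natural finite birational morphism $\phi\colon (X_{\overline{\Omega}})^n\to (X^\nu)_{\overline{\Omega}}$ is an isomorphism, and then identify $D_{\overline{\Omega}}$ with the conductor and match the strict transforms by flatness. Those later steps are fine, and your check of the side conditions of \autoref{def:geom_slc} is more careful than the paper's.

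The gap is in the key step. Flatness does give that the conductor of $(X^\nu)_{\overline{\Omega}}\to X_{\overline{\Omega}}$ is $\mathfrak{c}\otimes\overline{\Omega}$, where $\mathfrak{c}=\operatorname{Hom}_{\mathcal{O}_X}(\mathcal{O}_{X^\nu},\mathcal{O}_X)$. By hypothesis this equals the conductor of the normalization of $X_{\overline{\Omega}}$. However, for rings $A\subseteq B\subseteq C$ with $C$ the normalization of $A$, equality of the conductors of $B/A$ and $C/A$ does not imply $B=C$. For example, take the seminormal ring $A=k[x,y,z]/(xy,yz,zx)$, with $B=k[x,y]/(xy)\times k[z]$ and $C=k[x]\times k[y]\times k[z]$: both conductors equal $(x,y,z)$, yet $B\neq C$. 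So ``comparing conductors'' does not by itself show that the conductor of $\phi$ is trivial.

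The missing step is to combine the two ingredients you only invoke for your Serre-criterion route; this is exactly what the paper does.
\begin{enumerate}
\item Let $\xi$ be a codimension one point of $E_{\overline{\Omega}}$, and let $R$ be the local ring of $X_{\overline{\Omega}}$ at $\xi$. Since $R$ is a node, its normalization $\widetilde{R}$ satisfies $\operatorname{length}_R(\widetilde{R}/R)=1$, so there is no ring strictly between $R$ and $\widetilde{R}$.
\item The map $(X^\nu)_{\overline{\Omega}}\to X_{\overline{\Omega}}$ is not an isomorphism at $\xi$, because its conductor $\mathfrak{c}\otimes\overline{\Omega}$ cuts out $E_{\overline{\Omega}}\ni\xi$. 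Together with the first point, it is therefore the normalization at $\xi$.
\item Both maps to $X_{\overline{\Omega}}$ are isomorphisms off $E_{\overline{\Omega}}$, so $\phi$ is an isomorphism at every codimension one point.
\item $(X^\nu)_{\overline{\Omega}}$ is reduced and $S_2$ (by \cite[6.7.1]{EGA_IV.2}), and $(X_{\overline{\Omega}})^n$ is normal. A finite birational morphism between them that is an isomorphism in codimension one is an isomorphism.
\end{enumerate}
With this argument, the direct verification of $R_1$ along $D$, which you flagged as the main obstacle, is unnecessary.
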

\begin{proof}
Let $Y=(X^\nu)_{\overline{\Omega}}$: it is reduced and $S_2$. The normalization $\nu\colon X^\nu\to X$ base-changes to 
$\nu_{\overline{\Omega}}\colon Y\to X_{\overline{\Omega}}$.
Since $(X_{\overline{\Omega}})^\nu$ is normal, the morphism $(X_{\overline{\Omega}})^\nu\to X$ factors through $X^\nu$ \cite[035Q]{Stacks_Project}, and so we have an induced finite morphism
$\phi\colon (X_{\overline{\Omega}})^\nu\to Y$ fitting into a commutative diagram
        $$\begin{tikzcd}
            (X_{\overline{\Omega}})^\nu \arrow[r, "\phi"] \arrow[d, "\overline{\nu}" left] & 
            Y \arrow[dl, "\nu_{\overline{\Omega}}"]
            \\
            X_{\overline{\Omega}}
        \end{tikzcd}$$
Let $E\subset X$ be the conductor of $X^\nu\to X$. By \autoref{def:geom_slc} the normalization morphism $\overline{\nu}$ is an isomorphism exactly above the complement of $E_{\overline{\Omega}}$. The same holds for $\nu_{\overline{\Omega}}$, since $\nu\colon X^\nu\to X$ is an isomorphism exactly above the complement of $E$. Since $X_{\overline{\Omega}}$ has nodal singularities in codimension one, it is easily seen that $\phi$ must be an isomorphism in codimension one. As $Y$ and $(X_{\overline{\Omega}})^\nu$ are $S_2$, it follows that $\phi$ is an isomorphism.

Clearly $(\nu^{-1}_*\Delta)_{\overline{\Omega}}$ is equal to the strict transform of $\Delta_{\overline{\Omega}}$ on $Y$. Moreover $D=\nu^{-1}(E)$ so, by flatness of the field extension:
        $$D_{\overline{\Omega}}=\nu_{\overline{\Omega}}^{-1}(E_{\overline{\Omega}})$$
which, as $X_{\overline{\Omega}}$ is demi-normal, is equal to the conductor of $Y\to X_{\overline{\Omega}}$. Therefore the normalization of $(X_{\overline{\Omega}},\Delta_{\overline{\Omega}})$ is equal to $(X^\nu,\nu^{-1}_*\Delta+D)\times_\Omega \overline{\Omega}$. This shows that the latter is lc, in other words that $(X^\nu,\nu^{-1}_*\Delta+D)$ is geometrically lc.
\end{proof}

\section{Preliminary observations}\label{section:prelim_observations}

Throughout this section, $C$ denotes a (germ of) smooth curve over $k$.

\subsection{Localization on the base}
The following result is well-known to the experts: we sketch its proof for convenience.
\begin{lemma}\label{lemma:localiz_on_the_base}
Let $(X,\Delta)\to C$ be a locally stable family of pairs. Then the following are equivalent:
    \begin{enumerate}
        \item For every finite flat morphism $C'\to C$, where $C'$ is a smooth curve, the base-change family 
        $(X,\Delta)\times_C C'\to C'$ is locally stable.
        \item For every closed point $c\in C(k)$ and every normal ring $\sO'$ that is a finite flat extension of $\sO_{C,c}$, the base-change family
        $(X,\Delta)\times_C \Spec(\sO')\to \Spec(\sO')$ is locally stable.
        \item For every closed point $c\in C(k)$ and every normal ring $\widetilde{\sO}'$ that is a finite flat extension of $\sO_{C,c}^h$ (the henselization of $\sO_{C,c}$), the base-change family
        $(X,\Delta)\times_C \Spec(\widetilde{\sO}')\to \Spec(\widetilde{\sO}')$ is locally stable.
    \end{enumerate}
Moreover, the equivalences still hold if we replace everywhere ``locally stable" by ``strongly locally stable".
\end{lemma}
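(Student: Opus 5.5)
The plan is to prove the cycle of implications $(1)\Rightarrow(2)\Rightarrow(3)\Rightarrow(1)$. Throughout, we use that log canonicity of a pair $(Y,\Gamma+Y_{c'})$ is local on $Y$. It can be tested after Zariski localization, and also after étale (indeed henselian) localization, at points of $Y$. For a point of $Y$ lying over $c'\in C'$, the only fiber that matters locally is $Y_{c'}$. So local stability of a family over a curve is a statement about the local rings of the base at its closed points.

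For $(1)\Rightarrow(2)$, start from a normal finite flat extension $\sO'$ of $\sO_{C,c}$. Spread it out: $\sO'$ is the semi-local ring of a finite flat morphism $C'\to U$ from a normal (hence smooth) curve, where $U\subseteq C$ is an open neighbourhood of $c$. We can arrange this by taking the normalization of $U$ in the fraction field of $\sO'$ and shrinking $U$. By $(1)$, applied to $U$ (after compactifying, or simply over the germ), the base change $X_U\times_U C'$ is locally stable. Then $(X,\Delta)\times_C\Spec\sO'$ is the localization of this family at the finitely many points over $c$, so it is locally stable. The step $(3)\Rightarrow(1)$ is similar. Given $C'\to C$ and a closed point $c'\mapsto c$, base-change along $\sO_{C,c}\to\sO^h_{C,c}$. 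Then $\sO_{C',c'}\otimes\sO^h_{C,c}$ is a finite product of normal henselian local rings, each finite flat over $\sO^h_{C,c}$, and one of them is $\sO_{C',c'}^h$. By $(3)$, the corresponding base change is locally stable. Since $X'\times_{C'}\Spec\sO^h_{C',c'}\to X'\times_{C'}\Spec\sO_{C',c'}$ is a limit of étale neighbourhoods, and it is surjective onto the points lying over $c'$, log canonicity descends. This uses that discrepancies are computed étale-locally: a log resolution, or simply divisors over $X'$, pull back to étale covers with the same discrepancies, and every divisor over the base appears over some étale neighbourhood. Finally, $(2)\Rightarrow(3)$: a normal finite flat $\sO^h_{C,c}$-algebra $\widetilde{\sO}'$ is defined over some étale neighbourhood $(V,v)\to(C,c)$. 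Concretely, $\widetilde{\sO}'\cong\sO'_V\otimes\sO^h$ for a finite flat normal $\sO_{V,v}$-algebra $\sO'_V$, by a standard limit argument. Now $\sO_{V,v}$ is essentially étale over $\sO_{C,c}$, so $\sO'_V$ is a normal finite flat extension of $\sO_{V,v}$, but it is not necessarily finite over $\sO_{C,c}$. To handle this, I would apply $(2)$ with $C$ replaced by $V$. Here I would need the elementary observation that $(X,\Delta)\times_CV\to V$ is still locally stable, by the étale invariance above. Alternatively, I would use the finite extension of $\sO_{C,c}$ given by normalizing $\sO_{C,c}$ in the fraction field of $\sO'_V$. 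Its semilocalization contains $\sO'_V$ as a localization at a maximal ideal. By $(2)$ the base change along it is locally stable, and then localizing and further henselizing preserves this.

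For the strong version, conditions $(b)$ and $(c)$ have to be transferred as well. Closed fibers do not change under any of these base changes, since the residue field is $k$ throughout. So demi-normality with separable nodes is untouched. The generic fiber changes by a field extension $K(C)\subseteq K'$. Geometric log canonicity is by definition a property after passing to $\overline{K(C)}=\overline{K'}$, so it is unchanged. The condition that $\Omega$ be algebraically closed in the function fields needs a small check; I would verify it via geometric integrality of the components of the generic fiber, which follows from geometric reducedness and connectedness of the fibers in \autoref{def:family_of_pairs}.

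The main obstacle I expect is the descent in $(3)\Rightarrow(1)$ and the spreading-out in $(2)\Rightarrow(3)$. One has to be careful that $X$ is only essentially of finite type over a non-finite-type base like $\Spec\sO^h$, so that discrepancies still make sense there. One also has to check that log canonicity is genuinely insensitive to ind-étale base change: $\bQ$-Cartierness of $K+\Delta$ descends by faithful flatness, and exceptional divisors can be realized on a finite-type étale neighbourhood. I would cite the standard fact, e.g.\ \cite[2.14--2.15]{Kollar_Singularities_of_the_minimal_model_program}, that discrepancies are preserved under étale morphisms, and I would reduce the henselian case to it by a limit argument.
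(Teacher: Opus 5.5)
Your proposal is correct and is essentially the paper's proof, arranged as the cycle $(1)\Rightarrow(2)\Rightarrow(3)\Rightarrow(1)$ instead of as $(a)\Leftrightarrow(b)$, $(b)\Leftrightarrow(c)$. The ingredients are the same: normalizing $C$ in $\Frac(\sO')$; approximating a finite extension of $\sO^h_{C,c}$ by a finite extension of an \'{e}tale neighbourhood; recovering the latter as a localization of a finite normal extension of $\sO_{C,c}$, which the paper does via Zariski's Main Theorem; and invariance of discrepancies under henselization, which the paper takes from \cite[2.16]{Kollar_Singularities_of_the_minimal_model_program}. Two corrections: in $(2)\Rightarrow(3)$ only your second option works, since $(2)$ is a hypothesis about base-changes of $X\to C$ along finite extensions of $\sO_{C,c}$, and local stability of $(X,\Delta)\times_CV\to V$ does not let you ``apply $(2)$ with $C$ replaced by $V$''; and in $(3)\Rightarrow(1)$ you should tensor the semi-local ring of $C'$ over $c$ with $\sO^h_{C,c}$ rather than $\sO_{C',c'}$, because when several points lie over $c$, $\sO_{C',c'}\otimes\sO^h_{C,c}$ acquires field factors that are not finite over $\sO^h_{C,c}$ (the factor $\sO^h_{C',c'}$ you actually use is fine).
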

\begin{proof}
The relevant properties of fibers (being geometrically integral or geometrically (demi)-normal) ascend and descend faithfully flat base-changes. So the content of the statement is about comparison of discrepancies: the main point is that they are \'{e}tale-local invariants. Here are some details.

$(a)\Leftrightarrow (b)$: The item $(b)$ follows from $(a)$ applied to $C'$, the normalization of $C$ in $\Frac(\sO')$. Conversely, assume that $(b)$ holds and let $E$ be an exceptional divisor of $X\times_CC'$. If the center of $E$ on $X\times_CC'$ is contained in the closed fiber over $c'\in C'$, then we can read $a(E;(X,\Delta)\times_C C')$ after base-changing to $\Spec(\sO_{C',c'})$, so we apply $(b)$ with $\sO'=\sO_{C',c'}$. If the center of $E$ is contained on the generic fiber, then the discrepancy can be read after base-changing to the local ring of any closed point of $C'$, and $(b)$ applies again.

$(c)\Rightarrow (b)$: Let $\sO'$ be a finite flat local extension of $\sO_{C,c}$. By \cite[0CBF]{Stacks_Project} we have a canonical isomorphism
        $$(\sO')^h\cong \sO'\otimes_{\sO_{C,c}}\sO_{C,c}^h.$$
So by $(c)$ and \cite[2.16]{Kollar_Singularities_of_the_minimal_model_program} we obtain that $(X,\Delta)\times_C \Spec(\sO')\to \Spec(\sO')$ is locally stable.

$(b)\Rightarrow (c)$: Let $\widetilde{\sO}'$ be a finite flat extension of $\sO_{C,c}^h$. 
By \autoref{lemma:approxim_ext_henselization} below we can find an \'{e}tale extension $R$ of $\sO_{C,c}$ and a finite flat extension $\sO'$ of $R$ such that $\sO'\otimes_{R}\sO_{C,c}^h\cong \widetilde{\sO}'$. The induced map $\sO_{C,c}\to \sO'$ is finitely generated and quasi-finite: so by Zariski Main Theorem \cite[02LR]{Stacks_Project} we can factor it as
        $$\sO_{C,c}\overset{\varphi}{\longrightarrow}
        \mathcal{Q} \overset{j}{\longrightarrow}
        \sO'$$
where $\varphi$ is finite and $j$ is a localization. 
Replacing $\mathcal{Q}$ by $\mathcal{Q}/\ker(j)$, we can also assume that $\mathcal{Q}$ is flat over the DVR $\sO_{C,c}$. By $(b)$ we obtain that $X\times_C\Spec(\mathcal{Q})\to \Spec(\mathcal{Q})$ is locally stable, so \emph{a fortiori} $(X,\Delta)\times_C\Spec(\sO')\to \Spec(\sO')$ is locally stable. By \cite[2.16]{Kollar_Singularities_of_the_minimal_model_program} it follows that $(X,\Delta)\times_C \Spec(\widetilde{\sO}')\to \Spec(\widetilde{\sO}')$ is locally stable.
\end{proof}

\begin{lemma}\label{lemma:approxim_ext_henselization}
Let $A$ be a Noetherian local ring, with henselization $A^h$. Let $B$ be a finite flat extension of $A^h$. Then there is an \'{e}tale extension $A\hookrightarrow C$, inducing an isomorphism on residue fields, and a finite flat extension $C\hookrightarrow C'$ such that $C'\otimes_CA^h\cong B$.
\end{lemma}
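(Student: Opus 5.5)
Since $B$ is finite and flat over the local ring $A^h$, it is finitely presented as an $A^h$-algebra and free as an $A^h$-module of some finite rank $n$. I will first choose a basis $e_1,\dots,e_n$ of $B$ over $A^h$, with $e_1=1$. Then I will write the multiplication table
$$e_ie_j=\sum_l c_{ijl}e_l, \qquad c_{ijl}\in A^h.$$
With these structure constants, $B\cong (A^h)^n$ carries a commutative, associative, unital multiplication. Commutativity, associativity and the unit are finitely many polynomial identities in the $c_{ijl}$.

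Next I use that $A^h$ is a filtered colimit of local-essentially \'etale extensions. More precisely, $A^h=\varinjlim (C_\lambda,\mathfrak{q}_\lambda)$, where $C_\lambda$ runs over \'etale $A$-algebras equipped with a prime $\mathfrak{q}_\lambda$ lying over $\fm_A$ with trivial residue field extension; this is the construction of the henselization in \cite[04GN]{Stacks_Project}. Only finitely many elements $c_{ijl}$ are involved, so they all come from a single $C_\lambda$. Since the identities hold in $A^h$, they also hold in some $C_\mu$ further along the system. I then define $C'$ as the free $C_\mu$-module $C_\mu^n$ with multiplication given by the $c_{ijl}$. This is a finite free, hence flat, $C_\mu$-algebra, and by construction
$$C'\otimes_{C_\mu}A^h\cong B.$$
Finally I set $C:=C_\mu$. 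This is \'etale over $A$, and it induces an isomorphism on residue fields at $\mathfrak{q}_\mu$.

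Two points need care, depending on what ``extension'' means in the statement. The first is that $C'\hookrightarrow$ should be injective. The map $C\to C'$ is injective because $C'$ is free over $C$ with first basis vector $e_1=1$. The second is whether $A\to C$ should be injective and $C$ local. If locality is needed, I can first replace $C_\mu$ by a principal localization $C_\mu[1/g]$ with $g\notin\mathfrak{q}_\mu$ such that $\mathfrak{q}_\mu$ is the only prime above $\fm_A$. This is possible because the fiber over $\fm_A$ is finite and discrete. If one prefers a local ring, one can replace it by $(C_\mu)_{\mathfrak{q}_\mu}$, which is ind-\'etale, and this is what the proof of \autoref{lemma:localiz_on_the_base} actually uses via Zariski's Main Theorem. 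Injectivity of $A\to C$ holds because flat local maps are faithfully flat.

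I expect the main obstacle to be this bookkeeping in the colimit description of $A^h$, namely making sure that the chosen $C$ has the right residue field and is local at the relevant prime. The rest is standard descent of finitely presented modules and algebras along filtered colimits, as in \cite[01ZC]{Stacks_Project} and \cite[05LI]{Stacks_Project}.
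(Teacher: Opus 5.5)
Your proof is correct, and it takes a different route from the paper.

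\textbf{The paper's route.} The paper regards $B$ as a finitely presented $A^h$-algebra and spreads it out, via \cite[05N9]{Stacks_Project}, to a finite type algebra $C'$ over some \'etale $C$. It then recovers the needed properties by descent: flatness and finiteness of $C\to C'$ are deduced from those of $A^h\to B$, using faithful flatness of $C\to A^h$ together with \cite[07RG and 02LA]{Stacks_Project}.

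\textbf{Your route.} You use that $B$ is free over the local ring $A^h$. Taking $e_1=1$ is legitimate: $B\neq 0$, so $1\notin\fm_{A^h}B$ by Nakayama. You then spread out only the finitely many structure constants and the finitely many polynomial identities expressing commutativity, associativity and unitality. As a result $C'$ is finite free over $C$ by construction, and no descent step is needed.

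\textbf{Comparison.} Your argument is elementary and self-contained, and it yields a free rather than merely flat $C'$. It also avoids the faithful flatness of $C\to A^h$. That property holds for the local ring $C_{\mathfrak{q}}$, but not for an arbitrary \'etale $C$ that has points not specializing to $\mathfrak{q}$. So the paper's argument implicitly needs an extra localize-and-spread-out step, which your construction bypasses. The paper's route, in turn, is the standard black-box limit argument, and it would apply verbatim to other finitely presented properties.

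\textbf{A small correction to a side remark.} The proof of \autoref{lemma:localiz_on_the_base} needs $\sO_{C,c}\to\sO'$ to be of finite type in order to apply Zariski's Main Theorem \cite[02LR]{Stacks_Project}. So what should be fed into that argument is the \'etale algebra $C_\mu$, or a principal localization $C_\mu[1/g]$. The local ring $(C_\mu)_{\mathfrak{q}_\mu}$ would not work there as written.
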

\begin{proof}
Recall that $A^h$ is a filtered colimit of \'{e}tale extensions of $A$. Thus by \cite[05N9]{Stacks_Project} there exists an \'{e}tale extension $C$ of $A$, and a finitely generated $C$-algebra $C'$ such that $C'\otimes_C A^h=B$. It remains to show that $C'$ is finite flat over $C$. The map $C\to A^h$ is faithfully flat, so flatness of $A^h \to B$ descends to $C\to C'$. Finiteness descends by combining \cite[07RG and 02LA]{Stacks_Project}. 
\end{proof}

\begin{remark}
Consider the following statement:
    \begin{enumerate}[label=(\alph*)]
    \setcounter{enumi}{3}
        \item For every closed point $c\in C(k)$ and every normal ring $\mathfrak{O}$ that is a finite flat extension of $\widehat{\sO}_{C,c}$, the base-change family 
        $(X,\Delta)\times_C \Spec(\mathfrak{O})\to \Spec(\mathfrak{O})$ is locally stable.
    \end{enumerate}
Then the equivalent conditions of \autoref{lemma:localiz_on_the_base} follow from the above one (the point is that completion is a regular morphism and preserves discrepancies), but the converse is unknown in general. The major issue is that we do not have an approximation result like \autoref{lemma:approxim_ext_henselization} for finite extensions of $\widehat{\sO}_{C,c}$. 
Nonetheless, if log resolutions of $(X,\Delta+X_c)$ exist, then all these conditions are equivalent \cite[2.16]{Kollar_Singularities_of_the_minimal_model_program}.
\end{remark}

\begin{remark}\label{rmk:test_only_vertical_divisors}
Let $(X,\Delta)\to C$ be a family of pairs with $K_X+\Delta$ $\bQ$-Cartier. It is natural to expect that, to test whether $(X,\Delta)\to C$ is locally stable, it suffices to consider divisors that are \emph{vertical} over $C$. This is known when there exists a proper log resolution of $(X,\Delta+X_c)$ for all $c\in C(k)$. Indeed, the existence of such a resolution implies that the log canonical locus of $(X,\Delta+X_c)$ is open for each $c$ (by \cite[Corollary 2.11]{Kollar_Singularities_of_the_minimal_model_program}), while it contains every closed fiber of $X\to C$; as $X\to C$ is closed, the log canonical locus must be equal to $X$.
To the best of our knowledge, the general case is still open.
\end{remark}

\subsection{Demi-normality of base-changes}
Let $C'\to C$ be a finite flat morphism where $C'$ is a smooth (germ of) curve. 

\begin{proposition}\label{prop:demi_normality_loc_stable_family}
Let $f\colon X\to C$ be a
family of varieties whose closed fibers are demi-normal with only separable nodes. 
Then $X'=X\times_CC'$ is demi-normal with only separable nodes. Furthermore, if $X$ is normal then $X'$ is also normal.
\end{proposition}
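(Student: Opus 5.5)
We will use the characterization of demi-normality recalled above: a variety is demi-normal if and only if it is $S_2$ and its codimension one points are either regular or (separable) nodes, i.e.\ $S_2$, $G_1$ and semi-normal with at worst nodes in codimension one. Normality is $S_2$ plus $R_1$. So the plan is to check Serre's condition $S_2$ on $X'$ and then examine the codimension one points of $X'$, separating those lying over the generic point of $C'$ from those lying in closed fibers.

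\textbf{Step 1: $S_2$.} The morphism $f'\colon X'\to C'$ is flat, and its fibers over closed points $c'\mapsto c$ are isomorphic to $X_c$ (the residue fields are both $k$), which are $S_2$ by assumption. The generic fiber $X'_{K(C')}=X_{K(C)}\otimes_{K(C)}K(C')$ is a field extension of the generic fiber of $X$. Since $f$ is closed with $S_2$ closed fibers, $X$ is $S_2$ (flat over a regular base with $S_2$ fibers, \cite[6.4.2]{EGA_IV.2}), hence $X_{K(C)}$ is $S_2$ as a localization, and $S_2$ is preserved under arbitrary field extensions \cite[6.7.1]{EGA_IV.2}. So all fibers of $f'$ are $S_2$ and, as $C'$ is regular of dimension one, $X'$ is $S_2$ by the same flat-local criterion.

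\textbf{Step 2: codimension one points in closed fibers.} Let $x'\in X'$ be a codimension one point over a closed point $c'$. Then $x'$ is a generic point of an irreducible component of $X'_{c'}\cong X_c$. Since $X_c$ is reduced, $\sO_{X'_{c'},x'}$ is a field, so the maximal ideal of $\sO_{X',x'}$ is generated by a uniformizer of $C'$ at $c'$ (by flatness). Hence $\sO_{X',x'}$ is a DVR. Thus $X'$ is regular at these points.

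\textbf{Step 3: codimension one points on the generic fiber.} These are codimension one points of $X_{K(C')}$. By closedness of $f$, the closure $Z$ of a codimension one point $z$ of $X_{K(C)}$ meets every closed fiber in codimension one, as in \autoref{rmk:relative_div_check_only_closed_fibers}. If $X_{K(C)}$ is singular at $z$, then $X_c$ is singular along $Z_c$, so $X_c$ is nodal (separable if $p=2$) at generic points of $Z_c$, and $X$ itself is regular or nodal at $z$ by Step 2-type arguments. The key point, and the main obstacle, is that we need the \emph{geometric} behaviour of $X_{K(C)}$ at $z$ after the possibly inseparable extension $K(C')/K(C)$. We will argue via openness: the local ring $\sO_{X,\zeta}$ at the generic point $\zeta$ of a component of $Z_c$ is a two-dimensional local ring, flat over $\sO_{C,c}$ with closed fiber a node (or regular one-dimensional local ring). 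Since nodes are characterized formally as $k[[x,y]]/(xy)$ (separable nodes for $p=2$) and deformations of a (separable) node over a DVR are of the form $\widehat{\sO}_{C,c}[[x,y]]/(xy-h)$, we get an explicit local form which is visibly stable under base-change $\widehat{\sO}_{C,c}\to\widehat{\sO}_{C',c'}$. The same reasoning with a regular closed fiber gives a smooth-over-the-base local form. From these explicit forms, the generic point of $Z'$ on $X'$ has local ring which is regular or a (separable) node: the locus $h=0$ restricts to $Z'$ only if $h$ vanishes along it, giving $xy=0$ over the generic point of $C'$, a separable node since $xy$ defines a split node in any characteristic. This is where the separability hypothesis for $p=2$ enters: an inseparable node $x^2+ty^2$-type singularity could become non-reduced or cuspidal after adjoining $\sqrt{t}$ (\autoref{example:base-change_insep_node}).

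Combining Steps 1–3 gives that $X'$ is $S_2$ with only regular points or separable nodes in codimension one, i.e.\ demi-normal with separable nodes. If $X$ is normal, then $X$ is regular at all codimension one points, so in Step 3 the case $h$ vanishing along $Z$ (which produces a node of $X$ at $z$) cannot occur; hence $X'$ is $R_1$ and $S_2$, therefore normal.
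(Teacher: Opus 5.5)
Step 2 is fine and Step 1 can be repaired, but Step 3 carries the entire content of the proposition and, as written, proves nothing. The normality paragraph also rests on an unproved claim that is the subtle point of the statement.

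\textbf{Step 3.} You work at the generic point $\zeta$ of a component of $Z_c$. This is a codimension one point of $X_c$, so its residue field $\kappa=k(\zeta)$ is a function field, not $k$.

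\begin{itemize}
\item A separable node there has completion $\kappa[[x,y]]/(q)$, where $q$ is a separable quadratic form that in general splits only over a separable extension of $\kappa$. You must first pass to an \'etale extension. Only then is the family formally $\kappa[[t,x,y]]/(xy-th)$, rather than $\widehat{\sO}_{C,c}[[x,y]]/(xy-h)$.
\item ``Visibly stable under base-change'' and ``giving $xy=0$ over the generic point of $C'$'' are not arguments. After base-change the equation is $xy-h'$, with $h'$ the image of $th$. You still have to do three things.
\begin{enumerate}
\item Take the prime $\widetilde{\mathfrak Q}\subset\kappa[[t',x,y]]$ of a singular height-one point with $t'\notin\widetilde{\mathfrak Q}$. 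Show it contains $y-\partial_xh'$ and $x-\partial_yh'$, hence equals the prime these two elements generate.
\item Show the localization there is a \emph{split} node. For instance, expand in these new coordinates and apply Hensel's lemma to the quadratic part over $\kappa((t'))$. Its $xy$-coefficient is $\equiv 1$ and its other coefficients are $\equiv 0$ modulo $t'$.
\item Transfer ``regular or separable node'' from this localization of $\widehat{\sO}_{X',\zeta'}$ back to $\sO_{X',z'}$. This needs excellence (geometrically regular formal fibres) and descent along a flat unramified local map with separable residue extension.
\end{enumerate}
\end{itemize}

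None of this appears in your proposal, and items 1--2 are exactly where separability is used. The paper avoids the computation altogether. It shows $X'$ is reduced and $S_2$, quotes \cite[Proposition 4.11]{Bernasconi_Posva_Equising_lifting_K-trivial_slc_surfaces} for demi-normality, and treats separable nodes as weak normality \cite[Lemma 2.3.7]{Posva_Gluing_surfaces_mixed_char}, adapting the same argument.

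\textbf{Normality.} You assert that a codimension one node of $X'$ on the generic fibre ``produces a node of $X$ at $z$''. This is precisely what fails for inseparable nodes: in \autoref{example:base-change_insep_node}, $X$ is regular but $X^{(1)}$ is not normal. So it must be proved. In the split model it can be checked. The singular prime for $X'$ is the extension of $(x-t\,\partial_yh,\ y-t\,\partial_xh)$, and vanishing of the relevant coefficients in $\kappa[[t]]$ is detected after the injection $\kappa[[t]]\to\kappa[[t']]$. You do not carry this out. The paper argues differently: $X$ normal implies geometrically unibranch, this passes to the generic fibre of $f'$ \cite[6.15.6]{EGA_IV.2}, and a geometrically unibranch separable node would be normal.

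\textbf{Step 1 (minor).} The flat ascent of $S_2$ you invoke needs \emph{all} fibres to be $S_2$, including the generic one. You only deduce that afterwards, so the argument is circular as written. The paper gets the generic fibre from openness of the $S_2$-fibre locus \cite[12.1.6]{EGA_IV.3} together with closedness of $f$. Alternatively, use $\mathrm{depth}\,B_{\mathfrak q}\geq\mathrm{depth}\,B_{\mathfrak Q}-1$ for $\mathfrak Q$ minimal over $\mathfrak q+(t)$.
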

\begin{proof}
By \cite[12.1.6]{EGA_IV.3} the set $\{x\in X\mid X_{f(x)} \text{ is }S_2\text{ at }x\}$ is open. By hypothesis it contains every closed point of $X$, so it is equal to $X$. It follows that the generic fiber of $f$ is $S_2$. By \cite[6.7.1]{EGA_IV.2} the generic fiber of $f'=f\times_{C}C'$ is also $S_2$. As the closed fibers of $f'$ are $S_1$, it follows that $X'$ is $S_2$.
Since the fibers of $X'\to C'$ are geometrically reduced, we get that $X'$ is reduced. As the closed fibers of $f'$ are demi-normal, by \cite[Proposition 4.11]{Bernasconi_Posva_Equising_lifting_K-trivial_slc_surfaces} we obtain that $X'$ is demi-normal.

Let us show that $X'$ has only separable nodes. For demi-normal schemes, by \cite[Lemma 2.3.7]{Posva_Gluing_surfaces_mixed_char}, having only separable nodes is equivalent to being weakly normal. So it follows from the assumptions that the closed fibers of $f'$ are weakly normal. A straightforward adaptation of the arguments of \cite[Lemma 4.10, Proposition 4.11]{Bernasconi_Posva_Equising_lifting_K-trivial_slc_surfaces} shows that $X'$ is also weakly normal, as claimed.

Now assume that $X$ is normal: then it is geometrically unibranch, and the same holds for the generic fiber of $f'$ \cite[6.15.6]{EGA_IV.2}. Suppose that there is a codimension one nodal singularity $\eta'\in X'$: it is a separable node as we have seen, and it must map to the generic point of $C'$. So the separable node $\sO_{X',\eta'}$ is geometrically unibranch: this implies that it is normal, contradiction. So we obtain that $X'$ is normal.
\end{proof}

\autoref{prop:demi_normality_loc_stable_family} fails if we allow inseparable nodes in the fibers:

\begin{example}\label{example:base-change_insep_node}
In characteristic $p=2$, consider the family of surfaces $f\colon X\to \bA^1_t$ given by $X=V(x^2+y^2z+t)\subset \bA^4$ and where $f$ is the natural projection. Then $X$ is regular but the fiber $X_0$ is irreducible, demi-normal with an inseparable node \cite[\S 3.1]{Posva_Gluing_for_surfaces_and_threefolds}. Let $\mu\colon Y\to X$ be the blow-up defined by the ideal $(x,y,t)$. Straightforward computations show that $Y_0$ is reduced snc, and that $\mu^*(K_X+X_0)=K_Y+Y_0$. Therefore $X\to \bA^1_t$ is locally stable with regular total space. However, the base-change of $X$ along the $k$-linear Frobenius morphism of $\bA^1_t$ is not normal.
\end{example}

\subsection{$\bQ$-Cartier conditions}
We let $C'\to C$ as in the previous subsection.

\begin{lemma}\label{lemma:Q_cartier_cond_base_changes}
Let $(X,\Delta)\to C$ be a family of pairs,
with base-change $(X',\Delta')=(X,\Delta)\times_CC'$. Assume that $K_X+\Delta$ is $\bQ$-Cartier and that $X'$ is normal: then $K_{X'}+\Delta'$ is $\bQ$-Cartier.

If in addition $X\to C$ is projective, then $K_X+\Delta$ is ample over $C$ if and only if $K_{X'}+\Delta'$ is ample over $C'$.
\end{lemma}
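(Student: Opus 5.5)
The plan is to compare $K_{X'}$ with $g^*K_X$ via the relative dualizing sheaf. Since $C'\to C$ is finite flat between smooth curves, it is Gorenstein, and $\omega_{C'/C}\cong\sO_{C'}(R)$ for an effective Cartier divisor $R$ on $C'$. This divisor is the ``different'': it is the ramification divisor in the separable case, while for Frobenius-type maps $\omega_{C'/C}$ is still invertible, merely not equal to $\Omega_{C'/C}$. Flat base change of relative dualizing sheaves along the finite Gorenstein morphism $g\colon X'\to X$ gives $\omega_{X'/X}\cong f'^*\omega_{C'/C}$. Combining this with $\omega_{X'}\cong g^!\omega_X = \omega_{X'/X}\otimes g^*\omega_X$ on the big open set where $\omega_X$ is invertible, we obtain the linear equivalence of Weil divisors
$$K_{X'}\sim g^*K_X+f'^*R.$$
Here $g^*K_X$ makes sense as the closure of the pullback from the Gorenstein-in-codimension-one locus, exactly as in \autoref{def:base_change_divisors}. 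Since $X'$ is normal by assumption, Weil divisors on $X'$ are determined in codimension one, so this identity holds globally.

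Next we treat the boundary. We have $\Delta'=g^*\Delta$ by definition, and on the big open set $U'$ this is compatible with pullback. So $K_{X'}+\Delta'\sim_\bQ g^*(K_X+\Delta)+f'^*R$, where now $g^*(K_X+\Delta)$ is an honest pullback of a $\bQ$-Cartier divisor. The two Weil divisors agree on $U'$ and both sides are closures of their restrictions, by normality of $X'$. Since $f'^*R$ is Cartier, it follows that $K_{X'}+\Delta'$ is $\bQ$-Cartier.

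For ampleness, note that $f'^*R$ is pulled back from the base $C'$, so it is trivial up to linear equivalence over a (local) base $C'$; in general it is numerically trivial over $C'$. Hence $K_{X'}+\Delta'$ is ample over $C'$ if and only if $g^*(K_X+\Delta)$ is. Since $g$ is finite and surjective, ampleness of a $\bQ$-Cartier divisor $L$ over $C$ is equivalent to ampleness of $g^*L$ over $C$. This holds because ampleness is preserved by finite pullback, and conversely ampleness descends along finite surjective maps (for instance by \cite[0GFB]{Stacks_Project}-type results, or by Nakai–Moishezon, or by a Serre vanishing argument on fibers). Finally, ampleness over $C$ and over $C'$ agree for divisors on $X'$, because $C'\to C$ is finite.

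The main obstacle is the first step: justifying $\omega_{X'}\cong \omega_{X'/X}\otimes g^*\omega_X$ when $X$ is not Gorenstein. I would handle this by restricting to the Gorenstein open subset $V\subseteq X$ with complement of codimension $\ge 2$. Its preimage $g^{-1}(V)$ is still big in $X'$, because $g$ is finite flat, and on it the isomorphism is standard duality theory (see \cite[0C07, 0E9U]{Stacks_Project}). The isomorphism then extends to all of $X'$ by reflexivity, using that $X'$ is normal.
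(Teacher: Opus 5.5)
Your argument is correct, but it takes a different route from the paper. The paper factors $C'\to C$ into a separable map followed by a purely inseparable one. By \autoref{lemma:pur_insep_map_curves} the latter is an iterate of the $k$-linear Frobenius, so a single Frobenius suffices, and the two pieces are treated with different tools.
\begin{itemize}
\item In the separable case it uses Riemann--Hurwitz, $q^*(K_X+\Delta)=K_{X'}+\Delta'-R$, with $R$ a combination of fibers.
\item In the Frobenius case it writes an explicit local presentation of $X$ over $C$ and lifts $\partial_t$ to a $p$-closed derivation. This derivation generates a $1$-foliation $\mathfrak{F}$ whose quotient is $X'\to X$ and whose canonical divisor $K_{\mathfrak F}$ is Cartier. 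It then applies the foliated adjunction formula $q^*(K_X+\Delta)=K_{X'}+\Delta'+(p-1)K_{\mathfrak F}$.
\item For ampleness it shrinks $C$ and uses a global embedding in $\mathbb{P}^n_C$ to trivialize $K_{\mathfrak F}$.
\end{itemize}

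Your duality argument handles every finite flat $C'\to C$ uniformly, with no factorization. It identifies the correction term intrinsically as $f'^*$ of the divisor of $\omega_{C'/C}$. This recovers the different in the separable case, and $\omega_{C'/C}\cong\omega_{C'}^{\otimes(1-p)}$ in the Frobenius case; the latter matches $-(p-1)K_{\mathfrak F}$, since $K_{\mathfrak F}\sim f'^*K_{C'}$. Because the correction is pulled back from $C'$, your ampleness comparison needs no shrinking of the base. What the paper's route buys in exchange is the foliation $\mathfrak F$ and the Cartier property of $K_{\mathfrak F}$. It reuses these later, in \autoref{lemma:can_div_Frob_is_Cartier}, \autoref{prop:discrep_qt_by_F} and \autoref{prop:descent_along_pur_insep}.

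Two minor points.
\begin{itemize}
\item $R$ need not be effective. The trace vanishes for inseparable maps, and for the Frobenius of a projective curve of genus $\geq 2$ the bundle $\omega_{C'}^{\otimes(1-p)}$ has negative degree. You only use that $R$ is Cartier, so nothing breaks.
\item The passage through the Gorenstein locus of $X$ is unnecessary. For $g$ finite locally free, $\omega_{X'}\cong\mathcal{H}om_{\sO_X}(g_*\sO_{X'},\omega_X)\cong\omega_{X'/X}\otimes g^*\omega_X$ already holds on all of $X'$.
\end{itemize}
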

\begin{proof}
It follows from \cite[V, Proposition 6.6]{Lang_Algebra} that we can factor $C'\to C$ as 
        $$C'\longrightarrow C_0\longrightarrow C$$
where $C_0$ is a smooth (germ of) $k$-curve, $C_0\to C$ is separable and $C'\to C_0$ is purely inseparable. It suffices to consider both parts separately; moreover, by \autoref{lemma:pur_insep_map_curves}, for the purely inseparable part it suffices to consider the case of a single $k$-linear Frobenius. Furthermore, we are allowed to shrink $C$ and $X$ Zariski-locally. Notice that $X$ is normal since $X'$ is normal and $X'\to X$ is faithfully flat \cite[033G]{Stacks_Project}.

\medskip
\textsc{Case 1: $C'\to C$ is the $k$-linear Frobenius of $C$.} We may assume that $X=\Spec(R)$, that $C=\Spec(\sO)$, and that
        $$R\cong \frac{\sO[x_1,\dots,x_n]}{(f_1,\dots,f_r)}$$
where the $f_i$s are polynomials in the $x_j$s with coefficients in $\sO$. Then $X'=\Spec(R')$ with
        $$R'\cong \frac{\sO^{1/p}[x_1,\dots,x_n]}{(f_1,\dots,f_r)},$$
where $\sO^{1/p}$ denotes the sub-ring of an algebraic closure of $\Frac(\sO)$ consisting of $p$-th roots of elements of $\sO$. Since $C'$ is smooth over $k$, after shrinking $C$ if necessary, the $\sO^{1/p}$-module $\Der_k(\sO^{1/p})$ is free, say generated by the $p$-closed derivation $\partial$. It extends to a $p$-closed derivation of $\sO^{1/p}[x_1,\dots,x_n]$ by setting $\partial(x_i)=0$ for every $i$. Since the coefficients of the $f_i$s are $p$-th powers in $\sO^{1/p}$, we have $\partial(f_i)=0$ and so $\partial$ descends to a $p$-closed derivation $\psi$ on $R'$. Let $\mathfrak{F}$ be the $1$-foliation that is freely generated by $\psi$: then $K_\mathfrak{F}\cong \sO_X$ is Cartier and the projection $q\colon X'\to X$ is the quotient by $\mathfrak{F}$. 

By definition $q^*\Delta=\Delta'$.
So it follows from \cite[Proposition 4.2.3]{Posva_Singularities_quotients_by_1_foliations} that we have an equality of $\bQ$-divisors
        $$q^*(K_X+\Delta)=K_{X'}+\Delta'+(p-1)K_\mathfrak{F},$$
whence $K_{X'}+\Delta'$ is $\bQ$-Cartier.

Assume now that $X$ is projective over $C$: then, after shrinking $C$ if necessary, we can find a \emph{global} presentation
        $$X=V(f_1,\dots,f_r)\subset \bP^n_{C},
        \quad f_i\text{ homogeneous},$$
and a similar argument as above shows that $\mathfrak{F}$ is globally generated by $\psi$. So $K_\mathfrak{F}\cong \sO_X$, and it follows at once that $K_{X'}+\Delta'$ is ample over $C'$ if and only if $K_X+\Delta$ is ample over $C$.

\medskip
\textsc{Case $2$: $C'\to C$ is separable.} Since $q\colon X'\to X$ is the base-change of $C'\to C$, the ramification divisor $R$ of $X'\to X$ is a sum of multiples of (scheme-theoretic) fibers. In particular it is Cartier. Moreover $q^*\Delta=\Delta'$, as the generic points of $\Delta$ are all contained in the generic fiber. The ramification formula \cite[2.41.2]{Kollar_Singularities_of_the_minimal_model_program} reads
        $$q^*(K_X+\Delta)=K_{X'}+\Delta'-R,$$
so we deduce that $K_{X'}+\Delta'$ is $\bQ$-Cartier. Since $\sO(R)\cong \sO_X$ locally over $C$, we see as in the previous case that $K_{X'}+\Delta'$ is ample over $C'$ if and only if $K_X+\Delta$ is ample over $C$.
\end{proof}

\subsection{Base-change with tame ramifications}
The following is well-known:
\begin{proposition}\label{prop:tame_base_change}
Let $C'\to C$ be a finite flat morphism of smooth curves over $k$, with only tame ramification. Let $f\colon (X,\Delta)\to C$ be a family of pairs. Then $f\colon (X,\Delta)\to C$ is locally stable (resp.\ strongly locally stable) if and only if $f'\colon (X,\Delta)\times_C C'\to C'$ is locally stable (resp.\ strongly locally stable).
\end{proposition}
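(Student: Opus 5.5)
The argument splits into three parts: the fiber conditions, the equality of log canonical divisors under pullback, and a comparison of discrepancies for a finite cover.

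\emph{Fiber conditions.} These are (b) and (c) of \autoref{def:local_stability}.
For every closed point $c'\mapsto c$ we have $X'_{c'}\cong X_c$, so demi-normality of closed fibers and separability of nodes transfer in both directions. The generic fiber of $f'$ is the base-change of the generic fiber of $f$ along the finite extension $K(C')/K(C)$, so both have the same geometric generic fiber, and geometric log canonicity is equivalent for the two. By \autoref{lemma:localiz_on_the_base}, both directions may be checked after passing to henselizations at closed points, and local stability is a Zariski-local condition on the base. So I reduce to a single closed point $\mathbf{0}'\mapsto\mathbf{0}$. Over the henselization, tameness lets me choose uniformizers with $t=t'^m$, $p\nmid m$.

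\emph{Pullback formula.} The base-change $X'\to X$ is then the degree-$m$ cyclic cover obtained by adjoining an $m$-th root of $t$. Since $p\nmid m$, it is étale over the generic fiber, and along $X_\mathbf{0}$ it is totally ramified with index $m$ along every component of $X_\mathbf{0}$. In particular $X'$ is normal whenever $X$ is: it is $S_2$ by the argument of \autoref{prop:demi_normality_loc_stable_family}, and in codimension one it is regular, since it is étale over $X$ away from $X_\mathbf{0}$ and a tame cyclic cover along the regular generic points of $X_\mathbf{0}$. Conversely, $X$ is normal if $X'$ is, by faithfully flat descent. The Riemann--Hurwitz formula \cite[2.41.4]{Kollar_Singularities_of_the_minimal_model_program} gives
$$K_{X'}+X'_{\mathbf{0}'}+\Delta'=g^*(K_X+X_\mathbf{0}+\Delta),$$
so $K_{X'}+\Delta'$ is $\bQ$-Cartier if and only if $K_X+\Delta$ is. The direction ``$\Leftarrow$'' is the pullback part of \autoref{lemma:Q_cartier_cond_base_changes}. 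For ``$\Rightarrow$'' I would take the norm, or equivalently push forward $m$ times a Cartier multiple and use that $g$ is a $\mu_m$-quotient.

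\emph{Discrepancy comparison.} I then invoke the standard result for finite morphisms \cite[2.42--2.43]{Kollar_Singularities_of_the_minimal_model_program}: if $g\colon X'\to X$ is finite, $K_{X'}+\Delta'_{\rm tot}=g^*(K_X+\Delta_{\rm tot})$, and the degree is prime to $p$ (or more generally the ramification along every divisor over $X$ is tame), then $(X,\Delta_{\rm tot})$ is lc if and only if $(X',\Delta'_{\rm tot})$ is lc. Concretely, for a divisor $E'$ over $X'$ lying over $E$ over $X$, with ramification index $r$, one has
$$a(E';X',\Delta'_{\rm tot})+1=r\,\bigl(a(E;X,\Delta_{\rm tot})+1\bigr).$$
Every divisor over $X$ arises in this way. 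I apply this with $\Delta_{\rm tot}=X_\mathbf{0}+\Delta$.

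The main obstacle is this last step. The Riemann--Hurwitz formula used to compare $E'$ and $E$ requires the induced extension of DVRs $\sO_{X,E}\subset \sO_{X',E'}$ to be tamely ramified. This holds because the field extension $K(X')/K(X)$ is Galois with group $\mu_m$, whose order is prime to $p$, so every ramification index divides $m$ and the residue extension is separable. I would verify this carefully, normalizing $\sO_{X,E}$ in $K(X')$, rather than relying on a resolution; resolutions are unavailable in positive characteristic.
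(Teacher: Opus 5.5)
Your proof is correct and has the same skeleton as the paper's sketch. You reduce to a henselian local base where $t=t'^m$ with $p\nmid m$. You then use Riemann--Hurwitz to get $K_{X'}+\Delta'+X'_{\mathbf{0}'}=g^*(K_X+\Delta+X_\mathbf{0})$. Finally you conclude with Koll\'ar's 2.42, once every divisor over $X$ is known to be tamely ramified in the normalization of its model in $K(X')$.

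The real difference is how you verify that tameness.
\begin{itemize}
\item The paper passes to an \'etale extension containing $u^{1/a}$ and writes down the normalization of $\sO_{Y,E}[T']/(T'^a-ux^n)$ explicitly.
\item You instead observe that $K(X')=K(X)(t^{1/m})$ is a Kummer extension of degree $m$. It is a field because $T^m-t$ is Eisenstein at the generic points of the reduced fiber $X_\mathbf{0}$, and it is Galois since $\mu_m\subset k$. Hence every ramification index and residue degree divides $m$, so they are prime to $p$ and the residue extensions are separable.
\end{itemize}
Your argument is uniform and computation-free. In particular it covers, without extra work, the case $d=\gcd(a,n)>1$ in the paper's notation ($t=t'^a$, and $t=ux^n$ in $\sO_{Y,E}$). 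In that case the normalization splits into $d$ branches of ramification index $a/d$, rather than being the single ring with $x'^a=x$ described in the paper.

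You also make explicit two points the sketch leaves implicit. The first is normality of $X'$, which you obtain without assuming demi-normal closed fibers. The second is the descent of $\bQ$-Cartierness, via the norm of the finite flat map $X'\to X$.

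One small correction to your general formulation: ``degree prime to $p$'' does not by itself force tameness for a non-Galois cover, since an extension of degree $p+1$ can have a place with $e=p$. It is the Galois property, which you do invoke at the end, that yields $e\mid m$.
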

\begin{proof}
We only sketch the argument. By \autoref{lemma:localiz_on_the_base} we may assume that $C$ and $C'$ are spectra of DVRs, with respective closed points $\mathbf{0}$ and $\mathbf{0}'$. By Riemann--Hurwitz the divisor $K_X+\Delta+X_\mathbf{0}$ pullbacks to $K_{X'}+\Delta'+X'_{\mathbf{0}'}$, so one is $\bQ$-Cartier if and only if the other is. If $Y\to X$ is any normal birational model, let $Y'$ be the normalization of $Y\times_CC'$: the projection $Y'\to Y$ has only tame ramifications in codimension one, as we explain below. Then the result follows from the Riemann--Hurwitz-based computations of \cite[2.42]{Kollar_Singularities_of_the_minimal_model_program}.

To show that $Y'\to Y$ has tame ramifications, let $E$ be a prime divisor of $Y$ and let $\sO=\sO_{Y,E}$. If $E$ dominates $C$ then $\sO\to \sO\otimes_{\sO_C}\sO_{C'}$ is \'{e}tale and there is nothing to show. So assume that $E$ maps to $\mathbf{0}$. Working \'{e}tale-locally, we may assume that 
$\sO_{C,\mathbf{0}}=k[t]_{(t)}$,
$\sO_{C',\mathbf{0}'}=k[t']_{(t')}$ with $t=t'^a$ where $a$ is prime to $p$,
and $\sO_{Y,E}=k[x,\mathbf{y}]_{(x)}$ with $t=ux^n$ where $u\in \sO_{Y,E}^\times$. Then
        $$\sO'=\sO_{Y,E}\otimes_{\sO_C}\sO_{C'}\cong
        k[x,\mathbf{y}, T']_{(x)}/(T'^a-ux^n).
        $$
We want to compute the normalization of $\sO'$. Since \'{e}tale extensions commute with normalizations, we may assume that $u^{1/a}$ belongs to $\sO'$. Then 
        $$\sO'^\nu\cong k[x',\mathbf{y}]_{(x')}
        \quad \text{with } x'^a=x
        \text{ and } x'^n=u^{-1/a}T',$$
see \cite[Proposition 2.2.1]{Posva_Resolution_of_1_foliations}.
This shows that $\sO\to\sO'^\nu$ has tame ramifications.
\end{proof}

\subsection{Reduction to the normal case}
Let us fix a locally demi-stable family $(X,\Delta)\to C$. We assume that $X$ is demi-normal but not normal: let $\nu\colon \widetilde{X}\to X$ be the normalization, with conductors $D\subset \widetilde{X}$ and $E\subset X$. Since $X$ is demi-normal, $E$ and $D$ are reduced of pure codimension one, flat over $C$. We have $\nu^*(K_X+\Delta)=K_{\widetilde{X}}+\widetilde{\Delta}+D$ where $\widetilde{\Delta}$ is the strict transform of $\Delta$, and $(\widetilde{X},\widetilde{\Delta}+D)\to C$ is locally stable.

Let $C'\to C$ be a finite flat morphism of smooth $k$-curves. Write $(X',\Delta')=(X,\Delta)\times_CC'$ and $(\widetilde{X}',\widetilde{\Delta}'+D')=(\widetilde{X},\widetilde{\Delta}+D)\times_CC'$. It is clear that $\widetilde{\Delta}'$ is the strict transform of $\Delta'$. We also let $D'=D\times_CC'$ and $E'=E\times_CC'$.

\begin{proposition}\label{prop:red_to_normal_case}
With the above notation, assume that:
    \begin{enumerate}
        \item $(X,\Delta)\to C$ is strongly locally demi-stable, and
        \item $(\widetilde{X}',\widetilde{\Delta}'+D')\to C'$ is locally stable.
    \end{enumerate}
Then $(X',\Delta')\to C'$ is strongly locally demi-stable.
\end{proposition}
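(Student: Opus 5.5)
We will check the three conditions of \autoref{def:local_stability} for $(X',\Delta')\to C'$ separately. Conditions $(b)$ and $(c)$ are about fibers and should be formal. Condition $(a)$ will be reduced, via the standard definition of slc pairs, to the log canonicity of the normalization, which is hypothesis (2). The real work is identifying the normalization of $X'$ and its conductor, and showing that $K_{X'}+\Delta'$ is $\bQ$-Cartier.

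\emph{Fibers.} Every closed fiber $X'_{c'}$ is isomorphic to $X_c$, where $c$ is the image of $c'$, because the residue fields are both $k$. Hence $X'_{c'}$ is demi-normal with only separable nodes, which gives $(c)$. The generic fiber of $X'\to C'$ is $X_{K(C)}\otimes_{K(C)}K(C')$, and $K(C')$ has the same algebraic closure as $K(C)$. So the geometric generic fibers of $(X',\Delta')$ and of $(X,\Delta)$ coincide, conductors included, and $(b)$ follows from assumption (1). We also need that $K(C')$ is algebraically closed in the function fields of the components; this follows from geometric irreducibility of the components of $X_{K(C)}$. By \autoref{prop:demi_normality_loc_stable_family}, $X'$ is demi-normal with only separable nodes.

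\emph{Normalization and conductor.} Hypothesis (2) includes normality of $\widetilde{X}'$. The morphism $\nu'=\nu\times_CC'\colon \widetilde{X}'\to X'$ is finite. It is an isomorphism over $X'\setminus E'$, which is dense because $E$ is flat over $C$ of pure codimension one. Hence $\nu'$ is the normalization of $X'$. The conductor ideal is $\mathcal{H}om_{\sO_X}(\nu_*\sO_{\widetilde{X}},\sO_X)$, a coherent sheaf built from a finitely presented module, so its formation commutes with the flat base-change $X'\to X$. Thus the conductors of $\nu'$ are $E'$ and $D'$, and these are reduced since $X'$ is demi-normal. In particular the normalization of $(X',\Delta'+X'_{c'})$ is $(\widetilde{X}',\widetilde{\Delta}'+D'+\widetilde{X}'_{c'})$, and this pair is lc by (2).

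\emph{$\bQ$-Cartier property.} This is the step I expect to be the main obstacle, since \autoref{lemma:Q_cartier_cond_base_changes} is stated only for normal $X'$ and descending $\bQ$-Cartierness from the normalization would require gluing theory. I would avoid that route. Let $U\subset X$ be the big open set where $f$ is Gorenstein and $\Delta$ is $\bQ$-Cartier; its preimage $U'\subset X'$ is also big. On $U$, the relative dualizing sheaf commutes with base-change, so $\omega_{U'/C'}\cong q^*\omega_{U/C}$. Since $\omega_C$ and $\omega_{C'}$ are locally trivial, Zariski-locally on $C$ we get $K_{X'}+\Delta'\sim_{\bQ} q^*(K_X+\Delta)$ on $U'$, and hence on $X'$ by taking reflexive hulls ($X'$ is $S_2$). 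Since $K_X+\Delta$ is $\bQ$-Cartier ($(X,\Delta+X_c)$ is slc), $K_{X'}+\Delta'+X'_{c'}$ is $\bQ$-Cartier. Combined with demi-normality of $X'$ and the lc property of the normalization above, the definition of slc \cite[5.10]{Kollar_Singularities_of_the_minimal_model_program} yields that $(X',\Delta'+X'_{c'})$ is slc for every closed $c'\in C'$, which is $(a)$.
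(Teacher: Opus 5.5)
Your proof is correct, and it reaches the two substantive steps by a different and more elementary route than the paper.

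\emph{Conductor.} The paper identifies $\mathfrak{E}'$ with $E'$ in two moves. It notes that every component of $\mathfrak{E}'$ dominates $C'$ and then compares on the generic fiber via geometric slc-ness. It then shows $D'$ is reduced by a generic-reducedness plus $S_1$ argument along $D'\to D$. Your observation that the conductor ideal $\sHom_{\sO_X}(\nu_*\sO_{\widetilde X},\sO_X)$ commutes with flat base-change gives $\mathfrak{E}'=E\times_CC'$ and $\mathfrak{D}'=D\times_CC'$ scheme-theoretically at once. Reducedness then follows from demi-normality of $X'$.

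\emph{$\bQ$-Cartier property.} The paper goes through gluing theory. It checks that the involution $\tau'$ of the normalization $\sD'$ of $D'$ preserves $K_{\sD'}+\Diff_{\sD'}(\widetilde\Delta'+D')$, and uses that no lc center of $(X',\Delta')$ lies in a closed fiber. It then runs the first half of the proof of \cite[Proposition 5.38]{Kollar_Singularities_of_the_minimal_model_program}, with quotients by finite equivalence relations supplied by \cite[Theorem 6]{Kollar_Quotients_by_finite_equivalence_relations}. You argue directly instead:
\begin{itemize}
\item on the big Gorenstein locus $U'$ one has $\omega_{U'}\cong q^*\omega_U\otimes f'^*\omega_{C'/C}$, with $\omega_{C'/C}$ invertible;
\item hence the invertible sheaf $q^*\sO_X(m(K_X+\Delta))\otimes f'^*\omega_{C'/C}^{\otimes m}$ agrees with $\sO_{X'}(m(K_{X'}+\Delta'))$ on $U'$;
\item both sheaves are $S_2$ on the $S_2$ scheme $X'$, so they agree everywhere.
\end{itemize}

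This needs neither the involution nor any descent from the normalization. More generally, it shows that $K_{X'}+\Delta'$ is $\bQ$-Cartier whenever $K_X+\Delta$ is, as soon as $X'$ is $S_2$ and Gorenstein in codimension one. So it would also recover the first assertion of \autoref{lemma:Q_cartier_cond_base_changes} without the foliation computation. The paper's gluing argument is only really needed when one cannot start from $\bQ$-Cartierness on $X$, and here that comes for free from the hypothesis.

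One condition of \cite[5.10]{Kollar_Singularities_of_the_minimal_model_program} you did not check explicitly: $\Supp(\Delta'+X'_{c'})$ must contain no component of $E'$. This holds because each component of $E'$ dominates $C'$ and maps onto a component of $E$, and $\Supp(\Delta)$ contains no component of $E$ by the slc hypothesis on $(X,\Delta+X_c)$.
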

\begin{proof}
By \autoref{prop:demi_normality_loc_stable_family} we have that $X'$ is demi-normal. By assumption $(X'_{K(C')},\Delta'_{K(C')})$ is geometrically slc, and the closed fibers of $X'\to C'$ are demi-normal with only separable nodes. So we have to show $(X',\Delta')\to C'$ is locally demi-stable.

By assumption $\widetilde{X}'$ is normal, so $\widetilde{X}'\to X'$ is the normalization. Let $\mathfrak{E}'\subset X'$ and $\mathfrak{D}'\subset \widetilde{X}'$ be the conductors of the normalization. Every irreducible component of $\mathfrak{E}'$ dominates $C'$, since $X'$ is demi-normal with reduced fibers over $C'$.
By \autoref{def:geom_slc} we see that $E'$ is reduced and equal to $\mathfrak{E}'$ on the generic fiber: so $\mathfrak{E}'=E'$. Since $(\widetilde{X}',\widetilde{\Delta}'+D')$ is lc, we see that $D'$ is generically reduced: since $D'\to D$ is finite flat and $D$ is $S_1$, we obtain that $D'$ is $S_1$ \cite[Theorem 23.9]{Matsumura_Commutative_Ring_Theory}, and so it is reduced. By construction $D'$ is the preimage of $E'$ in $\widetilde{X}'$, so in fact $D'=\mathfrak{D}'$.

Let $\sD$ be the normalization of $D$, with involution $\tau\colon \sD\to \sD$ induced by the morphism $D\to E$. Similarly, let $\sD'$ be the normalization of $D'$ with involution $\tau'$ of $\sD'$ induced by $D'\to E'$. Since $D'\to E'$ is the base-change of $D\to E$, we see that $\tau'$ is the unique extension of the involution $\tau\times_CC'$ of $\sD\times_CC'$.

Consider the commutative diagram
        $$\begin{tikzcd}
        \sD'\arrow[d, "i'"] \arrow[r, "h"] &
        \sD\times_CC' \arrow[r, "g"] & \sD \arrow[d, "i"] \\
        \widetilde{X}'\arrow[rr, "\phi"] && \widetilde{X}.
        \end{tikzcd}$$
The involution $\tau$ preserves the log canonical divisor 
$K_\sD+\Diff_\sD(\widetilde{\Delta}+D)=
i^*(K_{\widetilde{X}}+\widetilde{\Delta}+D)$. So $\tau\times_CC'$ preserves
$g^*(K_\sD+\Diff_\sD(\widetilde{\Delta}+D))$, and thus $\tau'$
preserves 
$h^*g^*(K_\sD+\Diff_\sD(\widetilde{\Delta}+D))$. By adjunction and commutativity of the diagram, we have
        \begin{eqnarray*}
            h^*g^*(K_\sD+\Diff_\sD(\widetilde{\Delta}+D))&=&
            i'^*\phi^*(K_{\widetilde{X}}+\widetilde{\Delta}+D) \\
            &=&
            i'^*(K_{\widetilde{X}'}+\widetilde{\Delta}'+D') \\
            &=&
            K_{\sD'}+\Diff_{\sD'}(\widetilde{\Delta}'+D')
        \end{eqnarray*}
so $\tau'$ is an involution of the pair $(\sD', \Diff_{\sD'}(\widetilde{\Delta}'+D'))$.

To conclude the proof, it remains to show that $K_{X'}+\Delta'$ is $\bQ$-Cartier.
This holds on the generic fiber, since field extensions preserve such properties. Take a closed point $c\in C'$: no lc center of $(X',\Delta')$ is contained in the fiber $X'_c$, as otherwise $(X',\Delta'+X_c')$ would not be slc. 
Combine the fact that $\tau'$ is an involution of $(\sD', \Diff_{\sD'}(\widetilde{\Delta}'+D'))$ with the argument in \cite[Proof of Proposition 5.38]{Kollar_Singularities_of_the_minimal_model_program}, until ``\emph{... is finite by the easy (9.55)}" in the middle of page 212: we obtain that $K_{X'}+\Delta'$ is $\mathbb{Q}$-Cartier. Note that Koll\'{a}r's argument refers to \cite[Corollary 5.33]{Kollar_Singularities_of_the_minimal_model_program} and ultimately to \cite[Theorem 5.32]{Kollar_Singularities_of_the_minimal_model_program} for the construction of geometric quotients by finite equivalence relations: in our positive characteristic context, existence of such quotients holds by the easier \cite[Theorem 6]{Kollar_Quotients_by_finite_equivalence_relations}.
\end{proof}

\section{Fibrations and wild divisors}\label{section:fibrations}
In this section, we introduce the notion of wild divisors and wild coefficients.
We fix a smooth $k$-curve $C$, 
and we let $f\colon \sX\to C$ be a flat finite type morphism with $\sX$ normal.
We define
        \begin{equation}\label{eqn:relative_log_tangent}
        T_{\sX/C}=\ker\left(
            df\colon T_{\sX/k}\longrightarrow f^*T_{C/k}
        \right).
        \end{equation}
Since $f^*T_{C/k}$ is a line bundle, we see that $T_{\sX/C}$ is coherent and saturated. On the open set where $\sX\to C$ is relatively smooth, we can describe the sections of $T_{\sX/C}$ as those of $T_{\sX/k}$ which are $f^{-1}\sO_C$-linear (equivalently, which annihilate $f^{-1}\sO_C$). In particular, $T_{\sX/C}$ is generically closed under Lie brackets and $p$-th powers. As it is saturated, this is true globally. Hence $T_{\sX/C}$ is a $1$-foliation on $\sX$.

Next we compute the canonical divisor of $T_{\sX/C}$. For this we need to identify the image of $df\colon T_{\sX/k}\to f^*T_{C/k}$, at least in codimension one. Let $\eta$ be a codimension one point of $\sX$.
    \begin{enumerate}
        \item Assume that $\eta$ is contained in the generic fiber of $f$. We have an exact sequence
                \begin{equation}\label{eqn:horiz_divisor}
                         0\to T_{\sX/C,\eta}
                        \to T_{\sX/k,\eta}
                        \to f^*T_{C/k}\otimes \fm_{\sX,\eta}^e 
                        \to 0.
                    \end{equation}
        where $\fm_{\sX,\eta}$ is the maximal ideal of $\sO_{\sX, \eta}$ and $e=0$ if and only if $f$ is smooth at $\eta$.
    
        \item Assume that $\eta$ is the generic point of a closed fiber of $f$. If $t$ is a uniformizer of $\sO_{C,f(\eta)}$, we can write $t=ux_0^n$ where $x_0$ is a uniformizer of $\sO_{\sX,\eta}$, $u\in \sO_{\sX,\eta}^\times$ and $n\geq 1$. Let $x_1,\dots,x_d\in \sO_{\sX,\eta}$ be such that $x_0,\dots,x_d$ is a differential basis of $\sO_{\sX,\eta}$, with dual basis $\partial_{x_0},\dots,\partial_{x_i}$ of $T_{\sX,\eta}$. Let $u_i:= \partial_{x_i}(u)$.
        We distinguish two cases:
            \begin{itemize}
                \item \emph{The characteristic $p$ does not divide $n$.}---
                Then we have
                        $$dt=nux_0^{n-1}dx_0+ x_0^n\sum_{i=0}^du_idx_i$$
                from which we deduce
                        $$\partial_{x_0}(t)=nux_0^{n-1}+u_0x_0^n,\quad 
                        \partial_{x_i}(t)=u_ix_0^n \quad (i\geq 1)$$
                and so
                        $$df(\partial_{x_0})=x_0^{n-1}(nu+u_0x_0)\partial_t, \quad 
                        df(\partial_{x_i})=u_ix_0^n\partial_t 
                        \quad (i\geq 1).$$
                As $nu+u_0x_0$ is invertible, we obtain an exact sequence
                    \begin{equation}\label{eqn:vertical_tame_divisor}
                         0\to T_{\sX/C,\eta} 
                        \to T_{\sX/k,\eta} 
                        \to f^*T_C\otimes \mathfrak{m}_{\sX,\eta}^{n-1}\to 0.
                    \end{equation}

                \item \emph{The characteristic $p$ 
                divides $n$.}---
                Then $dt=x_0^n\sum_{i=0}^du_idx_i$ and so
                $df(\partial_{x_i})=u_ix_0^n\partial_t$
                for $i\geq 0$. Whenever $u_i\neq 0$, we write $u_i=x_0^{a_i}v_i$ where $a_i\geq 0$ and $v_i\in \sO_{\sX,\eta}^\times$. Note that at least one $u_i$ is non-zero: for otherwise $u\in K(\sX)^p$, and so $t\in K(\sX)^p$, which implies that the field extension $K(C)\subset K(\sX)$ is not separable, contradicting geometric reducedness of the generic fiber of $f$ \cite[Corollary 2.5]{Schroeer_Fibrations_with_nonreduced_geom_fibers}.

                Let $\gamma_w(\eta)=\min\{a_i\mid i: \ u_i\neq 0\}$. Then we have an exact sequence
                    \begin{equation}\label{eqn:vertical_wild_divisor}
                         0\to T_{\sX/C,\eta} 
                        \to T_{\sX/k,\eta} 
                        \to f^*T_{C/k}\otimes \mathfrak{m}_{\sX,\eta}^{n+\gamma_w(\eta)}\to 0.
                    \end{equation}
            \end{itemize}
    \end{enumerate}

\begin{definition}[see {\cite[Definition 3.4]{Benozzo_CBF_in_pos_char}}]\label{def:wild_and_tame_divisors}
Suppose that $E=\overline{\{\eta\}}$ is an irreducible component of a closed fiber of $f$, and let $n=\ord_E\sX_{f(\eta)}$ be as above. We say that $E$ is a \emph{tame component} if $n$ is not divisible by the characteristic $p$; otherwise we say that $E$ is a \emph{wild component}.

In any case, we define the \emph{wild coefficient} $\gamma_w(E)$ of $E$ by:
        $$\gamma_w(E)=\begin{cases}
        \gamma_w(\eta) & \text{if }E\text{ is wild,}\\
        -1 & \text{otherwise}.
        \end{cases}$$
\end{definition}

\begin{example}
Suppose that $t=(1+x_0^c)x_0^p$ where $c$ is not divisible by $p$, and let $x_1,\dots,x_d$ be differential coordinates together with $x_0$. Then $d(1+x_0^c)=cx_0^{c-1}dx_0$ so $\gamma_w(E)=c-1$. Note also that $T_{\sX/C}$ is generated by $\partial_{x_1},\dots,\partial_{x_d}$, so $E$ is $T_{\sX/C}$-invariant (see \autoref{section:div_and_Frob_base_change}).
\end{example}

\begin{proposition}[see {\cite[Theorem 3.6]{Benozzo_CBF_in_pos_char}}]\label{prop:can_div_relative_tangent_sheaf}
Let $\sX\to C$ be as above. Then
        \begin{equation}\label{eqn:can_divisor_rel_log_foliation}
        K_{T_{\sX/C}}
        =
        K_{\sX/C} -R(f)-W(f)-\Gamma
        \end{equation}
where
        $$R(f)=\sum_{c\in C(k)}f^{*}(c)-\red(f^{*}(c)), 
        \quad 
        W(f)=\sum_{E \text{ vertical}}(\gamma_w(E)+1)E$$
and $\Gamma$ is an effective divisor that has no vertical component over $C$. Moreover, if the generic fiber of $f$ is geometrically normal, then $\Gamma=0$.
\end{proposition}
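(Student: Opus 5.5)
The plan is to compute $K_{T_{\sX/C}}$ as the first Chern class of the saturated subsheaf $T_{\sX/C}\subset T_{\sX/k}$, using the defining sequence \eqref{eqn:relative_log_tangent}. Since all sheaves involved are reflexive or saturated on the normal variety $\sX$, it suffices to compare divisors at codimension one points. Taking determinants in the exact sequence
$$0\to T_{\sX/C}\to T_{\sX/k}\to \mathcal{I}\cdot f^*T_{C/k}\to 0,$$
where $\mathcal{I}\cdot f^*T_{C/k}$ is the image of $df$ (torsion-free of rank one), gives $-K_{T_{\sX/C}}=-K_\sX-\bigl(f^*(-K_C)-Z\bigr)$. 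Here $Z$ is the effective divisor defined by the divisorial part of the ideal $\mathcal{I}$. Hence $K_{T_{\sX/C}}=K_{\sX/C}-Z$, and the whole task is to identify $\ord_\eta Z$ at every codimension one point $\eta$.

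This identification is exactly what the local computations preceding the statement provide. At a vertical codimension one point $\eta$, lying on a component $E$ of a closed fiber with $n=\ord_E\sX_{f(\eta)}$, sequence \eqref{eqn:vertical_tame_divisor} gives $\ord_\eta Z=n-1$ when $p\nmid n$. Since $\gamma_w(E)=-1$ in this case, this equals $(n-1)+(\gamma_w(E)+1)$. When $p\mid n$, sequence \eqref{eqn:vertical_wild_divisor} gives $\ord_\eta Z=n+\gamma_w(\eta)=(n-1)+(\gamma_w(E)+1)$. The coefficient of $E$ in $f^*(c)-\red(f^*(c))$ is $n-1$, so summing over vertical components yields exactly $R(f)+W(f)$. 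At horizontal codimension one points, \eqref{eqn:horiz_divisor} gives multiplicity $e\ge 0$, with $e=0$ exactly where $f$ is smooth at $\eta$. We define $\Gamma$ as the sum of these horizontal contributions; it is effective with no vertical component. One point needs checking: that $\Supp\Gamma$ is a finite union of horizontal divisors. This holds because the non-smooth locus of the generic fiber is closed.

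For the last assertion, suppose the generic fiber $\sX_{K(C)}$ is geometrically normal. Then it is geometrically regular in codimension one, so it is smooth over $K(C)$ at every codimension one point of the generic fiber. Hence $f$ is smooth at every horizontal codimension one point $\eta$, so $e=0$ there and $\Gamma=0$.

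The main obstacle is the bookkeeping behind the determinant step. First, the identification of the image of $df$ at codimension one points must really be an ideal of the shape $\fm^{e}$ times $f^*T_{C/k}$; this uses that $\sO_{\sX,\eta}$ is a DVR with a differential basis, as in the case analysis. Second, one must justify that $c_1$ is additive in the sequence. This holds after restricting to the big open set where $\sX$ is regular and $T_{\sX/k}$ is locally free, because there the divisor of a saturated subsheaf is determined by the codimension one orders computed above.
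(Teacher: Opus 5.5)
Your proposal is correct and follows the paper's proof: take the (reflexified) determinants of the sequence defined by $df$, then read off the coefficients at codimension one points from \eqref{eqn:horiz_divisor}, \eqref{eqn:vertical_tame_divisor} and \eqref{eqn:vertical_wild_divisor}. For the ``Moreover'' part the paper only cites \cite[Step 2 of Theorem 3.6]{Benozzo_CBF_in_pos_char}; your direct argument is a valid self-contained substitute: geometric normality of the generic fiber gives geometric regularity, hence smoothness of $f$, at every horizontal codimension one point, so $e=0$ there and $\Gamma=0$.
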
   
\begin{proof}
Take the reflexified determinants of the short exact sequence defined by $df\colon T_{\sX/k}\to f^*T_{C/k}$, and use \autoref{eqn:horiz_divisor}, \autoref{eqn:vertical_tame_divisor} and \autoref{eqn:vertical_wild_divisor}.   
As for the ``Moreover'' part, see \cite[Step 2 of Theorem 3.6]{Benozzo_CBF_in_pos_char}.
\end{proof}
        
In particular, if $(\sX,\Delta)\to C$ is strongly locally stable, then we have $K_{T_{\sX/C}}=K_{\sX/C}$. 

\begin{remark}\label{rmk:alternative_def_wild_coeff}
The wild coefficients are uniquely determined by the fibration $\sX\to C$ and are independent of the choices of differential coordinates. More precisely, assume that $E$ is a wild divisor. If $t$ is a local parameter of $\sO_{C,f(E)}$ and $s$ a local parameter of $\sO_{\sX,E}$, then $t=us^n$ with $n=\ord_E \sX_{f(E)}$ and
        $$\gamma_w(E)   =\length_{\sO_{\sX,E}}\frac{\left(\sO_{\sX,E}\cdot du\right)^\text{sat}}{\sO_{\sX,E}\cdot du}$$
where the saturation is taken inside $\Omega^1_{\sO_{\sX,E}/k}$.        
More generally,
for any ideal $\mathfrak{a}$ of $\sO_{\sX,E}$, define the ideal
    $$\mathfrak{d}(\mathfrak{a})
     =\left\{\sum_i\partial_i(a_i)\mid a_i\in \mathfrak{a}, \           \partial_i\in \Der_k(\sO)\right\}
    \lhd \sO_{\sX,E}.$$
Then
    $$\gamma_w(E)            =\ord_E\mathfrak{d}\left(\fm_{C,f(E)}\sO_{\sX,E}\right)
    - \ord_E\sX_{f(E)}.$$
In particular, the wild coefficient of $E$ can be computed after passing to the completion or to the (strict) henselization of $\sO_{\sX,E}$.
\end{remark}

\begin{lemma}\label{lemma:wild_coeff_not-1_mod_p}
If $E$ is a vertical wild divisor, then $\gamma_w(E)\not\equiv -1$ modulo $p$.
\end{lemma}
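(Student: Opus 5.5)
The plan is to compute $\gamma_w(E)$ explicitly in a power series model of $\sO_{\sX,E}$, using the fact (\autoref{rmk:alternative_def_wild_coeff}) that the wild coefficient may be computed after completion. Let $\sO=\widehat{\sO}_{\sX,E}$, fix a uniformizer $s$, and write $t=us^n$ with $p\mid n$ and $u\in\sO^\times$.

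By Cohen's structure theorem, choose a coefficient field $K\subset\sO$, so that $\sO\cong K[[s]]$. I would choose $K$ so that it contains lifts of a $p$-basis of the residue field; this is possible in characteristic $p$. Then $\Der_k(\sO)$ is topologically generated by $\partial_s$ together with the derivations $D$ of $K$ extended coefficientwise, with $D(s)=0$. Using the description $\gamma_w(E)=\ord_E\mathfrak{d}(t\sO)-n$, and the identity $\partial(t)=s^n\partial(u)$ for all these derivations (because $\partial(s^n)=0$ as $p\mid n$), I get
$$\gamma_w(E)=\min\left\{\ord_s D(u),\ \ord_s\partial_s(u)\right\},$$
where the minimum runs over all derivations $D$ of $K$. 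The main technical point to check carefully is this reduction: that the ideal $\mathfrak{d}$ is computed correctly in the completion with these generators, and that a coefficient field compatible with a $p$-basis exists. This is the step I expect to require the most care, though it is standard.

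Next, expand $u=\sum_{i\ge0}a_is^i$ with $a_i\in K$ and $a_0\neq0$. Then
$$D(u)=\sum_i D(a_i)s^i \quad\text{and}\quad \partial_s(u)=\sum_{p\nmid i} i\,a_i s^{i-1}.$$
Hence $\gamma_w(E)=\min(\alpha,\beta)$, where $\alpha=\min\{i\mid D(a_i)\neq0\text{ for some }D\}$ (that is, $a_i\notin K^p$) and $\beta=\min\{i-1\mid p\nmid i,\ a_i\neq0\}$. Every value in the set defining $\beta$ is $\not\equiv-1\pmod p$.

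It remains to rule out $\gamma_w(E)=\alpha<\beta$ with $\alpha\equiv-1\pmod p$. In that case $p\nmid\alpha$ and $a_\alpha\neq0$, since $a_\alpha\notin K^p$. So $\alpha-1$ belongs to the set defining $\beta$, giving $\beta\le\alpha-1<\alpha$, a contradiction. Therefore either $\gamma_w(E)=\beta\not\equiv-1$, or $\gamma_w(E)=\alpha$ with $p\mid\alpha$; in both cases $\gamma_w(E)\not\equiv-1\pmod p$.

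Finiteness of $\gamma_w(E)$ (some $u_i\ne0$) was already established in \autoref{section:fibrations}, so the minimum is attained.
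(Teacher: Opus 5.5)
Your proof is correct and follows the same route as the paper's: complete to a power series ring over a coefficient field, expand $u$ in powers of $s$, and read off the $s$-adic order of $du$ from $\partial_s(u)$ and from the derivatives of the coefficients. It is in fact more careful than the paper's version. The paper writes $u=v^p+\sum_{p\nmid i}a_ix_0^i$, which tacitly treats the residue field $k(E)$ as perfect; it is not, and for instance $u=1+ys^p$, with $y$ a unit whose residue is not a $p$-th power, admits no such expression. Your observation that $\alpha<\beta$ forces $p\mid\alpha$ is exactly what handles the contributions $s^i\,da_i$ with $p\mid i$ that this decomposition omits.
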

\begin{proof}
Choose a uniformizer $x_0\in \sO$, and let us go to the completion $\widehat{\sO}\cong \kappa\llbracket x_0\rrbracket$, where $\kappa$ is the residue field of $\sO$. We expand $u$ as
        $$u=v^p+\sum_{i\not\in p\bZ}a_ix_0^i, \quad 
        v\in \kappa\llbracket x_0\rrbracket, \ a_i\in \kappa.$$
Then 
        $$du=\sum_{i\not\in p\bZ}ia_ix_0^{i-1}dx_0
        +\sum_{i\not\in p\bZ}x_0^ida_i.$$
So the biggest power of $x_0$ dividing $du$ is $x_0^{i-1}$ for $i\notin p\bZ$, which yields the statement.
\end{proof}

\section{Foliations induced by Frobenius base-change}\label{section:div_and_Frob_base_change}
In this section we gather some preliminary results about $1$-foliations induced by base-change along Frobenius morphisms.

Let $(\mathbf{0}\in C)$ be a germ of smooth curve over $k$, and let $f\colon (X,\Delta)\to C$ be a family of pairs.
Let $F_{C/k}\colon C\to C$ be the $k$-linear Frobenius. We let $X^{(1)}=X\times_{(f,F_{C/k})}C$. If $F_{X^{(1)}/k}\colon X^{(1)}\to X^{(1)}$ is the $k$-linear Frobenius of the latter, then we have a commutative diagram
        \begin{equation}\label{eqn:notations_for_base_change}
        \begin{tikzcd} 
            X^{(1)}\arrow[d, "f^{(1)}" left] \arrow[rr, "(F_{C/k})_X" below] 
            \arrow[rrr, bend left, "F_{X^{(1)}/k}"] &&
            X \arrow[r, "F_{X/C}" below]\arrow[d, "f" left] 
            & X^{(1)} \arrow[dl, "f^{(1)}", bend left] \\
            C \arrow[rr, "F_{C/k}" below] && C &
        \end{tikzcd}
        \end{equation}
where all horizontal arrows are purely inseparable morphisms of height one. We will write $\Delta^{(1)}=\Delta\times_{C,F_{C/k}}C=(F_{C/k})_X^*\Delta$, so that $(X^{(1)},\Delta^{(1)})\to C$ is also a family of pairs.

\begin{notation}
In case $X$ and $X^{(1)}$ are normal, we let
    \begin{itemize}
        \item $\mathfrak{F}$ be the rank one $1$-foliation of $X^{(1)}$ defined by $(F_{C/k})_X$, i.e.\
            $$\mathfrak{F}=
            \text{saturation of }\ker\left(d(F_{C/k})_X \colon T_{X^{(1)}}\to (F_{C/k})_X^*T_X\right)$$ 
        (it is the same $\mathfrak{F}$ that featured in the proof of \autoref{lemma:Q_cartier_cond_base_changes});
        \item the corank one $1$-foliation of $X$ 
        defined by $F_{X/C}$ is equal to $T_{X/C}$, according to the following lemma.
    \end{itemize}
\end{notation}

\begin{lemma}[{see \cite[Lemma 3.17]{Benozzo_CBF_in_pos_char}}]\label{lemma:relative_foliation_and_central_fiber}
Assume that $X$ and $X^{(1)}$ are normal. Then
$T_{X/C}$, defined as in \autoref{eqn:relative_log_tangent}, is a $1$-foliation which defines the morphism $F_{X/C}$ and leaves the central fiber $X_\mathbf{0}$ invariant.
\end{lemma}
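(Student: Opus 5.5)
We prove the three assertions in turn: that $T_{X/C}$ is a $1$-foliation, that it is the foliation of $F_{X/C}$, and that it leaves $X_\mathbf{0}$ invariant.

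\emph{Step 1: $T_{X/C}$ is a $1$-foliation.} This was already observed after \autoref{eqn:relative_log_tangent}. It is saturated in $T_{X/k}$, being the kernel of a map to a line bundle. On the relatively smooth locus its sections are the $f^{-1}\sO_C$-linear derivations, which are closed under brackets and $p$-th powers. By saturation this closure extends everywhere.

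\emph{Step 2: $T_{X/C}$ is the foliation of $F_{X/C}$.} Recall that the $1$-foliation attached to a finite purely inseparable height one morphism $X\to Y$ of normal varieties is the sheaf of derivations annihilating $\sO_Y\subset \sO_X$. By the Jacobson correspondence, two saturated $1$-foliations agreeing at the generic point coincide, so it suffices to compare the sub-$K(X)$-spaces of $\Der_k(K(X))$.

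Write $K=K(C)$ and $L=K(X)$. Then $F_{X/C}$ corresponds to the subfield $L^p\cdot K\subset L$: indeed $X^{(1)}=X\times_{C,F}C$ has function field $L^p K$ inside $L$, where the first factor maps via Frobenius and the second via inclusion. This uses normality of $X^{(1)}$, which gives the integral structure. A derivation of $L$ kills $L^pK$ if and only if it kills $K$, that is, if and only if it annihilates $f^{-1}\sO_C$. These are exactly the generic sections of $T_{X/C}$. Since both sheaves are saturated in $T_X$ (and $\sO_{X^{(1)}}=\sO_X\cap L^pK$ by normality), they agree.

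To be safe, I would also check that $[L:L^pK]=p^{\dim X-1}$. This holds because $L/K$ is separable, as the generic fiber is geometrically reduced. Hence the kernel has the correct corank one, consistent with the degree of $F_{X/C}$.

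\emph{Step 3: $X_\mathbf{0}$ is invariant.} Let $t$ be a uniformizer at $\mathbf{0}$. We must show $\partial(I)\subset I$ for the ideal $I=t\sO_X$ and every local section $\partial$ of $T_{X/C}$. Any such $\partial$ satisfies $\partial(t)=0$, so
$$\partial(ta)=t\,\partial(a)\in I.$$
Invariance of each irreducible component $E$ of $X_\mathbf{0}$ can be checked at the generic point $\eta$ of $E$ using the local description from \autoref{section:fibrations}. Write $t=ux_0^n$.
\begin{itemize}
\item In the tame case, $T_{X/C,\eta}$ is generated by derivations killing $t$, and a derivation killing $t$ sends $x_0$ into $(x_0)$.
\item In the wild case, the derivations $\partial$ with $\partial(u)=0$ preserve $(x_0)$ as well, since $\partial(x_0^n)=0$ holds automatically.
\end{itemize}
The main subtlety is this wild case. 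Here I would argue directly via $\eta$: the foliation preserves the reduced ideal of the invariant subscheme $V(t)$, because $\partial(\sqrt{I})\subset\sqrt{I}$ in characteristic $p$ for $p$-closed derivations preserving $I$. This is a standard fact: if $a^m\in I$, then differentiating $a^{m}$ shows $a^{m-1}\partial(a)\in I$, and induction gives the claim.
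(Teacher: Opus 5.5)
Your Steps 1 and 2, and the computation $\partial(ta)=t\,\partial(a)$ that opens Step 3, are essentially the paper's argument, and they suffice. The fibers of a family of pairs are geometrically reduced, so $t\sO_X$ is the ideal of the reduced scheme $X_\mathbf{0}$ and every component of $X_\mathbf{0}$ has multiplicity $n=1$. Your tame bullet then also gives invariance of each component.

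The remainder of Step 3 is false and should be deleted rather than repaired.

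\begin{itemize}
\item The wild bullet is a non sequitur. When $p\mid n$, the identity $\partial(x_0^n)=0$ puts no constraint on $\partial(x_0)$. In fact wild divisors are in general \emph{not} $T_{X/C}$-invariant: in \autoref{example:non_inv_wild_div} one has $t=x'y'^p$, and $\partial_{y'}$ kills $u=x'$ while $\partial_{y'}(y')=1$ (see also \autoref{section:invariance}).
\item The ``standard fact'' $\partial(\sqrt{I})\subseteq\sqrt{I}$ holds for $\mathbb{Q}$-algebras but fails in characteristic $p$. On $k[x]$, the $p$-closed derivation $\partial_x$ preserves $I=(x^p)$ but not $\sqrt{I}=(x)$. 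Your induction breaks precisely when $p\mid m$, since then $\partial(a^m)=0$ carries no information.
\end{itemize}

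So the wild case is not the ``main subtlety'' of this lemma. It simply does not occur for components of $X_\mathbf{0}$; normality of $X^{(1)}$ would also exclude it, since $t=ux_0^n$ with $p\mid n$ makes $t^{1/p}/x_0^{n/p}$ integral but not in $\sO_{X^{(1)}}$. On birational models, where wild components do occur, your claim about them is wrong.
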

\begin{proof}
We can regard $(F_{C/k})_X\colon X^{(1)}\to X$ as the normalization of $X$ in the field $L$, compositum of $K(C)^{1/p}$ and $K(X)$. Then the question is to identify the set of derivations of $K(X)$ defining the sub-extension $L^p\hookrightarrow K(X)$: as $L^p$ is the compositum of $K(X)^p$ and $K(C)$, these derivations are exactly the ones which annihilate $K(C)$. The associated reflexive sub-sheaf of $T_X$ is $T_{X/C}$, which by definition leaves $X_\mathbf{0}$ invariant.
As noticed after \autoref{eqn:relative_log_tangent}, it is a $1$-foliation.
\end{proof}

\begin{lemma}\label{lemma:can_div_Frob_is_Cartier}
Assume that $X^{(1)}$ is normal. Then the divisor $K_\mathfrak{F}$ is Cartier.
\end{lemma}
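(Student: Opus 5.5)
The plan is to reuse the explicit local description of $\mathfrak{F}$ from Case 1 of the proof of \autoref{lemma:Q_cartier_cond_base_changes}, and to show that $\mathfrak{F}$ is locally free of rank one there. Since $K_\mathfrak{F}$ is by definition the divisor of the reflexive rank one sheaf $\det\mathfrak{F}^\vee=\mathfrak{F}^\vee$, local freeness of $\mathfrak{F}$ gives that $K_\mathfrak{F}$ is Cartier. The question is local on $X^{(1)}$, so we may shrink $C$ around $\mathbf{0}$ and work on affine opens of $X$.

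First we fix the local setup. We write $X=\Spec(R)$ with $R\cong\sO[x_1,\dots,x_n]/(f_1,\dots,f_r)$ and $C=\Spec(\sO)$. Then $X^{(1)}=\Spec(R')$ with $R'=\sO^{1/p}[x_1,\dots,x_n]/(f_1,\dots,f_r)$, and $(F_{C/k})_X$ corresponds to $R\hookrightarrow R'$. After shrinking $C$, $\Der_k(\sO^{1/p})$ is free of rank one, generated by $\partial=\partial/\partial t^{1/p}$ where $t$ is a uniformizer at $\mathbf{0}$. This $\partial$ extends to $\sO^{1/p}[x]$ by $\partial(x_i)=0$. It kills the $f_i$ because their coefficients lie in $\sO=(\sO^{1/p})^p$, so it descends to a derivation $\psi$ of $R'$, and $\psi$ is $p$-closed since $\psi^p=0$.

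Next we identify $\mathfrak{F}$ with $R'\psi$. The derivation $\psi$ kills $R$, because $R$ is generated over $k$ by $\sO$ and the $x_i$. Since $K(X^{(1)})/K(X)$ has degree $p$, the derivations of $K(X^{(1)})$ killing $K(X)$ form a one-dimensional $K(X^{(1)})$-vector space, spanned by $\psi$ provided $\psi\neq0$. And $\psi\neq0$ because $\psi(t^{1/p})=1$. Hence $\mathfrak{F}$, as the saturation in $T_{X^{(1)}}$, consists of the derivations $g\psi$ with $g\in K(X^{(1)})$ such that $g\psi$ is regular.

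Finally we show that $R'\psi$ is saturated. This is the key point: $\psi(t^{1/p})=1$ is a unit. If $g\psi(R')\subseteq R'$, then $g=g\psi(t^{1/p})\in R'$. Hence $\mathfrak{F}=R'\psi\cong R'$ is free, $K_\mathfrak{F}\sim0$ locally, and $K_\mathfrak{F}$ is Cartier. Normality of $X^{(1)}$ is used so that $\mathfrak{F}$ and $K_\mathfrak{F}$ make sense as a reflexive sheaf and a Weil divisor class. I expect no real obstacle; the only point needing care is the saturation argument, which is exactly what the unit $\psi(t^{1/p})=1$ settles.
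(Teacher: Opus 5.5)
Your proposal is correct and is essentially the paper's argument: the paper simply refers back to the first paragraph of Case~1 in the proof of \autoref{lemma:Q_cartier_cond_base_changes}, where the descended derivation $\psi$ is asserted to freely generate $\mathfrak{F}$, so that $K_\mathfrak{F}$ is locally trivial. Your saturation step, $g=g\psi(t^{1/p})\in R'$, makes explicit the equality $\mathfrak{F}=R'\psi$ that the paper leaves implicit.
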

\begin{proof}
This was observed during the proof of \autoref{lemma:Q_cartier_cond_base_changes}: the argument is given in the first paragraph of Case 1, and it holds under our current assumptions. 
\end{proof}

\begin{lemma}\label{lemma:log_rel_tangent_is_Q-Gor}
Assume that $X^{(1)}$ is normal. Then $K_{T_{X/C}}=K_{X/C}$.
\end{lemma}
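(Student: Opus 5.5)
We plan to deduce this from \autoref{prop:can_div_relative_tangent_sheaf}, which gives
$$K_{T_{X/C}}=K_{X/C}-R(f)-W(f)-\Gamma,$$
so the task is to show $R(f)=0$, $W(f)=0$ and $\Gamma=0$ under the hypothesis that $X^{(1)}$ is normal. All three are statements at codimension one points of $X$. We check them separately for vertical and horizontal points, using that normality of $X^{(1)}$ forces good behaviour of the fibers of $f$ in codimension one. First, $X$ is normal: $(F_{C/k})_X\colon X^{(1)}\to X$ is faithfully flat and $X^{(1)}$ is normal (\cite[033G]{Stacks_Project}). Hence the setting of \autoref{section:fibrations} applies with $\sX=X$.

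\emph{Vertical components.} Let $E=\overline{\{\eta\}}$ be a component of $X_\mathbf{0}$, and write $t=ux_0^n$ as in \autoref{section:fibrations}. The family of pairs assumption says the fibers are geometrically reduced, so $X_\mathbf{0}$ is reduced. Thus $n=1$ for every vertical prime divisor $E$, which gives $R(f)=0$. Since $n=1$ is prime to $p$, every vertical component is tame, so $\gamma_w(E)=-1$ and its coefficient $\gamma_w(E)+1$ in $W(f)$ vanishes; hence $W(f)=0$. (As an alternative check, we can also argue directly: if $n$ were divisible by $p$, the local ring $\sO_{X,\eta}\otimes_{\sO_C}\sO_C^{1/p}$ would contain $(u^{1/p}x_0^{n/p})$ with $u^{1/p}\notin$, and this would produce a non-normal local ring at the generic point over $\eta$. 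This matches the expectation that normality of $X^{(1)}$ excludes wild components, but reducedness already suffices.)

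\emph{Horizontal part $\Gamma$.} This is the step we expect to be the main obstacle, since we are not assuming the generic fiber is geometrically normal. We will show directly that $df\colon T_{X/k}\to f^*T_{C/k}$ is surjective at every codimension one point $\eta$ of the generic fiber, i.e.\ that $e=0$ in \autoref{eqn:horiz_divisor}. The plan is to use that $X^{(1)}$ is normal, so $\sO_{X^{(1)},\eta'}$ is a DVR at the unique point $\eta'$ over $\eta$. Since $X^{(1)}_{K(C)}=X_{K(C)}\otimes_{K(C)}K(C)^{1/p}$, the residue field of $\eta$ is not ``inseparable with respect to $t$''. Concretely, if $dt$ vanished in $\Omega^1_{\sO_{X,\eta}}\otimes\kappa(\eta)$, then $t^{1/p}$ together with a uniformizer would fail to be regular in $\sO_{X,\eta}[t^{1/p}]$, contradicting normality. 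Translating the condition ``$X_{K(C)}$ is geometrically normal in codimension one over the purely inseparable extension $K(C)^{1/p}$'' into $e=0$ is the delicate point. We would first try the Jacobian-type computation of \autoref{eqn:horiz_divisor}: $e>0$ means $dt\in\fm_\eta\Omega^1$. Failing that, we would invoke the ``Moreover'' argument of \cite[Step 2 of Theorem 3.6]{Benozzo_CBF_in_pos_char}, observing that it only uses geometric normality over $K(C)^{1/p}$ (equivalently, normality of $X^{(1)}$ in codimension one), since $T_{X/C}$ only sees the first Frobenius twist. With $\Gamma=0$, the formula reduces to $K_{T_{X/C}}=K_{X/C}$.
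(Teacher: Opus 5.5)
\textbf{Verdict: genuine gap.} The horizontal part $\Gamma$ is the only non-formal step, and you leave it as a plan with two fallbacks rather than an argument.

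Your vertical part is correct, and more explicit than the paper, which leaves it implicit. Geometric reducedness of the closed fibers forces $n=1$ for every vertical prime divisor, so $R(f)=0$ and $W(f)=0$.

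For $\Gamma$, fallback (b) rests on an unverified claim about what Benozzo's argument ``really uses''. You do not need to reopen that argument. The missing idea is that normality of the generic fiber of $X^{(1)}$, namely $X_{K(C)}\otimes_{K(C)}K(C)^{1/p}$ (a localization of the normal scheme $X^{(1)}$), already implies that $X_{K(C)}$ is \emph{geometrically} normal. By Serre's criterion one checks $S_2$ and $R_1$ after arbitrary field extensions:
\begin{itemize}
\item $S_2$ is insensitive to field extensions \cite[6.7.1]{EGA_IV.2};
\item regularity at codimension $\leq 1$ points is geometric as soon as it holds after the single extension $K(C)\subset K(C)^{1/p}$ \cite[22.5.8]{EGA_IV.1}.
\end{itemize}
Then the ``Moreover'' clause of \autoref{prop:can_div_relative_tangent_sheaf} applies as stated and gives $\Gamma=0$. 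This is exactly the paper's two-line proof.

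Your fallback (a) can also be finished by hand, giving a self-contained alternative. Let $\eta$ be a horizontal codimension one point and $A=\sO_{X,\eta}$. This is a DVR with uniformizer $\pi$ and residue field $\kappa$, in which $t$ is a unit. The local ring of $X^{(1)}$ over $\eta$ is $B=A[T]/(T^p-t)$.
\begin{itemize}
\item If $\bar t\notin\kappa^p$, then $d\bar t\neq 0$ in $\Omega^1_{\kappa/k}$, so $dt\notin\fm\Omega^1_{A/k}$.
\item If $\bar t=\bar c^p$ with $c\in A$, let $m=\ord_\pi(t-c^p)\geq 1$ (possibly $\infty$). Whether $m=1$ does not depend on the lift $c$.
\begin{itemize}
\item For $m\geq 2$, the maximal ideal $(\pi,T-c)$ of $B$ needs two generators, so $B$ is not regular, hence not normal.
\item For $m=1$, $dt\equiv \bar w\,d\pi\neq 0$ in $\Omega^1_{A/k}\otimes\kappa$, where $t-c^p=\pi w$. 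This uses injectivity of $\fm/\fm^2\to\Omega^1_{A/k}\otimes\kappa$, valid since $k$ is perfect.
\end{itemize}
\end{itemize}
Hence normality of $X^{(1)}$ forces $dt\notin\fm\Omega^1_{A/k}$, i.e.\ $e=0$ in \autoref{eqn:horiz_divisor} at every horizontal codimension one point, which is precisely $\Gamma=0$.

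As written, neither route is carried out, so the proposal does not yet prove the lemma.
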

\begin{proof}
Since $X^{(1)}_{K(C)}$ is normal we conclude from Serre's criterion of normality, \cite[6.7.1]{EGA_IV.2} and \cite[22.5.8]{EGA_IV.1} that $X_{K(C)}$ is geometrically normal. Therefore $K_{T_{X/C}}=K_{X/C}$ by \autoref{prop:can_div_relative_tangent_sheaf}. 
\end{proof}

Let $E$ be a prime divisor over $X$, appearing on a normal birational model $Y\to X$. Let $Y^{(1),\nu}$ be the normalization of $Y\times_{C,F_{C/k}}C$: it is endowed with a birational morphism $Y^{(1),\nu}\to X^{(1)}$ that makes the relevant diagram commutative. Since the projection $Y^{(1),\nu}\to Y$ is also the normalization of $Y$ in the function field of $X^{(1)}$, it is a universal homeomorphism. So there is a unique prime divisor $E^{(1)}$ on $Y^{(1),\nu}$ mapping to $E$. It is easily seen that:

\begin{lemma}
The function
        $$\{\text{prime divisor over }X\}\longrightarrow
        \{\text{prime divisor over }X^{(1)}\},
        \quad E\mapsto E^{(1)}$$
is a bijection.
\hfill \qedsymbol
\end{lemma}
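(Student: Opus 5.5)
To prove that $E\mapsto E^{(1)}$ is a bijection, I will construct an explicit inverse. The construction uses that the morphisms $(F_{C/k})_X\colon X^{(1)}\to X$ and $F_{X/C}\colon X\to X^{(1)}$ are both finite, purely inseparable of height one, and compose to the $k$-linear Frobenius. At the level of function fields this gives inclusions
$$K(X^{(1)})^p\subseteq K(X)\subseteq K(X^{(1)}),$$
and both $K(X^{(1)})/K(X)$ and $K(X)/K(X^{(1)})^p$ are purely inseparable. I read a prime divisor over $X$ as a divisorial valuation of $K(X)$ centered on $X$. The assignment $E\mapsto E^{(1)}$ then sends $v$ to the unique valuation $v^{(1)}$ of $K(X^{(1)})$ extending it. Uniqueness holds because the extension is purely inseparable: concretely, $w(g)=v(g^p)/p$ up to normalization. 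This matches the construction in the text, since $Y^{(1),\nu}\to Y$ is a universal homeomorphism.

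For the inverse, given a divisor $E'$ over $X^{(1)}$ on a normal model $Y'\to X^{(1)}$, I will take the normalization $Y$ of $Y'$ in $K(X)$ via $F_{X/C}$. The map $Y\to Y'$ is again a universal homeomorphism, so $E'$ has a unique preimage prime divisor $E$ on $Y$. The model $Y$ is birational over $X$: it sits in the normalization of $X^{(1)}$ in $K(X)$, which is $X$ itself because $X$ is normal and finite over $X^{(1)}$. Hence $E$ is a divisor over $X$, and I set $E'\mapsto E$.

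It remains to check that the two maps are mutually inverse. The valuation of $E^{(1)}$ restricts on $K(X)$ to the valuation of $E$, up to scaling, so the inverse construction recovers $E$. Conversely, if $E$ comes from $E'$, the valuation of $E$ restricted to $K(X^{(1)})^p\cong K(X^{(1)})$ is proportional to that of $E'$. Since the relevant valuations are determined by restriction along purely inseparable extensions, the composite recovers $E'$, because extension to $K(X^{(1)})$ is unique. Centers are also compatible: $E^{(1)}$ is centered on $X^{(1)}$ exactly when $E$ is centered on $X$, since the maps are finite and hence valuations extend properly.

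The only mildly delicate point is bookkeeping: I must make sure that "divisor over $X$" is identified with a valuation independently of the chosen model, so that the two constructions are well defined on equivalence classes. This is standard, because divisorial valuations determine the divisor up to birational models.
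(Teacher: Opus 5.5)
Your argument is correct and is essentially what the paper intends. The paper gives no written proof: it only notes that $Y^{(1),\nu}\to Y$ is the normalization of $Y$ in $K(X^{(1)})$, hence a universal homeomorphism, and calls bijectivity ``easily seen''. Your inverse (normalize models of $X^{(1)}$ in $K(X)$ along $F_{X/C}$, using that $X$ is the normalization of $X^{(1)}$ in $K(X)$) together with uniqueness of valuation extensions along $K(X^{(1)})^p\subseteq K(X)\subseteq K(X^{(1)})$ is exactly the intended filling-in. When you write out that the two maps are mutually inverse, make explicit that the composite $K(X^{(1)})\to K(X)\to K(X^{(1)})$ is $g\mapsto g^p$. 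Then $w(g)=w(g^p)/p$ shows that each valuation is determined by its restriction along either embedding, which is the step your third paragraph compresses.
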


\begin{definition}\label{def:deltas}
Let $E$ be a prime divisor over $X$ appearing on a birational model $Y$. Then the $k$-linear Frobenius of $Y^{(1),\nu}$ factors as
        $$F_{Y^{(1),\nu}/k}\colon 
        Y^{(1),\nu}\overset{h}{\longrightarrow}
        Y \overset{g}{\longrightarrow}
        Y^{(1),\nu}$$
where $h$ is the quotient by (the birational pullback of) $\mathfrak{F}$, and $g$ is the quotient by (the birational pullback of) $T_{X/C}$.
By abuse of notation, we say $E$ is \emph{$T_{X/C}$-invariant} (resp.\ \emph{$\mathfrak{F}$-invariant}) if it is invariant with respect to the birational pullback of $T_{X/C}$ (resp.\ of $\mathfrak{F}$).
We write
    $$\delta_{T_{X/C}}(E)=\begin{cases}
        1 & \text{if }E\text{ is }T_{X/C}\text{-invariant,}\\
        p & \text{otherwise}
    \end{cases}
    \quad  \quad 
    \delta_\mathfrak{F}(E^{(1)})=\begin{cases}
        1 & \text{if }E^{(1)}\text{ is }\mathfrak{F}\text{-invariant,}\\
        p & \text{otherwise}.
    \end{cases}$$
\end{definition}

\begin{lemma}\label{lemma:inv_dichotomy}
The divisor $E$ is $T_{X/C}$-invariant if and only if $E^{(1)}$ is not $\mathfrak{F}$-invariant.
In particular, $\delta_{T_{X/C}}(E)\cdot \delta_\mathfrak{F}(E^{(1)})=p$ holds.
\end{lemma}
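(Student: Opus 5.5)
The plan is to translate invariance into ramification indices of discrete valuation rings, and then use that ramification indices multiply along the factorisation of Frobenius in \autoref{def:deltas}.

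\textbf{Local dictionary.} Choose a normal model $Y$ on which $E$ appears, and let $\sO=\sO_{Y,E}$ and $\sO^{(1)}=\sO_{Y^{(1),\nu},E^{(1)}}$. These are DVRs with residue fields $\kappa$ and $\kappa^{(1)}$, each of transcendence degree $\dim X-1$ over $k$. The factorisation $F_{Y^{(1),\nu}/k}=g\circ h$ induces inclusions
$$(\sO^{(1)})^p\subset \sO\subset \sO^{(1)},$$
and each inclusion is a degree $p$ extension of fraction fields. For a degree $p$ purely inseparable extension of DVRs coming from a $1$-foliation quotient, we have $p=e\cdot f$, where $e$ is the ramification index and $f$ the residue degree. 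The finiteness needed for this holds because $k$ is perfect and everything is essentially of finite type over $k$. I will then use the standard fact (see \cite{Posva_Singularities_quotients_by_1_foliations}) that the foliation-invariant divisors are exactly those with $e=1$ and $f=p$. The reason is that invariance means the generating derivation preserves the ideal of $E$. It therefore induces a nonzero derivation on the residue field, so the residue extension is nontrivial. In the non-invariant case a uniformiser $s$ can be chosen with $\partial(s)$ a unit, so $s^p$ is a uniformiser downstairs and $e=p$. In other words, the ramification index of the quotient map along the divisor is exactly the number $\delta$ of \autoref{def:deltas}.

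\textbf{Frobenius computation and multiplicativity.} For the full Frobenius $(\sO^{(1)})^p\subset\sO^{(1)}$, the element $s^p$ is a uniformiser of the subring whenever $s$ is a uniformiser of $\sO^{(1)}$. Hence the ramification index is $p$, and the residue extension is $(\kappa^{(1)})^p\subset\kappa^{(1)}$. Ramification indices multiply in towers of DVRs, so
$$e\bigl((\sO^{(1)})^p\subset\sO\bigr)\cdot e\bigl(\sO\subset\sO^{(1)}\bigr)=p.$$
The first factor is the ramification of $g$ along $E^{(1)}$, i.e.\ of the $T_{X/C}$-quotient along $E$, which equals $\delta_{T_{X/C}}(E)$. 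The second is the ramification of $h$, i.e.\ of the $\mathfrak{F}$-quotient, along $E^{(1)}$, which equals $\delta_\mathfrak{F}(E^{(1)})$. The product identity $\delta_{T_{X/C}}(E)\cdot\delta_\mathfrak{F}(E^{(1)})=p$ follows. Since each factor lies in $\{1,p\}$, exactly one of them equals $1$, and this is the stated dichotomy.

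\textbf{Main obstacle.} The delicate point is justifying the dictionary ``invariant $\iff e=1$'' for the birational pullbacks of the foliations on the non-smooth model $Y$, including the identification of $g$ and $h$ with the foliation quotients at the generic points of $E$ and $E^{(1)}$. I would handle this by localising at the codimension one point, where all the schemes involved are regular. There the foliation is generated by a single derivation, if it has rank one, or is dual to the relevant kernel otherwise. One can then argue directly with uniformisers, as above, to relate invariance of $E$ to the residue degree $f$.
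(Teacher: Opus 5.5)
Your proposal is correct and follows the paper's proof. The paper notes that $F_{Y^{(1),\nu}/k}^*E^{(1)}=pE^{(1)}$ factors as $h^*g^*E^{(1)}$, so exactly one of $g^*E^{(1)}=pE$ or $h^*E=pE^{(1)}$ holds, and then concludes by \cite[Lemma 4.2.2]{Posva_Singularities_quotients_by_1_foliations}; this is your multiplicativity of ramification indices combined with the dictionary ``invariant $\iff$ ramification index $1$''. In your side-sketch of that dictionary, the claim that an invariant foliation induces a nonzero derivation on the residue field needs saturation of the birational pullback foliation at the generic point of $E$ (otherwise a generator like $s\partial_s$ induces zero), but you are citing that lemma anyway.
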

\begin{proof}
Let us use the notation of \autoref{def:deltas}. As 
$F_{Y^{(1),\nu}/k}^*E^{(1)}=pE^{(1)}$, exactly one of the following happens: $(g^*E^{(1)}=E, h^*E=pE^{(1)})$, or $(g^*E^{(1)}=pE, h^*E=E^{(1)})$. So the result follows from \cite[Lemma 4.2.2]{Posva_Singularities_quotients_by_1_foliations}.
\end{proof} 

\begin{lemma}\label{lemma:orders_central_fibers}
Let $E$ be a divisor appearing on a normal birational model $Y$ of $X$. Then:
        $$\ord_EY_\mathbf{0}= 
        \delta_{T_{X/C}}(E)\cdot \ord_{E^{(1)}}Y^{(1),\nu}_\mathbf{0} 
        = \frac{p}{\delta_\mathfrak{F}(E^{(1)})}\cdot 
        \ord_{E^{(1)}}Y^{(1),\nu}_\mathbf{0}.
        $$
\end{lemma}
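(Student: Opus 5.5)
We compare pullbacks of the central fibers along the two maps $h\colon Y^{(1),\nu}\to Y$ and $g\colon Y\to Y^{(1),\nu}$ of \autoref{def:deltas}. The only inputs are two facts already used in the proof of \autoref{lemma:inv_dichotomy}: which multiplicity ($1$ or $p$) occurs for $g^*E^{(1)}$ and $h^*E$, and how that choice is governed by invariance.

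The first step is to identify the pullbacks of the central fibers. The composite $Y\xrightarrow{g}Y^{(1),\nu}\to X^{(1)}\to C$ equals $Y\to X\xrightarrow{F_{X/C}}X^{(1)}\xrightarrow{f^{(1)}}C$, and $f^{(1)}\circ F_{X/C}=f$ by diagram \autoref{eqn:notations_for_base_change}. Hence the structure map $Y^{(1),\nu}\to C$ pulls back along $g$ to the structure map $Y\to C$. Pulling back the Cartier divisor $\mathbf{0}$ then gives $g^*Y^{(1),\nu}_\mathbf{0}=Y_\mathbf{0}$ as Cartier divisors. Since $g$ is a finite universal homeomorphism of normal varieties, $E^{(1)}$ is the only prime divisor of $Y^{(1),\nu}$ lying over $E$. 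Taking the coefficient of $E$ therefore gives
$$\ord_EY_\mathbf{0}=\ord_E(g^*E^{(1)})\cdot\ord_{E^{(1)}}Y^{(1),\nu}_\mathbf{0}.$$

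The second step is to compute $\ord_E(g^*E^{(1)})$. Here $g$ is the quotient of $Y$ by the birational pullback of $T_{X/C}$. By \cite[Lemma 4.2.2]{Posva_Singularities_quotients_by_1_foliations}, as in \autoref{lemma:inv_dichotomy}, we have $g^*E^{(1)}=E$ if $E$ is $T_{X/C}$-invariant and $g^*E^{(1)}=pE$ otherwise. Thus $\ord_E(g^*E^{(1)})=\delta_{T_{X/C}}(E)$, which is the first equality. The second equality then follows from $\delta_{T_{X/C}}(E)\cdot\delta_\mathfrak{F}(E^{(1)})=p$ in \autoref{lemma:inv_dichotomy}.

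The step needing most care is making sure the convention in \cite[Lemma 4.2.2]{Posva_Singularities_quotients_by_1_foliations} goes the right way: invariant divisors pull back with multiplicity $1$ under the quotient map, and non-invariant ones with multiplicity $p$. As a sanity check, take $E$ a component of $X_\mathbf{0}$, which is $T_{X/C}$-invariant by \autoref{lemma:relative_foliation_and_central_fiber}. A tame component has $\ord_E X_\mathbf{0}$ prime to $p$, so it cannot equal $p$ times an integer; this forces $g^*E^{(1)}=E$ and hence multiplicity $\delta_{T_{X/C}}(E)=1$, consistent with the convention above.
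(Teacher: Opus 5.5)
Your proposal is correct and follows the paper's proof. Like the paper, you pull back $Y^{(1),\nu}_\mathbf{0}$ along the quotient $Y\to Y^{(1),\nu}$ by (the birational pullback of) $T_{X/C}$ and use \cite[Lemma 4.2.2]{Posva_Singularities_quotients_by_1_foliations} to read off $\delta_{T_{X/C}}(E)$ as the coefficient of $E$ in the pullback of $E^{(1)}$; the second equality then comes from \autoref{lemma:inv_dichotomy}. The only cosmetic difference is that you justify $g^*Y^{(1),\nu}_\mathbf{0}=Y_\mathbf{0}$ directly from $F_{X/C}$ being a $C$-morphism, where the paper cites \autoref{lemma:relative_foliation_and_central_fiber}.
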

\begin{proof}
In view of \autoref{lemma:inv_dichotomy}, it suffices to prove the first equality. Let $h\colon Y\to Y^{(1),\nu}$ be the quotient by (the birational pullback of) $T_{X/C}$. By \autoref{lemma:relative_foliation_and_central_fiber} we have
$h^*Y_\mathbf{0}^{(1),\nu}=Y_\mathbf{0}$, while 
$h^*E^{(1)}=\delta_{T_{X/C}}(E)\cdot E$ by \cite[Lemma 4.2.2]{Posva_Singularities_quotients_by_1_foliations}. So around the generic point of $E$ we have
        $$\ord_E(Y_\mathbf{0})\cdot E=
        Y_\mathbf{0}=
        h^*Y_\mathbf{0}^{(1),\nu}=
        \delta_{T_{X/C}}(E)\cdot \ord_{E^{(1)}}(Y_\mathbf{0}^{(1),\nu})\cdot E$$
as claimed.
\end{proof}

\begin{example}\label{example:non_inv_wild_div}
We indicate a simple way to produce non-$T_{X/C}$-invariant wild vertical divisors. Take a general regular closed point of $X_\mathbf{0}$, pick local coordinates $x,y,z_2\dots,z_d$ such that $t=x$ where $t$ is a local parameter of $\sO_{C,\mathbf{0}}$. We blow-up $(x,y)$ with weights $(p,1)$: the schematic chart with coordinates $x',y',z_2,\dots,z_d$ is given by $x=x'y'^p$, $y=y'$. Then $t=x'y'^p$, so the relative tangent sheaf over $C$ is generated by $\partial_{y'}$ and the $\partial_{z_i}$s. In particular, the exceptional divisor $E=V(y')$ is wild and non-$T_{X/C}$-invariant (with wild coefficient $0$).---
See \autoref{e-nonzero_wildcoeffs} for examples of non-$T_{X/C}$-invariant wild vertical divisors with non-zero wild coefficient.
\end{example} 

\section{Descent}\label{section:descent}
In this section we study descent of local stability along Frobenius morphisms and Artin--Schreier covers.
Throughout the section, we let $(\mathbf{0}\in C)$ be a germ of smooth curve over $k$ and $f\colon (X,\Delta)\to C$ be a family of pairs. 

\subsection{Descent along Frobenius}
We use the notations introduced in \autoref{section:div_and_Frob_base_change}.

\begin{proposition}\label{prop:discrep_qt_by_F}
Let $E^{(1)}$ be a divisor over $X^{(1)}$, appearing on a birational model $\mu'\colon Y^{(1),\nu}\to X^{(1)}$. Assume that $X$ and $X^{(1)}$ are normal, and that $K_X+\Delta$ is $\bQ$-Cartier. Then
        \begin{eqnarray*}a(E;X,\Delta+X_\mathbf{0})
        &=&
        \frac{1}{\delta_\mathfrak{F}(E^{(1)})}\cdot 
        \left[
        a(E^{(1)};X^{(1)},\Delta^{(1)}+X_\mathbf{0}^{(1)})
        \right.\\
        && \left. \quad \quad \quad \quad \quad \quad 
        +(p-1)\cdot \left(
        a(E^{(1)};\mathfrak{F})-\ord_{E^{(1)}}Y^{(1),\nu}_\mathbf{0}
        \right)
        \right].
        \end{eqnarray*}
\end{proposition}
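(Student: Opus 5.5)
The plan is to apply the adjunction formula for quotients by $1$-foliations \cite[Proposition 2.5.14]{Posva_Singularities_quotients_by_1_foliations} to the quotient $h=(F_{C/k})_X\colon X^{(1)}\to X$ by $\mathfrak{F}$, and then to translate the log term coming from the central fiber. First I check that the right-hand side is well defined. By \autoref{lemma:Q_cartier_cond_base_changes} (Case 1), $K_{X^{(1)}}+\Delta^{(1)}$ is $\bQ$-Cartier, with
$$h^*(K_X+\Delta)=K_{X^{(1)}}+\Delta^{(1)}+(p-1)K_\mathfrak{F}.$$
Here $K_\mathfrak{F}$ is Cartier by \autoref{lemma:can_div_Frob_is_Cartier}. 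Also $h^*X_\mathbf{0}=X^{(1)}_\mathbf{0}$ as Cartier divisors, since $X^{(1)}_\mathbf{0}$ is the pullback of the Cartier divisor $\mathbf{0}$ along $X^{(1)}\to X\to C$. Adding, we get
$$h^*(K_X+\Delta+X_\mathbf{0})=K_{X^{(1)}}+\Delta^{(1)}+X^{(1)}_\mathbf{0}+(p-1)K_\mathfrak{F}.$$

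Next I work on the models $Y$ and $Y^{(1),\nu}$ of \autoref{def:deltas}, with $h_Y\colon Y^{(1),\nu}\to Y$ the quotient by the birational pullback $\mathfrak{F}_Y$ of $\mathfrak{F}$. The same formula holds on the birational model:
$$h_Y^*K_Y=K_{Y^{(1),\nu}}+(p-1)K_{\mathfrak{F}_Y}.$$
I pull back $\mu^*(K_X+\Delta+X_\mathbf{0})=K_Y+\mu^{-1}_*\Delta+Y_\mathbf{0}+\sum -a(E_i)E_i$ along $h_Y$. By commutativity of the square, this equals the pullback of the formula above along $\mu'$. I then compare coefficients at $E^{(1)}$, in four steps.
\begin{itemize}
\item The coefficient of $h_Y^*E$ at $E^{(1)}$ is $\delta_\mathfrak{F}(E^{(1)})$, by \cite[Lemma 4.2.2]{Posva_Singularities_quotients_by_1_foliations}, as in the proof of \autoref{lemma:inv_dichotomy}.
\item $h_Y^*Y_\mathbf{0}=Y^{(1),\nu}_\mathbf{0}$, so the central-fiber terms need separate care.
\item $K_{\mathfrak{F}_Y}-\mu'^*K_\mathfrak{F}=\sum a(E';\mathfrak{F})E'$, by the definition of foliated discrepancy.
\item $\mu'^{-1}_*\Delta^{(1)}=h_Y^*\mu^{-1}_*\Delta$ away from exceptional divisors.
\end{itemize}

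The main obstacle is bookkeeping the central fiber when $E$ is exceptional. The issue is that in $a(E;X,\Delta+X_\mathbf{0})$ the full pullback of $X_\mathbf{0}$ is subtracted, not just its strict transform. So I write each log discrepancy so that the $\bQ$-Cartier divisor $\mu^*X_\mathbf{0}$ appears explicitly, and compare the $E^{(1)}$-coefficients of $h_Y^*$ applied to both sides. The left side contributes $\delta_\mathfrak{F}(E^{(1)})\cdot a(E;X,\Delta+X_\mathbf{0})$. The right side contributes two terms. The first is $a(E^{(1)};X^{(1)},\Delta^{(1)}+X^{(1)}_\mathbf{0})$. The second is $(p-1)$ times the $E^{(1)}$-coefficient of $K_{\mathfrak{F}_Y}-\mu'^*K_\mathfrak{F}$.

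For the second term I must be careful about whether the foliated discrepancy $a(E^{(1)};\mathfrak{F})$ is the one for the foliation with or without the invariant central fiber. The foliation $\mathfrak{F}$ is generated by $\partial$ with $\partial(t^{1/p})=1$. Its pullback to $Y^{(1),\nu}$ therefore picks up a pole or zero along the fiber components: in coordinates $t^{1/p}=v\,s^{m}$ with $m=\ord_{E^{(1)}}Y^{(1),\nu}_\mathbf{0}$, the derivation acquires a factor of $s^{m}$ up to the transversality correction that $\delta_\mathfrak{F}$ records. This is where the term $-\ord_{E^{(1)}}Y^{(1),\nu}_\mathbf{0}$ arises. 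I would verify it by a local computation at the generic point of $E^{(1)}$, treating the two cases $\delta_\mathfrak{F}=1$ and $\delta_\mathfrak{F}=p$ separately and using \autoref{lemma:orders_central_fibers} to match multiplicities. Dividing by $\delta_\mathfrak{F}(E^{(1)})$ then gives the stated formula.
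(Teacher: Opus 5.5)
Your central-fiber bookkeeping is wrong from the first display. The claim $h^*X_\mathbf{0}=X^{(1)}_\mathbf{0}$ fails. $X^{(1)}_\mathbf{0}$ is the fiber of $f^{(1)}$, whereas the composite $X^{(1)}\to X\to C$ equals $F_{C/k}\circ f^{(1)}$, and $F_{C/k}^*\mathbf{0}=p\cdot\mathbf{0}$. Concretely, the local equation $t$ of $X_\mathbf{0}$ becomes $(t^{1/p})^p$ in $R\otimes_\sO\sO^{1/p}$, while $t^{1/p}$ cuts out $X^{(1)}_\mathbf{0}$. So $h^*X_\mathbf{0}=pX^{(1)}_\mathbf{0}$ and $h_Y^*Y_\mathbf{0}=pY^{(1),\nu}_\mathbf{0}$. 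It is the other quotient $F_{X/C}$, a $C$-morphism, that pulls the central fiber back to the central fiber. Your version also contradicts \autoref{lemma:orders_central_fibers}, which you plan to use: $h_Y^*Y_\mathbf{0}=Y^{(1),\nu}_\mathbf{0}$ would force $\ord_EY_\mathbf{0}=\ord_{E^{(1)}}Y^{(1),\nu}_\mathbf{0}/\delta_\mathfrak{F}(E^{(1)})$ instead of $p\cdot\ord_{E^{(1)}}Y^{(1),\nu}_\mathbf{0}/\delta_\mathfrak{F}(E^{(1)})$.

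This is why your last paragraph has to produce the term $-(p-1)\ord_{E^{(1)}}Y^{(1),\nu}_\mathbf{0}$ from a ``correction'' to the foliated discrepancy, and no such mechanism exists. You have yourself fixed $a(E^{(1)};\mathfrak{F})$ as the $E^{(1)}$-coefficient of $K_{\mathfrak{F}_Y}-\mu'^*K_\mathfrak{F}$. The pole that $\partial$ acquires along $E^{(1)}$ when $t^{1/p}=vs^m$ is already contained in that number; this is exactly the computation giving $a\geq n-1$ or $a\geq n$ in the proof of \autoref{prop:descent_along_pur_insep}. No local computation produces an extra $-m$. With your pullback formula you would land on the false identity $\delta_\mathfrak{F}(E^{(1)})\,a(E;X,\Delta+X_\mathbf{0})=a(E^{(1)};X^{(1)},\Delta^{(1)}+X^{(1)}_\mathbf{0})+(p-1)\,a(E^{(1)};\mathfrak{F})$.

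The repair is to use
\[
h^*(K_X+\Delta+X_\mathbf{0})=K_{X^{(1)}}+\Delta^{(1)}+X^{(1)}_\mathbf{0}+(p-1)\bigl(K_\mathfrak{F}+X^{(1)}_\mathbf{0}\bigr).
\]
Then the $E^{(1)}$-coefficient of $(p-1)\bigl(\mu'^*(K_\mathfrak{F}+X^{(1)}_\mathbf{0})-K_{\mathfrak{F}_Y}\bigr)$ is $-(p-1)\bigl(a(E^{(1)};\mathfrak{F})-\ord_{E^{(1)}}Y^{(1),\nu}_\mathbf{0}\bigr)$. Comparing $E^{(1)}$-coefficients gives the stated formula directly, with no case distinction on $\delta_\mathfrak{F}$. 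Equivalently, as the paper does, prove the identity for $(X,\Delta)$ without the central fiber. Then subtract $\ord_EY_\mathbf{0}=\frac{p}{\delta_\mathfrak{F}(E^{(1)})}\ord_{E^{(1)}}Y^{(1),\nu}_\mathbf{0}$, splitting $p=1+(p-1)$.
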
 
\begin{proof}
By \autoref{lemma:Q_cartier_cond_base_changes} the divisor $K_{X^{(1)}}+\Delta^{(1)}$ is $\bQ$-Cartier.
We have a commutative diagram
        $$\begin{tikzcd}
            Y^{(1),\nu} \arrow[r, "q'"] \arrow[d, "\mu'"] &
            Y \arrow[d, "\mu"] \\
            X^{(1)} \arrow[r, "q"] & X
        \end{tikzcd}$$
where $q'$ is the quotient by $\mu'^*\mathfrak{F}$ and $\mu$ is birational. We have $\Delta^{(1)}=q^*\Delta$ by definition, and so $q'^*(\mu^{-1}_*\Delta)=\mu'^{-1}_*\Delta^{(1)}$. Recall that $K_\mathfrak{F}$ is Cartier. Using \cite[Proposition 4.2.3]{Posva_Singularities_quotients_by_1_foliations} we compute on one hand
        \begin{eqnarray*} 
        \mu'^*q^*(K_X+\Delta)&=&
        K_{Y^{(1),\nu}}+\mu'^{-1}_*\Delta^{(1)}+(p-1)K_{\mu'^*\mathfrak{F}} \\
        && \quad
        -\left[(p-1) a(E^{(1)};\mathfrak{F})\cdot E^{(1)}
        +a(E^{(1)};X^{(1)},\Delta^{(1)})
        \right]
        \end{eqnarray*}
and on the other hand
        $$q'^*\mu^*(K_X+\Delta)=
        K_{Y^{(1),\nu}}+\mu'^{-1}_*\Delta^{(1)}+(p-1)K_{\mu'^*\mathfrak{F}}
        -a(E;X,\Delta)\delta_\mathfrak{F}(E^{(1)})\cdot E^{(1)}.$$
The two pullbacks are equal, so we find
        $$a(E;X,\Delta)=\delta_\mathfrak{F}(E^{(1)})^{-1}\cdot
        \big[
        a(E^{(1)};X^{(1)},\Delta^{(1)})+(p-1)a(E^{(1)};\mathfrak{F})
        \big].$$
To throw in the central fiber, we subtract $\ord_EY_\mathbf{0}$ on both sides. We have seen in \autoref{lemma:orders_central_fibers} that $\ord_EY_\mathbf{0}=p\cdot\delta_\mathfrak{F}(E^{(1)})^{-1} \cdot\ord_{E^{(1)}}Y^{(1),\nu}_\mathbf{0}$, and so by rearrangement we find the desired formula.
\end{proof}

\begin{proposition}\label{prop:descent_along_pur_insep}
Assume that $f^{(1)}\colon (X^{(1)},\Delta^{(1)})\to C$ is locally stable (resp.\ strongly locally stable).
Then $f\colon (X,\Delta)\to C$ is locally stable (resp.\ strongly locally stable).
\end{proposition}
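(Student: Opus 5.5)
The plan is to prove the three ingredients separately: normality and $\bQ$-Cartierness of $K_X+\Delta$, the discrepancy inequality via \autoref{prop:discrep_qt_by_F}, and the extra conditions for strong local stability.

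\textbf{Step 1: preliminaries.} Since $q=(F_{C/k})_X\colon X^{(1)}\to X$ is faithfully flat and $X^{(1)}$ is normal, $X$ is normal \cite[033G]{Stacks_Project}. Next I show $K_X+\Delta$ is $\bQ$-Cartier; I avoid assuming it, since \autoref{lemma:Q_cartier_cond_base_changes} goes in the wrong direction. \autoref{lemma:can_div_Frob_is_Cartier} gives that $K_\mathfrak{F}$ is Cartier, and \cite[Proposition 4.2.3]{Posva_Singularities_quotients_by_1_foliations} gives the equality of Weil divisors $q^*(K_X+\Delta)=K_{X^{(1)}}+\Delta^{(1)}+(p-1)K_\mathfrak{F}$. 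Hence $q^*(K_X+\Delta)$ is $\bQ$-Cartier. The composite $X\xrightarrow{F_{X/C}}X^{(1)}\xrightarrow{q}X$ is the $k$-linear Frobenius of $X$, so $F_{X/C}^*q^*(K_X+\Delta)=p(K_X+\Delta)$. Thus $K_X+\Delta$ is $\bQ$-Cartier, and adding the Cartier divisor $X_\mathbf{0}$, the pair $(X,\Delta+X_\mathbf{0})$ is a pair. The family conditions of \autoref{def:family_of_pairs} for $X\to C$ then follow formally, because closed fibers of $X$ and $X^{(1)}$ are isomorphic.

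\textbf{Step 2: the discrepancy inequality.} Take a divisor $E$ over $X$ and its partner $E^{(1)}$ on $Y^{(1),\nu}$. Write $\delta=\delta_\mathfrak{F}(E^{(1)})$ and $m=\ord_{E^{(1)}}Y^{(1),\nu}_\mathbf{0}$. By \autoref{prop:discrep_qt_by_F} and $a(E^{(1)};X^{(1)},\Delta^{(1)}+X^{(1)}_\mathbf{0})\geq -1$, it suffices to prove
$$(p-1)\bigl(a(E^{(1)};\mathfrak{F})-m\bigr)\geq 1-\delta .$$
That is, I need $a(E^{(1)};\mathfrak{F})\geq m$ when $E^{(1)}$ is $\mathfrak{F}$-invariant, and $a(E^{(1)};\mathfrak{F})\geq m-1$ otherwise.

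To compute $a(E^{(1)};\mathfrak{F})$, I use that $\mathfrak{F}$ is generated near $\mathbf{0}$ by the derivation $\psi$ from the proof of \autoref{lemma:Q_cartier_cond_base_changes}. It satisfies $\psi(\tau)=1$ for $\tau=t^{1/p}$, the uniformizer of the source curve, and $K_\mathfrak{F}=0$. Thus $a(E^{(1)};\mathfrak{F})$ is, up to the usual invariance correction, the order along $E^{(1)}$ of $\psi$ relative to a local generator $\theta$ of $\mu'^*\mathfrak{F}$ at the generic point of $E^{(1)}$. Write $\psi=s^{r}\theta$ with $s$ a uniformizer along $E^{(1)}$, and $\tau=ws^{m}$ with $w$ a unit. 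Applying $\psi$ to this gives
$$1=s^r\theta(ws^m).$$
If $E^{(1)}$ is invariant, then $\theta(s)\in(s)$, so $\theta(ws^m)\in(s^m)$ and $r\leq -m$. If $E^{(1)}$ is not invariant, then $\theta(s)$ is a unit and $\theta(ws^m)\in(s^{m-1})$, giving $r\leq 1-m$. Comparing with the sign convention $K_{\mu'^*\mathfrak{F}}=\mu'^*K_\mathfrak{F}-a(E^{(1)};\mathfrak{F})E^{(1)}$ (so that $a=-r$ in the invariant case, and the analogous formula shifted by $\varepsilon$ in the non-invariant case) yields exactly the two required inequalities.

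\textbf{Step 3: strong local stability.} For the strong version, closed fibers of $X$ and $X^{(1)}$ are isomorphic, so demi-normality with separable nodes descends. The geometric generic fibers also coincide: $X^{(1)}_{K(C)}=X_{K(C)}\otimes_{K(C)}K(C)^{1/p}$, so geometric log canonicity of the generic fiber descends.

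\textbf{Main obstacle.} The main risk is the bookkeeping in Step 2: matching the sign and invariance conventions for foliation discrepancies in \cite{Posva_Singularities_quotients_by_1_foliations}, so that the elementary estimate on $\psi(\tau)=1$ lands on the right side of each inequality. If the convention differs by $1$ in the non-invariant case, I would redo the estimate using the sharper fact that $\theta(s)$ is a unit there. I would also handle horizontal $E$ separately; there $m=0$ and $\psi$ is regular, so the inequality is immediate.
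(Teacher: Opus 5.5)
Your proof is correct. For vertical divisors it is the paper's argument: the paper also reads off $a(E^{(1)};\mathfrak{F})$ from the generator $\partial_t$ of $\mathfrak{F}$ with $\partial_t(t)=1$ (its $t$ is your $\tau$), and feeds the result into \autoref{prop:discrep_qt_by_F}.

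Your Leibniz expansion $\theta(ws^m)=\theta(w)s^m+mws^{m-1}\theta(s)$ replaces the paper's case split. The paper treats $p\nmid n$ after an \'etale base change making $u=1$, and then $p\mid n$ separately; your expansion gives the same two bounds at once.

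Two steps genuinely differ.
\begin{itemize}
\item For $\bQ$-Cartierness, the paper pulls $K_{X^{(1)}}+\Delta^{(1)}$ back along $F_{X/C}$ and needs $K_{T_{X/C}}=K_{X/C}$ (\autoref{lemma:log_rel_tangent_is_Q-Gor}). You instead pull $K_X+\Delta$ back along $q$ and use $q\circ F_{X/C}=F_X$. This needs nothing about $T_{X/C}$ or the generic fiber.
\item For divisors dominating $C$, the paper invokes Patakfalvi--Waldron on the normalization of $Y_{\kappa'}$. Your computation with $m=0$ gives $a(E^{(1)};\mathfrak{F})\geq 0$ directly from $\psi(\tau)=1$, so your argument treats all divisors uniformly and is self-contained. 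The paper's route isolates the generic-fiber input in a citable statement.
\end{itemize}

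There are two points to fix.

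First, the convention. The one used in \autoref{prop:discrep_qt_by_F} (see its proof) is
\[
K_{\mu'^*\mathfrak{F}}=\mu'^*K_\mathfrak{F}+a(E^{(1)};\mathfrak{F})E^{(1)},
\]
that is, $a(E^{(1)};\mathfrak{F})$ is the least integer $a$ with $s^a\psi$ regular along $E^{(1)}$. Hence $a(E^{(1)};\mathfrak{F})=-r$ for every $E^{(1)}$, invariant or not, with no $\varepsilon$-shift. With this, your two estimates are exactly the required inequalities. The minus-sign formula you wrote would instead give $a=r$ and reverse both inequalities, so replace it and drop the hedge.

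Second, delete the closing remark that $\psi$ is regular along a horizontal $E^{(1)}$. The derivation $\psi$ is regular on $Y\times_C C$, but it need not extend regularly to the normalization $Y^{(1),\nu}$ where $Y\times_C C$ is non-normal along $E^{(1)}$. That non-normality is precisely what the paper's Case 1 handles. Moreover, regularity would only give $a\leq 0$, which points the wrong way. The needed lower bound comes, exactly as in the vertical case, from $1=s^r\theta(\tau)$ with $\theta(\tau)$ regular, which forces $r\leq 0$.
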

\begin{proof}
Note that $X$ is normal, since $X^{(1)}$ is normal and $X^{(1)}\to X$ is faithfully flat \cite[033G]{Stacks_Project}. 

We start by establishing that $K_X+\Delta$ is $\bQ$-Cartier. By \autoref{lemma:relative_foliation_and_central_fiber}, the log adjunction formula \cite[Proposition 4.2.3]{Posva_Singularities_quotients_by_1_foliations} and the fact that $F_{X/C}^*\Delta^{(1)}=p\Delta$, we find
        $$F_{X/C}^*(K_{X^{(1)}}+\Delta^{(1)})=
        (K_X+\Delta)+(p-1)\cdot (K_{T_{X/C}}+\Delta).$$
Since $X^{(1)}$ is normal, we have $K_{T_{X/C}}+\Delta=K_X+\Delta-f^*K_C$ by \autoref{lemma:log_rel_tangent_is_Q-Gor}. Therefore we get
        $$F_{X/C}^*(K_{X^{(1)}}+\Delta^{(1)})+(p-1)\cdot f^*K_C=
        p\cdot (K_X+\Delta)$$
which shows that $K_X+\Delta$ is $\bQ$-Cartier.

Next we investigate the local stability of $f\colon X\to C$.
By \autoref{lemma:localiz_on_the_base} we may assume that $C=\Spec(\sO)$ is the spectrum of a DVR with closed point $\mathbf{0}$. We denote the closed fiber of $f^{(1)}$ by $X^{(1)}_\mathbf{0}$. 
The strongly locally stable case follows from the locally stable one, since the geometric fibers of $f$ and $f^{(1)}$ are the same, so we prove the locally stable case.
Let $E$ be an exceptional divisor over $X$ appearing on a normal birational model $Y$, and $E^{(1)}$ the corresponding exceptional divisor over $X^{(1)}$. To compute $a(E;X,\Delta+X_\mathbf{0})$, we distinguish two cases.

\medskip
\textsc{Case 1: $E$ dominates $C$.}
In this case we can localize over the function field $\kappa$ of $C$ to compute the discrepancy, and the base-change $F_{C/k}\colon C'=C\to C$ becomes the field extension $\kappa\hookrightarrow \kappa'=\kappa^{1/p}$. Notice that $Y_{\kappa'}$ is $S_2$ and generically reduced (as $X_{\kappa'}$ is generically reduced), so it is reduced. The morphism $h\colon Y_{\kappa'}\to Y$ is Gorenstein, so $Y_{\kappa'}$ is Gorenstein in codimension one and its canonical divisor is the pullback of $K_Y$. Let $Y'$ be the normalization of $Y_{\kappa'}$, with induced morphism $\varphi=h\circ\nu\colon Y'\to Y$. Notice that $E^{(1)}$ is a divisor on $Y'$ with support $\varphi^*E$.

To conclude this case, we follow the argument given in \cite[Proposition 2.3]{Bernasconi_Tanaka_On_dP_fibrations_in_pos_char}. By \cite[Theorem 3.1]{LMMPJoeZsolt} we have 
        $$\nu^*K_{Y_{\kappa'}}=K_{Y'}-G$$
where $G\geq 0$. So for any $\bQ$-divisor $F$ on $Y$,
        $$\varphi^*(K_Y+F)=K_{Y'}+\varphi^*F-G.$$
It follows that 
    $$a(E;X_\kappa, \Delta_\kappa)\geq a(E^{(1)};X_{\kappa'},\Delta_{\kappa'})\geq -1,$$
as desired.

\medskip
\textsc{Case 2: $E$ does not dominate $C$.} 
We assume for the rest of the proof that $E$ maps to $\mathbf{0}$. To treat this case, we use $1$-foliation techniques.
Recall that the morphism $(F_{C/k})_X\colon X^{(1)}\to X$ is the quotient by $\mathfrak{F}$.



We may localize $X$ at the center of $E$: then $X\times_CC^{(1)}$ becomes the localization of $X^{(1)}$ at the center of $E^{(1)}$. As in the proof of \autoref{lemma:Q_cartier_cond_base_changes}, we can write $X=\Spec(R)$ where
        $$R=\frac{\sO[x_1,\dots,x_N]}{(f_1,\dots,f_r)}$$
and the $f_i$s are polynomials in the variables $x_1,\dots,x_N$ and coefficients in $\sO$. Thus
        $$R^{(1)}=R\otimes_{\sO}\sO^{1/p}=\frac{\sO^{1/p}[x_1,\dots,x_N]}{(f_1,\dots,f_r)},$$
where $\sO^{1/p}$ is defined as in the proof of \autoref{lemma:Q_cartier_cond_base_changes}.
Let $t$ be a uniformizer of $\sO^{1/p}$. Consider the derivation $D_t$ of $\sO^{1/p}[x_1,\dots,x_N]$ which sends $t$ to $1$ and the $x_i$s to $0$. The coefficients of the $f_j$s belong to $\sO$, hence they are $p$-th powers in $\sO^{1/p}$: so $D_t(f_j)=0$ for every $j$. This implies that $D_t$ descends to a derivation of $R^{(1)}$ which we denote $\partial_t$. 
Then $\partial_t$ generates $\mathfrak{F}$.
In particular $K_\mathfrak{F}$ is Cartier, and $\sO(-K_\mathfrak{F})$ is generated by $\partial_t$.

Now, let $T$ be the local ring along $E^{(1)}$ on a normal birational model $\phi\colon Y^{(1),\nu}\to X^{(1)}$ where the divisor appears. It is a DVR, say with uniformizer $s$. Since $E^{(1)}$ maps to $\mathbf{0}$, we can write $t=us^n$ in $T$ where $n=\ord_{E^{(1)}}Y^{(1),\nu}_\mathbf{0}>0$ and $u\in T^\times$. By definition $a=a(E^{(1)};\mathfrak{F})$ is the smallest integer with the property that
        $$\partial_t=s^{-a}\psi \quad \text{for some } \psi\in \Der(T).$$
We distinguish two cases:
    \begin{itemize}
        \item \emph{$n$ and $p$ are coprime.} Then we can pass to an \'{e}tale extension of $T$ (or to its henselization) and assume that $u=1$. We have
                $$1=\partial_t(s^n)
                =ns^{n-1}\partial_t(s).$$
        Therefore $1=ns^{-a+n-1}\psi(s)$. As $\psi(s)\in T$ we must have $a\geq n-1$, with equality if and only if $\psi(s)\notin(s)$, which is equivalent to $E^{(1)}$ not being $\mathfrak{F}$-invariant.
        By \autoref{prop:discrep_qt_by_F} and the fact that $a(E^{(1)};X^{(1)},\Delta^{(1)}+X^{(1)}_\mathbf{0})\geq -1$, we obtain that $a(E;X,\Delta+X_\mathbf{0})\geq -1$.

        \item \emph{$p$ divides $n$.} We have $1=\partial_t(us^n)=s^n\partial_t(u)$, so $1=s^{-a+n}\psi(u)$. So $a(E^{(1)};\mathfrak{F})\geq n$ in any case, and as before we deduce $a(E;X,\Delta+X_\mathbf{0})\geq -1$.
    \end{itemize}
This concludes the proof.
\end{proof}

As an application, we prove that if local stability is lost after some purely inseparable base-change, then it is lost for all further purely inseparable base-changes. To formulate it precisely, we extend \autoref{eqn:notations_for_base_change}:
for $e\geq 1$ we let $X^{(e)}=X\times_{(f,F_{C/k}^e)}C$ be the base-change of $X\to C$ by the $e$-th iterate of the $k$-linear Frobenius of $C$. It is easy to see that we have a commutative diagram
        $$\begin{tikzcd}
        \dots \arrow[r]&  X^{(e+1)} \arrow[d, "f^{(e+1)}"] \arrow[r] & X^{(e)} \arrow[r]\arrow[d, "f^{(e)}"] & \dots \arrow[r] &
        X^{(1)} \arrow[d, "f^{(1)}"] \arrow[r] & X \arrow[d, "f"] \\
        \dots \arrow[r] & C \arrow[r, "F_{C/k}"] & 
        C \arrow[r,"F_{C/k}"] & \dots \arrow[r] & C \arrow[r, "F_{C/k}"] & C
        \end{tikzcd}$$
where every square is Cartesian. 
We let $\Delta^{(e)}$ be the pullback of $\Delta$ through $X^{(e)}\to X$, as in \autoref{def:base_change_divisors}.

\begin{proposition}\label{prop:loc_stability_lost_forever}
Suppose that the closed fibers of $X\to C$ are demi-normal with only separable nodes, that the generic one is geometrically normal, and that $K_X+\Delta$ is $\bQ$-Cartier. If the base-change $(X^{(e_0)},\Delta^{(e_0)})\to C$ is \emph{not} locally stable for some $e_0\geq 1$, then $(X^{(e)},\Delta^{(e)})\to C$ is not locally stable for any $e\geq e_0$.
\end{proposition}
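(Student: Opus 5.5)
The plan is to reduce to the case $e=e_0+1$ by induction, and then to recognise $X^{(e_0+1)}\to C$ as the Frobenius base-change of $X^{(e_0)}\to C$. Concretely, since every square in the diagram above is Cartesian, we have $X^{(e+1)}=(X^{(e)})^{(1)}$ and $\Delta^{(e+1)}=(\Delta^{(e)})^{(1)}$; here the second identity holds because pullback of $\bQ$-Cartier divisors on big open sets is functorial, as in \autoref{def:base_change_divisors}. So it suffices to show the following claim: if $(X^{(e)},\Delta^{(e)})\to C$ is not locally stable, then $(X^{(e+1)},\Delta^{(e+1)})\to C$ is not locally stable. We then apply the claim for $e=e_0,e_0+1,\dots$.

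The claim is the contrapositive of \autoref{prop:descent_along_pur_insep}, applied to the family $(X^{(e)},\Delta^{(e)})\to C$ in place of $(X,\Delta)\to C$. To do this we must check that the hypotheses hold for every $e$, so that all the objects used in that proof make sense:
\begin{itemize}
\item $(X^{(e)},\Delta^{(e)})\to C$ is a family of pairs, by the remark following \autoref{def:base_change_divisors}.
\item Its closed fibers are isomorphic to those of $X\to C$, because $k$ is algebraically closed and the Frobenius of $C$ is bijective on closed points. Hence they are demi-normal with only separable nodes.
\item By \autoref{prop:demi_normality_loc_stable_family}, $X^{(e)}$ is demi-normal with only separable nodes.
\item $X^{(e)}$ is in fact normal. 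Its generic fiber is a base-change of the geometrically normal generic fiber of $X$ along a field extension, so it is geometrically normal and in particular geometrically unibranch. The final argument of \autoref{prop:demi_normality_loc_stable_family} then excludes codimension one nodes, so $X^{(e)}$ is normal. The same argument shows that $X^{(e+1)}$ is normal.
\item $K_{X^{(e)}}+\Delta^{(e)}$ is $\bQ$-Cartier, by \autoref{lemma:Q_cartier_cond_base_changes} applied to $C\xrightarrow{F^e_{C/k}}C$, since $K_X+\Delta$ is $\bQ$-Cartier and $X^{(e)}$ is normal.
\end{itemize}

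With these verified, \autoref{prop:descent_along_pur_insep} applied to $X^{(e)}$ says: if $(X^{(e+1)},\Delta^{(e+1)})\to C$ were locally stable, then $(X^{(e)},\Delta^{(e)})\to C$ would be locally stable. This is exactly the contrapositive of the claim, and the induction then concludes.

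The main point needing care is the normality of the intermediate total spaces $X^{(e)}$. \autoref{prop:demi_normality_loc_stable_family} is stated as ``if $X$ normal then $X'$ normal'', which I would apply directly to the composite base-change $F^e_{C/k}$ starting from $X$. For that I first need $X$ itself to be normal. Here I would argue that $X$ is demi-normal by the same proposition, using the trivial base-change. A codimension one node of $X$ must lie on the generic fiber, since the closed fibers are reduced of codimension one. But the generic fiber is geometrically normal, so there are no such nodes, and $X$ is normal.
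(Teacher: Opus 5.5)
Your proof is correct and follows the paper's argument. The paper also first checks, via \autoref{prop:demi_normality_loc_stable_family} and \autoref{lemma:Q_cartier_cond_base_changes}, that every $X^{(e)}$ is normal with $K_{X^{(e)}}+\Delta^{(e)}$ $\mathbb{Q}$-Cartier, and then applies the contrapositive of \autoref{prop:descent_along_pur_insep} repeatedly, using $(X^{(e)})^{(1)}=X^{(e+1)}$. Your additional verification that $X$ itself is normal fills in a step the paper leaves implicit: codimension one points on closed fibers are regular because those fibers are reduced, and those on the generic fiber are regular by geometric normality.
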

\begin{proof}
Thanks to \autoref{prop:demi_normality_loc_stable_family} and \autoref{lemma:Q_cartier_cond_base_changes}, we see that every $X^{(e)}$ is normal and every $K_{X^{(e)}}+\Delta^{(e)}$ is $\bQ$-Cartier. Then the contraposition of the statement follows by repeated applications of \autoref{prop:descent_along_pur_insep}.
\end{proof}

\subsection{Descent along Artin--Schreier extensions}
Let $C'\to C$ be a finite flat morphism of smooth curves, such that the extension of function fields $K(C)\hookrightarrow K(C')$ is Galois of degree $p$.
As usual, we write $f'=f\times_CC'\colon (X',\Delta')=(X,\Delta)\times_CC'\to C'$.

\begin{lemma}\label{lemma:action_on_normalization}
There is a $C$-linear $\bZ/p$-action on $C'$ with geometric quotient $C$.
\end{lemma}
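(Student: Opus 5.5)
The approach is to use Galois theory of the function field extension together with normality of $C'$. Let $G=\operatorname{Gal}(K(C')/K(C))\cong\bZ/p$. Since $C'\to C$ is finite and $C'$ is a smooth curve, hence normal, $C'$ is the normalization of $C$ in $K(C')$.

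First we work affine-locally. Cover $C$ by affine opens $U=\Spec(A)$. The preimage in $C'$ is then $\Spec(A')$, where $A'$ is the integral closure of $A$ in $K(C')$. Each $\sigma\in G$ fixes $K(C)\supseteq A$ pointwise. It therefore sends elements integral over $A$ to elements integral over $A$, so $\sigma(A')=A'$. This gives an action of $G$ on $A'$ by $A$-algebra automorphisms, and these actions glue because they are all restrictions of the single action on $K(C')$. The result is a $C$-linear action of $\bZ/p$ on $C'$.

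Next we identify the invariants. We have
$$(A')^G=A'\cap K(C')^G=A'\cap K(C).$$
This consists of the elements of $K(C)$ that are integral over $A$. Since $A$ is normal, this ring equals $A$.

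Finally, $C'\to C$ is finite, hence affine, and the action is compatible with the affine cover. By the standard result on quotients of affine schemes by finite groups, $\Spec(A')\to\Spec((A')^G)=\Spec(A)$ is a geometric quotient (see e.g.\ Stacks Project or SGA1 V.1). Gluing over the cover shows that $C$ is the geometric quotient $C'/(\bZ/p)$.

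None of these steps is a real obstacle. The only point needing care is that the action preserves the integral closure and that the invariant ring equals $A$. This uses normality of $C$, which holds because $C$ is a smooth curve.
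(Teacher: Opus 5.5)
Your proposal is correct and follows essentially the same route as the paper. Both arguments extend the Galois action on $K(C')$ to the integral closure $A'$ of $A$ (by applying $\sigma$ to a monic equation), then identify the invariants as $A'\cap K(C)=A$ using normality of $C$. The only difference is that you explicitly cite the standard fact that $\Spec(A')\to\Spec((A')^G)$ is a geometric quotient for a finite group acting on an affine scheme, which the paper leaves implicit.
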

\begin{proof}
By the Artin--Schreier theorem \cite[VI, Theorem 6.4]{Lang_Algebra} there is a $\bZ/p$-action on $K(C')$ with field of invariants $K(C)$; this action is necessarily $K(C)$-linear. Let $\sigma$ be a generator of the action: we claim that it extends to an $\sO_C$-linear automorphism of $\sO_{C'}$. The extension is necessarily unique, so we may assume that $C=\Spec(A)$ is affine. Then $C'=\Spec(A')$, and pick $x\in A'$. Since $A'$ is the integral closure of $A$ in $K(C')=\Frac(A')$, there is a monic equation
        $$x^n+\sum_{i=0}^{n-1}a_ix^i=0, \quad 
        a_i\in A.$$
Applying $\sigma$, we find $\sigma(x)^n+\sum_i a_i\sigma(x)^i=0$ and thus $\sigma(x)\in A'$. 

Returning to the general situation, we obtain that the $\bZ/p$-action extends to $\sO_{C'}$. The sub-sheaf of invariants is a sheaf of integrally closed $\sO_C$-algebras with generic stalk $K(C)$, so it must be equal to $\sO_C$.
\end{proof}

Observe that $G=\bZ/p$ acts on $X'=X\times_CC'$ via the second factor, making $f'$ equivariant, and that the projection $q\colon X'\to X$ is the geometric quotient.

\begin{lemma}\label{lemma:descent_Q-Cartier_AS_ext}
Assume that $X'$ is normal and that $K_{X'}+\Delta'$ is $\bQ$-Cartier. Then $X$ is normal and $K_X+\Delta$ is $\bQ$-Cartier.
\end{lemma}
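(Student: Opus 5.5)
Normality of $X$ is immediate from faithfully flat descent: $q\colon X'\to X$ is the base-change of the finite flat surjection $C'\to C$, so it is faithfully flat, and $X$ is normal because $X'$ is \cite[033G]{Stacks_Project}. The substance of the statement is the $\bQ$-Cartier property of $K_X+\Delta$. The approach is to compare $q^*(K_X+\Delta)$ with $K_{X'}+\Delta'$ through the ramification formula, and then to descend $\bQ$-Cartierness along the finite quotient $q$ by $G=\bZ/p$ using a norm argument.

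\textbf{Step 1 (ramification formula).} The extension $K(C)\subset K(C')$ is separable, so we are in Case 2 of the proof of \autoref{lemma:Q_cartier_cond_base_changes}. There we have
$$q^*(K_X+\Delta)=K_{X'}+\Delta'-R,$$
where $R$ is the pullback along $f'$ of the ramification divisor of $C'\to C$. In particular $R$ is a sum of multiples of fibers of $f'$, hence Cartier. Consequently $D':=q^*(K_X+\Delta)$, which is a priori only a Weil $\bQ$-divisor on $X'$, is $\bQ$-Cartier.

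\textbf{Step 2 (descent via the norm).} Choose $m>0$ such that $mD'$ is Cartier. Work locally on $X$; since $q$ is finite we may assume $X$ is affine and that $mD'$ is principal near the finitely many points of a fiber, say $mD'=\operatorname{div}(\phi)$ with $\phi\in K(X')^\times$. Put $\psi=\operatorname{Nm}_{K(X')/K(X)}(\phi)=\prod_{\sigma\in G}\sigma^*\phi$. The divisor $mD'$ is $G$-invariant, since it is pulled back from $X$. It follows that $\operatorname{div}_{X'}(\psi)=\sum_\sigma\sigma^*(mD')=p\,mD'=q^*(pm(K_X+\Delta))$. For a finite surjective morphism of normal varieties, $q^*$ is injective on Weil divisors and satisfies $\operatorname{div}_{X'}(q^*\psi)=q^*\operatorname{div}_X(\psi)$. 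Hence $pm(K_X+\Delta)=\operatorname{div}_X(\psi)$ locally. Therefore $K_X+\Delta$ is $\bQ$-Cartier, with index dividing $pm$.

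\textbf{Main obstacle.} The delicate point is the local principal generator in Step 2. We need $mD'$ to be principal on a $G$-stable open neighbourhood of a whole fiber $q^{-1}(x)$. Since $q$ is finite and $X'$ is semi-local over $\mathcal{O}_{X,x}$, a Cartier divisor on $\operatorname{Spec}$ of a semi-local ring is principal (its Picard group is trivial), so this holds after localizing at $x$. Alternatively, we can bypass generators entirely by using $\operatorname{Nm}_q(\mathcal{O}_{X'}(mD'))\cong\mathcal{O}_X(pm(K_X+\Delta))$ for the finite flat morphism $q$. Flatness holds because $q$ is a base-change of the flat morphism $C'\to C$, so the norm of a line bundle is a line bundle.

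Finally, it should be checked that $\Delta'$ agrees with $q^*\Delta$ in the sense of \autoref{def:base_change_divisors}, which is immediate from that definition on the big open set $U$.
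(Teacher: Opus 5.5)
Your proof is correct, and it follows the paper's overall strategy. First, the ramification formula from Case 2 of \autoref{lemma:Q_cartier_cond_base_changes} makes $q^*(K_X+\Delta)$ $\bQ$-Cartier. Then a product over $G$ of conjugates of a local generator gives an invariant object that descends.

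Where you differ is in how the descent is justified. The paper argues sheaf-theoretically:
\begin{enumerate}
\item it $G$-linearizes $\mathscr{L}=\sO_{X'}(m(K_{X'}+\Delta')-mR)$ and introduces the sheaf of invariants $\mathscr{L}^G$;
\item it proves, using reflexivity, that $\mathscr{L}^G=\sO_X(m(K_X+\Delta))$, and that the module pullback $q^*\sO_X(m(K_X+\Delta))$ is $S_1$;
\item it checks after faithfully flat pullback that the invariant section $\bigotimes_\sigma\sigma(s)$ is a generator.
\end{enumerate}
You replace all of this with a divisor computation. The norm $\psi$ of a local equation lies in $K(X)$. Compatibility of $q^*$ with principal divisors, together with injectivity of $q^*$ on Weil divisors, then gives $\operatorname{div}_X(\psi)=pm(K_X+\Delta)$ near $x$. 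This is shorter and avoids the module-theoretic claim entirely.

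Your semi-local remark also makes explicit a point the paper passes over quickly. The generator must be chosen on a $G$-stable neighbourhood of the whole fiber $q^{-1}(x)$. Intersecting the $G$-translates of a neighbourhood $V'$ of a single point $x'$ still contains $x'$ only if $V'$ already contains the $G$-orbit of $x'$.

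Your alternative via $\operatorname{Nm}_q$ shows that the $G$-action plays no essential role. A Weil divisor on a normal variety is $\bQ$-Cartier as soon as its pullback along a finite locally free morphism from a normal variety is.

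The paper's route does yield in addition the identification $\mathscr{L}^G\cong\sO_X(m(K_X+\Delta))$, but the lemma does not need it.
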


\begin{proof}
Since $q$ is faithfully flat and $X'$ is normal, $X$ is also normal \cite[033G]{Stacks_Project}.
As seen in Case 2 of the proof of \autoref{lemma:Q_cartier_cond_base_changes}, for any positive integer $m>0$ we have
        $$q^*m(K_X+\Delta)=m(K_{X'}+\Delta')-mR$$
where $R$ is the ramification divisor of the $G$-action on $X'$.  Therefore, if $U\subset X$ is the big open subset where $K_X+\Delta$ is $\bQ$-Cartier and $m$ is divisible enough, we find that $\sO_{X'}(m(K_{X'}+\Delta')-mR)$ is $G$-linearized over $U'=q^{-1}(U)$. Since $q$ is finite and flat, $U'$ is big in $X'$: hence, as $\sO(m(K_{X'}+\Delta')-mR)$ is reflexive, its $G$-linearization over $U'$ extends to a $G$-linearization over $X'$.
Replacing $m$ by a multiple if necessary, we may also assume that $\mathscr{L}=\sO_{X'}(m(K_{X'}+\Delta')-mR)$ is invertible. 

We define the presheaf $\mathscr{L}^G$ of $G$-invariant sections of $\mathscr{L}$ as follows: if $V\subset X$ is open, then $G$ acts on the $\sO_{X'}(q^{-1}V)$-module $\Gamma(q^{-1}V,\mathscr{L})$. We let $\Gamma(V,\mathscr{L}^G)$ be the sub-group of $G$-invariants elements of $\Gamma(q^{-1}V,\mathscr{L})$: it is naturally an $\sO_X(V)$-module. We leave it to the reader to check that $\mathscr{L}^G$ is a quasi-coherent $\sO_X$-module.

\begin{claim}
The module-theoretic pullback $q^*\sO_X(m(K_X+\Delta))$ is $S_1$, and the $G$-invariant sub-module $\mathscr{L}^G=\sO_X(m(K_X+\Delta))$.
\end{claim}
\begin{proof}\renewcommand{\qedsymbol}{$\lozenge$}
On an integral quasi-projective scheme, a coherent module is $S_1$ if and only if it embeds locally into a coherent locally free module \cite[Lemma 1.5]{Hartshorne_Generalized_div_on_Gor_schemes}. This applies to $\sO_X(m(K_X+\Delta))$ since it is even $S_2$. As $q$ is flat, the module-theoretic pullback $q^*(-)$ preserves injective maps, and so we obtain that $q^*\sO_X(m(K_X+\Delta))$ is $S_1$.

It is clear that $\mathscr{L}^G=\sO_X(m(K_X+\Delta))$ holds over $U$. To prove that the equality is also true over $X$, it suffices to show that $\mathscr{L}^G$ is reflexive: that is, whenever $V$ is open and $Z\subset V$ is closed of codimension at least $2$, the restriction map $\Gamma(\mathscr{L}^G,V)\to \Gamma(\mathscr{L}^G,V\setminus Z)$ is bijective. We have
$\mathscr{L}^G\hookrightarrow q_*\mathscr{L}$ and the target is $S_2$ \cite[Proposition 5.4]{Kollar_Mori_Birational_geometry_of_algebraic_varieties}, hence reflexive. 
In particular, sections of $\mathscr{L}^G$ defined on $V\setminus Z$ can be extended uniquely to elements of $q_*\mathscr{L}(V)$. But since being $G$-invariant is a property that can be checked generically, it is clear that these extensions belong to $\mathscr{L}^G$, whence $\mathscr{L}^G$ is indeed reflexive.
\end{proof}

Now take any point $x'\in X'$, and let $V'$ be an open neighbourhood of $x'$ where $\sO_{X'}(m(K_{X'}+\Delta'))$ is freely generated by one section $s$. Replacing $V'$ by $\cap_{\sigma\in G}\sigma(V')$, we may assume that $V'=q^{-1}(V)$ for an open neighbourhood $V$ of $q(x')$. Then each $\sigma(s)$ is a section of $\mathscr{L}$ over $V'$ which generates. Replacing $s$ by $\bigotimes_\sigma \sigma(s)$ and $m$ by $pm$, we may therefore assume that $s$ is $G$-invariant. Hence $s$ descends to a section of the $\sO_X$-module $\mathscr{L}^G=\sO_X(m(K_X+\Delta))$ over $V$. 

Consider the induced map 
$s\colon \sO_V\to \sO_X(m(K_X+\Delta))|_V$. We claim it is an isomorphism. Since $q$ is faithfully flat, it suffices to show this after pulling back by $q$. We have the induced maps
        $$\begin{tikzcd}
        \sO_{V'} \arrow[r, "q^*s"] &
        q^*\left(\sO_X(m(K_X+\Delta))|_V\right) \arrow[r, "\alpha"] &
        q^{[*]}\left(\sO_X(m(K_X+\Delta))|_V\right)
        =\mathscr{L}|_{V'}
        \end{tikzcd}$$
where $\alpha$ is the double-dual map. The composition $\sO_{V'}\to \mathscr{L}|_{V'}$ corresponds to the section $s\in \Gamma(V', \mathscr{L})$: it is surjective, since $s$ generates $\mathscr{L}$ over $V'$. Since $q^*\sO_X(m(K_X+\Delta))$ is $S_1$, the map $\alpha$ is injective \cite[Lemma 1.5]{Hartshorne_Generalized_div_on_Gor_schemes}. So we find that $q^*s$ is surjective. The map $q^*s$ is also injective, because $q^*\sO_X(m(K_X+\Delta))$ is $S_1$.

So we have found that $K_X+\Delta$ is $\bQ$-Cartier around $q(x')$. As $q$ is surjective, the proof is complete.
\end{proof}

\begin{proposition}\label{prop:descent_loc_stab_along_AS}
With the above notation, assume that $f'\colon (X',\Delta')\to C'$ is locally stable (resp.\ strongly locally stable). Then $f\colon (X,\Delta)\to C$ is locally stable (resp.\ strongly locally stable).
\end{proposition}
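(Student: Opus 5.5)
First reduce to a local situation and settle the $\bQ$-Cartier issue. By \autoref{lemma:localiz_on_the_base} we may take $C=\Spec(\sO)$ to be the spectrum of a DVR with closed point $\mathbf{0}$, and $C'$ semi-local with points $\mathbf{0}'_i$ over $\mathbf{0}$. By the $\bZ/p$-action of \autoref{lemma:action_on_normalization} these points are permuted transitively, so we may localize further at one of them. Since $(X',\Delta'+X'_{\mathbf{0}'})$ is lc, $X'$ is normal and $K_{X'}+\Delta'$ is $\bQ$-Cartier. Then \autoref{lemma:descent_Q-Cartier_AS_ext} gives that $X$ is normal and $K_X+\Delta$ is $\bQ$-Cartier. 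For the strong version, the geometric generic fiber and the closed fibers of $f$ and $f'$ coincide: $K(C')\otimes\overline{K(C)}$ splits, and closed fibers over $\mathbf{0}'$ and $\mathbf{0}$ are isomorphic since residue fields are algebraically closed. So conditions (b) and (c) of \autoref{def:local_stability} descend, and it suffices to treat plain local stability.

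Let $E$ be a divisor over $X$ on a normal model $Y$, and let $Y'$ be the normalization of $Y\times_CC'$. The quotient $Y'\to Y$ is the geometric quotient by $G=\bZ/p$, and a component $E'$ lies over $E$. If $E$ dominates $C$, then $Y'\to Y$ is étale at $E'$, since $K(C')/K(C)$ is separable and the base change is étale over the generic fiber. Hence $a(E;X,\Delta)=a(E';X',\Delta')\geq -1$.

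The main case is $E$ mapping to $\mathbf{0}$. Write the ramification formula
$$\varphi^*(K_Y+\mu^{-1}_*\Delta+Y_\mathbf{0})=K_{Y'}+\mu'^{-1}_*\Delta'+\varphi^*Y_\mathbf{0}-R_{Y'/Y},$$
together with the analogue $q^*(K_X+\Delta)=K_{X'}+\Delta'-R$ downstairs. Here $R=(d-1+\ldots)X'_{\mathbf{0}'}$ is a multiple of the fiber, and we have $q^*X_\mathbf{0}=eX'_{\mathbf{0}'}$ with $e\in\{1,p\}$.

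Comparing, we get
$$a(E;X,\Delta+X_\mathbf{0})+1=\frac{1}{r}\Bigl(a(E';X',\Delta'+X'_{\mathbf{0}'})+1+\text{(correction)}\Bigr),$$
where $r$ is the ramification index of $E'/E$. The correction term is $\ord_{E'}(\varphi^*Y_\mathbf{0}-R_{Y'/Y})$ minus its downstairs counterpart. This is exactly where I would invoke the formulas for discrepancies of $\bZ/p$-quotients from \autoref{appendix:Z/p_qt}. I expect them to show that the different $R_{Y'/Y}$ along $E'$ is bounded by the different of $C'/C$ at $\mathbf{0}'$ times $\ord_{E'}$ of the fiber. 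The underlying reason is that $\sO_{Y',E'}$ is generated over $\sO_{Y,E}$ by the Artin–Schreier generator of $C'/C$, possibly after normalization.

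From that bound the inequality $a(E)\geq-1$ follows: the log-canonical threshold condition is preserved up to the nonnegative factor $1/r$. The main obstacle is precisely this different estimate for divisors $E'$ after normalization, where wild ramification along $E'$ may differ from that of $C'/C$. I would handle it by writing an Artin–Schreier equation $z^p-z=t^{-m}$ and computing the normalization at $E$ via the valuation of $t^{-m}$ in $\sO_{Y,E}$. The key fact to use is that the different of the normalized extension is at most the different of the unnormalized one, since the Dedekind different only decreases under normalization.
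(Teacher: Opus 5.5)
Your reductions are fine: $\bQ$-Cartier descent via \autoref{lemma:descent_Q-Cartier_AS_ext}, the fibers for the strong version, and the horizontal divisors. The vertical case, which carries the whole content, has a genuine gap. Write $E'\subset Y'$ for the divisor over $E$, $t'$ for a uniformizer of $\sO'$, $\mathfrak r+1=\ord_{\mathbf 0'}I(t')$ and $n=\ord_{E'}Y'_{\mathbf 0'}$. Carrying out your comparison of the two pullbacks of $K_X+\Delta+X_\mathbf 0$ to $Y'$ uses three inputs: $R_{X'/X}=(p-1)(\mathfrak r+1)X'_{\mathbf 0'}$, $\coeff_{E'}R_{Y'/Y}=(p-1)i(E')$, and $q^*X_\mathbf 0=pX'_{\mathbf 0'}$. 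It yields exactly the paper's identity
\[
e(E')\, a(E;X,\Delta+X_\mathbf 0)=a(E';X',\Delta'+X'_{\mathbf 0'})+(p-1)\bigl(\mathfrak r n-i(E')\bigr).
\]
So you need $i(E')\le \mathfrak r n+1$ when the ramification at $E'$ is wild ($e(E')=p$), and $i(E')\le \mathfrak r n$ when it is fierce ($e(E')=1$).

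The bound you propose is too weak. Bounding the different along $E'$ by the different of $C'/C$ times the fiber multiplicity, because ``the different only decreases under normalization'', gives $(p-1)i(E')\le (p-1)(\mathfrak r+1)n$. This is the trivial bound; it also follows from $i(E')\le \ord_{E'}I(t')$. Plugging it in only gives $a(E;X,\Delta+X_\mathbf 0)\ge (-1-(p-1)n)/e(E')$, which is $<-1$ as soon as $n\ge 2$ in the wild case, and for every $n$ in the fierce case. The loss is measured by the conductor of $\sO_{Y,E}\otimes_{\sO}\sO'$ in its normalization. You would need to bound that conductor from below by $(p-1)(n-1)$, resp.\ $(p-1)n$; ``the different decreases'' only tells you its sign, not its size. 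Your explicit Artin--Schreier route does not obviously close this either. When $p\mid \ord_E Y_\mathbf 0$, the pole order of $t^{-m}$ along $E$ is divisible by $p$, so the ramification at $E$ is governed by the unit in $t=u s^{\ord_EY_\mathbf 0}$ rather than by the valuation. In the fierce case the residue field $k(E)$ is imperfect.

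The paper gets the sharp bounds directly on $Y'$. Write $t'=us^n$ in $\sO_{Y',E'}$, so $\ord_{E'}I(t')=(\mathfrak r+1)n$. Apply \cite[Lemma 2.3]{Totaro_Terminal_non_CM_3folds}:
\[
I(us^n)=\sigma(u)I(s)\sum_{i=1}^n\sigma(s)^{i-1}s^{n-i}+I(u)s^n,
\]
where the sum has order $\ge n-1$, and order $\ge n$ if $p\mid n$ and the ramification is wild (by \autoref{lemma:wild_ramification_triv_resid_action}). Combine this with the fact that the ramification is wild iff $i(E')=\ord_{E'}I(s)$, and fierce iff $i(E')<\ord_{E'}I(s)$. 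This gives $i(E')\le\mathfrak r n+1$, resp.\ $i(E')\le \mathfrak r n$, and an estimate of this strength is what your argument is missing.

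A smaller point concerns the case where $\sO'$ is not local. Do not localize $C'$ at one point, since that destroys finiteness and the quotient structure. Instead observe that the $\bZ/p$-action is then free, so $C'\to C$ is \'etale and \cite[Proposition 2.15]{Kollar_Singularities_of_the_minimal_model_program} applies.
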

\begin{proof}
By \autoref{lemma:descent_Q-Cartier_AS_ext}, $X$ is normal and $K_X+\Delta$ is $\bQ$-Cartier. The geometric fibers of $f$ and $f'$ are the same, so it suffices to compare discrepancies to prove the statement. 

We start with some reductions. By \autoref{lemma:localiz_on_the_base} we may assume that $C=\Spec(\sO)$ is the spectrum of a DVR with algebraically closed residue field $k$ and closed point $\mathbf{0}$. Then $C'=\Spec(\sO')$ is the spectrum of a semi-local ring, and by \autoref{lemma:action_on_normalization} 
there is an action of $\bZ/p$ on $\sO'$ with invariant ring $\sO$. Since $\sO$ is local, this action is transitive on the set of maximal ideals of $\sO'$ \cite[V, \S 2, Th\'{e}or\`{e}me 2]{Bourbaki_AC_V-VII}.
If $\sO'$ is \emph{not} local, then the action has no inertia (because $\bZ/p$ is simple), and so $\sO'$ is \'{e}tale over $\sO$ \cite[IX, \S 1, Corollaire 1]{Raynaud_Anneaux_locaux_henseliens}. In that case the descent result follows from \cite[Proposition 2.15]{Kollar_Singularities_of_the_minimal_model_program}. So we may assume that $\sO'$ is local.
Then, let $\sO^h$ be the henselization of $\sO$. The fiber product $\sO'\otimes_\sO\sO^h$ is the henselization of $\sO'$ by \cite[0CBF]{Stacks_Project}.
The group $\bZ/p$ acts on the first factor, with invariant ring $\sO^h$. So we may replace $\sO$ and $\sO'$ by their respective henselizations. To summarize, we may assume that $C=\Spec(\sO)$ and $C'=\Spec(\sO')$ are the spectra of henselian DVRs with algebraically closed residue fields, that $G=\bZ/p$ acts on $C'$ with ramification at the closed point, and that $\sO$ is the ring of $G$-invariants.

In what follows, we use the notions and notation introduced in \autoref{appendix:Z/p_qt}.
Since the residue field at the closed point $\mathbf{0}'$ of $C'$ is algebraically closed, the ramification at $\mathbf{0}'$ is wild. 

\begin{claim}\label{claim:ram_theory}
Let $t$ be a uniformizer of $\sO'$. There is an integer $\mathfrak{r}\geq 1$ such that $\ord_{\mathbf{0}'}I(t)=\mathfrak{r}+1$ and $[C'^G]=(\mathfrak{r}+1)\cdot \mathbf{0}'$.
\end{claim}
\begin{proof}\renewcommand{\qedsymbol}{$\lozenge$}
The so-called higher ramification groups
        $$G_i=\{g\in G\mid g \text{ acts trivially on }\sO'/(t^{i+1})\}, \quad i\geq -1,$$
form a descending chain of normal sub-groups of $G$. Since $G$ is cyclic of order $p$, there is $\mathfrak{r}\geq -1$ such that
        $$G_{-1}=G_0=\dots=G_{\mathfrak{r}}=G, 
        \quad \text{and }
        G_{\mathfrak{r}+l}=\{1\} \text{ for all }l\geq 1.$$
This implies that $[C'^G]=(\mathfrak{r}+1)\cdot \mathbf{0}'$. It is easily seen that $\mathfrak{r}+1=\inf_{x\in \sO'}\{\ord_{\mathbf{0}'}I(x)\}$, and since the Galois action is wild at $\mathbf{0}'$ we get $\mathfrak{r}+1=\ord_{\mathbf{0}'}I(t)$. 

It remains to show that $\mathfrak{r}\geq 1$, equivalently that the $G$-action on $\sO'/(t^2)$ is trivial. This $G$-representation splits as $k\oplus (t)/(t^2)$, and the desired triviality follows from \autoref{lemma:wild_ramification_triv_resid_action}.
\end{proof}

\begin{claim}\label{cor:AS_ext_fixed_locus}
$[X'^G]=(\mathfrak{r}+1)X'_{\mathbf{0}'}$.
\end{claim}
\begin{proof}\renewcommand{\qedsymbol}{$\lozenge$}
By definition of the action on $X'$ we have $X'^G=f'^{-1}(C'^G)$ so the statement follows from the previous claim.
\end{proof}

Observe that the ramification index $e(-)$ of every component of $\Delta'$ is $1$, since the $G$-action is unramified on the generic fiber of $f'$ and every component of $\Delta$ dominates $C$. In particular, $\Delta'$ agrees with the divisor $\Delta_{X'}$ defined in the way explained in the paragraph before \autoref{prop:discrepancy_AS_qt}.
Moreover $e(X'_{\mathbf{0}'})=e(\mathbf{0'})=p$.

Let $\varphi\colon Y\to X$ be a birational normal model, $Y'$ be its normalization in $K(X)$ with induced birational morphism $\varphi'\colon Y'\to X'$, $D$ be a $\varphi$-exceptional divisor and $E$ a prime divisor of $Y'$ dominating $D$.
If the $G$-action is unramified at the generic point of $E$, then by \autoref{prop:discrepancy_AS_qt} we have
        $$a(D; X,\Delta+X_{\mathbf{0}})\geq 
        a(E;X',\Delta'+X'_{\mathbf{0}'})\geq -1.$$
So we may assume that the $G$-action is ramified at $E$: this implies that $E$ maps to $\mathbf{0}'$. By \autoref{cor:AS_ext_fixed_locus} we have $[X'^G]=(\mathfrak{r}+1)X'_{\mathbf{0}'}$.
Writing $n=\coeff_E Y'_{\mathbf{0}'}$, the formula of \autoref{prop:discrepancy_AS_qt} reads:
    \begin{eqnarray*}
        e(E)\cdot a(D;X,\Delta+X_\mathbf{0})&=&
            a(E;X',\Delta'+X'_{\mathbf{0}'})
            -(p-1)\coeff_E\varphi'^*X'_{\mathbf{0}'} \\
            && \quad +(p-1)\coeff_E \varphi'^* (\mathfrak{r}+1)X'_{\mathbf{0'}}
            -(p-1)\coeff_E [Y'^G] \\
            &=& a(E;X',\Delta'+X'_{\mathbf{0}'}) 
            +(p-1)\cdot \left[
            \mathfrak{r}n-i(E)
            \right].
    \end{eqnarray*} 
We need to estimate $i(E)$ to conclude. In the DVR $\sO_{Y',E}$ we can write $t=us^n$ where $s$ is a uniformizer and $u\in \sO_{Y',E}^\times$. By \autoref{claim:ram_theory} we have
        $$\ord_EI(t)=(\mathfrak{r}+1)n.$$
We distinguish now several cases:
    \begin{itemize}
        \item \emph{$(n,p)=1$.} Passing to the henselization or the completion of $\sO_{Y',E}$, we may assume that $u=1$. Then by \cite[Lemma 2.3]{Totaro_Terminal_non_CM_3folds}:
                $$I(t)=I(s^n)=I(s)\sum_{i=1}^n \sigma(s)^{i-1}s^{n-i}.$$ 
        As $\ord_E \sum_{i=1}^n \sigma(s)^{i-1}s^{n-i}\geq n-1$ we deduce that $\ord_E I(s)\leq \mathfrak{r}n+1$.
        \begin{itemize}
            \item If the $G$-action is wildly ramified at $E$, then $e(E)=p$ and $i(E)=\ord_E I(s)\leq \mathfrak{r}n+1$. Hence
                    \begin{eqnarray*} 
                    a(D;X,\Delta+X_\mathbf{0}) &=&
                    \frac{1}{p}\big[
                    a(E;X',\Delta'+X'_{\mathbf{0}'}) 
                    +(p-1)(\mathfrak{r}n-i(E))
                    \big] \\
                    &\geq & \frac{1}{p}\big[
                    -1 
                    +(p-1)(-1) \big] \\
                    &= & -1.
                    \end{eqnarray*}

            \item If the $G$-action is fiercely ramified at $E$, then $e(E)=1$ and $i(E)<\ord_E I(s)\leq \mathfrak{r}n+1$, whence $i(E) \leq \mathfrak{r}n$. Hence
                    \begin{eqnarray*} 
                    a(D;X,\Delta+X_\mathbf{0}) &=&
                    a(E;X',\Delta'+X'_{\mathbf{0}'}) 
                    +(p-1)(\mathfrak{r}n-i(E)) \\
                    & \geq  & a(E;X',\Delta'+X'_{\mathbf{0}'}) \\
                    &\geq & -1.
                    \end{eqnarray*}
        \end{itemize}

        \item \emph{$(n,p)=p$.} We have \cite[Lemma 2.3]{Totaro_Terminal_non_CM_3folds}:
            $$I(t)=\sigma(u)I(s)\sum_{i=1}^n \sigma(s)^{i-1}s^{n-i}
            +I(u)s^n.$$
        We have $\ord_E\sum_{i=1}^n \sigma(s)^{i-1}s^{n-i}\geq n-1$. As $I(t)$ has order $(\mathfrak{r}+1)n$ and $s^n$ has order $n$, we see that at least one of $\ord_E I(s)\leq\mathfrak{r} n+1$ or $\ord_E I(u)\leq \mathfrak{r}n$ must happen. Now:
            \begin{itemize}
                \item If the ramification is fierce then $i(E)<\ord_E I(s)$ and $i(E)\leq \ord_E I(u)$. So in any case $i(E)\leq \mathfrak{r}n$.
                \item If the ramification is wild, then $i(E)=\ord_EI(s)$. Moreover by \autoref{lemma:wild_ramification_triv_resid_action} we have:
                    $$\sum_{i=1}^n \sigma(s)^{i-1}s^{n-i} \equiv 
                    \sum_{i=1}^n s^{n-1}\equiv ns^{n-1}\equiv 0 \quad 
                    \mod (s^n).$$
                Hence the order of $\sum_{i=1}^n \sigma(s)^{i-1}s^{n-i}$ is $\geq n$, so in fact $i(E)\leq \mathfrak{r}n$.
            \end{itemize}
         We have found that $i(E)\leq \mathfrak{r}n$ always holds, and so
                \begin{eqnarray*} 
                    a(D;X,\Delta+X_\mathbf{0}) &=&
                    \frac{1}{e(E)}\big[
                    a(E;X',\Delta'+X'_{\mathbf{0}'}) 
                    +(p-1)(\mathfrak{r}n-i(E))
                    \big] \\
                    &\geq & \frac{1}{e(E)}(-1 ) \\
                    &\geq & -1.
                    \end{eqnarray*}
    \end{itemize}
This completes the proof of the proposition.
\end{proof}

\section{Permanence under base-change}\label{section:permanence_under_base_change}

\subsection{When inversion of adjunction applies}\label{section:inv_adjunction}

\begin{terminology}
Let $(X,S+\Delta)$ be a pair, where $S$ is a reduced divisor. 
Let $\lbrack S\rbrack$ be the set of prime divisors corresponding to the irreducible components of $S$.
We say that \emph{inversion of adjunction} holds for $(X,S+\Delta)$ along $S$ if: 
    $$\totdis(S^\nu,\Diff_{S^\nu}(\Delta))=
    \totdis(E\notin \lbrack S\rbrack, \text{center} \cap S\neq \emptyset,
    X,S+\Delta),$$
see \cite[Theorem 4.9]{Kollar_Singularities_of_the_minimal_model_program}.
We say that \emph{inversion of adjunction holds (along Cartier boundaries) in dimension $n$} if inversion of adjunction holds for all pairs $(X, \Delta+S)$ as above (with $S$ Cartier) and $\dim(X)=n$.
\end{terminology}

In equi-characteristic as well as in mixed characteristic, inversion of adjunction implies permanence of strong local stability under finite base-change.

\begin{proposition}\label{lemma:inversion}
Let $R$ be a DVR with fraction field $K$ and algebraically closed residue field $k$. 
Let $(X,\Delta)\to\Spec(R)$ be a strongly locally stable family of relative dimension $n$.
Let $R'$ be a finite normal DVR extension of $R$. If inversion of adjunction along Cartier boundaries holds for pairs of relative dimension $n$ over $R'$, then the base-change 
$(X,\Delta)\times_{\Spec(R)} \Spec(R')\to\Spec (R')$ is strongly locally stable.
\end{proposition}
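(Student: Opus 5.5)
The plan is to reduce everything to a statement about the closed fiber $X'_{\mathbf{0}'}$, where inversion of adjunction can be applied. Set $(X',\Delta')=(X,\Delta)\times_{\Spec R}\Spec R'$ and let $\mathbf{0}'$ be the closed point of $\Spec R'$.

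\textbf{Step 1: the set-up on $X'$.} By \autoref{prop:demi_normality_loc_stable_family}, $X'$ is demi-normal. The equicharacteristic proof adapts because the argument only uses fibers and openness of the $S_2$ locus. Then:
\begin{itemize}
\item The closed fiber $X'_{\mathbf{0}'}$ is isomorphic to $X_\mathbf{0}$, since both residue fields equal $k$. Hence it is demi-normal with separable nodes.
\item The generic fiber of $X'$ is the base-change of $X_K$ to a finite extension $K'$. It is therefore geometrically (s)lc, with the same geometric fiber.
\end{itemize}
Conditions (b) and (c) of \autoref{def:local_stability} are therefore automatic. The real content is that $(X',\Delta'+X'_{\mathbf{0}'})$ is log canonical (or slc), together with the $\bQ$-Cartier property of $K_{X'}+\Delta'$.

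In the demi-normal case I would first pass to the normalization via \autoref{prop:red_to_normal_case}. Then I assume $X$ normal, so $X'$ is normal. \autoref{lemma:Q_cartier_cond_base_changes} gives that $K_{X'}+\Delta'$ is $\bQ$-Cartier. Its proof is written over curves, but it works over any DVR: the separable part uses the ramification formula, the inseparable part the foliation argument. In mixed characteristic only the separable case occurs.

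\textbf{Step 2: adjunction on the closed fiber.} $X'_{\mathbf{0}'}$ is a reduced Cartier divisor, being the pullback of a uniformizer. By \autoref{rmk:adjunction} applied to the original family, $(X_\mathbf{0},\Delta_\mathbf{0})$ is slc. Its normalization has boundary $\Diff_{X_\mathbf{0}^\nu}(\Delta)$, which equals the conductor plus $\Delta|_{X_\mathbf{0}}$; this uses that $\Delta$ has no component of $\Sing(X_\mathbf{0})$ in its support.

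Under $X'_{\mathbf{0}'}\cong X_\mathbf{0}$ we have $\Delta'|_{X'_{\mathbf{0}'}}=\Delta|_{X_\mathbf{0}}$. The different is computed on the codimension one points of the fiber, where $X'$ and $X$ are both Gorenstein (nodal or regular). Hence $\Diff_{X'^\nu_{\mathbf{0}'}}(\Delta')$ agrees with $\Diff_{X_\mathbf{0}^\nu}(\Delta)$. I must check this carefully: at points of the conductor the total space is normal, and the different only records the double locus. So $\totdis(X'^\nu_{\mathbf{0}'},\Diff)\geq -1$.

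\textbf{Step 3: inversion of adjunction and the remaining divisors.} By the hypothesis, every divisor $E$ over $X'$ whose center meets $X'_{\mathbf{0}'}$, other than $X'_{\mathbf{0}'}$ itself, has $a(E;X',\Delta'+X'_{\mathbf{0}'})\geq -1$. Divisors whose center does not meet the closed fiber have center in the generic fiber. There the pair is $(X'_{K'},\Delta'_{K'})$, which is lc because it is geometrically lc; this is the Case 1 argument of \autoref{prop:descent_along_pur_insep} applied in reverse.

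Since $\Spec R'$ is local and $X'\to\Spec R'$ is closed, every point specializes into the closed fiber. So every center meets $X'_{\mathbf{0}'}$ and the hypothesis already covers all divisors. The lc property of the generic fiber is used only to guarantee consistency.

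I expect the main obstacle to be the identification of the differents in Step 2 in the demi-normal or slc setting. This requires tracking the conductor and checking that base-change is harmless at codimension one points of $X'_{\mathbf{0}'}$. I would handle it by reducing first to the normal case, where the closed fiber is $S_2$ and $G_1$ and adjunction is standard.
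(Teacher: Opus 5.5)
Your proposal is correct and follows the paper's proof. The steps are the same: normality of $X'$ and $\bQ$-Cartierness of $K_{X'}+\Delta'$ from \autoref{prop:demi_normality_loc_stable_family} and \autoref{lemma:Q_cartier_cond_base_changes}; then identification of $\bigl((X'_{\mathbf{0}'})^\nu,\Diff(\Delta')\bigr)$ with $\bigl(X_\mathbf{0}^\nu,\Diff(\Delta)\bigr)$ under $X'_{\mathbf{0}'}\cong X_\mathbf{0}$, where you compute the differents directly at the Gorenstein codimension-two points and the paper cites \cite[Proposition 4.5]{Kollar_Singularities_of_the_minimal_model_program}; then \autoref{rmk:adjunction} and inversion of adjunction.

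There are two small differences. First, you use closedness of $X'\to\Spec(R')$ so that inversion of adjunction, formulated with centers meeting $S$, also covers horizontal divisors; the paper applies it only to centers inside $X'_{\mathbf{0}'}$ and handles dominating centers through the geometrically lc generic fiber. Second, your detour through \autoref{prop:red_to_normal_case} is unnecessary: $(X,\Delta+X_\mathbf{0})$ is lc, so $X$ is already normal.
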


\begin{proof}
Let $\mathbf{0}$ be the closed point of $\Spec(R)$ and $\mathbf{0}'$ be the closed point of $\Spec(R')$, and write $(X,\Delta)\times_{\Spec(R)} \Spec(R')=(X',\Delta')$.
It suffices to show that $(X',\Delta')\to \Spec(R')$ is locally stable.
By \autoref{prop:demi_normality_loc_stable_family} and \autoref{lemma:Q_cartier_cond_base_changes}, the base-change $X'$ is normal and $K_{X'}+\Delta'$ is $\bQ$-Cartier. Moreover $X'_{\mathbf{0}'}\cong X_\mathbf{0}$ is demi-normal. Applying \cite[Proposition 4.5.(4)+(6)]{Kollar_Singularities_of_the_minimal_model_program} we see that the divisor 
    $$(K_{X'}+\Delta'+X'_{\mathbf{0}'})|_{(X'_{\mathbf{0}'})^\nu}
    =K_{(X'_{\mathbf{0}'})^\nu}+\Diff_{(X'_{\mathbf{0}'})^\nu}(\Delta')$$ 
is equal, under the isomorphism $X'_{\mathbf{0}'}\cong X_\mathbf{0}$, to $(K_X+\Delta+X_\mathbf{0})|_{X_\mathbf{0}^\nu}$ (\footnote{
    Notice that the statement of \cite[Proposition 4.5.(6)]{Kollar_Singularities_of_the_minimal_model_program} requires the pair to be lc: but this is only used to provide a local description of the prime divisor $S$ (in the notations of \emph{op.cit.}), which in our case is the central fiber, and so the argument applies verbatim. 
}). Combining this fact with \autoref{rmk:adjunction}, we obtain that $((X'_{\mathbf{0}'})^\nu,\Diff_{(X'_{\mathbf{0}'})^\nu}(\Delta'))$ is lc.

By inversion of adjunction, we deduce that $a(E;X',\Delta'+X_{\mathbf{0}'}')\geq -1$ for every divisor $E$ whose center on $X'$ is contained in $X'_{\mathbf{0}'}$. If $E$ is a divisor over $X'$ whose center dominates $\Spec(R')$, then
        $$a(E;X',\Delta'+X_{\mathbf{0}'}')=a(E_{\Frac(R')};X'_{\Frac(R')},\Delta'_{\Frac(R')})\geq -1$$
since $(X_{\Frac(R)},\Delta_{\Frac(R)})$ is geometrically lc. This shows that $(X',\Delta')\to \Spec(R')$ is locally stable. 
\end{proof}

At the moment, inversion of adjunction is known in the following cases:
    \begin{itemize}
        \item In equicharacteristic $0$ \cite[Theorem 4.9]{Kollar_Singularities_of_the_minimal_model_program};
     
        \item In dimension $3$ and residue characteristics $p>5$ \cite[Corollary 10.1]{7authors_globally+regular_MMP};
        
        \item In dimension $3$ for plt pairs and equicharacteristic $p>0$ \cite[Corollary 1.5]{Hacon_Witaszek_On_the_relative_MMP_for_threefolds_in_low_char};
        
        \item In dimension $4$ for plt pairs in equicharacteristic $p>5$, assuming the existence of log resolutions \cite[Corollary 4.9]{Hacon_Witaszek_MMP_for_4folds_pos_and_mixed_char} (\footnote{The assumption of standard coefficients in \emph{op.cit.} is not necessary if we assume existence of all log resolutions: see the proof of Corollary 4.8 and Theorem 4.1 in \emph{op.cit.}}).
    \end{itemize}

\begin{remark}
There is also an inversion of adjunction statement for \emph{MJ-log canonical singularities} which can be applied in our context, see \cite{Ishii_Reguera_Singularities_in_arb_char_via_jets}. MJ-log canonical singularities are log canonical: see \cite[Proposition 2.21]{Ein_Ishii_Singularities_wrt_MJ_discrepancies}, which also works in positive characteristic. The converse is true for local complete intersection singularities, but fails dramatically in general: there are examples of three-dimensional terminal quotient singularities (in any characteristic) which are not MJ-lc \cite[Example 3.14]{Ein_Ishii_Singularities_wrt_MJ_discrepancies}.
\end{remark}


There is another important case that is known in every dimension: adjunction for $F$-purity. We discuss it next.

\subsubsection{$F$-locally stable families}

\begin{definition}\label{d-F-locallystable}
Let $C$ be a (germ of) smooth curve over $k$. We say that a family of pairs $f\colon (X,\Delta)\to C$ is \emph{$F$-locally stable} if:
        \begin{enumerate}
            \item $X$ is normal, $K_X+ \Delta$ is $\bQ$-Cartier and $\Delta$ is an effective $\bZ_{(p)}$-divisor, and
            \item for each closed point $c\in C(k)$, the pair $(X,\Delta+X_c)$ is $F$-pure.
        \end{enumerate}
\end{definition}

\begin{proposition}\label{prop:permanence_F_pure_case}
Let $f\colon (X,\Delta)\to C$ be an $F$-locally stable family. Then:
    \begin{enumerate}
        \item $(X,\Delta)\to C$ is strongly locally stable with $F$-pure fibers (in particular with only separable nodes);
        \item for every finite flat morphism $C'\to C$, where $C'$ is a regular curve, the base-change $(X,\Delta)\times_C C'\to C'$ is $F$-locally stable.
    \end{enumerate}
\end{proposition}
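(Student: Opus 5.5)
For (a), I will use the $F$-pure inversion of adjunction of \cite{Polstra_Simpson_Tucker_F-pure_inversion_of_adjunction} together with the comparison of \cite{Hara_Watanabe_F_sing_vs_lt_and_lc}. First, the fibers. Since $X_c$ is a Cartier divisor on the normal variety $X$ and $(X,\Delta+X_c)$ is $F$-pure, $F$-adjunction (the easy direction of \cite{Polstra_Simpson_Tucker_F-pure_inversion_of_adjunction}, or directly restricting a splitting compatible with $X_c$) gives that $X_c$ is $F$-pure, with $(X_c,\Delta_c)$ $F$-pure. $F$-pure schemes are weakly normal, and $X_c$ is Cohen--Macaulay in codimension one as a Cartier divisor in a normal scheme, hence $S_2$ there. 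I expect $S_2$ of $X_c$ in general to follow from the fact that $F$-purity of $(X,X_c)$ forces $X_c$ to be $S_2$; this is the standard fact that $F$-pure centers are $S_2$ inside a normal $F$-pure variety, or alternatively one uses that the compatible splitting descends. A weakly normal $S_2$ scheme that is $G_1$ is demi-normal with separable nodes by \cite[Lemma 2.3.7]{Posva_Gluing_surfaces_mixed_char}, and $G_1$ follows from the one-dimensional $F$-pure classification (nodes or smooth points). Next, by \cite{Hara_Watanabe_F_sing_vs_lt_and_lc}, $F$-purity of $(X,\Delta+X_c)$ gives log canonicity, so the family is locally stable.

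For the generic fiber, I note that $F$-purity is open. The non-$F$-pure locus of $(X,\Delta)$ is closed and misses all closed fibers, so it is empty since $f$ is closed. I then need geometric log canonicity of the generic fiber. Here I would argue that the geometric generic fiber is $F$-pure: $X_{\bar\eta}$ is a limit of base-changes along finite extensions $C'\to C$. So once (b) is established, the generic fiber of each $X'$ is $F$-pure, and $F$-purity of the generic fiber over the perfection of $K(C)$ passes to $\overline{K(C)}$. Hara--Watanabe then gives log canonicity. Thus (a) essentially reduces to (b) applied to all $C'$, combined with the direct limit. This ordering must be handled carefully, and I regard it as the main subtle point.

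For (b), I first reduce to $C'$ local with closed point $\mathbf 0'$ mapping to $\mathbf 0$, using \autoref{lemma:localiz_on_the_base}. From the fiber conditions established in (a), \autoref{prop:demi_normality_loc_stable_family} shows that $X'$ is normal, and \autoref{lemma:Q_cartier_cond_base_changes} shows that $K_{X'}+\Delta'$ is $\bQ$-Cartier; $\Delta'$ is still a $\bZ_{(p)}$-divisor. The central fiber satisfies $X'_{\mathbf 0'}\cong X_{\mathbf 0}$. The different is unchanged: by \cite[Proposition 4.5]{Kollar_Singularities_of_the_minimal_model_program}, as in the proof of \autoref{lemma:inversion}, $\Diff(\Delta')$ corresponds to $\Diff(\Delta)$. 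Hence $(X'_{\mathbf 0'},\Diff(\Delta'))$ is $F$-pure. I then apply the $F$-pure inversion of adjunction of \cite{Polstra_Simpson_Tucker_F-pure_inversion_of_adjunction} along the Cartier divisor $X'_{\mathbf 0'}$. It requires normality, a $\bQ$-Cartier condition with index prime to $p$ on $K_{X'}+\Delta'+X'_{\mathbf 0'}$, and $F$-purity of the normalized different. This gives $F$-purity of $(X',\Delta'+X'_{\mathbf 0'})$ near $X'_{\mathbf 0'}$.

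The main obstacle is the index hypothesis. Case 1 of \autoref{lemma:Q_cartier_cond_base_changes} gives $q^*(K_X+\Delta)=K_{X'}+\Delta'+(p-1)K_\mathfrak F$ with $K_\mathfrak F$ Cartier, and the separable case differs only by a Cartier ramification divisor. So the index of $K_{X'}+\Delta'$ divides that of $K_X+\Delta$, which is prime to $p$ because $F$-pure pairs with $\bZ_{(p)}$ boundary have index prime to $p$ after localization. If needed, I would work with the $\bZ_{(p)}$-version of $F$-purity throughout.
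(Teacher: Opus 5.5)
\textbf{Verdict: gap.} Your architecture is the paper's, but one step is unsupported and two auxiliary claims are false.

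The shared structure is:
\begin{itemize}
\item Hara--Watanabe gives log canonicity of $(X,\Delta+X_c)$.
\item For (b), you use $F$-pure adjunction on $X'_{\mathbf{0}'}\cong X_\mathbf{0}$, followed by $F$-pure inversion of adjunction from \cite{Polstra_Simpson_Tucker_F-pure_inversion_of_adjunction}.
\item Geometric log canonicity of the generic fiber comes from (b) applied to all finite $C'\to C$.
\end{itemize}

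\textbf{The gap: $S_2$ of the closed fibers.} Neither justification you offer works.
\begin{itemize}
\item Restricting a splitting compatible with $X_c$ only shows that $X_c$ is $F$-pure, and $F$-purity does not imply $S_2$. For example, $k[x,y,z,w]/((x,y)\cap(z,w))$ (two planes meeting at a point) is a Stanley--Reisner ring, hence $F$-pure, but it is not $S_2$.
\item ``$F$-pure centers are $S_2$'' is not a result you can quote. Moreover $X_c$ need not be irreducible, and the fact that $X_c$ is Cartier has to enter somewhere.
\end{itemize}
What is needed is a depth statement on the total space. The paper uses \cite[Lemma 3.4]{Polstra_Simpson_Tucker_F-pure_inversion_of_adjunction}, which gives that $\sO_X$ and $\sO_X(-X_c)$ are $S_3$ (an $F$-pure analogue of Koll\'ar's $S_3$ theorem for lc pairs). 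Then $X_c$ is $S_2$ by \cite[Lemma 2.60]{Kollar_Singularities_of_the_minimal_model_program}. This matters twice: $S_2$ is part of demi-normality in strong local stability, and it is the input to \autoref{prop:demi_normality_loc_stable_family}, which you use for normality of $X'$ in (b).

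\textbf{False claim 1: nodes in codimension one.} One-dimensional $F$-pure singularities are not just nodes and smooth points: $k[x,y,z]/(xy,yz,zx)$ is $F$-pure but not Gorenstein. The nodal property has to come from the pair. Once $(X,\Delta+X_c)$ is lc by Hara--Watanabe, the classification of codimension-two lc singularities \cite[Corollary 2.32]{Kollar_Singularities_of_the_minimal_model_program} gives it; this is what the paper does.

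\textbf{False claim 2: index prime to $p$.} It is not true that $F$-pure pairs with $\bZ_{(p)}$-boundary have index prime to $p$. Let $R$ be the $p$-th Veronese subring of $k[x,y,z]$. It is strongly $F$-regular (a direct summand of $k[x,y,z]$), with $\mathrm{Cl}(R)\cong\bZ/p$ and $[K_R]=-3$. So for $p\neq 3$ the index is $p$, and $\Spec(R)\times\bA^1\to\bA^1$ is even $F$-locally stable.

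This index claim is also unnecessary. The paper's definition imposes no index condition, and its proof applies the results of \cite{Polstra_Simpson_Tucker_F-pure_inversion_of_adjunction} directly under the definition's hypotheses. These are normality, $\bQ$-Cartierness of $K_{X'}+\Delta'$ (which you correctly get from \autoref{lemma:Q_cartier_cond_base_changes}), and $\bZ_{(p)}$-coefficients. So drop that paragraph rather than try to repair it.

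\textbf{Minor omission.} To pass from $F$-purity near $X'_{\mathbf{0}'}$ to $F$-purity of $(X',\Delta'+X'_{\mathbf{0}'})$ on all of $X'$, you need openness of the $F$-pure locus together with closedness of $X'\to C'$, as the paper spells out.
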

\begin{proof}
Let us first prove that $(X,\Delta)\to C$ is locally stable with $F$-pure fibers. For any $c\in C$ closed, the pair $(X,\Delta+X_c)$ is lc by \cite{Hara_Watanabe_F_sing_vs_lt_and_lc}. So $(X,\Delta)\to C$ is locally stable. Since $\Delta+X_c$ is $\bQ$-Cartier by assumption, by \cite[Lemma 3.4]{Polstra_Simpson_Tucker_F-pure_inversion_of_adjunction} we obtain that $\sO_X$ and $\sO_X(-X_c)$ are $S_3$. So $X_c$ is $S_2$ by \cite[Lemma 2.60]{Kollar_Singularities_of_the_minimal_model_program}. Since $X_c$ has at worst nodes in codimension one (by the classification of codimension two lc singularities \cite[Corollary 2.32]{Kollar_Singularities_of_the_minimal_model_program}), we get that the closed fibers $X_c$ are demi-normal. By \cite[Theorem A]{Polstra_Simpson_Tucker_F-pure_inversion_of_adjunction} the pairs $(X_c,\Delta_c)$ are $F$-pure, which implies in particular that every node is separable.

Now let $C'\to C$ be as in the second assertion. By \autoref{lemma:localiz_on_the_base} we may assume that $C$ and $C'$ are spectra of DVRs, with respective closed point $\mathbf{0}$ and $\mathbf{0}'$. Write $(X',\Delta')=(X,\Delta)\times_C C'$. By assumption $\Delta'$ is a $\bZ_{(p)}$-divisor, the fibers $X_\mathbf{0}\cong X'_{\mathbf{0}'}$ are demi-normal as observed above, and by \autoref{prop:demi_normality_loc_stable_family} the fiber product $X'$ is normal. Then, applying \cite[Theorem 2.6]{Polstra_Simpson_Tucker_F-pure_inversion_of_adjunction} first for $F$-pure adjunction on $X_\mathbf{0}\cong X'_{\mathbf{0}'}$, then for $F$-pure inversion of adjunction on $(X',\Delta'+X'_{\mathbf{0}'})$, we see that the germ $(x'\in X',\Delta'+X'_{\mathbf{0}'})$ is $F$-pure for every $x'$ mapping to $\mathbf{0}'$. Since the locus $\{x'\in X'\mid (x'\in X',\Delta'+X'_{\mathbf{0}'}) \text{ is }F\text{-pure}\}$ is open, and $X'\to C$ is closed, we deduce that $(X',\Delta'+X'_{\mathbf{0}'})$ is $F$-pure. This proves the second assertion.

To finish the proof of the first assertion, it remains to show that the generic fiber of $(X,\Delta)\to C$ is geometrically lc. This is true, as we have seen that the generic fiber of $(X',\Delta')\to C'$ is $F$-pure, in particular log canonical, for any $C'\to C$.
\end{proof}

\subsection{Reduction to purely inseparable base-changes 
after \cite{Hu_Zong_Base_change_local-stability_positive_char} and 
\cite{Posva_Local_stability_comparison}
}\label{section:reduction_to_pur_insep}
It is shown in \cite{Hu_Zong_Base_change_local-stability_positive_char} and
\cite{Posva_Local_stability_comparison} that if $(X,\Delta)\to C$ is locally stable and $C_\text{AS}\to C$ is an Artin--Schreier cover, then local stability of $(X,\Delta)\times_CC_\text{AS}\to C_\text{AS}$ holds if all Frobenius base-changes $(X^{(e)},\Delta^{(e)})\to C$ are locally stable (\footnote{
    We remark that our definition of ``local stability" is different from the one used in \cite{Hu_Zong_Base_change_local-stability_positive_char}. We stick to our definitions and reformulate the results of \emph{op.cit.}\ in our terminology. 
}).

In \cite[Theorem 1.6]{Hu_Zong_Base_change_local-stability_positive_char} the authors claim that local stability of arbitrary finite base-changes $X\times_C C'\to C'$ therefore follows from the local stability of base-changes $X\times_C \mathfrak{C}\to \mathfrak{C}$ where $\mathfrak{C}$ is any finite flat cover of the perfection $C_\text{perf}$ of $C$; however no argument is provided, and the subtleties inherent to handling perfect schemes are not dealt with. In this subsection, we clarify their statement. To simplify the formulation, we say that:

\begin{terminology}
\emph{(Strong) local stability in relative dimension $n$ is preserved by base-changes with property $\mathbf{P}$} if, for any (strongly) locally stable family $(X,\Delta)\to C$ of relative dimension $n$, where $C$ is a smooth $k$-curve, and any morphism $C'\to C$ with property $\mathbf{P}$, where $C'$ is also a smooth $k$-curve, it holds that $(X,\Delta)\times_C C'\to C'$ is (strongly) locally stable.  
\end{terminology}

\begin{proposition} \label{p-purelyinsep_vs_wild}
Assume that (strong) local stability in relative dimension $n$ is preserved by base-changes by finite flat purely inseparable morphisms. Then (strong) local stability in relative dimension $n$ is preserved by arbitrary finite flat base-changes.
\end{proposition}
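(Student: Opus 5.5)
Let $(X,\Delta)\to C$ be (strongly) locally stable of relative dimension $n$, and let $C'\to C$ be finite flat with $C'$ a smooth curve. By \autoref{lemma:localiz_on_the_base}, it suffices to treat the case where $C$ is the spectrum of a henselian DVR with residue field $k$ and $C'$ is the spectrum of a finite normal local extension. Following the factorization used in the proof of \autoref{lemma:Q_cartier_cond_base_changes}, we write $C'\to C_0\to C$ with $C_0\to C$ separable and $C'\to C_0$ purely inseparable. By hypothesis, stability survives $C'\to C_0$. Hence we reduce to showing that it survives the separable map $C_0\to C$.

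Let $C''\to C$ be the normalization of $C$ in the Galois closure of $K(C_0)/K(C)$, with Galois group $G$. Over a henselian DVR with algebraically closed residue field, $G$ equals its inertia group. Therefore $G$ has a normal $p$-Sylow subgroup $P$ (the wild inertia) with cyclic quotient $G/P$ of order prime to $p$. We then factor $C''\to C$ as $C''\to C''/P\to C$.
\begin{itemize}
\item The map $C''/P\to C$ is tame, so \autoref{prop:tame_base_change} gives stability over $C''/P$.
\item Since $P$ is a $p$-group, it admits a composition series with successive quotients $\bZ/p$. This factors $C''\to C''/P$ as a tower of Artin--Schreier covers.
\end{itemize}
At each Artin--Schreier step, we apply the Hu--Zong/Posva result recalled at the beginning of \autoref{section:reduction_to_pur_insep}. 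That result needs all iterated Frobenius base-changes $X^{(e)}$ of the current family to be locally stable. This is supplied by the hypothesis, since $F^e_{C/k}$ is purely inseparable and the current base is again a smooth curve. Strong local stability of each new family follows in the same way, because geometric fibers are unchanged by base-change. Inducting up the tower, we obtain that $(X,\Delta)\times_C C''\to C''$ is (strongly) locally stable.

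Next we descend from $C''$ to $C_0$. The intermediate field $K(C_0)$ sits in $K(C'')$ with group $H=\mathrm{Gal}(K(C'')/K(C_0))\le G$. The extension $C''\to C_0$ is again Galois with group $H$, which is solvable of the same shape, $H\cap P$ normal in $H$ with cyclic prime-to-$p$ quotient. We factor $C''\to C_0$ as $C''\to C''/(H\cap P)\to C_0$. The first map is a tower of Artin--Schreier covers, along which we descend step by step using \autoref{prop:descent_loc_stab_along_AS}. The second map is tame, where \autoref{prop:tame_base_change} gives descent, since it is an ``if and only if'' statement.

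The main obstacle is arranging the group theory so that each step is a genuine $\bZ/p$-Galois cover between smooth curves. Descending from $C''$ to $C_0$ requires a chain of subgroups from $1$ to $H\cap P$ in which each is normal of index $p$ in the next. Such a chain exists because $H\cap P$ is a $p$-group.

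A second point needing care is that the Hu--Zong criterion is phrased for base-changes over the original base. I would apply it to the family over the intermediate base, and apply the hypothesis to that family, which is already known to be (strongly) locally stable. This is legitimate because the hypothesis is stated for all (strongly) locally stable families of relative dimension $n$, so it holds uniformly at each step of the induction.
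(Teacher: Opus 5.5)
Your proposal is correct and follows essentially the same route as the paper: reduce to henselian DVRs, split off the purely inseparable part, pass to the Galois closure, climb first a tame step and then an Artin--Schreier tower (using the hypothesis together with the Hu--Zong/Posva criterion), and finally descend to $C_0$ via \autoref{prop:descent_loc_stab_along_AS} and \autoref{prop:tame_base_change}. The only cosmetic difference is the choice of $p$-subgroup: you use the normal wild inertia subgroup, which makes the tame steps Galois, whereas the paper takes an arbitrary $p$-Sylow subgroup and deduces tameness from the residual degree being prime to $p$.
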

\begin{proof}
Fix a (strongly) locally stable family $(X,\Delta)\to C$ of relative dimension $n$, and a finite flat $C'\to C$ where $C'$ is a smooth curve. The conditions on the geometric fibers are preserved by base-change, so it suffices to prove the locally stable instance of the statement. By \autoref{lemma:localiz_on_the_base} and \cite[04GH]{Stacks_Project} we may assume that $C=\Spec(\sO)$ and $C'=\Spec(\sO')$ where $\sO$ and $\sO'$ are henselian DVRs. 

We adopt the following convention: whenever $\mathscr{F}$ is a finite field extension of $\Frac(\sO)$, we let $C_\mathscr{F}$ be the normalization of $C$ in $\mathscr{F}$: by \cite[04GH]{Stacks_Project}, $C_\mathscr{F}$ is the spectrum of an henselian DVR with residue field $k$. We denote by $(X_\mathscr{F},\Delta_\mathscr{F})=(X,\Delta)\times_CC_\mathscr{F}\to C_\mathscr{F}$ the base-change of $(X,\Delta)\to C$.
We will also use without mention the following fact: if $\mathbf{G}$ is a finite group of $\Frac(\sO)$-automorphisms of $\mathscr{F}$ with fixed field $\mathscr{H}$, then $\mathbf{G}$ acts on on $C_\mathscr{F}$ and $X_\mathscr{F}$, making $X_\mathscr{F}\to C_\mathscr{F}$ equivariant, and the quotient of the family is $(X_\mathscr{H},\Delta_\mathscr{H})\to C_\mathscr{H}$.

That being said, by \cite[V, Proposition 6.6]{Lang_Algebra} the induced extension of fraction fields factors as
        $$\Frac(\sO)\hookrightarrow M\hookrightarrow \Frac(\sO')$$
where $\Frac(\sO) \subseteq M$ is separable and $M \subseteq \Frac(\sO')$ is purely inseparable. If $(X_M,\Delta_M)\to C_M$ is locally stable, then by assumption $(X_{\Frac(\sO')},\Delta_{\Frac(\sO')})\to C'$ is locally stable. So we may assume that $\Frac(\sO')$ is a separable extension of $\Frac(\sO)$.

Fix a normal closure $L$ of $\Frac(\sO)\hookrightarrow \Frac(\sO')$. Then $L$ is a Galois extension of $\Frac(\sO)$ \cite[VI, Corollary 1.6]{Lang_Algebra} and of $\Frac(\sO')$ \cite[VI, Theorems 1.1 and 1.8]{Lang_Algebra}. We will use the main result of \cite{Hu_Zong_Base_change_local-stability_positive_char} to show that $X_L\to C_L$ is locally stable, and then use our descent statements to deduce that $X_{\Frac(\sO')}\to C'$ is locally stable.

\medskip
\textsc{Step 1: $(X_L,\Delta_L)\to C_L$ is locally stable.} Let $G$ be the Galois group of $L/\Frac(\sO)$.
Choose a maximal $p$-Sylow subgroup $G_p<G$, and let $L_0=L^{G_p}$. Then $[L_0:\Frac(\sO)]$ is prime to $p$. As $C_{L_0}$ and $C$ are local rings, the only ramification point of $C_{L_0}\to C$ is the closed point. There the ramification index must divide $[L_0:\Frac(\sO)]$ \cite[09E8]{Stacks_Project}, and so we see that $C_{L_0}\to C$ has only tame ramifications. By \autoref{prop:tame_base_change} it follows that $(X_{L_0},\Delta_{L_0})\to C_{L_0}$ is locally stable.

By \cite[VI, Theorem 1.8]{Lang_Algebra} the extension $L/L_0$ is Galois with group $G_p$. Since $G_p$ is a $p$-group, there exists a series
        $$\{1\}=G^n\lhd G^{n-1} \lhd G^{n-2} \lhd \dots \lhd G^0=G_p$$
such that each $G^{i}/G^{i+1}\cong \bZ/p$ \cite[I, Corollary 6.6]{Lang_Algebra}. If $L_i=L^{G^i}$, we obtain a sequence of finite extensions of henselian DVRs
        $$\sO_{C_{L_0}}\hookrightarrow \sO_{C_{L_1}}\hookrightarrow \dots 
        \hookrightarrow \sO_{C_{L_{n-1}}}\hookrightarrow \sO_{C_{L_{n}}}=\sO_{C_L}$$
such that each extension $\Frac(\sO_{C_{L_i}})\hookrightarrow \Frac(\sO_{C_{L_{i+1}}})$ is of degree $p$, hence an Artin--Schreier extension. Combining the assumption and \cite[Theorem 1.2]{Posva_Local_stability_comparison}, we see inductively that each $(X_{L_i},\Delta_{L_i})\to C_{L_i}$ is locally stable. So $(X_L,\Delta_L)\to C_L$ is locally stable.

\medskip
\textsc{Step 2: $(X_{\Frac(\sO')},\Delta_{\Frac(\sO')})\to C'$ is locally stable.} 
Let $H$ be the Galois group of $L/\Frac(\sO')$. Choose a maximal $p$-Sylow subgroup $H_p<H$ and let $K_0=L^{H_p}$. As in Step 1, we see that we have a commutative diagram
        $$\begin{tikzcd}
        (X_{L},\Delta_L)\arrow[r]\arrow[d] & (X_{K_{m-1}},\Delta_{K_{m-1}})\arrow[r]\arrow[d] & \dots \arrow[r] & (X_{K_1},\Delta_{K_1})\arrow[r]\arrow[d] & (X_{K_0},\Delta_{K_0})\arrow[d] \\
        C_L\arrow[r] & C_{K_{m-1}}\arrow[r] & \dots\arrow[r] & C_{K_1}\arrow[r] & C_{K_0}
        \end{tikzcd}$$
where each small square is the base-change by an Artin--Schreier extension. By \autoref{prop:descent_loc_stab_along_AS} we obtain that $(X_{K_0},\Delta_{L_0})\to C_{K_0}$ is locally stable. Finally, $C_{L_0}\to C'$ is ramified only at the closed point with ramification index dividing $[\Frac(K_0):\Frac(\sO)]=|H|/|H_p|$, which is prime to $p$. So local stability descends to $(X_{\Frac(\sO')},\Delta_{\Frac(\sO')})\to C'$ by \autoref{prop:tame_base_change}, which completes the proof.
\end{proof} 

\begin{remarks}
\begin{enumerate}
    \item As the proof shows, given a (strong) locally stable family $(X,\Delta)\to C$, to show (strong) local stability for arbitrary finite flat base-changes $(X,\Delta)\times_CC'\to C'$, it suffices to assume permanence under Frobenius base-changes of local stability for one-parameter families with geometric fibers amongst $\{(X_{\bar{c}},\Delta_{\bar{c}})\mid c\in C\}$.
    
    \item Suppose that $f\colon (X,\Delta)\to C$ is locally stable, and let $F_{C/k}^e$ be the $e$-th $k$-linear Frobenius of $C$. If $(X,\Delta)\times_{(f,F_{C/k}^n)}C\to C$ is locally stable, then so is $(X,\Delta)\times_{(f,F_{C/k}^m)}C\to C$ for any $m\leq n$ by \autoref{prop:loc_stability_lost_forever}. A potential definition of local stability of $(X,\Delta)\times_C C_\text{perf}\to C_\text{perf}$ could be: there exists a sequence of positive integers $(n_i)$, with $\lim_{i\to \infty}n_i=\infty$, such that each $(X,\Delta)\times_{(f,F_{C/k}^{n_i})}C\to C$ is locally stable. Note that this implies that $(X,\Delta)\to C$ is strongly locally stable.
\end{enumerate}
\end{remarks}

\subsection{Base-change by Frobenius}\label{section:Frob_base_change}
Let $f\colon (X,\Delta)\to C$ be a family of pairs over a smooth (germ of) $k$-curve. We use the notation introduced in \autoref{section:div_and_Frob_base_change}. We make the following assumption for the whole subsection:

\begin{assumption}\label{assumption:nice_family_to_base_change}
The variety $X$ is normal, the closed fibers of $X\to C$ are demi-normal with only separable nodes, the generic fiber is geometrically normal, and $K_X+\Delta$ is $\bQ$-Cartier.
\end{assumption}

This implies that $X^{(1)}$ is normal and that $K_{X^{(1)}}+\Delta^{(1)}$ is $\bQ$-Cartier (\autoref{prop:demi_normality_loc_stable_family} and \autoref{lemma:Q_cartier_cond_base_changes}). We wish to compute the discrepancies of $(X^{(1)},\Delta^{(1)}+X_\mathbf{0}^{(1)})$. Of course, we are interested in the case where $(X,\Delta)\to C$ is strongly locally stable, and for this reason we focus on exceptional divisors that are vertical over $C$. Nonetheless, the formulas that we obtain below are valid more generally under \autoref{assumption:nice_family_to_base_change}.

\begin{remark}
Geometric log canonicity of $(X_{K(C)},\Delta_{K(C)})$ implies that $\Delta^{(e)}_{K(C)}$ has coefficients $\leq 1$ for every $e$. This puts some restrictions on the invariance of $\Supp(\Delta_{K(C)})$ under higher order derivations, but we will not elaborate on this point.
\end{remark}

\subsubsection{Discrepancies of vertical divisors}
Let $E$ be an exceptional divisor over $X$, and $E^{(1)}$ be the corresponding exceptional divisor over $X^{(1)}$. Recall the functions $\delta_{T_{X/C}}(-)$ and $\delta_\mathfrak{F}(-)$ introduced in \autoref{def:deltas}.

\begin{proposition}\label{prop:formula_discrep_qt}
Let $E$ be a divisor over $X$, vertical over $C$. 
Then:
        $$a(E^{(1)};X^{(1)},\Delta^{(1)}+X_\mathbf{0}^{(1)})
        =
        \frac{1}{\delta_{T_{X/C}}(E)}\cdot \left[
        p\cdot a(E;X,\Delta+X_\mathbf{0})-(p-1)\cdot \gamma_w(E)
        \right],$$
where $\gamma_w(E)$ is defined as in \autoref{def:wild_and_tame_divisors}.
\end{proposition}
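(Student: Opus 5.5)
The plan is to read off the discrepancy of $E^{(1)}$ from the purely inseparable quotient $F_{X/C}\colon X\to X^{(1)}$ by $T_{X/C}$ (\autoref{lemma:relative_foliation_and_central_fiber}), rather than from the quotient $X^{(1)}\to X$ used in \autoref{prop:discrep_qt_by_F}. The advantage is that the relevant foliation canonical divisor is computed explicitly by \autoref{prop:can_div_relative_tangent_sheaf}, and this is where $\gamma_w(E)$ enters.

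\textbf{Setup.} Fix a normal birational model $\mu\colon Y\to X$ on which $E$ appears, and let $\mu'\colon Y^{(1),\nu}\to X^{(1)}$ be as before. Let $g\colon Y\to Y^{(1),\nu}$ be the quotient by $\mathcal{G}:=\mu^*T_{X/C}$. Since $\mathcal{G}$ and $T_{Y/C}$ are saturated and agree generically, $\mathcal{G}=T_{Y/C}$, where $Y\to C$ is flat because $Y$ is integral over a DVR. Write $n=\ord_EY_\mathbf{0}$, $\delta=\delta_{T_{X/C}}(E)$, $a=a(E;X,\Delta)$ and $a'=a(E^{(1)};X^{(1)},\Delta^{(1)})$.

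\textbf{Global pullback formula.} From the proof of \autoref{prop:descent_along_pur_insep}, together with \autoref{lemma:log_rel_tangent_is_Q-Gor}, we have
$$F_{X/C}^*(K_{X^{(1)}}+\Delta^{(1)})=p(K_X+\Delta)-(p-1)f^*K_C.$$
Pulling back along $\mu$ gives
$$g^*\mu'^*(K_{X^{(1)}}+\Delta^{(1)})=p(K_Y+\mu^{-1}_*\Delta-aE+\dots)-(p-1)f^*K_C.$$

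\textbf{The other side.} We compute $g^*(K_{Y^{(1),\nu}}+\mu'^{-1}_*\Delta^{(1)}-a'E^{(1)})$ near the generic point of $E$ by three inputs.
\begin{enumerate}
\item The foliation adjunction formula $g^*K_{Y^{(1),\nu}}=K_Y+(p-1)K_{\mathcal{G}}$ \cite[Proposition 4.2.3]{Posva_Singularities_quotients_by_1_foliations}.
\item The strict transforms satisfy $g^*\mu'^{-1}_*\Delta^{(1)}=p\,\mu^{-1}_*\Delta$, since the components of $\Delta$ are horizontal.
\item We have $g^*E^{(1)}=\delta E$ (\cite[Lemma 4.2.2]{Posva_Singularities_quotients_by_1_foliations}, as in \autoref{lemma:orders_central_fibers}).
\end{enumerate}

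\textbf{The key computation.} The central input is the coefficient of $E$ in $K_{\mathcal{G}}=K_{T_{Y/C}}$. Apply \autoref{prop:can_div_relative_tangent_sheaf} to $Y\to C$, or directly the exact sequences \eqref{eqn:vertical_tame_divisor} and \eqref{eqn:vertical_wild_divisor} at the generic point of $E$. This gives, locally along $E$,
$$K_{\mathcal{G}}=K_Y-f^*K_C-(n+\gamma_w(E))E$$
in both cases, because $\gamma_w(E)=-1$ in the tame case. The horizontal term $\Gamma$ is irrelevant here, since we only look at the vertical divisor $E$.

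\textbf{Comparing coefficients.} Comparing coefficients of $E$ on the two sides gives
$$-(p-1)(n+\gamma_w(E))-\delta a'=-pa,\qquad\text{that is,}\qquad \delta a'=pa-(p-1)(n+\gamma_w(E)).$$

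\textbf{Adding the central fibers.} By \autoref{lemma:orders_central_fibers}, $\ord_{E^{(1)}}Y^{(1),\nu}_\mathbf{0}=n/\delta$. Hence
$$a(E^{(1)};X^{(1)},\Delta^{(1)}+X^{(1)}_\mathbf{0})=a'-n/\delta\quad\text{and}\quad a(E;X,\Delta+X_\mathbf{0})=a-n.$$
Substituting,
$$\delta\cdot a(E^{(1)};X^{(1)},\Delta^{(1)}+X^{(1)}_\mathbf{0})=pa-(p-1)n-(p-1)\gamma_w(E)-n=p(a-n)-(p-1)\gamma_w(E),$$
which is the claimed formula.

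\textbf{Main obstacle.} The step needing the most care is justifying that the foliation adjunction formula and the identification $\mu^*T_{X/C}=T_{Y/C}$ hold at the generic point of $E$ on the possibly non-smooth normal model $Y$. Since everything is a codimension one computation in the DVR $\sO_{Y,E}$, I would reduce to that DVR, where the differential basis description of \autoref{section:fibrations} applies verbatim.
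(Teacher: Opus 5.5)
Your proposal is correct and is essentially the paper's own proof. Both pull back $K_{X^{(1)}}+\Delta^{(1)}$ along the two sides of the square formed by $F_{X/C}$ and its birational lift, use the foliation adjunction formula with $K_{T_{Y/C}}$ read off from \autoref{prop:can_div_relative_tangent_sheaf}, compare coefficients of $E$, and then add the central fibers via \autoref{lemma:orders_central_fibers}. The only cosmetic difference is how the horizontal term $\Gamma$ is handled: the paper shrinks $Y$ so that its generic fiber is geometrically normal, which makes $\Gamma$ vanish, whereas you observe that $\Gamma$ has no vertical components and so cannot affect the coefficient of $E$.
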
 
\begin{proof}
Let $\phi\colon Y\to X$ be a normal birational model on which $E$ appears. Then we have a commutative diagram
        $$\begin{tikzcd}
            Y\arrow[r, "q'"] \arrow[d, "\phi"] & Y^{(1),\nu}
            \arrow[d, "\phi'"] \\
            X \arrow[r, "q"] & X^{(1)}
        \end{tikzcd}$$
where $q'$ is the quotient by $\phi^*T_{X/C}$ and $\phi'$ is birational. Note that $q^*\Delta^{(1)}=p\Delta$, since by definition $\Delta^{(1)}$ is the pullabck of $\Delta$ along $X^{(1)}\to X$. By throwing away a closed subset of $Y$ that does not contain $E$, we may assume that the generic fiber of $Y\to C$ is geometrically normal.
Then, using \autoref{eqn:can_divisor_rel_log_foliation} and \cite[Proposition 4.2.3]{Posva_Singularities_quotients_by_1_foliations} we get
        \begin{eqnarray*}
            \phi^*q^*(K_{X^{(1)}}+\Delta^{(1)})&=&
        \phi^*(K_X+(p-1)K_{T_{X/C}}+p\Delta) \\
        &=&
        \phi^*(p(K_X+\Delta)-(p-1)f^*K_C) \\
        &=&
        p(K_Y+\phi^{-1}_*\Delta)-(p-1)(f\circ\phi)^*K_C
        -p\cdot a(E;X,\Delta)\cdot E
        \end{eqnarray*}
(where the fact that $K_{T_{X/C}}$ might not be $\bQ$-Cartier is not an issue), and 
        \begin{eqnarray*}
        q'^*\phi'^*(K_{X^{(1)}}+\Delta^{(1)}) &=&
        p(K_Y+\phi^{-1}_*\Delta)-(p-1)(f\circ\phi)^*K_C \\
        && \quad -\big[
        (p-1)(\ord_EY_\mathbf{0}+\gamma_w(E))+
        \delta_{T_{X/C}}(E)\cdot a(E^{(1)};X^{(1)},\Delta^{(1)})
        \big]\cdot E.
        \end{eqnarray*}
Equating the two expressions yields
        $$p\cdot a(E;X,\Delta)-(p-1)(\ord_EY_\mathbf{0}+\gamma_w(E))
        =
        \delta_{T_{X/C}}(E)\cdot a(E^{(1)};X^{(1)},\Delta^{(1)}).$$
Combining this equality with \autoref{lemma:orders_central_fibers}, we can write
        \begin{eqnarray*}
        \delta_{T_{X/C}}(E)\cdot a(E^{(1)};X^{(1)},\Delta^{(1)}+X_\mathbf{0}^{(1)}) &=&
        \delta_{T_{X/C}}(E)\cdot a(E^{(1)};X^{(1)},\Delta^{(1)})-\delta_{T_{X/C}}(E)\cdot \ord_{E^{(1)}}Y_\mathbf{0}^{(1),\nu} \\
        &=& \delta_{T_{X/C}}(E)\cdot a(E^{(1)};X^{(1)},\Delta^{(1)})-\ord_EY_\mathbf{0} \\
        &=& p\cdot \big[a(E;X,\Delta)-  \ord_EY_\mathbf{0}\big]-
        (p-1)\gamma_w(E) \\
        &=& 
        p\cdot a(E;X,\Delta+X_\mathbf{0})-(p-1)\gamma_w(E)
        \end{eqnarray*}
which yields the statement.
\end{proof} 

\begin{remark}
We can also find the formula of \autoref{prop:formula_discrep_qt} starting from \cite[Theorem 3.25]{Benozzo_CBF_in_pos_char} in the case $e=1$.
\end{remark}

Let us also compute the discrepancies of $\mathfrak{F}$:

\begin{proposition}\label{prop:discrepancy_base_change_foliation}
If $E$ is vertical over $C$, then
        $$a(E^{(1)};\mathfrak{F})=
        \frac{1}{\delta_{T_{X/C}}(E)}\big(\gamma_w(E)+\ord_EY_\mathbf{0}\big).$$
\end{proposition}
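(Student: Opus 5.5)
The plan is to compare the two expressions for $a(E;X,\Delta+X_\mathbf{0})$ we already have, one from \autoref{prop:discrep_qt_by_F} (descent along $\mathfrak{F}$) and one from \autoref{prop:formula_discrep_qt} (quotient by $T_{X/C}$), and then solve for $a(E^{(1)};\mathfrak{F})$. Both hold under \autoref{assumption:nice_family_to_base_change}, since $X$ and $X^{(1)}$ are normal and $K_X+\Delta$ is $\bQ$-Cartier. Write $a=a(E;X,\Delta+X_\mathbf{0})$, $a^{(1)}=a(E^{(1)};X^{(1)},\Delta^{(1)}+X^{(1)}_\mathbf{0})$, $\delta=\delta_{T_{X/C}}(E)$, and $n=\ord_EY_\mathbf{0}$. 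By \autoref{lemma:inv_dichotomy} we have $\delta_\mathfrak{F}(E^{(1)})=p/\delta$, and by \autoref{lemma:orders_central_fibers} we have $\ord_{E^{(1)}}Y^{(1),\nu}_\mathbf{0}=n/\delta$.

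\autoref{prop:discrep_qt_by_F} then reads
$$\frac{p}{\delta}\,a=a^{(1)}+(p-1)\Big(a(E^{(1)};\mathfrak{F})-\frac{n}{\delta}\Big).$$
\autoref{prop:formula_discrep_qt} gives $a^{(1)}=\delta^{-1}\big(p\,a-(p-1)\gamma_w(E)\big)$. Substituting this into the first identity, the terms $p\,a/\delta$ cancel, leaving
$$0=-\frac{(p-1)\gamma_w(E)}{\delta}+(p-1)\,a(E^{(1)};\mathfrak{F})-(p-1)\frac{n}{\delta}.$$
Dividing by $p-1$ gives exactly $a(E^{(1)};\mathfrak{F})=\delta^{-1}(\gamma_w(E)+n)$.

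The one point to check is that the two propositions are applied to the same model: the model $Y^{(1),\nu}$ of \autoref{prop:discrep_qt_by_F} must be the normalization of $Y\times_{C,F_{C/k}}C$, with $Y$ the model used in \autoref{prop:formula_discrep_qt}. This holds by construction of $E\mapsto E^{(1)}$. The local shrinking of $Y$ done in \autoref{prop:formula_discrep_qt}, to make its generic fiber geometrically normal, does not affect any coefficient along $E$.

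As a sanity check, and as an alternative direct argument if needed, one can compute locally. Take the generator $\partial_t$ of $\mathfrak{F}$ from the proof of \autoref{prop:descent_along_pur_insep} and write $t=u's^{n'}$ in $\sO_{Y^{(1),\nu},E^{(1)}}$ with $n'=n/\delta$. Then compute the pole order of $\partial_t$, using that $u'$ is related to the unit $u$ along $E$ through $q'$. This agrees with the case analysis there: in the tame invariant case the answer is $n'-1$ with $\gamma_w=-1$, and in the wild case it is at least $n'$. I expect this direct route to be the harder one, since it requires tracking $\gamma_w$ through the pullback, so the algebraic elimination above is the preferred proof.
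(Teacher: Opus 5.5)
Your proof is correct and follows the paper's argument. Like the paper, you substitute \autoref{prop:formula_discrep_qt} into \autoref{prop:discrep_qt_by_F}, use \autoref{lemma:inv_dichotomy} and \autoref{lemma:orders_central_fibers} to write $\delta_\mathfrak{F}(E^{(1)})=p/\delta_{T_{X/C}}(E)$ and $\ord_{E^{(1)}}Y^{(1),\nu}_\mathbf{0}=\ord_EY_\mathbf{0}/\delta_{T_{X/C}}(E)$, and then solve for $a(E^{(1)};\mathfrak{F})$; your algebra is sound, so the direct local computation in your last paragraph is not needed.
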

\begin{proof}
We have
        \begin{eqnarray*}
        a(E;X,\Delta+X_\mathbf{0})&=&
        \delta_\mathfrak{F}(E^{(1)})^{-1}
        \cdot \left[
        a(E^{(1)};X^{(1)},\Delta^{(1)}+X^{(1)}_\mathbf{0})
        \right. \\
        && \quad \quad \quad \quad \quad\quad \left. +(p-1)\cdot \left(a(E^{(1)};\mathfrak{F})
        -\ord_{E^{(1)}}Y^{(1),\nu}_\mathbf{0}\right)
        \right] \\
        &=&
        \delta_\mathfrak{F}(E^{(1)})^{-1}
        \cdot \left[
        \frac{p}{\delta_{T_{X/C}}(E)} \cdot 
        a(E;X,\Delta+X_\mathbf{0}) - \frac{p-1}{\delta_{T_{X/C}}(E)}\cdot \gamma_w(E)
         \right. \\
        && \quad  \quad \quad \quad \quad \quad 
        \left. +(p-1)\cdot a(E^{(1)};\mathfrak{F})
        - \frac{p-1}{\delta_{T_{X/C}}(E)}\cdot \ord_EY_\mathbf{0}
        \right] \\
        &=& 
        a(E;X,\Delta+X_\mathbf{0})-\frac{p-1}{p}\gamma_w(E)
        \\ && \quad \quad 
        +\frac{p-1}{\delta_{T_{X/C}}(E)}\cdot a(E^{(1)};\mathfrak{F}) 
        -\frac{p-1}{p}\ord_EY_\mathbf{0},
        \end{eqnarray*}
where the first equality is \autoref{prop:discrep_qt_by_F}, the second follows from \autoref{prop:formula_discrep_qt} and \autoref{lemma:orders_central_fibers}, and the third one holds thanks to \autoref{lemma:inv_dichotomy}. Using \autoref{lemma:inv_dichotomy} again, we obtain that 
        $$a(E^{(1)};\mathfrak{F})=\frac{\delta_\mathfrak{F}(E^{(1)})}{p}\big(\gamma_w(E)+\ord_EY_\mathbf{0}\big)
        =\frac{1}{\delta_{T_{X/C}}(E)}\big(\gamma_w(E)+\ord_EY_\mathbf{0}\big)$$
as claimed.
\end{proof}

\begin{corollary}\label{cor:wild_coeff_p_div_in_non_inv_case}
If $E$ is vertical, wild and non-$T_{X/C}$-invariant then $\gamma_w(E)\in p\mathbb{Z}_{\geq 0}$.
\end{corollary}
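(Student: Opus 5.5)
The plan is to read off integrality from \autoref{prop:discrepancy_base_change_foliation}. For $E$ vertical and non-$T_{X/C}$-invariant we have $\delta_{T_{X/C}}(E)=p$, so that proposition gives
$$a(E^{(1)};\mathfrak{F})=\frac{1}{p}\big(\gamma_w(E)+\ord_EY_\mathbf{0}\big).$$
The left-hand side is an integer. Indeed, $a(E^{(1)};\mathfrak{F})$ is defined as the coefficient of $E^{(1)}$ in $K_{\mu'^*\mathfrak{F}}-\mu'^*K_\mathfrak{F}$. Here $K_\mathfrak{F}$ is Cartier by \autoref{lemma:can_div_Frob_is_Cartier}, and $K_{\mu'^*\mathfrak{F}}$ is a Weil divisor. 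Concretely, as in the proof of \autoref{prop:descent_along_pur_insep}, it is the minimal integer $a$ such that $\partial_t=s^{-a}\psi$ with $\psi\in\Der(T)$. Hence $\gamma_w(E)+\ord_EY_\mathbf{0}\in p\mathbb{Z}$.

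Next, since $E$ is wild, $\ord_EY_\mathbf{0}$ is divisible by $p$ by \autoref{def:wild_and_tame_divisors}. Subtracting, we get $\gamma_w(E)\in p\mathbb{Z}$.

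For non-negativity, note that for a wild divisor $\gamma_w(E)=\gamma_w(\eta)$ is the minimum of the exponents $a_i\geq 0$, and this minimum is taken over a nonempty set, as shown before \autoref{def:wild_and_tame_divisors}. So $\gamma_w(E)\geq 0$.

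The one point to check carefully is that \autoref{prop:discrepancy_base_change_foliation} applies under the standing \autoref{assumption:nice_family_to_base_change}, and that the quantity $a(E^{(1)};\mathfrak{F})$ appearing there is genuinely integral. The latter follows from the Cartier property of $K_\mathfrak{F}$ and the local description via the generator $\partial_t$; no other obstacle is expected.
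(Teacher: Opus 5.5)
Your proposal is correct and follows the paper's proof: you rewrite \autoref{prop:discrepancy_base_change_foliation} with $\delta_{T_{X/C}}(E)=p$ as $\gamma_w(E)=p\cdot a(E^{(1)};\mathfrak{F})-\ord_EY_\mathbf{0}$. You then conclude from two facts: $a(E^{(1)};\mathfrak{F})\in\mathbb{Z}$ because $K_\mathfrak{F}$ is Cartier (\autoref{lemma:can_div_Frob_is_Cartier}), and $p$ divides $\ord_EY_\mathbf{0}$ because $E$ is wild. Your explicit check that $\gamma_w(E)\geq 0$ holds by definition for wild divisors makes precise a point the paper leaves implicit.
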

\begin{proof}
In that case we have $\gamma_w(E)=p\cdot a(E^{(1)};\mathfrak{F})-\ord_EY_\mathbf{0}$
and the right-hand side is an integer divisible by $p$, since $K_\mathfrak{F}$ is Cartier by \autoref{lemma:can_div_Frob_is_Cartier} and $\ord_E Y_\mathbf{0}$ is divisible by $p$.
\end{proof}

\begin{example}[Non-zero wild coefficients] \label{e-nonzero_wildcoeffs}
We indicate a way to produce non-$T_{X/C}$-invariant wild divisors with non-zero wild coefficients. 
Start with a non-$T_{X/C}$-invariant divisor $E$ appearing on some model $Y$ (there are many, see \autoref{example:non_inv_wild_div}). By \autoref{prop:discrepancy_base_change_foliation} and \autoref{lemma:orders_central_fibers}, we have
        $$a(E^{(1)};\mathfrak{F})=\frac{\gamma_w(E)}{p}+
        \ord_{E^{(1)}}Y_\mathbf{0}^{(1),\nu}.$$
Take a general regular closed point $\mathbf{z}$ of $E^{(1)}$ that is not contained in any other irreducible component of $Y_\mathbf{0}^{(1),\nu}$, and where the birational pullback of $\mathfrak{F}$ is regular. Then, we can find a local coordinate $y$ such that the birational pullback of $\mathfrak{F}$ is generated by $\partial_y$. Let $x$ be a local equation of $E^{(1)}$ at $\mathbf{z}$: if the images of $x$ and $y$ in the cotangent space 
$\mathfrak{m}_\mathbf{z}/\mathfrak{m}^2_\mathbf{z}$ are linearly equivalent, then $\partial_y(x)\notin \mathfrak{m}_\mathbf{z}$, a contradiction since $E^{(1)}$ must be $\mathfrak{F}$-invariant. So $x,y$ are part of a system of local parameters.

Blow-up the codimension two locus $\mathfrak{c}$ defined by the prime ideal $(x,y)$, and let $D^{(1)}$ be the exceptional divisor. Notice that the image of $\mathfrak{c}$ in $Y$ is contained in $E$ but not in any other irreducible component of $Y_\mathbf{0}$: thus the induced divisor $D$ over $X$ is vertical and wild.
A simple computation in local coordinates shows that $D^{(1)}$ is $\mathfrak{F}$-invariant (so $D$ is non-$T_{X/C}$-invariant by \autoref{lemma:inv_dichotomy}), and the discrepancy along $D^{(1)}$ of the $1$-foliation generated by $\partial_y$ is equal to $1$. Hence
        \begin{eqnarray*}
            a(D^{(1)};\mathfrak{F}) &=&
            a\left(D^{(1)}; \sO_{Y^{(1),\nu},\mathbf{z}}\cdot\partial_y,
            -a(E^{(1)};\mathfrak{F})\cdot E^{(1)}
            \right)\\
            &=&
            1+a(E^{(1)};\mathfrak{F}).
        \end{eqnarray*}
Write $Z^{(1)}=\Bl_\mathfrak{c}Y^{(1),\nu}$. The choice of the center $\mathfrak{c}$ ensures that $\ord_{D^{(1)}}Z^{(1)}_\mathbf{0}=\ord_{E^{(1)}}Y^{(1),\nu}_\mathbf{0}$. Since $D$ is non-$T_{X/C}$-invariant, as above we have
        $$a(D^{(1)};\mathfrak{F})=\frac{\gamma_w(D)}{p}+
        \ord_{E^{(1)}}Z_\mathbf{0}^{(1)}.$$
Comparing the formulas yields $\gamma_w(D)=\gamma_w(E)+p>0$.
\end{example}

We consider now whether we can bound the discrepancy 
$a(E^{(1)};X^{(1)},\Delta^{(1)}+X_\mathbf{0}^{(1)})$ from below.

\begin{proposition}\label{prop:discrepancy_tame_divisor}
Assume that $E$ is vertical and \emph{tame} over $C$. Then $E$ is $T_{X/C}$-invariant and: 
$a(E^{(1)};X^{(1)},\Delta^{(1)}+X^{(1)}_\mathbf{0})+1= 
p\cdot \big[a(E;X,\Delta+X_\mathbf{0})+1
\big]$.
\end{proposition}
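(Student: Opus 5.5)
The proof has two parts: first show that a tame vertical divisor is $T_{X/C}$-invariant, then substitute into \autoref{prop:formula_discrep_qt}.

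\emph{Invariance.} Let $Y$ be a normal model on which $E$ appears, and let $n=\ord_EY_\mathbf{0}$, which is prime to $p$. By \autoref{lemma:orders_central_fibers} we have $n=\delta_{T_{X/C}}(E)\cdot\ord_{E^{(1)}}Y^{(1),\nu}_\mathbf{0}$. If $E$ were not $T_{X/C}$-invariant, then $\delta_{T_{X/C}}(E)=p$ and $p$ would divide $n$, a contradiction. Hence $\delta_{T_{X/C}}(E)=1$.

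As a cross-check, one can argue directly. Pass to the henselization of $\sO_{Y,E}$ and write $t=s^n$ with $s$ a uniformizer. A derivation $\partial$ annihilating $t$ satisfies $ns^{n-1}\partial(s)=0$, hence $\partial(s)=0$ generically. So $T_{X/C}$ kills $s$ and preserves the ideal $(s)$, which is exactly invariance.

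\emph{Discrepancy formula.} With $\delta_{T_{X/C}}(E)=1$ and $\gamma_w(E)=-1$ for tame divisors (\autoref{def:wild_and_tame_divisors}), \autoref{prop:formula_discrep_qt} gives
$$a(E^{(1)};X^{(1)},\Delta^{(1)}+X^{(1)}_\mathbf{0})=p\cdot a(E;X,\Delta+X_\mathbf{0})+(p-1).$$
Adding $1$ to both sides yields the claimed identity.

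The main point to verify is that the convention $\gamma_w(E)=-1$ really matches the tame exact sequence \autoref{eqn:vertical_tame_divisor}, so that \autoref{prop:formula_discrep_qt} holds verbatim for tame $E$. In that sequence the cokernel is twisted by $\mathfrak{m}^{n-1}=\mathfrak{m}^{n+\gamma_w}$ with $\gamma_w=-1$, and this is the same normalization used in $W(f)$ of \autoref{prop:can_div_relative_tangent_sheaf}. I would therefore re-read the proof of \autoref{prop:formula_discrep_qt} to confirm that the coefficient $(p-1)(\ord_EY_\mathbf{0}+\gamma_w(E))$ comes from \autoref{eqn:can_divisor_rel_log_foliation} uniformly in both the tame and the wild case. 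If any doubt remains, I would redo that computation for the tame case alone, using the sequence \autoref{eqn:vertical_tame_divisor}.
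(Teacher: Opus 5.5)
Your proof is correct. The discrepancy step is exactly the paper's: it substitutes $\delta_{T_{X/C}}(E)=1$ and $\gamma_w(E)=-1$ into \autoref{prop:formula_discrep_qt} and applies it verbatim. Your worry about the convention is unfounded. In the proof of that proposition, the coefficient $(p-1)(\ord_EY_\mathbf{0}+\gamma_w(E))$ comes uniformly from $R(f)+W(f)$ in \autoref{eqn:can_divisor_rel_log_foliation}, and a tame $E$ contributes $(n-1)+0=n+\gamma_w(E)$ there.

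For invariance, your ``cross-check'' is the paper's argument. The paper localizes \'{e}tale so that $t=s^n$, takes a differential basis $s,x_1,\dots$, notes that $T_{Y/C}$ is generated by the $\partial_{x_i}$, and observes that these kill $s$.

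Your primary argument takes a genuinely different route. It reads invariance off the multiplicity relation in \autoref{lemma:orders_central_fibers}, using only that $\ord_{E^{(1)}}Y^{(1),\nu}_\mathbf{0}$ is an integer because $Y^{(1),\nu}_\mathbf{0}$ is Cartier. This is shorter and coordinate-free, and it makes clear that tameness $p\nmid n$ is the only input. The paper's computation is more hands-on, and it also exhibits explicit generators of $T_{Y/C}$ at $E$, in the style later used in \autoref{section:invariance}.

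Incidentally, the direct argument does not even need henselization. For $\partial\in T_{Y/C}$, you have
$0=\partial(us^n)=s^{n-1}\bigl(s\,\partial(u)+nu\,\partial(s)\bigr)$.
Since $nu$ is a unit, this gives $\partial(s)\in(s)$.
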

\begin{proof}
Let $t$ be a uniformizer of $\sO_{C,\mathbf{0}}$. Let $\phi\colon Y\to X$ be a model of $X$ where $E$ appears: then in the DVR $\sO=\sO_{Y,E}$ we can write $t=us^n$ where $s$ is a uniformizer of $\sO$, $u\in \sO^\times$ and $n$ is prime to $p$. Passing to an \'{e}tale neighbourhood of $\sO$ if necessary, we may assume that $u=1$. Let $x_1,\dots,x_n\in \sO$ be such that $s,x_1,\dots,x_n$ is a differential basis of $\sO$ over $k$ (this is possible since $\sO$ is regular, hence smooth over $k$). As $\phi^*T_{X/C}=T_{Y/C}$ at the generic point of $E$, we see that $\phi^*T_{X/C}$ is generated by the derivations $\partial_{x_i}$, $i=1,\dots,n$. We have $\partial_{x_i}(t)=ns^{n-1}\partial_{x_i}(s)=0$ so we see that $E$ is $T_{X/C}$-invariant. Since $\gamma_w(E)=-1$, by \autoref{prop:formula_discrep_qt} we obtain that
        \begin{eqnarray*}
            a(E^{(1)};X^{(1)},\Delta^{(1)}+X^{(1)}_\mathbf{0}) 
            +1
            &=&
            p \cdot a(E;X,\Delta,X_\mathbf{0})+(p-1)+1 \\
            &=&
            p\cdot\big[a(E;X,\Delta+X_\mathbf{0})+1\big]
        \end{eqnarray*}
as claimed.
\end{proof} 

The wild case is more evasive, because $E$ can be $T_{X/C}$-invariant or not and, in view of of \autoref{prop:formula_discrep_qt}, we have to control the wild coefficient $\gamma_w(E)$ in terms of the discrepancies of $(X,\Delta)$ only. 
On the other hand, if we know for some reason that $(X^{(1)},\Delta^{(1)})\to C$ is locally stable, then \autoref{prop:formula_discrep_qt} implies that discrepancies and wild coefficients of wild divisors are not independent---the most striking consequence being for wild log canonical places, see \autoref{prop:discrep_formula_at_lc_places} below. 

\begin{example}[Snc case]
Suppose that $X_\mathbf{0}$ is snc. We can repeatedly blow-up strata contained in the central fiber to create wild divisors, but their wild coefficients will stay small compared to the discrepancies. Indeed, we know by \cite[Lemma 2.57]{Kollar_Families_of_varieties_of_general_type} that $X^{(1)}\to C$ is still locally stable.

For example, suppose that $X=\bA^{n}_{x_\bullet}$ with $f=\colon X\to \bA^1_t$ given by $f^*(t)=x_1\cdots x_e$ for $e\leq n$. Blow-up $Z=V(x_1,\dots,x_d)$ where $d\geq e$, and denote by $E$ the exceptional divisor. Then $a(E;X,X_\mathbf{0})=-1$. The multiplicity of $E$ in the central fiber is $e$. Assume that $p$ divides $e$, so $E$ is a wild divisor. To compute the other invariants, consider the blow-up chart $\bA^n_{y_\bullet}$ is given by
        $$x_1\mapsto y_1, \quad x_i\mapsto y_1y_i \ (i>1).$$
As $t=y_1^e\cdot (y_2\cdots y_e)$ and $y_1,\dots,y_n$ is a differential basis of $\sO_{\Bl_ZX,E}$, we see that $\gamma_w(E)=0$. Notice that $T_{\Bl_ZX/\bA^1_t}$ is the set of those derivations that annihilate $y_2\cdots y_e$: it contains $\partial_{y_1}$, and so $E$ is \emph{not} $T_{X/C}$-invariant. 
\end{example}

\begin{example}[Family of semi-stable curves]
Assume that $f\colon (X,\Delta)\to C$ is strongly locally stable of relative dimension one. Let $x$ be a closed, singular point of $X$ contained in the central fiber. Then $(x\in X,\Delta)$ is a canonical surface singularity and so, by \cite[Theorem 2.29]{Kollar_Singularities_of_the_minimal_model_program}, $\Delta=0$ around $x$. Let $\mu\colon Y\to X$ be the minimal resolution, with exceptional curves $E_i$: we know that $a(E_i;X)=0$. So we have
        $$-1\leq a(E_i;X,X_\mathbf{0})=-\mult_{E_i}(\mu^*X_\mathbf{0})$$
and hence each $E_i$ has multiplicity one in $Y_\mathbf{0}$: in particular none of them are wild divisors. In most cases the fibers of $Y\to C$ have snc support, see \cite[Corollary 3.31]{Kollar_Singularities_of_the_minimal_model_program}, and then the previous example applies for all further blow-ups.
\end{example}

The following result shows that the situation at wild places  that are not $(1-1/p)$-klt is, in some sense, critical:

\begin{proposition}\label{prop:discrep_formula_at_lc_places}
Assume that $E$ is a wild divisor and that  $a(E;X,\Delta+X_\mathbf{0})\leq -1/p$.
Then the discrepancy $a(E^{(1)};X^{(1)},\Delta^{(1)}+X^{(1)}_\mathbf{0})\leq -1$, with equality if and only if: $E$ is \emph{not} $T_{X/C}$-invariant, and the wild coefficient $\gamma_w(E)=0$.
\end{proposition}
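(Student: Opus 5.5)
The plan is to read off both the inequality and the equality case from the two formulas of \autoref{prop:formula_discrep_qt}, separating the $T_{X/C}$-invariant and non-invariant cases. The invariant case follows directly. The non-invariant case is where I expect the real difficulty, and I do not yet have a complete argument for it.

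Throughout, write $a=a(E;X,\Delta+X_\mathbf{0})$ and $a'=a(E^{(1)};X^{(1)},\Delta^{(1)}+X^{(1)}_\mathbf{0})$. Since $E$ is wild, \autoref{def:wild_and_tame_divisors} gives $\gamma_w(E)\geq 0$.

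\emph{Invariant case.} Here $\delta_{T_{X/C}}(E)=1$, and \autoref{prop:formula_discrep_qt} gives $a'=pa-(p-1)\gamma_w(E)$. The hypothesis $a\leq -1/p$ means $pa\leq -1$, so $a'\leq -1$. Equality forces $\gamma_w(E)=0$ and $a=-1/p$. The statement says equality never occurs for invariant $E$, so this configuration must be excluded. For that I would work in the DVR $\sO_{Y,E}$ with $t=us^n$, $p\mid n$, and $\gamma_w(E)=0$. Then $du$ is non-vanishing along $E$, and invariance means $\phi^*T_{X/C}$ preserves $(s)$. I would show that these two facts together force $ds\in \sO\cdot du+s\,\Omega^1$ to fail in a way incompatible with $\ord_E$ of $(p-1)K_{T_{X/C}}$ computed via \autoref{prop:can_div_relative_tangent_sheaf}. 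Concretely, I would aim to prove $\gamma_w(E)\geq 1$, or $pa\leq -2$, whenever $E$ is invariant and wild. I would first try to extract this from \autoref{prop:discrepancy_base_change_foliation}: in the invariant case it reads $a(E^{(1)};\mathfrak{F})=\gamma_w(E)+n$, while $E^{(1)}$ is non-$\mathfrak{F}$-invariant. Comparing this with the local generator $\partial_t$ of $\mathfrak{F}$, as in the second bullet of the proof of \autoref{prop:descent_along_pur_insep}, should pin down $\gamma_w(E)$.

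\emph{Non-invariant case.} Here $\delta_{T_{X/C}}(E)=p$, and $a'=a-\frac{p-1}{p}\gamma_w(E)$, with $\gamma_w(E)\in p\mathbb{Z}_{\geq0}$ by \autoref{cor:wild_coeff_p_div_in_non_inv_case}. The formula alone only gives $a'\leq a\leq -1/p$. To reach $a'\leq -1$ one must show that in this case the hypothesis $a\leq -1/p$ forces $a\leq -1+\frac{p-1}{p}\gamma_w(E)$; equality $a'=-1$ then holds exactly when $\gamma_w(E)=0$ and $a=-1$.

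\emph{Main obstacle.} The step I expect to be hardest is exactly this strengthening for non-invariant $E$: upgrading $a\leq -1/p$ to the bound needed for $a'\leq -1$. I would try an integrality argument. By \autoref{prop:discrep_qt_by_F} with $\delta_\mathfrak{F}(E^{(1)})=1$, we have $a=a'+(p-1)\bigl(a(E^{(1)};\mathfrak{F})-\ord_{E^{(1)}}Y^{(1),\nu}_\mathbf{0}\bigr)$, where $K_\mathfrak{F}$ is Cartier by \autoref{lemma:can_div_Frob_is_Cartier}. I would then compare $p\cdot a$ with the pullback of $X^{(1)}_\mathbf{0}$, which is $pX_\mathbf{0}$ on the invariant side, to show that $a$ cannot lie in $(-1,-1/p]$ for non-invariant wild $E$.

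If this integrality argument fails, I would test the claim on the weighted blow-up of \autoref{example:non_inv_wild_div}. There $\gamma_w(E)=0$ and $a$ can be computed explicitly, which would show whether the stated inequality genuinely requires an extra input such as $a=-1$.
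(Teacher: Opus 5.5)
\textbf{The two steps you leave open cannot be completed, because the statement as written is false.} Both auxiliary claims you set out to prove fail, and with them the conclusion.

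Your case split via \autoref{prop:formula_discrep_qt} is exactly the paper's argument. Its proof is the single sentence that the claim follows from that formula and $\gamma_w(E)\geq 0$. That formula gives precisely what you found: $a'\leq -1$ for invariant $E$, with equality iff $a=-1/p$ and $\gamma_w(E)=0$; and $a'=a-\frac{p-1}{p}\gamma_w(E)$ otherwise. The following two examples show that nothing stronger holds.

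\emph{Non-invariant case.} Take $X=\mathbb{A}^2_{x,y}\to\mathbb{A}^1_t$ with $t=x$ and $\Delta=c\,V(y)$, where $1/p\leq c<1$; this family is strongly locally stable. The divisor $E$ of \autoref{example:non_inv_wild_div} (weights $(p,1)$ on $(x,y)$) is wild and non-invariant, with $\gamma_w(E)=0$ and $a=p-c-p=-c\leq -1/p$. On $X^{(1)}=\mathbb{A}^2_{\tau,y}$ with $\tau^p=x$, the divisor $E^{(1)}$ is the exceptional divisor of the blow-up of the origin, and $a'=1-c-1=-c>-1$. Indeed $(X^{(1)},\Delta^{(1)}+X^{(1)}_\mathbf{0})$ is snc. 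So no integrality argument can push $a$ out of $(-1,-1/p]$.

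\emph{Invariant case.} Keep $t=x$ and take $\Delta=\frac{1}{p}V(x-y^p)$. This family is again strongly locally stable. Each $V(x-c)$ has contact order $p$ with $V(x-y^p)$, so $1/p$ is exactly the log canonical threshold. Over $\overline{K(C)}$ the boundary becomes a single point with coefficient $1$, and $\Delta^{(1)}=V(\tau-y)$. Let $E$ have weights $(p+1,1)$ on $(x-y^p,y)$. In the chart $x-y^p=x_1'y'^{p+1}$, $y=y'$ one gets $t=y'^p(1+x_1'y')$, so $E$ is wild with $\gamma_w(E)=0$. At the generic point of $E$, $T_{Y/C}$ is generated by $x_1'\partial_{x_1'}-y'\partial_{y'}$, so $E$ is $T_{X/C}$-invariant; this is the situation of \autoref{prop:inv_case_I}. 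Since $a=(p+1)-\frac{p+1}{p}-p=-\frac{1}{p}$, the formula gives $a'=-1$ for an invariant divisor. Directly: $E^{(1)}$ has weights $(p+1,p)$ on $(\tau-y,y)$, and $a'=2p-(p+1)-p=-1$.

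In particular, your proposed lemma that ``$\gamma_w(E)\geq 1$ or $pa\leq -2$'' for invariant wild $E$ is false. The example following \autoref{def:wild_and_tame_divisors}, with $t=(1+x_0)x_0^p$, already gives invariant wild divisors with $\gamma_w=0$. Also, \autoref{prop:discrepancy_base_change_foliation} only returns $\gamma_w(E)\geq 0$ in this case.

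\textbf{What the one-line argument does prove.} It proves the version at wild log canonical places, which is what the label and the introduction refer to. If $a=-1$, the invariant case gives $a'=-p-(p-1)\gamma_w(E)<-1$. The non-invariant case gives $a'=-1-\frac{p-1}{p}\gamma_w(E)\leq -1$, with equality iff $\gamma_w(E)=0$.

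Under the stated hypothesis $a\leq -1/p$, the correct conclusion is: $a'\leq -1$ unless $E$ is non-invariant with $\gamma_w(E)=0$, in which case $a'=a$. This holds because for non-invariant $E$ one has $\gamma_w(E)\in p\mathbb{Z}$ by \autoref{cor:wild_coeff_p_div_in_non_inv_case}, and $\gamma_w(E)\geq p$ forces $a'\leq -\frac{1}{p}-(p-1)<-1$.

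Your fallback of testing \autoref{example:non_inv_wild_div} was the right instinct. It breaks the statement as soon as you add a horizontal boundary.
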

\begin{proof}
This is a direct consequence of \autoref{prop:formula_discrep_qt} and $\gamma_w(E)\geq 0$.
\end{proof}

\begin{example}\label{example:non_lc_family_curves_I}
    In characteristic $p=2$, we consider
    $f\colon X=\bA^{n+2}_{xyz_\bullet}\to \bA^1_t=C$ given by
    $t=x(x+y^r)$ for $r\geq 1$. This is an $F$-locally stable family when $r=1$, but when $r>1$ it is neither $F$-locally stable nor locally stable. We blow-up the ideal $(x,y,z_\bullet)$ on $X$ to obtain a chart $\bA3$ with coordinates $\bar{x},\bar{y},\bar{t},\bar{z}_\bullet$ where $x=\bar{x}$, $y=\bar{x}\bar{y}$ and $z_i=\bar{x}\bar{z}_i$ for every $i$. 
    The exceptional divisor is $E=V(\bar{x})$ and $a(E;X,X_\mathbf{0})=n-1$. We have
            $$t=\bar{x}^2(1+\bar{x}^{r-1}\bar{y}^r),$$
    so $E$ is wild and
        $$\gamma_w(E)=
        \begin{cases}
            r-1 & \text{if }r \text{ is odd,}\\
            r-2 & \text{if }r \text{ is even.}
        \end{cases}$$
    The relative tangent sheaf of the blow-up $Y$ over $C$ is generated by $\partial_{\bar{y}}, \partial_{\bar{z}_\bullet}$ if $r$ is even, and by $\partial_{\bar{x}}, \partial_{\bar{z}_\bullet}$ if $r$ is odd. So $E$ is $T_{Y/C}$-invariant if $r$ is even, and non-$T_{Y/C}$-invariant if $r$ is odd. Hence,
        $$a(E^{(1)};X^{(1)},X^{(1)}_\mathbf{0})
        =\begin{cases}
           n-1-\frac{p-1}{p}(r-1) & \text{if }r\text{ is odd,}\\
           p(n-1)-(p-1)(r-2) & \text{if }r\text{ is even.}
        \end{cases}$$
\end{example}   

\begin{example}\label{example:non_lc_family_curves_II}
    As a variation of the previous example, in any characteristic $p>0$ consider the family
    $f\colon X=\bA^{n+2}_{xyz_\bullet}\to \bA^1_t$ given by 
    $t=x(x^{p-1}+y^r)$ for $r\geq 1$. Let $E$ be the exceptional divisor obtained by blowing-up the ideal $(x,y,z_\bullet)$. Then $E$ is wild, it is non-$T_{X/C}$-invariant if and only if $p$ divides $r-1$, and
        $$\gamma_w(E)=
        \begin{cases}
            r-1 & \text{if }p\text{ divides }r-1, \\
            r-2 & \text{otherwise.}
        \end{cases}$$
\end{example}   



\begin{remark}[Is $T_{X/C}$ log canonical?]\label{rmk:when_relative_tangent_is_lc}
Suppose for simplicity that $\Delta=0$. 
Then it follows from \cite[Theorem 4.2.5]{Posva_Singularities_quotients_by_1_foliations} that $X^{(1)}\to C$ is locally stable if $T_{X/C}$ is log canonical (a slightly weaker property is in fact sufficient, see \cite{Hiroi_Quotients_by_adjoint_foliations}). 
We do not know whether local stability of $X\to C$ implies log canonicity of $T_{X/C}$. Note that for a vertical divisor $E$, it follows from \autoref{prop:can_div_relative_tangent_sheaf} 
that $a(E;T_{X/C})=a(E;X,X_\mathbf{0})-\gamma_w(E)$. 

In characteristic $0$, this implication is always true by \cite[Proposition 3.7]{ACSS_Positivity_moduli_part}. The proof relies on toroidalisation of morphisms in the sense of \cite{AK_Weak_semistable_reduction}. This is problematic in positive characteristic, essentially because we cannot get rid of units appearing in front of monomials---and it transpires that these units play a central role in the local analysis.

Javier Carvajal--Rojas, Samuele Ciprietti, Ghazaleh Fakhrivaighan and Anne Fayolle are developing a notion of $F$-splitting for (higher) foliations \cite{foliations_in_progress}: they can show that an $F$-split foliation is log canonical. However this does not always apply to our situation, see \autoref{rmk:non_T_split_deformations}.
\end{remark}

\subsubsection{Remarks on $T_{X/C}$-invariance}\label{section:invariance}
Let $t$ be a uniformizer of $\sO_{C,\mathbf{0}}$ and let $Y\to X$ be a normal birational model where a vertical wild divisor $E$ appears. Given a uniformizer $s$ of $\sO=\sO_{Y,E}$, we can write 
$t=us^n$
where $u\in \sO^\times$ and $n$ is divisible by $p$. By definition we can write 
        $$du=s^{\gamma_w(E)}\eta, \quad \eta\in \Omega^1_{\sO/k}\setminus \fm\Omega^1_{\sO/k}$$
where $\fm$ is the maximal ideal of $\sO$.
The canonical isomorphism $\Der_k(\sO)\cong \Hom_{\sO}(\Omega^1_{\sO/k},\sO)$ will be denoted by $\psi\mapsto \psi^\text{lin}$.

\begin{lemma}\label{lemma:description_relative_tangent_sheaf}
We have 
    $$\phi^*T_{X/C}\otimes \sO
=T_{Y/C}\otimes \sO=\{\theta\in \Der_k(\sO)\mid \theta(u)=0\}=\{\theta\in \Der_k(\sO)\mid \theta^\mathrm{lin}(\eta)=0\}.$$
\end{lemma}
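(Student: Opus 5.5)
The plan is to prove the three descriptions in turn: first the equality with the relative tangent sheaf of $Y$, then the kernel of $\theta\mapsto\theta(u)$, then the reformulation through $\eta$.

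\emph{First equality.} Both $\phi^*T_{X/C}$ (the birational pullback of the $1$-foliation) and $T_{Y/C}$ are saturated subsheaves of $T_Y$. Each is determined by its generic stalk, namely the derivations of $K(Y)=K(X)$ that annihilate $K(C)$. So they coincide, and localizing at $E$ gives $\phi^*T_{X/C}\otimes\sO=T_{Y/C}\otimes\sO$.

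\emph{Second equality.} By definition \autoref{eqn:relative_log_tangent}, $T_{Y/C}\otimes\sO$ is the kernel of $df$, that is, the set of $\theta\in\Der_k(\sO)$ with $\theta(t)=0$. Since $n$ is divisible by $p$, we have $\theta(s^n)=ns^{n-1}\theta(s)=0$. Hence $\theta(t)=\theta(u)s^n$, and because $\sO$ is a domain, $\theta(t)=0$ if and only if $\theta(u)=0$.

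\emph{Third equality.} Write $\theta(u)=\theta^{\mathrm{lin}}(du)=s^{\gamma_w(E)}\theta^{\mathrm{lin}}(\eta)$. Using again that $\sO$ is a domain, $\theta(u)=0$ if and only if $\theta^{\mathrm{lin}}(\eta)=0$.

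The only mild subtlety is the first step, which relies on the saturation and on the generic description of both sheaves. This was already noted after \autoref{eqn:relative_log_tangent} and in \autoref{lemma:relative_foliation_and_central_fiber}.
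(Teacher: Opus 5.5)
Your proposal is correct and follows the same route as the paper. In both, the two sheaves are identified generically as the derivations killing $t$, the hypothesis $p\mid n$ lets you replace $t$ by $u$, and the factor $s^{\gamma_w(E)}$ is cancelled in the domain $\sO$ to pass from $du$ to $\eta$; you simply spell out the saturation argument and the computation $\theta(us^n)=\theta(u)s^n$ in more detail.
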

\begin{proof}
By definition $\phi^*T_{X/C}$ is generically the sheaf of derivations which annihilate $t$, so it is equal to $T_{Y/C}$. As $t=us^n$ and $(n,p)>1$, it is also the set of derivations which annihilate $u$. In other words it is the set of derivations whose linearizations annihilate $du=s^{\gamma_w(E)}\eta$, equivalently $\eta$.
\end{proof}

Let $x_1,\dots,x_d\in \sO$ be such that $s,x_1,\dots,x_d$ is a differential basis of $\sO$ over $k$ (this is possible, since $\sO$ is regular and $k$ is perfect). We can write
        \begin{equation}\label{eqn:decomposition_eta}
        \eta=\alpha ds+\sum_{i=1}^d\beta_i dx_i,
        \quad \alpha,\beta_i\in \sO.
        \end{equation}
Since $\eta\notin \fm\Omega^1_{\sO/k}$, not all the coefficients $\alpha,\beta_i$ belong to $\fm$. We define:
        $$\mathcal{N}=\sO\cdot \eta + \sO\cdot ds
        \subseteq \Omega^1_{\sO/k}.$$
Then $\mathcal{N}$ is an $\sO$-submodule of $\Omega^1_{\sO/k}$, which may or not be saturated, of generic rank one or two.

\begin{proposition}\label{prop:u_s_indep_diff_coordinates}
Assume that $\mathcal{N}\subseteq \Omega^1_{\sO/k}$ is saturated of rank two. Then $E$ is not $T_{X/C}$-invariant.
\end{proposition}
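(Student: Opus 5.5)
The plan is to use the description of $T_{Y/C}$ at the generic point of $E$ from \autoref{lemma:description_relative_tangent_sheaf}: it consists of the derivations $\theta$ with $\theta^{\mathrm{lin}}(\eta)=0$. Saying that $E$ is $T_{X/C}$-invariant means that every such $\theta$ satisfies $\theta(s)\in (s)$, i.e.\ $\theta^{\mathrm{lin}}(ds)\in\fm$. So it suffices to exhibit one derivation $\theta\in\Der_k(\sO)$ with $\theta^{\mathrm{lin}}(\eta)=0$ and $\theta^{\mathrm{lin}}(ds)=1$ (or at least a unit).

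The key step is to produce this $\theta$ from the saturation hypothesis. Since $\Omega^1_{\sO/k}$ is free with basis $ds,dx_1,\dots,dx_d$ and $\mathcal{N}=\sO\eta+\sO ds$ is saturated of rank two, the quotient $\Omega^1_{\sO/k}/\mathcal{N}$ is torsion-free. Over the DVR $\sO$ it is therefore free, so $\mathcal{N}$ is a direct summand with basis $\{\eta,ds\}$. Here the two generators are independent because $\mathcal{N}$ has rank two and is generated by two elements. Concretely, writing $\eta=\alpha ds+\sum\beta_i dx_i$ as in \autoref{eqn:decomposition_eta}, the image of $\eta$ in $\Omega^1/\sO ds\cong\bigoplus\sO dx_i$ is $\sum\beta_i dx_i$. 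Saturation forces $\sum \beta_idx_i\notin\fm\cdot\bigoplus \sO dx_i$, since otherwise $s^{-1}\sum\beta_idx_i$ would lie in the saturation but not in $\mathcal{N}$. Hence some $\beta_j$ is a unit.

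We then extend the basis $\{\eta,ds\}$ to a basis of $\Omega^1_{\sO/k}$, e.g.\ by replacing $dx_j$ with $\eta$, and take the dual functional sending $ds\mapsto1$, $\eta\mapsto0$, and the remaining $dx_i\mapsto 0$. Explicitly this is $\theta=\partial_s-\beta_j^{-1}\alpha\,\partial_{x_j}$, which is a genuine element of $\Der_k(\sO)$ because $\beta_j$ is a unit. Then $\theta(u)=0$, so $\theta\in\phi^*T_{X/C}\otimes\sO$, while $\theta(s)=1\notin(s)$. Therefore $E$ is not invariant.

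The only delicate point is justifying that a unit coefficient $\beta_j$ exists, i.e.\ that the saturation hypothesis translates into this condition. One has to check that it is exactly saturation, and not merely rank two, that rules out all $\beta_i\in\fm$. I would verify this via the torsion of $\Omega^1/\mathcal{N}$ as described above; everything else is formal.
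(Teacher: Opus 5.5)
Your proof is correct and follows the paper's argument. Freeness of $\Omega^1_{\sO/k}/\mathcal{N}$ lets you extend $\{\eta,ds\}$ to a basis; the dual derivation $\theta$ with $\theta^{\mathrm{lin}}(\eta)=0$ and $\theta^{\mathrm{lin}}(ds)=1$ then lies in $T_{Y/C}\otimes\sO$ by \autoref{lemma:description_relative_tangent_sheaf}, while $\theta(s)=1$. Your unit-$\beta_j$ check and the formula $\theta=\partial_s-\beta_j^{-1}\alpha\,\partial_{x_j}$ simply make this basis extension explicit.
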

\begin{proof}
By assumption $\Omega^1_{\sO/k}/\mathcal{N}$ is torsion-free, hence free. So $\eta,ds,\omega_2,\dots,\omega_d$ is an $\sO$-basis of $\Omega^1_{\sO/k}$ for some $1$-forms $\omega_2,\dots,\omega_d$. Let $\delta,\partial_s,\zeta_2,\dots,\zeta_d$ be the dual basis of $\Der_k(\sO)$. Then
        $$\delta(u)=
        \delta^\text{lin}\left(s^{\gamma_w(E)}\eta\right)
        =s^{\gamma_w(E)}, \quad 
        \partial_s(u)=\zeta_2(u)=\dots =\zeta_d(u)=0.$$
By \autoref{lemma:description_relative_tangent_sheaf} we see that $T_{Y/C}\otimes \sO$ is (freely) generated by $\partial_s,\zeta_2,\dots,\zeta_d$. As $\partial_s(s)=1$, we see that $E$ is not $T_{Y/C}$-invariant.
\end{proof}

\begin{proposition}\label{prop:inv_case_I}
Assume that $\mathcal{N}$ has generic rank two, but is not saturated. Then $E$ is $T_{X/C}$-invariant.
\end{proposition}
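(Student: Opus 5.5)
Since $\mathcal{N}$ has generic rank two, $\eta$ and $ds$ are linearly independent in $\Omega^1_{\sO/k}\otimes K$. Using the differential basis $s,x_1,\dots,x_d$ and the decomposition \autoref{eqn:decomposition_eta}, this says that some $\beta_i$ is nonzero. Modulo $ds$, the module $\mathcal{N}$ is generated by the class of $\sum_i\beta_i dx_i$ in the free module $\Omega^1_{\sO/k}/\sO\,ds\cong\bigoplus_i\sO\,dx_i$.

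The plan has three steps.

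\emph{Step 1: identify non-saturation with all $\beta_i\in\fm$.} I claim $\mathcal{N}$ is saturated if and only if some $\beta_i$ is a unit. If some $\beta_i$ is a unit, then $\eta,ds$ together with the $dx_j$ for $j\neq i$ form a basis, so $\mathcal{N}$ is saturated. Conversely, suppose every $\beta_i\in\fm=(s)$. Write $\beta_i=s^m\beta_i'$ with $m\geq 1$ minimal. Then $s^{-m}(\eta-\alpha\,ds)=\sum_i\beta_i'dx_i$ lies in $\mathcal{N}^{\mathrm{sat}}$ but not in $\mathcal{N}$, so $\mathcal{N}$ is not saturated. Hence our hypothesis gives $\beta_i\in\fm$ for all $i$. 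Since $\eta\notin\fm\Omega^1_{\sO/k}$, it follows that $\alpha$ is a unit.

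\emph{Step 2: describe $T_{Y/C}\otimes\sO$.} By \autoref{lemma:description_relative_tangent_sheaf}, a derivation $\theta=a\partial_s+\sum_i b_i\partial_{x_i}$ lies in $T_{Y/C}\otimes\sO$ if and only if $a\alpha+\sum_i b_i\beta_i=0$. Because $\alpha$ is a unit, this solves as $a=-\alpha^{-1}\sum_i b_i\beta_i$. So $T_{Y/C}\otimes\sO$ is freely generated by the derivations
$$\theta_i=\partial_{x_i}-\alpha^{-1}\beta_i\,\partial_s,\qquad i=1,\dots,d.$$

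\emph{Step 3: check invariance.} Each generator satisfies $\theta_i(s)=-\alpha^{-1}\beta_i\in(s)$, since $\beta_i\in\fm$. Therefore every local section of the saturated foliation maps the ideal $(s)$ into itself, which means $E$ is $T_{Y/C}$-invariant, i.e.\ $T_{X/C}$-invariant in the sense of \autoref{def:deltas}.

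The main obstacle is Step 1, specifically the direction showing that non-saturation forces every $\beta_i$ into $\fm$; this is where the rank-two hypothesis is used. Once that is settled, Steps 2 and 3 are direct computations.
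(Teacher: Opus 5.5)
Your proof is correct and follows the paper's outline. You show every $\beta_i\in\fm$ (so $\alpha\in\sO^\times$), compute generators of $T_{Y/C}\otimes\sO$ from $\theta^{\mathrm{lin}}(\eta)=0$, and check that they send $s$ into $(s)$. The paper does the first step by an element-chasing contradiction and the second in an adapted basis $ds,\theta,\omega_2,\dots,\omega_d$, with generators $\alpha\delta-s^r\partial_s,\zeta_2,\dots,\zeta_d$; it reuses that adapted basis in \autoref{lemma:bound_inv_case}. Your direct-summand argument and your direct solution in the original dual basis are a slightly more economical version of the same proof.

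One correction to your closing remark: the direction you actually use (non-saturated $\Rightarrow$ all $\beta_i\in\fm$) is just the contrapositive of your direct-summand observation and needs no rank-two hypothesis. Rank two only enters the converse, which you do not need.
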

\begin{proof}
First we claim that in \autoref{eqn:decomposition_eta} we have $\beta_i\in \fm$ for all $i$. Indeed, as $\mathcal{N}$ is not saturated there exists a $1$-form $\xi$ such that $s\xi\in \mathcal{N}$ but $\xi\notin \mathcal{N}$. As $\mathcal{N}$ is also generated by $ds$ and $\sum_i \beta_idx_i$, we can write
        $$s\xi = ads+b\sum_i\beta_idx_i \quad 
        \text{for some }a,b\in \sO.$$
Since $s,x_1,\dots,x_d$ is a differential basis, the coefficients $a,b\beta_i$ must all belong to $\fm$. If some $\beta_i$ is invertible, then $b\in \fm$. Writing $a=sa'$ and $b=sb'$ with $a',b'\in \sO$, we obtain
        $$\xi=a'ds+b'\sum_i\beta_idx_i\in \mathcal{N},$$
contradiction. So every $\beta_i$ belongs to $\fm$; note that this implies that $\alpha\in \sO^\times$.

We can therefore write $\sum_i \beta_idx_i=s^r\theta$ where $\theta$ is a $1$-form not divisible by $s$, and $r>0$. We can find an $\sO$-basis $ds,\theta,\omega_2,\dots,\omega_d$ of $\Omega^1_{\sO/k}$, and we let $\partial_s,\delta,\zeta_2,\dots,\zeta_d$ be the dual basis of $\Der_k(\sO)$.
Observe that
        $$s^r=\delta^\text{lin}(s^r\theta)
        =\delta^\text{lin}\left(
            \eta -\alpha ds\right)
            =\delta^\text{lin}(\eta)$$
and that 
        $$\partial_s^\text{lin}(\eta)=
        \partial_s^\text{lin}(s^r\theta+\alpha ds)=\alpha,$$
so $\alpha\delta-s^r\partial_s$ belongs to $T_{Y/C}\otimes \sO$. 
As $\alpha$ is invertible, it follows that a basis of $T_{Y/C}\otimes \sO$ is given by $\alpha\delta-s^r\partial_s,\zeta_2,\dots,\zeta_d$. As
        $$\zeta_i(s)=0 \ \forall i, \quad 
        (\alpha\delta-s^r\partial_s)(s)=-s^r\in \fm,$$
we see that $E$ is $T_{X/C}$-invariant.
\end{proof}

\begin{remark}\label{rmk:simplification_inv_case_1}
In the situation of \autoref{prop:inv_case_I}, up to passing to $\widehat{\sO}$ we may assume that $\alpha=1$ in \autoref{eqn:decomposition_eta}. Indeed, if $\lambda\in \sO^\times$ then using the uniformizer $s'=\lambda s$ instead of $s$ yields
        $$du=s'^{\gamma_w(E)}\eta' \quad 
        \text{where} \quad 
        \eta'=\alpha \lambda^{\gamma_w(E)+1} ds'+ s'^{r'}\theta', \quad r'\geq 1.$$
As $\gamma_w(E)+1$ is not divisible by $p$ (\autoref{lemma:wild_coeff_not-1_mod_p}), after passing to the completion if necessary we may choose $\lambda$ so that $\alpha\lambda^{\gamma_w(E)+1}=1$.
\end{remark}

\begin{proposition}\label{prop:inv_case_II}
Assume that $\mathcal{N}$ has generic rank one. Then $E$ is $T_{X/C}$-invariant.
\end{proposition}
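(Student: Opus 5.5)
If $\mathcal{N}$ has generic rank one, then $\eta$ and $ds$ are linearly dependent over $K=\Frac(\sO)$. So I will argue by writing $\eta$ as a multiple of $ds$ and then reading off a generating set of $T_{Y/C}\otimes\sO$ explicitly, exactly as in the proofs of \autoref{prop:u_s_indep_diff_coordinates} and \autoref{prop:inv_case_I}.

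\textbf{Step 1: $\eta$ is a unit multiple of $ds$.} Since $s,x_1,\dots,x_d$ is a differential basis, $ds$ is part of an $\sO$-basis of $\Omega^1_{\sO/k}$. In the decomposition \autoref{eqn:decomposition_eta}, linear dependence of $\eta$ and $ds$ forces all $\beta_i=0$, so $\eta=\alpha\, ds$. As $\eta\notin\fm\Omega^1_{\sO/k}$, we get $\alpha\in\sO^\times$.

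\textbf{Step 2: compute $T_{Y/C}\otimes\sO$.} By \autoref{lemma:description_relative_tangent_sheaf}, $T_{Y/C}\otimes\sO$ consists of the derivations $\theta$ with $\theta^{\mathrm{lin}}(\eta)=0$. This is equivalent to $\theta^{\mathrm{lin}}(ds)=0$, that is, $\theta(s)=0$, because $\alpha$ is a unit. Taking the dual basis $\partial_s,\partial_{x_1},\dots,\partial_{x_d}$, the module $T_{Y/C}\otimes\sO$ is freely generated by $\partial_{x_1},\dots,\partial_{x_d}$.

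\textbf{Step 3: invariance.} Each generator satisfies $\partial_{x_i}(s)=0\in(s)$. Hence every local section of $\phi^*T_{X/C}$ at the generic point of $E$ maps the ideal $(s)$ into itself, and $E$ is $T_{X/C}$-invariant.

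There is no real obstacle here. The only point needing care is Step 1, namely that $ds$ is not divisible by $s$ and extends to a basis, so that rank one really forces $\eta\in\sO^\times\cdot ds$ rather than merely $\eta\in K\cdot ds$. This is immediate from the choice of differential basis.

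It is also consistent with \autoref{lemma:wild_coeff_not-1_mod_p}. With $\eta=\alpha\,ds$ we have $du=s^{\gamma_w(E)}\alpha\,ds$, which is compatible with $u$ depending on $s$ alone to leading order, so there is no contradiction to check.
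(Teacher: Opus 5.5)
Your proof is correct and follows the paper's argument: reduce to $\eta\in\sO^\times\cdot ds$, then use \autoref{lemma:description_relative_tangent_sheaf} to see that $T_{Y/C}\otimes\sO$ is generated by the $\partial_{x_i}$, which kill $s$. The only cosmetic difference is in Step 1: the paper gets $\eta=\alpha\,ds$ by noting that $\sO\cdot\eta$ is saturated in $\Omega^1_{\sO/k}$ and hence equals $\mathcal{N}$, whereas you read it off directly from the coordinates in \autoref{eqn:decomposition_eta}.
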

\begin{proof}
Suppose that $\mathcal{N}/(\sO\cdot \eta)$ is not zero: this means that $\sO\cdot \eta$ is not saturated in $\mathcal{N}$. However, it is saturated in $\Omega^1_{\sO/k}$, which is a contradiction: so $\mathcal{N}=\sO\cdot \eta$. This implies that $\eta=\alpha ds$ in \autoref{eqn:decomposition_eta} with $\alpha\in \sO^\times$, so in fact $\sN=\sO\cdot ds$. Take a differential basis $s,x_1,\dots,x_d$ of $\sO$ over $k$ and let $\partial_s,\partial_{x_1},\dots,\partial_{x_d}$ be the dual basis of $\Der_k(\sO)$. By \autoref{lemma:description_relative_tangent_sheaf}, $T_{Y/C}\otimes \sO$ is generated by the $\partial_{x_i}$s. These derivations satisfy $\partial_{x_i}(s)=0$,
so $E$ is $T_{X/C}$-invariant.
\end{proof}

\section{Some results on wild coefficients}\label{section:study_wild_coeff}
The purpose of this section is to study wild coefficients in some specific cases.

\subsection{Deformations of non-$F$-pure canonical surface singularities}\label{section:examples}
The first instances where \autoref{prop:permanence_F_pure_case} does not apply are deformations of some non-$F$-pure canonical surface singularities in low characteristics. In this subsection we give a few basic examples.

Let $A=k\llbracket x,y,z\rrbracket /(f)$ be a germ of a two-dimensional hypersurface singularity with an isolated singularity at the origin. Let $g_1,\dots,g_n$ be a $k$-basis of the (finite-dimensional) Jacobian ring
$A/(\partial_xf, \partial_yf,\partial_zf)$.
The \emph{miniversal deformation} of $A$ is the morphism
        $$\Spec\left(
        \frac{k\llbracket x,y,z,t_1,\dots,t_n\rrbracket}
        {(f+\sum_i t_ig_i)}
        \right)\longrightarrow
        \widehat{\bA}^n_{t_1,\dots,t_n}.$$
Any deformation of $A$ is, \'{e}tale-locally on the base, a base-change of the miniversal deformation. See \cite[\S 4.5]{Kollar_Mori_Birational_geometry_of_algebraic_varieties} for more details. So a typical one-parameter deformation $\mathcal{A}$ over $k\llbracket t\rrbracket$ of $A$ is obtained by selecting a curve 
        $$\widehat{\bA}^1_t\to \widehat{\bA}^n_{t_1,\dots,t_n},
        \quad t_i=\alpha_i t^{m(i)}
        \text{ with } \alpha_i\in k
        \text{ and }m(i)\in \bZ_{\geq 0}$$
in the miniversal deformation space, and pulling back the miniversal family along it. Assume in addition that $A$ is a canonical surface singularity. Then, up to change of coordinates, we can write $f$ in a standard form (see for example \cite[Theorem 0.4.13]{Cossec_Dolgachev_Liedtke_Enriques_surfaces_I}), which allows to write down explicitly the miniversal deformation.

If $p\leq 5$, then the canonical surface singularities $A$ are not necessarily $F$-pure; see for example \cite[Table 1]{TY_Quasi_split}. We consider some deformations of the non-$F$-pure ones. In several cases, blowing-up above the singular point of the special fiber $(t=0)$ eventually produces a wild divisor, but in the examples we computed the wild coefficients are always zero.

\begin{example}[$E^0_8$ singularity, $p=2$]
When $k$ has characteristic $2$, the $E^0_8$-singularity is defined by $f=z^2+x^3+y^5$ and is non-$F-$pure. We consider the deformation
        $$\mathcal{A}=k\llbracket x,y,z,t\rrbracket/(z^2+x^3+y^5+t^jg),$$
where $g=g(x,y,z)$ has non-zero image in the Jacobian ring of the $E^0_8$ singularity, and where $j\geq 1$ and $\mult_{(x,y,z,t)}t^jg\geq 3$. We assume that $g$ is general enough so that $\mathcal{A}$ has an isolated singularity at the origin.
By blowing-up the origin, we obtain a wild, non-$T$-invariant divisor $E$ with $\gamma_w(E)=0$.

Here are some details. A local chart for the blow-up is given by 
        $$Y=V(\bar{z}^2+\bar{x}+\bar{x}^3\bar{y}^5
        +\bar{x}\bar{h})\subset \widehat{\bA}^4, \quad 
        x=\bar{x},\  y=\bar{x}\bar{y},\ 
        z=\bar{x}\bar{z}, \ t=\bar{x}\bar{t},$$
where $\bar{h}$ is a regular function on $\widehat{\bA}^4$; it follows from the fact that $\mult_{(x,y,z,t)}t^jg\geq 3$ that $\bar{h}$ has no constant term. Then $Y$ is a normal threefold at the generic point of the exception divisor $E$.
The divisor $E$ is the trace on $Y$ of $V(\bar{x})$: so
        $$\sO:=\sO_{Y,E}=
        k\llbracket \bar{x},\bar{y},\bar{z},\bar{t}
        \rrbracket_{(\bar{x},\bar{z})}/
        (\bar{z}^2+\bar{x}+\bar{x}^3\bar{y}^5
        +\bar{x}\bar{h}).$$
In $\sO$ we have $\bar{z}^2=\bar{x}(1+\bar{x}^2\bar{y}^5+\bar{h})$, so $\bar{z}$ is a uniformizer of $\sO$ and $\bar{x}$ has order $2$. We have
        $$t=\bar{x}\bar{t}
        =\underbrace{\bar{t}(1+\bar{x}^2\bar{y}^5+\bar{h})^{-1}}_{=u\in\sO^\times}\cdot\bar{z}^2,$$
so $E$ is wild.
If $v=1+\bar{x}^2\bar{y}^5+\bar{h}$ then $\ord_E du=\ord_E d(v^2u)$. One computes $\ord_E\partial_{\bar{t}}(v^2u)=0$, which implies that $\ord_Edu=0$. 

We claim that $E$ is never $T_{\mathcal{A}/k\llbracket t\rrbracket}$-invariant. If $\bar{h}$ does not depend on $\bar{z}$ (equivalently if $g$ does not depend on $z$) then $\partial_{\bar{z}}\in \Der^\text{cont}(\sO)$ kills $u$, which implies that $E$ is non-invariant. More generally, observe that the derivations
$b\partial_{\bar{y}}+c\partial_{\bar{z}}$ on $\widehat{\bA}^4$ that annihilate $\bar{x}^2\bar{y}^5+\bar{h}$ restrict to (continuous) derivations on $\sO$. In particular, 
    $$\psi=\partial_{\bar{z}}(\bar{h})\cdot \partial_{\bar{y}}
    +(\bar{x}^2\bar{y}^4+\partial_{\bar{y}}(\bar{h}))\cdot \partial_{\bar{z}}$$
defines a continuous derivation on $\sO$. We have
$\psi(u)=v^{-2}\psi(v^2u)=0$ while $\psi(\bar{z})\notin \bar{z}\sO$, which shows that $E$ is non-invariant.
\end{example} 

\begin{example}[$E^0_6$ singularity, $p=3$]
When $k$ has characteristic $3$, let us consider the deformation
        $$\mathcal{A}=k\llbracket x,y,z,t\rrbracket/(z^2+x^3+y^4+t^2)$$
of the $E^0_6$-singularity, which is non-$F$-pure. The origin is an isolated singular point. We blow-up four times, along centers that are infinitely near the origin. The last exceptional divisor is wild, non-$T_{\mathcal{A}/k\llbracket t \rrbracket}$-invariant, with wild coefficient $0$.

More precisely, the $y$-chart of the first blow-up contains an isolated singular point, but the exceptional divisor is not wild. Three more blow-ups, the second and third ones at the unique singular point of the $x$-chart, produce a regular chart given by
        $$V(\bar{z}^2+\bar{y}+\bar{y}^2+\bar{t}^2)\subset \widehat{\bA}^4, \quad t=\bar{t}\bar{y}\bar{x}^6.$$
The last exceptional divisor is $E=V(\bar{x})$. Then $u=\bar{t}\bar{y}$, so we see that $E$ is non-invariant with $\gamma_w(E)=0$.
\end{example}

\begin{example}[$E^0_6$ singularity, $p=3$]
We modify slightly the previous example: let us consider
        $$\mathcal{A}=k\llbracket x,y,z,t\rrbracket/(z^2+x^3+y^4+t^3).$$
Changing coordinates $w=x+t$, we write it as
        $$\mathcal{A}=k\llbracket x,y,z,w\rrbracket/(z^2+w^3+y^4), \quad 
        t=w-x.$$
Up to completion $\mathcal{A}$ is the product of the $E^0_6$-singularity with $k[x]$, so there is a resolution of singularities $Y=X\widehat{\otimes}\bA^1_x\to \Spec(\mathcal{A})$ where $X$ is the minimal resolution of the $E^0_6$-singularity. Since $t=w-x$ and the resolution does not depend on the variable $x$, we see that $Y$ has no wild divisor.

Instead, let us blow-up $(x,y,z,w)$ with weights $(3,1,1,3)$. On the schematic chart where the exceptional divisor is $E=V(\bar{y})$, the equation of the blow-up is $\bar{z}^2+\bar{y}^2(1+\bar{y}^5\bar{w}^3)=0$. The function $\bar{v}=\bar{z}/\bar{y}$ belongs to the normalization, which is then given by the spectrum of a suitable completion of
$k[\bar{x},\bar{y},\bar{v},\bar{w}]/(\bar{v}^2+1+\bar{y}^5\bar{w}^3)$. The exceptional divisor is $E=V(\bar{y})$ and 
$t=\bar{v}^3(\bar{w}-\bar{x})\bar{y}^3$. So $E$ is wild, and it is easy to see that $\gamma_w(E)=0$ and that $E$ is non-$T_{\mathcal{A}/k\llbracket w \rrbracket}$-invariant.
\end{example}

\begin{remark}\label{rmk:non_T_split_deformations}
Although one-parameter deformations $X\to C$ of non-$F$-pure singularities are never $F$-locally stable, one can ask whether the foliation $T_{X/C}$ is $F$-split in the sense of \cite{foliations_in_progress}.
Using a foliated Fedder criterion, the authors of \emph{op.cit.} checked in several cases that $T_{X/C}$ is not $F$-split. In fact, one could conjecture that this happens for a general (or indeed every) one-parameter deformation of such surface singularities.
\end{remark}

\subsection{Generalities on pullback of differential operators}\label{section:diff_operators}
If $(X,\Delta)\to C$ is $F$-locally stable (\autoref{d-F-locallystable}), we saw that, thanks to $F$-pure inversion of adjunction, the family $(X^{(1)},\Delta^{(1)})\to C$ is still $F$-locally stable (\autoref{prop:permanence_F_pure_case}). 
Recall that $F$-purity is characterized by the existence of a differential operator on $X$ satisfying a splitting property. We will study in the next sub-section how the existence of such operators influences the values of wild coefficients. To prepare this analysis, we first gather some general facts on differential operators.

In this subsection we let $(X=\Spec(A),\Gamma)$ be a normal pair, $\phi\colon Y\to X$ be a normal birational model, and $E\subset Y$ be a $\phi$-exceptional divisor. We write $\sO=\sO_{Y,E}$, and let $s$ be a uniformizer of this DVR.

We refer to \cite[\S 16.8]{EGA_IV.4} for the general theory of (Grothendieck) differential operators. 
Recall that since $\sO$ is smooth over $k$, the $\sO$-module $\Omega^1_{\sO/k}$ and its $\sO$-dual $\Der_k(\sO)$ are free of dimension $\dim Y$.

To state the following lemma, we use the following notation: 
$\sO\langle X_0,\dots,X_d\rangle$ denotes the ring of polynomials in the \emph{non-commutative} variables $X_0,\dots, X_d$ and with coefficients in $\sO$. 

\begin{lemma}\label{lemma:decomposition_diff_operator}
Let $\zeta_0,\dots,\zeta_{d}$ be any $\sO$-basis of $\Der_k(\sO)$. If $D\in \Diff_k(\sO)$ is an $\sO^p$-linear differential operator, then we can write $D$ as a polynomial
        \begin{equation}
        D=P(\zeta_0,\dots,\zeta_d), \quad
        P\in \sO\langle X_0,\dots,X_d\rangle \text{ where every power is }<p.
        \end{equation}
Moreover, the constant coefficient of $P$ is zero if and only if $D|_{\sO^p}=0$.
\end{lemma}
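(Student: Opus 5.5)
The plan is to reduce to the standard description of $\sO^p$-linear differential operators via a $p$-basis. Since $\sO$ is a localization of a smooth $k$-algebra and $k$ is perfect, $\sO$ is finite free over $\sO^p$. Choose $y_0,\dots,y_d\in\sO$ whose differentials form a basis of $\Omega^1_{\sO/k}$. Then $\{y^\alpha : 0\le\alpha_i<p\}$ is an $\sO^p$-basis of $\sO$. Moreover $\Omega^1_{\sO/k}=\Omega^1_{\sO/\sO^p}$, because $d(\sO^p)=0$. Hence the ring $\mathrm{End}_{\sO^p}(\sO)$ coincides with the $\sO^p$-linear differential operators (of order $<(d+1)(p-1)+1$). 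I will use the classical fact (Jacobson; cf. EGA IV 16.11) that
$$\mathrm{End}_{\sO^p}(\sO)=\bigoplus_{0\le\alpha_i<p}\sO\,\partial_0^{\alpha_0}\cdots\partial_d^{\alpha_d},$$
where $\partial_i$ is the dual basis to the $dy_i$. These $\partial_i$ commute and satisfy $\partial_i^p=0$. The proof is a dimension count: both sides are free $\sO$-modules of rank $p^{d+1}$. The operators $\partial^\alpha$ are $\sO$-independent, which one sees by evaluating on the monomials $y^\beta$, via a triangularity argument in the ordering by $|\beta|$. A general $\sO^p$-linear $D$ is then recovered from its values on the basis $y^\beta$ by a Taylor-type inversion formula.

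Next I pass from the $\partial_i$ to an arbitrary basis $\zeta_0,\dots,\zeta_d$. Write $\partial_i=\sum_j c_{ij}\zeta_j$ with $(c_{ij})$ invertible over $\sO$. Substituting into each $\partial^\alpha$ gives a non-commutative polynomial in the $\zeta_j$ with coefficients in $\sO$. The coefficients appear on the left after commuting them past the derivations using $\zeta_j\circ c=c\,\zeta_j+\zeta_j(c)$. This shows that $D=Q(\zeta_0,\dots,\zeta_d)$ for some $Q\in\sO\langle X_0,\dots,X_d\rangle$. Powers $\ge p$ may appear in $Q$, and the main obstacle is to remove them.

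To remove them, I argue by induction on degree. Since $\zeta_j^p$ is again a derivation of $\sO$, being a $p$-th power of a derivation in characteristic $p$, we can write $\zeta_j^p=\sum_l b_{jl}\zeta_l$. So every occurrence of $X_j^p$ inside a word can be replaced by a linear combination of single letters. Again the coefficients are moved to the left by the Leibniz commutation rule. Each such replacement strictly lowers the total degree of the word. Moving coefficients leftward never increases degree, though it may create new words of lower degree. Hence the process terminates, and I obtain a polynomial $P$ in which every power appearing is $<p$. Note that this only requires controlling consecutive repeated letters, which is what "every power is $<p$" means for non-commuting variables. I expect the careful bookkeeping of this termination to be the only delicate point.

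Finally I treat the constant term. Write $P=c+P'$, where $c\in\sO$ and every monomial of $P'$ has positive degree, hence ends with some $\zeta_j$ on the right. Since $\zeta_j$ kills $\sO^p$, we get $P'(\zeta)|_{\sO^p}=0$. So $D(a^p)=c\,a^p$ for all $a\in\sO$, and in particular $D(1)=c$. Therefore $D|_{\sO^p}=0$ if and only if $c=0$.
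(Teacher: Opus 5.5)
Your argument is correct, and from the change of basis onward it matches the paper: you substitute $\partial_i=\sum_j c_{ij}\zeta_j$ and then eliminate $p$-th powers using $\zeta_j^p\in\Der_k(\sO)$. Your constant-term argument, that every non-constant monomial ends in a derivation so that $D(a^p)=D(1)\,a^p$, spells out what the paper calls immediate.

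The genuine difference is the first step. The paper starts from the structure theorem of EGA~IV~16.11 for all of $\Diff_k(\sO)$, namely the unique expansion $\sum_{\mathbf{i}}\alpha_{\mathbf{i}}D^{\mathbf{x}}_{\mathbf{i}}$ in divided-power operators. It then shows that $\sO^p$-linearity forces $\alpha_{\mathbf{i}}=0$ whenever some $i_j\geq p$. To do so it subtracts the truncated part and evaluates at a lexicographically minimal monomial $\mathbf{x}^{\mathbf{j}}=v^p\mathbf{x}^{\mathbf{i}}$ with $\alpha_{\mathbf{j}}\neq 0$, using $D^{\mathbf{x}}_{\mathbf{i}}=\frac{1}{\mathbf{i}!}\prod_j\partial_{x_j}^{i_j}$ for $\mathbf{i}<(p,\dots,p)$.

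You instead compute $\End_{\sO^p}(\sO)$ directly from the $p$-basis $\{y^\beta\}$. This needs the extra input that a differential basis is a $p$-basis, which holds here, e.g.\ because $\sO$ is essentially \'etale over $k[y_0,\dots,y_d]$. In exchange it avoids the full description of $\Diff_k(\sO)$, and it shows that the hypothesis $D\in\Diff_k(\sO)$ is superfluous.

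One point of phrasing: a rank count alone proves nothing over a DVR, since $p^{d+1}$ independent elements of a free $\sO$-module of rank $p^{d+1}$ need not span. What does the work is that the matrix $\bigl(\partial^\alpha(y^\beta)\bigr)_{\alpha,\beta}$ is triangular for the order by $|\beta|$, with diagonal entries $\beta!$. These are units because every $\beta_i<p$, so you get independence and spanning at once, and your Taylor-type inversion is exactly the inverse of this matrix.
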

\begin{proof}
Since $\sO$ is smooth over $k$, we can find a differential basis $(x_0,\dots,x_{d})=\mathbf{x}$ of $\sO$ over $k$. By \cite[\S 16.11]{EGA_IV.4}, any differential operator $L$ of $\sO$ over $k$ of order at most $m$ can be written in a unique way
        $$L=
        \sum_{|\mathbf{i}|\leq m} \alpha(L,\mathbf{x})_\mathbf{i}
        \cdot D^{\mathbf{x}}_\mathbf{i}, \quad 
        \alpha(L,\mathbf{x})_\mathbf{i}\in \sO,$$
where by definition we have
        $$D^{\mathbf{x}}_\mathbf{i}(\mathbf{x}^\mathbf{j})
        =\binom{\mathbf{j}}{\mathbf{i}}\mathbf{x}^{\mathbf{j}-\mathbf{i}}, \quad \text{where }\binom{\mathbf{j}}{\mathbf{i}}=0 \text{ if }\mathbf{i}\not\leq \mathbf{j}.$$
We claim that $L$ is $\sO^p$-linear if and only if $\alpha(L,\mathbf{x})_\mathbf{i}=0$ whenever $i_j\geq p$ for some $j$. 
The \emph{if} direction is clear, because in case $(i_0,\dots,i_d)<(p,\dots,p)$ we have $D^\mathbf{x}_\mathbf{i}=\frac{1}{\mathbf{i}!}\prod_{j=1}^d\partial_{x_j}^{i_j}$ where $\partial_{x_1},\dots,\partial_{x_d}$ is the basis of $\Der_k(\sO)$ dual to $dx_1,\dots,dx_d\in \Omega_{\sO/k}^1$, and therefore $L$ is a linear combination of $\sO^p$-linear operators. Conversely, suppose that $L$ is $\sO^p$-linear. The operator
        $$\sum_{i_0,\dots,i_d<p} \alpha(L,\mathbf{x})_\mathbf{i}
        \cdot D^{\mathbf{x}}_\mathbf{i}$$
is $\sO^p$-linear by what we have just said; subtracting it from $L$, we may assume that $\alpha(L,\mathbf{x})_\mathbf{i}=0$ when $i_0,\dots,i_d<p$. This implies that $L(\mathbf{x}^\mathbf{i})=0$ whenever $(i_0,\dots,i_d)< (p,\dots,p)$. For the sake of contradiction, suppose that after these reductions $L\neq 0$, and take $\mathbf{j}$ minimal for the lexicographic order with the property $\alpha(L,\mathbf{x})_\mathbf{j}\neq 0$. We can write
        $$\mathbf{x}^\mathbf{j}=v^p\cdot \mathbf{x}^\mathbf{i} \quad \text{where }
        v\in \sO \  \text{ and } \ 0\leq i_0,\dots,i_d<p.$$
Then $L(\mathbf{x}^\mathbf{j})=\alpha(L,\mathbf{x})_\mathbf{j}\neq 0$ by minimality, while by $\sO^p$-linearity
        $$L(\mathbf{x}^\mathbf{j})=v^pL(\mathbf{x}^\mathbf{i})=0,$$
which yields a contradiction. This proves our claim.

We can therefore write
        $$D=\sum_{0\leq i_0,\dots,i_d<p}\alpha_\mathbf{i}D^{\mathbf{x}}_\mathbf{i},
        \quad \alpha_\mathbf{i}\in \sO.$$
Since $\zeta_0,\dots,\zeta_d$ is also an $\sO$-basis of $\Der_k(\sO)$, each $\partial_{x_i}$ is an $\sO$-linear combination of the $\zeta_j$s, and therefore each $D^\mathbf{x}_\mathbf{i}$ with $i_0,\dots,i_d<p$ can be written as a polynomial in the $\zeta_j$s. Substituting into the above expression for $D$ yields an expression
        $$D=P(\zeta_0,\dots,\zeta_d)\quad 
        \text{for some }P\in \sO\langle X_0,\dots,X_d\rangle.$$
Suppose that $\zeta_i^{p+n}$, for some $i$ and $n\geq 0$, appears in this expression. Since $\zeta_i^p\in \Der_k(\sO)$, we can write it as a linear combinations of the $\zeta_j$s, and modify the polynomial $P$ accordingly. After a finite number of such modifications, all the powers of the variables in $P$ are $<p$: this proves the first assertion. The second one is immediate.
\end{proof}

The abelian group $\Diff_k(\sO)$ is an $\sO$-bimodule: given $L\in \Diff_k(\sO)$ and $x\in \sO$, we have 
        $$\text{left structure:} \ \ \ xL\colon y\mapsto x\cdot L(y), \quad 
        \text{right structure:} \ \ \ L* x\colon y\mapsto L(xy).$$
If $L$ is $\sO^p$-linear and $x\in \sO^p$ then $L*x=xL$, but the two module structures are quite different.

Recall that $\Diff_k(A)=\bigcup_{e\geq 0}\Hom_{A^{p^e}}(A,A)$, see for example \cite[\S 2.5]{Smith_cdBergh_Simplicity_diff_operators_in_pos_char}. So any $k$-differential operator of $A$ is $A^{p^e}$-linear for some $e\geq 0$, and therefore it extends uniquely to a $\Frac(A)^{p^e}$-linear endomorphism of $\Frac(A)$. Since $\Frac(A)=\Frac(\sO)$, it can be considered as a \emph{rational} differential operator of $\sO$, i.e.\ as an element of $\Frac(\sO)\otimes_\sO\Diff_k(\sO)$ where the tensor product is taken with respect to the left $\sO$-module structure of $\Diff_k(\sO)$.

\begin{definition}\label{def:discrepancy_diff_operator}
Let $D\in \Diff_k(A)$ be a non-zero differential operator of $A$ over $k$. We let 
$a(E;D)\in \bZ$ be the minimal integer such that 
$s^{a(E;D)}D\in \Diff_k(\sO)$, where $s$ is a uniformizer of $\sO$.
\end{definition}

Let us fix some notations: let $e\geq 1$, $q=p^e$ and $X^e=(|X|,\sO_X^{q})$. We define $\phi^e\colon Y^e\to X^e$ similarly. Let also $\Gamma^e$ be the divisor corresponding to $\Gamma$ on $X^e$.
The following result is a well-known consequence of duality along the $k$-linear Frobenius $F^e\colon X\to X^e$:

\begin{lemma}\label{lemma:finite_duality_along_Frobenius}
Assume that $(q-1)\Gamma$ is integral. Then there is an isomorphism
        $$\alpha_e\colon \Hom_{\sO_{X^{e}}}(\sO_X((q-1)\Gamma), \sO_{X^{e}})
        \cong \left(
        \sO_{X^e}\big((1-q)(K_{X^{e}}+\Gamma^e\big)
        \right)^{1/q}$$
where we consider $\sO_{X^e}((1-q)(K_{X^{e}}+\Gamma^e)$ as a sub-sheaf of $K(X^e)$ to apply the exponent $1/q$.
\end{lemma}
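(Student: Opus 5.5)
The plan is to reduce the statement to Grothendieck duality for the finite morphism $F^e\colon X\to X^e$, and then to twist by the divisor $(q-1)\Gamma$ and reflexify.

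First, I would work on the big open subset $U\subseteq X$ where $X$ is regular and $\Gamma$ is Cartier. Its complement has codimension $\geq 2$, and every sheaf in the statement is reflexive. For the left side, note that $\mathcal{H}om$ into the reflexive sheaf $\sO_{X^e}$ is reflexive, and $F^e_*$ of a reflexive sheaf on a normal scheme is again reflexive. So an isomorphism over $U$ extends uniquely to $X$.

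Second, on $U$ the Frobenius $F^e$ is finite and flat, since $U$ is regular (Kunz). Duality for the finite morphism $F^e$ then gives
\[F^e_*\mathcal{H}om_{\sO_X}(\mathcal{M},\omega_X)\cong \mathcal{H}om_{\sO_{X^e}}(F^e_*\mathcal{M},\omega_{X^e})\]
for any coherent $\mathcal{M}$. Here we identify $\omega_{X^e}$ with $\sO_{X^e}(K_{X^e})$ via the $k$-isomorphism $X^e\cong X$ of abstract schemes; this is legitimate because $k$ is perfect, so $k^q=k$, and canonical divisors correspond. I would take
\[\mathcal{M}=\sO_X((q-1)\Gamma)\otimes \omega_X^{\otimes(1-q)}\cdot(F^e)^*\omega_{X^e}.\]
More concretely, I would use the standard identity $(F^e)^!\omega_{X^e}=\omega_X$ together with $(F^e)^*\sO_{X^e}(D^e)=\sO_X(qD)$. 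These give
\[\mathcal{H}om_{\sO_{X^e}}(F^e_*\sO_X((q-1)\Gamma),\sO_{X^e})\cong F^e_*\sO_X\big((1-q)K_X-(q-1)\Gamma\big).\]

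Third, I would translate this into the notation of the statement. The sheaf $F^e_*\sO_X(D)$, viewed inside $K(X)$, is by definition the subsheaf $\sO_{X^e}(D^e)^{1/q}$: a function $g$ on $X$ is the $q$-th root of $g^q\in K(X^e)$, and $\ord$ along a prime divisor gets multiplied by $q$ under $X\to X^e$. Applying this with $D=(1-q)(K_X+\Gamma)$ gives exactly the right-hand side $\sO_{X^e}\big((1-q)(K_{X^e}+\Gamma^e)\big)^{1/q}$. The hypothesis that $(q-1)\Gamma$ is integral is precisely what makes these divisorial sheaves of integral Weil divisors, so no rounding issues arise.

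I expect the main subtlety to be bookkeeping rather than conceptual: making the identification of $\omega_{X^e}$ with $K_{X^e}$ and the $1/q$-exponent convention consistent, so that the resulting isomorphism is canonical. That is, it should be given by evaluating an operator against a fixed generator of the trace $F^e_*\omega_X^{1-q}\to\sO_{X^e}$ (the Cartier operator). I would pin this down by checking in local coordinates at a smooth point. There $\Hom_{A^q}(A,A^q)$ is free over $A$, generated by the trace map $\Phi$ that sends $x^{q-1}\cdots$ to $1$ on the monomial basis, and it corresponds to the section $(dx)^{1-q}$.
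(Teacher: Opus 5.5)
Your proposal is correct and is the argument the paper has in mind: the paper gives no separate proof and only invokes duality for the finite morphism $F^e\colon X\to X^e$, which is what you carry out on the regular locus before extending by reflexivity. One bookkeeping slip: the module to feed into duality is $\mathcal{M}=\sO_X((q-1)\Gamma)\otimes(F^e)^*\omega_{X^e}$, so that $F^e_*\mathcal{M}\cong F^e_*\sO_X((q-1)\Gamma)\otimes\omega_{X^e}$ by the projection formula; the factor $\omega_X^{\otimes(1-q)}$ you inserted belongs instead to the output $\mathcal{H}om_{\sO_X}(\mathcal{M},\omega_X)\cong\sO_X\big((1-q)K_X-(q-1)\Gamma\big)$, and your subsequent line using $(F^e)^!\omega_{X^e}=\omega_X$ already gets this right.
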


An element of $\Hom_{\sO_{X^{e}}}(\sO_X((q-1)\Gamma), \sO_{X^{e}})$ induces an $\sO_X^{p^e}$-linear differential operator of $\sO_X$, by restricting its domain along the inclusion $\sO_X\hookrightarrow \sO_X((q-1)\Gamma)$. These differential operators have the property, remarkable in our context, that their discrepancy $a(E;-)$ can be compared with $a(E;X,\Gamma)$: this is the content of \autoref{prop:bound_on_pole} below. We indicate three proofs of that result. The first is implicitly contained in \cite[First proof of Theorem 3.3]{Hara_Watanabe_F_sing_vs_lt_and_lc}, and while it is longer, we feel that it is the most transparent about the relations between the discrepancies.

\begin{proposition}\label{prop:preparation_bound_on_pole}
Assume that $(q-1)\Gamma$ is integral, and let $\varrho\in \Hom_{\sO_{X^e}}(\sO_X((q-1)\Gamma),\sO_{X^e})$ be non-zero. Consider $\varrho$ as a rational section of $\sHom_{\sO_{Y^e}}(\sO_Y,\sO_{Y^e})$ on $Y$ and let $i\in \bZ$ be the smallest integer for which $\varrho*s^i$ is a regular section of $\sHom_{\sO_{Y^e}}(\sO_Y,\sO_{Y^e})$ at the generic point of $E$.

Let $r>0$ be the Cartier index of $K_X+\Gamma$, and write $q-1=m_er+i_e$ with $0\leq i_e<r$. Then there exists a non-negative integer $o$, independent of $\varrho$, such that
        $$i\leq o+m_er\cdot a(E;X,\Gamma).$$
\end{proposition}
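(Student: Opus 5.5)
The plan is to use the duality isomorphism $\alpha_e$ of \autoref{lemma:finite_duality_along_Frobenius}, on $X$ and on $Y$, and to compare the two sides along $E$. Via $\alpha_e$, the map $\varrho$ corresponds to a rational section $\sigma$ of $\sO_{X^e}((1-q)(K_{X^e}+\Gamma^e))^{1/q}$. Raising to the $q$-th power, we view it as a rational section $\sigma^q$ of $\sO_X((1-q)(K_X+\Gamma))$ that is regular on $X$. On $Y$ (near the generic point of $E$, where $Y$ is regular) the same duality gives
$$\sHom_{\sO_{Y^e}}(\sO_Y,\sO_{Y^e})\cong \sO_{Y^e}\big((1-q)K_{Y^e}\big)^{1/q},$$
and this identification is compatible with $\alpha_e$ on the common function field. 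Moreover, the right $\sO_Y$-module structure $\varrho\mapsto \varrho*s^i$ corresponds to multiplying $\sigma$ by $s^{i/q}$, i.e.\ multiplying $\sigma^q$ by $s^i$. Hence $i$ is exactly the smallest integer such that $s^i\sigma^q$ is a regular section of $\sO_Y((1-q)K_Y)$ at the generic point of $E$, up to a bounded correction. That correction comes from the choice of generator of $\omega_Y$ and from the twist by $\Gamma$; I absorb it into the constant $o$.

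Next I compute $\ord_E$ of $\sigma^q$ viewed as a rational section of $\sO_Y((1-q)K_Y)$. Write $q-1=m_er+i_e$. Since $r(K_X+\Gamma)$ is Cartier, locally $\sO_X(-m_er(K_X+\Gamma))$ is generated by $\omega^{-m_e}$, with $\omega$ a local generator of $\sO_X(r(K_X+\Gamma))$. Thus $\sigma^q=g\cdot\omega^{-m_e}\cdot\tau$, where $g\in A$ is regular and $\tau$ is a section of the reflexive rank-one sheaf $\sO_X(-i_e(K_X+\Gamma))$. Pulling back, $\phi^*\omega^{-m_e}$ as a section of $\sO_Y(-m_er(K_Y+\phi^{-1}_*\Gamma))$ acquires order $-m_er\cdot a(E;X,\Gamma)$ along $E$, by definition of discrepancy. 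The factor $g$ has nonnegative order, and the sections $\tau$ of the fixed sheaf $\sO_X(-i_e(K_X+\Gamma))$ have order along $E$ bounded below by a constant depending only on that sheaf and $E$, since it is finitely generated over $A$. The strict transform of $\Gamma$ does not contain $E$, so it contributes nothing. Collecting terms gives
$$\ord_E(\sigma^q)\geq -o-m_er\cdot a(E;X,\Gamma),$$
with $o$ independent of $\varrho$. Since $i$ is the least integer with $\ord_E(s^i\sigma^q)\geq 0$, this yields $i\leq o+m_er\cdot a(E;X,\Gamma)$.

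The step I expect to be delicate is the compatibility claim: that $\alpha_e$ on $X$ and the corresponding duality on $Y$ agree birationally, and that the right module action corresponds to multiplication by $s^i$ on the $q$-th power. I would check this directly at the generic point of $E$. Choose a differential basis $s,x_1,\dots,x_d$ of $\sO$. The trace-type generator $\Phi$ of $\Hom_{\sO^q}(\sO,\sO^q)$ sends $s^{q-1}\mathbf{x}^{q-1}$ to $1$ and the other monomials to $0$, and it corresponds to $(ds\wedge dx_1\wedge\cdots\wedge dx_d)^{1-q}$. Every element can then be written as $\Phi*c$ with $c\in \sO$, and one tracks $\ord_E c$. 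Any residual discrepancy between the normalizations on $X$ and on $Y$ is independent of $\varrho$, so it goes into $o$.
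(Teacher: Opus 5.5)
Your proposal is correct and follows the paper's proof. Both arguments use the compatible Frobenius-duality isomorphisms $\alpha_e$ on $X$ and on $Y$, split $(1-q)(K_X+\Gamma)=-m_er(K_X+\Gamma)-i_e(K_X+\Gamma)$ via a local generator of $\sO_X(r(K_X+\Gamma))$ whose pullback has order $ra(E;X,\Gamma)$ along $E$, and bound the pole of the remaining factor by a constant; the paper does this with an element $b\in A$ satisfying $b\cdot H^0(X,\sO(-j(K_X+\Gamma)))\subseteq H^0(Y,\omega_Y^{-j})$, which is the content of your finite-generation argument. The paper takes $b$ to work for all $0\le j<r$ at once, so that $o$ is independent of $e$ as well, which the limit argument after the proposition needs; your bound gives the same uniformity if you take the maximum over the finitely many possible values of $i_e$.
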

\begin{proof}
We start with some preparations:
\begin{itemize}
    \item We may assume that $Y=\Spec(\sO)$. Then $s^q=s_e$ is a uniformizer of $\sO_{Y^e}$. By finite duality we have an isomorphism
    $$\sHom_{Y^e}(\sO_Y, \sO_{Y^e})=
        \sO_{Y^e}((1-q)K_{Y^e})^{1/q}$$
compatible with the isomorphism of \autoref{lemma:finite_duality_along_Frobenius}. By abuse of notation, we call this isomorphism $\alpha_e$ as well.
As $Y$ is Gorenstein, these sheaves are invertible as $\sO_Y$-modules. The $\sO_Y$-module structure on $\sHom_{\sO_{Y^e}}(\sO_Y, \sO_{Y^e})$ is given by pre-composition, i.e.\ by the right structure $[(x,L)\rightsquigarrow L*x]$ mentioned before.

    \item Choose an element $b\in A$ such that 
$b\cdot H^0(X,\sO(-j (K_X+\Gamma)))\subseteq H^0(Y,\omega_Y^{-j})$ for all $0\leq j<r$. 
Then $b^q\cdot H^0(X^e, \sO(-j(K_{X^e}+\Gamma^e)))\subseteq H^0(Y^e,\omega_{Y^e}^{-j})$ for all $0\leq j<r$. 
We define $o=\ord_Eb=\ord_{E^e}b^q$, and write $b^q=b_e$ below. Notice that $o$ depends only on $(X,\Gamma)$ and $\sO_{Y,E}$.
    
    \item Let us fix an embedding of $\sO_X(K_X+\Gamma)$ inside the function field of $X$, so that we can find $w\in K(X)$ with
                $$\sO_X(r(K_X+\Gamma))=w\sO_X.$$
Then $\sO_{X^e}(r(K_{X^e}+\Gamma^e))=w^q\sO_{X^e}$. We write $w^q=w_e$ below.

    \item Finally, we have $a(E^e;X^e,\Gamma^e)=a(E;X,\Gamma)$: we denote this rational number by $a$.
\end{itemize}

We can now start the proof. By definition of the discrepancy we have
        $$\sO(rK_{Y^e})=\phi^{e, *}\sO(r(K_{X^e}+\Gamma^e)+raE^e)
        =s_e^{-ra}w_e\sO_{Y^e}.$$
It follows that
        \begin{eqnarray*}
            \sO((1-q)K_{Y^e})^{1/q} &=&
            \sO(-m_erK_{Y^e}-i_eK_{Y^e})^{1/q} \\
            &=& \left( 
            s_e^{m_era}w_e^{-m_e}\cdot \omega_{Y^e}^{-i_e}
            \right)^{1/q}.
        \end{eqnarray*}
Therefore, if $\theta_e$ is a generator of the invertible $\sO_Y$-module
$\sHom_{\sO_{Y^e}}(\sO_Y,\sO_{Y^e})$ then the previous equation implies that
        \begin{equation}\label{eqn:1st_approximation}
        \sO_{Y}\cdot \alpha_{e}(\theta_e)
        =\left(
        s_e^{m_era}w_e^{-m_e}\cdot \omega_{Y^{e}}^{-i_e}
        \right)^{1/q}
        =
        s^{m_era}\cdot \left(
        w_e^{-m_e}\cdot \omega_{Y^{e}}^{-i_e}
        \right)^{1/q}.
        \end{equation}
If $\varrho$ is as in the statement, then 
        \begin{eqnarray*} 
        \alpha_e(\varrho)\in 
        \phi^{e,*}\sO_{X^e}((-m_er-i_e)(K_{X^e}+\Gamma^e))^{1/q} &=&
        \left(w_e^{-m_e}\phi^{e, *}\sO_{X^e}(-i_e(K_{X^e}+\Gamma^e))\right)^{1/q} \\
        &\subseteq & 
        \left(w_e^{-m_e}b_e^{-1}H^0(Y^e,-i_eK_{Y^e})\right)^{1/q}.
        \end{eqnarray*}
This implies
        $$\alpha_e(\varrho)\in \left(
        s_e^{-o}w_e^{-m_e}\cdot \omega_{Y^e}^{-i_e}
        \right)^{1/q},$$
and thus by the $\sO_{Y}$-linearity of $\alpha_e$:
        \begin{equation*}\label{eqn:2nd_approximation}
        \alpha_e(\varrho*s^i)\in 
        s^{-o+i}\cdot
        \left(
        w_e^{-m_e}\cdot \omega_{Y^e}^{-i_e}
        \right)^{1/q}.
        \end{equation*}
By minimality of $i$, we have 
$\varrho*s^i=\text{(unit)}\cdot \theta_e$.
Therefore
        \begin{equation}\label{eqn:2nd_approximation}
        \alpha_e(\theta_e)\in 
        s^{-o+i} \cdot \left(
        w_e^{-m_e}\cdot \omega_{Y^e}^{-i_e}
        \right)^{1/q}.
        \end{equation}
By comparing the powers of $s$ in \autoref{eqn:1st_approximation} and \autoref{eqn:2nd_approximation} we see that necessarily $-o+i\leq m_era$. This yields the desired inequality.
\end{proof}

Now suppose that we are given 
$\varrho\in \Hom_{\sO_{X^e}}(\sO_X((q-1)\Gamma),\sO_{X^e})$. 
Let $i\in \bZ$ be minimal such that $\varrho*s^i$ belongs to $\Hom_{\sO^q}(\sO,\sO^q)$. The image of $\varrho*s^i\colon \sO_{Y,E}\to \sO_{Y^e,E^e}$ is an ideal, by the $\sO_{Y^e}$-linearity of $\varrho$. If it is contained in $s_e\sO_{Y^e,E^e}$ (where $s_e=s^q$, as in the proof of \autoref{prop:preparation_bound_on_pole}) then 
$s^{-q}(\varrho*s^i)=\varrho*s^{i-q}$ would be regular at the generic point of $E$, contradicting the minimality of $i$. Therefore $\varrho*s^i\colon \sO_{Y,E}\to \sO_{Y^e,E^e}$ is surjective. 

More generally, for $n\geq 1$, we denote by
$\varrho_n\in\Hom_{\sO_{X^{e(n+1)}}}(\sO_{X^{ne}},\sO_{X^{e(n+1)}})$ the operator defined by 
        $$\varrho_n(x)=\varrho\left(x^{1/q^n}\right)^{q^n}, \quad x\in \sO_{X^{ne}}=\sO_X^{q^n}.$$
Then $i$ is the minimal integer such that $\varrho_n*s_{ne}^i$ is regular at the generic point of $E^{ne}$, and sends $\sO_{Y^{ne},E^{ne}}$ onto $\sO_{Y^{e(n+1)},E^{e(n+1)}}$.

Consider the composition 
    $$\boldsymbol{\varrho}=\varrho_{ne}\circ\dots\circ \varrho_n\circ\varrho
    \in \Hom_{\sO_{X^{e(n+1)}}}(\sO_X, \sO_{X^{e(n+1)}}).$$
By what we have said above, the minimal integer $v$ such that $\boldsymbol{\varrho}*s^v$ is a regular element of
$\sHom_{\sO_{Y^{e(n+1)}}}(\sO_Y,\sO_{Y^{e(n+1)}})$ at the generic point of $E$ is
$v=i+qi+\dots+q^{n}i$, since
    $$\boldsymbol{\varrho}*s^{i+\dots+q^{n}i}
    =(\varrho_{n}*s_{ne}^i)\circ \dots 
    (\varrho_1*s_e^i)\circ 
    (\varrho*s^i).$$
By \autoref{prop:bound_on_pole} we obtain
        $$i(1+q+\dots+q^n)\leq o+m_{e(n+1)}r\cdot a(E;X,\Gamma).$$
By writing down the closed form of the geometric series, we get
        $$\frac{i}{q-1}\leq \frac{o}{q^{n+1}-1}+
        \lfloor \frac{q^{n+1}-1}{r}\rfloor \cdot \frac{r}{q^{n+1}-1}\cdot a(E;X,\Gamma).$$
By letting $n$ go to infinity, we have therefore proved:

\begin{proposition}\label{prop:bound_on_pole}
Assume that $(p^e-1)\Gamma$ is integral and let $0\neq \varrho\in \Hom_{\sO_{X^e}}(\sO_X((p^e-1)\Gamma),\sO_{X^e})$. Let $i\in \bZ$ be minimal such that $\varrho*s^i\in \Hom_{\sO_{Y^e,E^e}}(\sO_{Y,E},\sO_{Y^e,E^e})$. Then:
        \begin{equation}\label{eqn:bound_on_pole_2}
            \frac{i}{p^e-1}\leq a(E;X,\Gamma).
        \end{equation}
\end{proposition}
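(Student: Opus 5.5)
The idea is to amplify the single operator $\varrho$ by iterating its Frobenius twists, so that the fixed error term $o$ of \autoref{prop:preparation_bound_on_pole} becomes negligible. Write $q=p^e$, let $r$ be the Cartier index of $K_X+\Gamma$, and let $i$ be as in the statement.

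First, I check that $\varrho*s^i$ maps $\sO_{Y,E}$ \emph{onto} $\sO_{Y^e,E^e}$. Its image is an ideal of the DVR $\sO_{Y^e,E^e}$, because $\varrho$ is $\sO_{Y^e}$-linear. If that ideal were contained in $(s^q)$, then $\varrho*s^{i-q}$ would still be regular, contradicting the minimality of $i$.

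Next, for $n\geq 1$ I define the Frobenius twists $\varrho_n(x)=\varrho(x^{1/q^n})^{q^n}$ on $\sO_X^{q^n}$. Transport of structure shows that each $\varrho_n$ has the same minimal index $i$ with respect to the uniformizer $s^{q^n}$, and that $\varrho_n*s^{q^n i}$ is again surjective at the generic point. I then form the composite
\[
\boldsymbol{\varrho}=\varrho_n\circ\cdots\circ\varrho_1\circ\varrho\in\Hom_{\sO_{X^{e(n+1)}}}\big(\sO_X((q^{n+1}-1)\Gamma),\sO_{X^{e(n+1)}}\big).
\]
To see that it has the right domain, I use that $(q^{n+1}-1)\Gamma=(q-1)\Gamma+q(q-1)\Gamma+\cdots$ and argue via the duality of \autoref{lemma:finite_duality_along_Frobenius}, or simply restrict to $\sO_X$ as the paper's preparation allows.

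The key computation is
\[
\boldsymbol{\varrho}*s^{v}=(\varrho_n*s^{q^n i})\circ\cdots\circ(\varrho*s^i),\qquad v=i(1+q+\cdots+q^n).
\]
This is a composite of surjective regular maps, so $v$ is exactly the minimal index for $\boldsymbol{\varrho}$. Surjectivity is what prevents any cancellation: if $v$ were not minimal, the image of the composite would land in the maximal ideal, and this is ruled out stage by stage.

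Applying \autoref{prop:preparation_bound_on_pole} to $\boldsymbol{\varrho}$ with $e$ replaced by $e(n+1)$ gives
\[
i\,\frac{q^{n+1}-1}{q-1}\leq o+m_{e(n+1)}\,r\,a(E;X,\Gamma),
\]
with $o$ independent of $n$. Here $o$ depends only on $(X,\Gamma)$ and $E$, not on the operator or on $e$; I must double-check this in the construction of $b$.

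Dividing by $q^{n+1}-1$ and using $m_{e(n+1)}r/(q^{n+1}-1)\to 1$ as $n\to\infty$, I obtain $i/(q-1)\leq a(E;X,\Gamma)$.

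The main obstacle is bookkeeping rather than substance. The sign of $a(E;X,\Gamma)$ matters when I pass to the limit of $m_{e(n+1)}r/(q^{n+1}-1)\leq 1$, but since this ratio tends to $1$ the limit is fine in either case. The more delicate points are verifying exact minimality of $v$ for the composite, and making sure $o$ is uniform in $n$.
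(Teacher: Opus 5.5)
Your outline matches the paper's first proof: Frobenius twists $\varrho_n$, the composite $\boldsymbol{\varrho}$, \autoref{prop:preparation_bound_on_pole} at level $e(n+1)$, then $n\to\infty$. There is a genuine gap, however, at the step you flag as delicate, because the mechanism you give for it fails.

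Surjectivity does not detect minimality. Your argument for the surjectivity of $\varrho*s^i$ only uses that $\varrho*s^{i-q}$ is not regular. It therefore shows just as well that $\varrho*s^j$ is surjective for every $j$ with $i\le j\le i+q-1$. So the claim ``if $v$ were not minimal, the image of the composite would land in the maximal ideal'' is false. From the surjectivity of $\boldsymbol{\varrho}*s^v$ you can only conclude that $\boldsymbol{\varrho}*s^{v-q^{n+1}}$ is not regular. In other words, the true minimal index $v_0$ of $\boldsymbol{\varrho}$ only satisfies $v_0\geq v-q^{n+1}+1$. Since \autoref{prop:preparation_bound_on_pole} bounds $v_0$, not $v$, from above, this error of size $q^{n+1}$ survives the division by $q^{n+1}-1$. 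The argument as written therefore only yields $i/(q-1)\leq a(E;X,\Gamma)+1$.

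The equality $v_0=v$ does hold, but the input you need is the \emph{non-regularity} of $\varrho*s^{i-1}$, propagated by induction on the number of factors.
\begin{itemize}
\item Write $\boldsymbol{\varrho}=\boldsymbol{\varrho}'\circ\varrho$, where $\boldsymbol{\varrho}'=\varrho_n\circ\dots\circ\varrho_1$ is the twist along $x\mapsto x^q$ of $\varrho_{n-1}\circ\dots\circ\varrho$. Write also $v=i+qv'$ and $s_e=s^q$.
\item By $\Frac(\sO)^q$-linearity of $\varrho$ one gets $\boldsymbol{\varrho}*s^{v-1}=(\boldsymbol{\varrho}'*s_e^{v'})\circ(\varrho*s^{i-1})$.
\item The image of $\varrho*s^{i-1}$ is an $\sO^q$-submodule of $\Frac(\sO)^q$ not contained in $\sO^q$, so it contains $s_e^{-1}\sO^q$.
\item By induction, $\boldsymbol{\varrho}'*s_e^{v'-1}$ is not regular, and therefore neither is $\boldsymbol{\varrho}*s^{v-1}$.
\end{itemize}
Equivalently: $\varrho*s^i$ is a unit multiple of a generator of the rank-one right $\sO$-module $\Hom_{\sO^q}(\sO,\sO^q)$, and composites of Frobenius twists of generators are again generators.

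Two further remarks.
\begin{itemize}
\item Your fallback of restricting $\boldsymbol{\varrho}$ to $\sO_X$ does not put you in the setting of \autoref{prop:preparation_bound_on_pole}. That result needs the domain $\sO_X((q^{n+1}-1)\Gamma)$ in order to invoke \autoref{lemma:finite_duality_along_Frobenius}. What works is your first option: $\varrho$ extends by $\sO_{X^e}$-linearity and reflexivity to $\sO_X((q^{n+1}-1)\Gamma)\to\sO_{X^e}((q^{n}-1)\Gamma^e)$, because $(q-1)+q(q^n-1)=q^{n+1}-1$.
\item The paper's second proof applies the negativity lemma to $\mathrm{div}(\alpha_e(\varrho))/(q-1)$. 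It avoids the iteration, and all of this bookkeeping, entirely.
\end{itemize}
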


The second proof of \cite[Theorem 3.3]{Hara_Watanabe_F_sing_vs_lt_and_lc} gives a quicker and more ``geometric'' approach to \autoref{prop:bound_on_pole}:


\begin{proof}[Second proof of \autoref{prop:bound_on_pole}]
As in the previous proof of \autoref{prop:bound_on_pole}, we let $\alpha_e$ be the isomorphism defined by finite duality $$\sHom_{Y^e}(\sO_Y, \sO_{Y^e})=
        \sO_{Y^e}((1-q)K_{Y^e})^{1/q}$$
compatible with the isomorphism of \autoref{lemma:finite_duality_along_Frobenius}.
Note that $\alpha_e(\varrho)$ is a rational section of $\left( \sO_{Y^e}((1-q)(K_{Y^e}+\phi^{e,-1}_*\Gamma^e)) \right)^{1/q}$, therefore it defines an integral, not necessarily effective divisor on $Y^e$, that we denote by $\mathrm{div}(\alpha_e(\varrho))$.
On the other hand, $\alpha_e(\varrho*s^{i}) \in H^0(Y^e, ((1-q)(K_{Y^e}+\phi^{e,-1}_*\Gamma^e))^{1/q}) \cong H^0(Y^e, ((1-q)(K_{Y^e}+\phi^{e,-1}_*\Gamma^e)))$, and, since $s\in \cO_Y$, we have $\mathrm{div}(\alpha_e(\varrho))=\mathrm{div}(\alpha_e(\varrho*s^{i}))-iE$.
Let $D:= \mathrm{div}(\alpha_e(\varrho))=\mathrm{div}(\alpha_e(\varrho*s^{i}))-iE$ and $B:=D/(q-1)$.
    By our assumption, $B \sim_{\bQ} -(K_Y+\phi^{-1}_*\Gamma)$, hence $\phi_*B\sim_{\bQ}-(K_X+\Gamma)$ is $\bQ$-Cartier.
    Note that $\phi_*B \geq 0$ since $B$ is effective outside $E$.
    In particular, the $\bQ$-divisor $B-\phi^*\phi_*B+aE\sim_{\bQ} -\phi^*(K_X+\Gamma)-\phi^*\phi_*B$ is $\phi$-trivial and $\phi$-exceptional (where $a= a(E;X,\Gamma)$).
    By the negativity Lemma, we conclude that it is $0$.
    However, $\phi^*\phi_*B\geq 0$, yielding that $i/(q-1) \leq a$.
\end{proof}

\begin{remark}
One can also give a short third proof of \autoref{prop:bound_on_pole} by applying \cite[Chapter 6, Lemma 4.15]{Schwede_Smith_book}: we leave the details to the reader.
\end{remark}

\subsection{Wild coefficients in the $F$-split case}\label{section:wild_coeff_F-split_case}

\begin{definition}[{\cite[Chapter 5, Definition 3.7]{Schwede_Smith_book}}]\label{def:F-split_along_div}
    Let $(X=\Spec(A),\Delta)$ be a normal pair and $D$ be an effective $\bQ$-divisor.
    We say that \emph{$(X,\Delta)$ is $e$-$F$-split along $D$} if the natural injection
    \[
    A^{p^e} \hookrightarrow A\left(
    \lceil (p^e-1)\Delta\rceil +D
    \right)
    \]
    splits.
\end{definition}

\begin{remark}\label{caution}
 If $(X,\Delta)$ is $e$-$F$-split along $D$ and $e' <e$, then it is not necessarily true that $(X,\Delta)$ is $e'$-$F$-split along $\lceil(p^{e'}-1)\Delta\rceil$ (see \cite[Caution 4.8]{Schwede_Smith_book}).
\end{remark}

Let us now consider $f\colon (X=\Spec(A),\Delta)\to C=\Spec(\sO_{C,\mathbf{0}})$ a strongly locally stable family of pairs. Let $\phi\colon Y\to X$ be a normal birational model, $E\subset Y$ a wild divisor. We let $t$ be a uniformizer of $\sO_{C,\mathbf{0}}$ and $s$ be a uniformizer of $\sO=\sO_{Y,E}$: we can write 
$t=us^n$ where $u\in \sO^\times$ and $n$ is divisible by $p$ and $du=s^{\gamma_w(E)}\eta$ in $\Omega^1_{\sO/k}$, where the $1$-form $\eta$ is not divisible by $s$.
We assume:

\begin{assumption}\label{assumption:F_split_along_div}
The pair $(X,\Delta)$ is $1$-$F$-split along $X_\mathbf{0}$. 
\end{assumption}

This assumption implies that $(X, \Delta)$ is $F$-pure and that the $F$-pure threshold\footnote{For the definition of $F$-pure threshold see \cite[Definition 5.4.29]{Schwede_Smith_book}.} $\mathrm{fpt}(X_\mathbf{0};X,\Delta) \geq 1/(p-1)$.

Then, we have an $A^p$-linear operator $\psi\colon A(\lceil(p-1)\Delta\rceil+ X_\mathbf{0}))\to A^p$ that splits the natural injective map $A^p \hookrightarrow A(\lceil(p-1)\Delta\rceil+X_\mathbf{0}))$ as in \autoref{def:F-split_along_div}. 
Consider the composition
        $$\varphi\colon A\hookrightarrow A\left( 
        \lceil(p-1)\Delta\rceil + X_\mathbf{0}\right)
        \overset{\psi}{\longrightarrow}
        A^p\hookrightarrow A,
        \quad \text{and let} \quad 
        \varsigma=\varphi-\id.$$
Note that by $A^p$-linearity we have $\varsigma|_{A^p}=0$.

\begin{lemma} \label{l-7.1.2b}
We have $\varsigma(t^{p-1})=\lambda t^{p-1}$ where $\lambda\in A$ is invertible in a neighbourhood of $X_\mathbf{0}$.
\end{lemma}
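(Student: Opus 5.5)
We need $\varsigma(t^{p-1}) = \varphi(t^{p-1}) - t^{p-1}$. The plan is to write $t^{p-1} = t^{p}\cdot t^{-1}$ and use $A^p$-linearity of $\psi$. Here $t\in\sO_{C,\mathbf{0}}$ pulls back to a local equation of the Cartier divisor $X_\mathbf{0}$, and $t^{-1}$ is a section of $A(X_\mathbf{0})\subseteq A(\lceil(p-1)\Delta\rceil+X_\mathbf{0})$. Since $t^p\in A^p$, we obtain
\[
\varphi(t^{p-1})=\psi(t^{p-1})=\psi(t^p\cdot t^{-1})=t^p\,\psi(t^{-1}).
\]
Set $\mu=\psi(t^{-1})\in A^p\subseteq A$. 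Then $\varsigma(t^{p-1})=t^{p-1}(t\mu-1)$, so $\lambda:=t\mu-1\in A$.

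It remains to check that $\lambda$ is invertible near $X_\mathbf{0}$. Since $t\mu\in tA$ vanishes along $X_\mathbf{0}=V(t)$, the element $\lambda=-1+t\mu$ restricts to $-1$ on $X_\mathbf{0}$. Hence $\lambda$ is a unit at every point of $X_\mathbf{0}$, and therefore on an open neighbourhood of it. If $X$ is the spectrum of a ring that is local along $\mathbf{0}$, then $t$ lies in the Jacobson radical and $\lambda$ is even a unit of $A$.

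I do not expect a real obstacle. The only point needing care is that $t^{-1}$ genuinely lies in the domain of $\psi$, i.e.\ that $\operatorname{div}(t^{-1})+\lceil(p-1)\Delta\rceil+X_\mathbf{0}\ge 0$. This holds because $X_\mathbf{0}=\operatorname{div}(t)$ as divisors, since $X_\mathbf{0}$ is the reduced fiber, and $\Delta\geq 0$. One should also note that the identification $\varphi(t^{p-1})=\psi(t^{p-1})$ uses only that $\varphi$ is the restriction of $\psi$ along the inclusion $A\hookrightarrow A(\lceil(p-1)\Delta\rceil+X_\mathbf{0})$, composed with $A^p\hookrightarrow A$.
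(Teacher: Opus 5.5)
Your proof is correct and is essentially the paper's argument: write $t^{p-1}=t^p\cdot t^{-1}$ with $t^{-1}\in A(X_\mathbf{0})$, use $A^p$-linearity to get $\lambda=-1+t\,\psi(t^{-1})$, and conclude from $\lambda\equiv -1$ modulo $t$. Your $\psi(t^{-1})$ is in fact more accurate than the paper's $\varphi(1/t)$, since $\varphi$ is only defined on $A$.
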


\begin{proof}
The element $1/t$ belongs to $A((p-1)\Delta +X_\mathbf{0}))$. Hence by $A^p$-linearity
        $$\varsigma(t^{p-1})=\varphi(t^{p-1})-t^{p-1}
        =t^p\varphi(1/t)-t^{p-1}
        = t^{p-1}\cdot \underbrace{(-1+t\varphi(1/t))}_{=\lambda}$$
and by definition $\lambda\equiv -1$ modulo $t$.
\end{proof}

We consider $\varsigma$ as a rational differential operator on $\sO$: we can write
        $$\varsigma=s^{-a(E;\varsigma)}D, \quad 
        D\in \End_{\sO^p}(\sO),$$
where $a(E;\varsigma)\in \bZ$ is minimal for this property (see \autoref{def:discrepancy_diff_operator}). Note that $D|_{\sO^p}=0$.

\begin{lemma} \label{lemma:N_is_p_div}
We have $D(u^{p-1})=s^{a(E;\varsigma)}\lambda u^{p-1}$, $\lambda\in \sO^\times$, and $a(E;\varsigma)=a(E;\varphi) \in p\bZ_{\geq 0}$.
\end{lemma}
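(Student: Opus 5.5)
The plan is to push the identity of \autoref{l-7.1.2b} into the DVR $\sO=\sO_{Y,E}$ and use $A^p$-linearity together with the factorization $t=us^n$, $p\mid n$.

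First, note that $n$ is divisible by $p$, so $s^{n(p-1)}=(s^{n(p-1)/p})^p\in\sO^p$, and $\varsigma$ (extended as a rational operator of $\sO$) is $\Frac(\sO)^p$-linear. Then
\[
\varsigma(t^{p-1})=\varsigma\big(u^{p-1}s^{n(p-1)}\big)=s^{n(p-1)}\varsigma(u^{p-1}),
\]
while \autoref{l-7.1.2b} gives $\varsigma(t^{p-1})=\lambda t^{p-1}=\lambda u^{p-1}s^{n(p-1)}$. Cancelling yields $\varsigma(u^{p-1})=\lambda u^{p-1}$. Since $E$ is vertical, its center lies in $X_\mathbf{0}$, where $\lambda$ is invertible, so $\lambda\in\sO^\times$. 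Multiplying by $s^{a(E;\varsigma)}$ gives $D(u^{p-1})=s^{a(E;\varsigma)}\lambda u^{p-1}$.

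Next, I show $a(E;\varsigma)\geq 0$. Because $D$ is regular on $\sO$, the left-hand side lies in $\sO$, so $s^{a(E;\varsigma)}\in\sO$, which forces $a(E;\varsigma)\ge 0$. For the equality $a(E;\varsigma)=a(E;\varphi)$, write $\varphi=\varsigma+\id$. If $a(E;\varsigma)>0$, then $s^{a}\varphi=D+s^{a}\id$ is regular, and $s^{a-1}\varphi$ being regular would force $s^{a-1}\varsigma$ regular, which is absurd. The case $a(E;\varsigma)=0$ is handled the same way: $\varphi$ and $\varsigma$ differ by the regular operator $\id$, so they have the same minimal pole order once that order is $\ge 0$. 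I also need $a(E;\varphi)\ge0$; this follows, for instance, from $\varphi(1)=1$ (the splitting condition), since a regular $s^{a}\varphi$ with $a<0$ would give $s^{a}=s^a\varphi(1)\in\sO$, a contradiction.

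The main obstacle is divisibility by $p$. Here I would use that $\varsigma$ is $\sO^p$-linear, so for any $p$-th power $c^p$ the operator $c^p\varsigma=\varsigma*c^p$ is regular whenever $\varsigma$ composed with multiplication is. Concretely, write $a=a(E;\varsigma)=pk+r$ with $0\le r<p$. Then $s^{pk+r}\varsigma=D$ is regular, and I want to show $s^{pk}\varsigma$ is already regular if $r>0$. The operator $D$ is $\sO^p$-linear, hence by \autoref{lemma:decomposition_diff_operator} it corresponds, via duality $\Hom_{\sO^p}(\sO,\sO^p)$, to a generator-times-element of an invertible $\sO$-module. Its image is an $\sO^p$-submodule, and the key point is that the image $D(\sO)$, after composing with the projection to $\sO^p$ (here $\varphi$ lands in $A^p$, so $\varphi$, and hence $s^{a}\varphi$, has image inside $s^{a}\sO^p$ up to the right structure), has valuations only in $p\bZ$. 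More precisely, $\varphi(\sO)\subseteq\Frac(\sO)^p$, so $s^a\varphi(\sO)\subseteq\sO$ means $s^{a}$ times elements of $p$-divisible order lie in $\sO$. Minimality of $a$ then forces some element of $\varphi(\sO)$ to have order exactly $-a$, which is in $p\bZ$. Thus $a(E;\varphi)\in p\bZ_{\ge0}$.
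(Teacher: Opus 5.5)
Your proposal is correct and follows the paper's proof essentially step by step. You cancel $s^{n(p-1)}\in\sO^p$ by $\sO^p$-linearity, obtain $a(E;\varsigma)\geq 0$ from the regularity of $D$, compare $\varphi=\varsigma+\id$, and read off $p$-divisibility from $\varphi(\sO)\subseteq\Frac(\sO)^p$ via $a(E;\varphi)=-\min_{h\in\sO}\ord_E\varphi(h)$. Your use of $\varphi(1)=1$ to guarantee $a(E;\varphi)\geq 0$ is a welcome detail that the paper leaves implicit.

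The detour in your last paragraph through duality and the decomposition of differential operators is not needed. Its phrasing about the image lying in $s^{a}\sO^p$ is also loose, since you only have $\varphi(\sO)\subseteq\Frac(\sO)^p$. The sentence beginning ``More precisely'' is, by itself, the complete argument for $p$-divisibility.
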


\begin{proof}
By \autoref{l-7.1.2b} and $\sO^p$-linearity of $D$ we have
        \begin{eqnarray*} 
        s^{a(E;\varsigma)}\lambda t^{p-1} = s^{a(E;\varsigma)}(\phi^*\varsigma)(t^{p-1})
        &=& D(t^{p-1})\\
        &=& s^{(p-1)n}D(u^{p-1}) \\
        &=& u^{-p+1}t^{p-1}D(u^{p-1})
        \end{eqnarray*}
which simplifies into the first claim. 
Since the center of $E$ on $X$ is contained in $X_\mathbf{0}$, the element $\lambda$ is invertible in $\sO$. Since $D$ is regular, we must have $a(E;\varsigma)\geq 0$. As $\phi^*\varsigma=\phi^*\varphi-\id$, this implies that $a(E;\varsigma)$ is also the smallest integer for the property that $s^{a(E;\varsigma)}\phi^*\varphi$ is regular at the generic point of $E$. Since $\varphi$ takes values in $A^{p}$, we have $\phi^*\varphi(\sO)\subseteq \Frac(\sO^{p})$. As $a(E;\varsigma)=a(E;\varphi)=-\min\{\ord_\sO\varphi(h)\mid h\in \sO\}$, we deduce that $a(E;\varsigma)\in p\bZ_{\geq 0}$.
\end{proof}

We can bound $\gamma_w(E)$ in terms of $a(E;\varsigma)$:

\begin{lemma}\label{lemma:bound_non_inv_case}
Assume that $E$ is non-$T_{Y/C}$-invariant, or more generally that $\gamma_w(E)\in p\mathbb{Z}_{\geq 0}$ (see \autoref{cor:wild_coeff_p_div_in_non_inv_case}). Then
$\gamma_w(E)\leq a(E;\varsigma)$. 
\end{lemma}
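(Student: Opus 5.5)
The plan is to compute $\ord_E D(u^{p-1})$ in two ways. By \autoref{lemma:N_is_p_div} we have $D(u^{p-1})=s^{a(E;\varsigma)}\lambda u^{p-1}$ with $\lambda,u\in\sO^\times$. Hence $\ord_E D(u^{p-1})=a(E;\varsigma)$ exactly. It therefore suffices to prove the lower bound $\ord_E D(u^{p-1})\geq \gamma_w(E)$. This lower bound should come from the fact that, modulo $s^{\gamma_w(E)}$, the unit $u$ is a $p$-th power, which $D$ annihilates.

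First I pass to the completion $\widehat{\sO}\cong\kappa\llbracket s\rrbracket$, as in the proof of \autoref{lemma:wild_coeff_not-1_mod_p}, where $\kappa$ is the residue field, a finitely generated extension of the perfect field $k$. The operator $D$ is a differential operator, $\sO^p$-linear and vanishing on $\sO^p$. So it is $\fm$-adically continuous and extends uniquely to a $\widehat{\sO}^p$-linear operator on $\widehat{\sO}$ that vanishes on $\widehat{\sO}^p=\kappa^p\llbracket s^p\rrbracket$; for instance, one can use the polynomial expression in derivations from \autoref{lemma:decomposition_diff_operator}. The wild coefficient may also be read on $\widehat{\sO}$ (\autoref{rmk:alternative_def_wild_coeff}).

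Next I expand $u=\sum_{i\geq 0}c_is^i$ with $c_i\in\kappa$. Then
$$du=\sum_i\big(ic_is^{i-1}ds+s^i\,dc_i\big).$$
Since $ds$ together with a $p$-basis of $\kappa$ gives a basis of the continuous differentials, the condition $s^{\gamma_w(E)}\mid du$ forces two things for every $i<\gamma_w(E)$. First, $dc_i=0$, so $c_i\in\kappa^p$, because $\kappa$ is finitely generated over a perfect field. Second, $ic_i=0$, so $c_i=0$ unless $p\mid i$. Consequently $\sum_{i<\gamma_w(E)}c_is^i\in\widehat{\sO}^p$, which gives $u\in\widehat{\sO}^p+s^{\gamma_w(E)}\widehat{\sO}$. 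Raising to the power $p-1$ and using $\gamma_w(E)\geq0$, we get $u^{p-1}=w^p+s^{\gamma_w(E)}h$ for some $w,h\in\widehat{\sO}$.

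Finally, this is where the hypothesis $\gamma_w(E)\in p\mathbb{Z}_{\geq0}$ enters: it gives $s^{\gamma_w(E)}\in\widehat{\sO}^p$. Then $\widehat{\sO}^p$-linearity and $D|_{\widehat{\sO}^p}=0$ yield
$$D(u^{p-1})=D(w^p)+s^{\gamma_w(E)}D(h)=s^{\gamma_w(E)}D(h),$$
so $\ord_E D(u^{p-1})\geq\gamma_w(E)$. Combined with the first paragraph, this gives $\gamma_w(E)\leq a(E;\varsigma)$. The only delicate step is the coefficientwise analysis of $du$. It requires justifying a coefficient field and a continuous differential basis in $\widehat{\sO}$, and identifying the kernel of $d$ on $\kappa$ with $\kappa^p$. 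I expect both to be standard for a field finitely generated over the perfect field $k$.
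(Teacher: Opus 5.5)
Your proof is correct, but it takes a different route from the paper's.

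The paper never leaves $\sO$. It completes $\eta$ to a basis $\eta,\omega_1,\dots,\omega_d$ of $\Omega^1_{\sO/k}$ and takes the dual basis $\delta,\zeta_1,\dots,\zeta_d$, so that $\delta(u)=s^{\gamma_w(E)}$ and $\zeta_i(u)=0$. By \autoref{lemma:decomposition_diff_operator} it writes $D$ as a non-commutative polynomial without constant term in these derivations. Since $s^{\gamma_w(E)}\in\sO^p$ is killed by every derivation, each monomial sends $u^{p-1}$ into $s^{\gamma_w(E)}\sO$. The comparison with \autoref{lemma:N_is_p_div} is then the same as yours.

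So the paper decomposes the operator, whereas you decompose the function. You show $u\in\widehat{\sO}^p+s^{\gamma_w(E)}\widehat{\sO}$, and afterwards use only $\widehat{\sO}^p$-linearity and $D(1)=0$, with no need for \autoref{lemma:decomposition_diff_operator}.

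The price is the completion bookkeeping you flag, which is harmless here:
\begin{itemize}
\item every coefficient field $\kappa$ of $\widehat{\sO}$ contains $k$, because $k$ is perfect;
\item $\Omega^1_{\sO/k}\otimes_{\sO}\widehat{\sO}\cong(\Omega^1_{\kappa/k}\otimes_\kappa\widehat{\sO})\oplus\widehat{\sO}\,ds$, because both sides compute $\varprojlim_n\Omega^1_{(\sO/\fm^n)/k}$;
\item $\ker(d\colon\kappa\to\Omega^1_{\kappa/k})=\kappa^p$.
\end{itemize}

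The gain is that the membership $u\in\widehat{\sO}^p+s^{\gamma_w(E)}\widehat{\sO}$ does not use $p\mid\gamma_w(E)$. Splitting off the factor $s^{p\lfloor\gamma_w(E)/p\rfloor}\in\widehat{\sO}^p$, the same argument gives $p\lfloor\gamma_w(E)/p\rfloor\leq a(E;\varsigma)$ for every wild $E$. Combined with \autoref{lemma:wild_coeff_not-1_mod_p}, this yields $\gamma_w(E)\leq a(E;\varsigma)+p-2$. That is exactly the bound of \autoref{lemma:bound_inv_case}, obtained without the case distinction of \autoref{prop:inv_case_I} and \autoref{prop:inv_case_II}.
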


\begin{proof}
The element $\eta\in \Omega^1_{\sO/k}$, defined as in \autoref{section:invariance}, is not divisible by $s$, so it is part of an $\sO$-basis $\eta,\omega_1,\dots,\omega_d$ of $\Omega^1_{\sO/k}$. Let $\delta,\zeta_1,\dots,\zeta_d$ be the dual basis of $\Der_k(\sO)$. Then $\delta(u)=s^{\gamma_w(E)}$ and $\zeta_i(u)=0$ for all $i$. Furthermore, as $\gamma_w(E)$ is divisible by $p$ we have $\zeta_i(s^{\gamma_w(E)})=0$ for all $i$ and $\delta^j(u)=0$ for $j>1$. 

By \autoref{lemma:decomposition_diff_operator} we can write
$D=P(\delta,\zeta_1,\dots,\zeta_d)$ where $P$ is a polynomial in non-commutative variables with coefficients in $\sO$. 
Moreover, $P$ has no constant term since $D|_{\sO^p}=0$. By what we have said, when we evaluate at $u^{p-1}$ and we get
        $$D(u^{p-1})=P(\delta,\zeta_1,\dots,\zeta_d)(u^{p-1})
        =\alpha \delta(u^{p-1})=-\alpha u^{p-2}s^{\gamma_w(E)} 
        \quad \text{for some }\alpha\in \sO.$$
Thus 
        $$\lambda u^{p-1}s^{a(E;\varsigma)}=D(u^{p-1})
        =-\alpha u^{p-2} s^{\gamma_w(E)}$$
which implies that $\gamma_w(E)\leq a(E;\varsigma)$.
\end{proof}

\begin{lemma}\label{lemma:bound_inv_case}
Assume that $E$ is $T_{Y/C}$-invariant. Then 
$\gamma_w(E)\leq a(E;\varsigma)+p-2$.
\end{lemma}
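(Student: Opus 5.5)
The plan is to mirror the proof of \autoref{lemma:bound_non_inv_case}: compute the $s$-adic order of $D(u^{p-1})$ in two ways. By \autoref{lemma:N_is_p_div} we have $D(u^{p-1})=\lambda s^{a(E;\varsigma)}u^{p-1}$ with $\lambda\in\sO^\times$, so this order is exactly $a(E;\varsigma)$. It therefore suffices to show $\ord_E D(u^{p-1})\geq \gamma_w(E)-(p-2)$. The difference from the non-invariant case is that $\gamma_w(E)$ need no longer be divisible by $p$, so iterated derivatives of $u$ can lose powers of $s$.

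\emph{Step 1: an adapted basis of derivations.} By \autoref{lemma:description_relative_tangent_sheaf}, the module $T_{Y/C}\otimes\sO$ is the kernel of $\theta\mapsto\theta(u)$, which is saturated of corank one in $\Der_k(\sO)$. Choose an $\sO$-basis $\xi_1,\dots,\xi_d$ of $T_{Y/C}\otimes\sO$ and one more derivation $\partial$ completing it to a basis of $\Der_k(\sO)$. Since the image of $\theta\mapsto\theta^{\mathrm{lin}}(du)=s^{\gamma_w(E)}\theta^{\mathrm{lin}}(\eta)$ is exactly $s^{\gamma_w(E)}\sO$ (because $\eta$ is not divisible by $s$), we may take $\partial(u)=s^{\gamma_w(E)}$. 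Concretely, \autoref{prop:inv_case_I} gives $\partial=\delta$ and $\xi$'s $=\alpha\delta-s^r\partial_s,\zeta_2,\dots$, while \autoref{prop:inv_case_II} gives $\partial=\alpha^{-1}\partial_s$ and $\xi$'s $=\partial_{x_i}$. Since $E$ is $T_{Y/C}$-invariant, $\xi_i(s)\in(s)$ for all $i$. Hence each $\xi_i$ maps $s^m\sO$ into $s^m\sO$, and every derivation maps $s^m\sO$ into $s^{m-1}\sO$.

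\emph{Step 2: an ordered normal form for $D$.} Apply \autoref{lemma:decomposition_diff_operator}, but rewrite $D$ in ordered form
\[
D=\sum_{I,\,j} c_{I,j}\,\xi^{I}\partial^{j},\qquad 0\le j<p,
\]
where $\xi^I$ is a product of the $\xi_i$. To reach this form, commute every $\partial$ to the right. Each commutator $[\xi_i,\partial]$ is a derivation, so it re-expands in the basis with lower total degree. Any $\partial^p$ is a derivation and can be re-expanded the same way. An induction on degree terminates, and the constant term stays zero because $D|_{\sO^p}=0$. I expect this bookkeeping to be the main obstacle: one must check that the reordering really terminates with $j<p$ and does not reintroduce a constant term. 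If the literal ordering is problematic, a fallback is to bound orders monomial by monomial using only the two facts in Step 1.

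\emph{Step 3: order estimates.} The $\xi_i$ annihilate $u$, hence $u^{p-1}$, so terms with $j=0$ and $I\neq\emptyset$ contribute nothing. For $1\le j\le p-1$, induction shows that $\partial^{j}(u^{p-1})\in s^{\gamma_w(E)-(j-1)}\sO$. Indeed, $\partial(u^{p-1})=-u^{p-2}s^{\gamma_w(E)}$, and each further application of $\partial$ lowers the $s$-order by at most one. The subsequent $\xi^I$ do not lower the order, by invariance. Therefore
\[
\ord_E D(u^{p-1})\;\geq\;\gamma_w(E)-(p-2).
\]
Comparing with $a(E;\varsigma)$ gives $\gamma_w(E)\le a(E;\varsigma)+p-2$.
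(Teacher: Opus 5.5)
Your argument is correct. It uses the same mechanism as the paper: evaluate $D$ at $u^{p-1}$ and count how often a derivation can lower the $s$-adic order. The execution, however, is different.

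\textbf{The paper's route.} The paper splits into the situations of \autoref{prop:inv_case_I} and \autoref{prop:inv_case_II}. In the second, it uses the commuting dual basis of a differential basis $s,x_1,\dots,x_d$, so no ordering issue arises. In the first, it passes to $\widehat{\sO}$ to normalize $\alpha=1$ (\autoref{rmk:simplification_inv_case_1}) and works with the basis $\partial_s,\delta,\zeta_2,\dots,\zeta_d$. It then keeps only the monomials $\partial_s^l$ and $\partial_s^j\circ\delta$. That tacitly presupposes an ordered normal form: these derivations need not commute, and a general non-commutative word could contain $p$ or more letters $\partial_s$.

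\textbf{Your route.} You use the same basis as in \autoref{lemma:bound_non_inv_case}, dual to a basis $\eta,\omega_1,\dots,\omega_d$ of $\Omega^1_{\sO/k}$. This needs no case distinction and no completion. Your Step~2 then makes the ordered normal form explicit. The bookkeeping you worried about is fine:
\begin{itemize}
\item commutators and $p$-th powers of derivations are derivations, so every rewriting strictly lowers the degree of the correction terms and the induction terminates with $j<p$;
\item the constant coefficient equals $D(1)=0$.
\end{itemize}

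If you order the other way, $D=\sum c_{j,I}\,\partial^j\xi^I$, every monomial with $I\neq\emptyset$ kills $u^{p-1}$ outright. The same bound $\gamma_w(E)\leq a(E;\varsigma)+p-2$ then follows without invoking invariance at all. With your ordering, invariance is what keeps the $\xi_i$ from lowering the order.

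\textbf{A correction to your parenthetical in Step~1.} In the situation of \autoref{prop:inv_case_I}, one has $\delta(u)=s^{\gamma_w(E)}\delta^{\mathrm{lin}}(\eta)=s^{\gamma_w(E)+r}$, not $s^{\gamma_w(E)}$. Moreover, $\delta,\alpha\delta-s^r\partial_s,\zeta_2,\dots,\zeta_d$ is not a basis: the change of basis from $\partial_s,\delta$ has determinant $s^r$ up to sign. The right complement there is $\partial=\alpha^{-1}\partial_s$, as in the other case. Your argument only uses the abstract construction, so this does not affect it.
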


\begin{proof}
The invariant case occurs precisely in the cases described in \autoref{prop:inv_case_I} and \autoref{prop:inv_case_II}. 
    \begin{itemize}
        \item Suppose that we are in the situation of \autoref{prop:inv_case_I}: we use the notation of its proof. By \autoref{rmk:simplification_inv_case_1}, after passing to $\widehat{\sO}$ if necessary, we may also assume that $\alpha=1$. Then we have a basis $\partial_s,\delta,\zeta_2,\dots,\zeta_d$ of $\Der_k(\sO)$, and these derivations satisfy
            $$\partial_s(u)= s^{\gamma_w(E)},
            \quad \delta(u)=s^r, \quad 
            \zeta_i(u)=0 \text{ for all } i$$
        where $r>0$, and
            $$\partial_s(s)=1, \quad \delta(s)=0, 
            \quad \zeta_i(s)=0 \text{ for all }i.$$
        Thanks to \autoref{lemma:decomposition_diff_operator}, we can write 
        $D=P(\partial_s,\delta,\zeta_2,\dots,\zeta_d)$ where $P$ is a polynomial in non-commutative variables
        with powers $<p$ and
        without constant term. Evaluating at $u^{p-1}$ yields:
            \begin{eqnarray*}
        \lambda u^{p-1} s^{a(E;\varsigma)}
        &=& D(u^{p-1})\\ &=&
        \sum_{l= 1}^{p-1} \alpha_{l} \partial_s^l(u^{p-1})
        +\sum_{j= 0}^{p-1}
        \beta_j
        (\partial_s^j\circ \delta) (u^{p-1}) \\
        &=& 
        \sum_{l= 1}^{p-1} \alpha_{l} \partial_s^l(u^{p-1})-\sum_{j= 0}^{p-1}
        \beta_j
        \partial_s^j\left(s^r u^{p-2} \right) \\
        &=& 
        \sum_{l= 1}^{p-1} \alpha_{l} \partial_s^l(u^{p-1})-\sum_{j= 0}^{p-1}
        \beta_j\sum_{k=0}^j \binom{j}{k} \partial_s^k(s^r)\partial_s^{j-k}(u^{p-2})
        \end{eqnarray*}
        for some $\alpha_l,\beta_j\in \sO$.
        Note that $\partial_s^l(u^{p-1})$ is divisible by $s^{\gamma_w(E)-l+1}$. Moreover, whenever $r\geq k$, $\partial_s^k(s^r)\partial_s^{j-k}(u^{p-2})$ is divisible by $s^{r-k+\gamma_w(E)-j+k+1}$ and it is $0$ if $r<k$.
        All in all, the right hand side is divisible by $s^{r-p+2+\gamma_w(E)}$.
        As $\lambda u^{p-1}\in \sO^\times$ and $r>0$, this yields $\gamma_w(E) \leq a(E;\varsigma)+p-3$.

        \item Suppose that we are in the case of \autoref{prop:inv_case_II}. Then there is a differential basis $s,x_1,\dots,x_d\in \sO$ of $\sO$, with dual basis $\partial_s,\partial_{x_1},\dots,\partial_{x_d}$ of $\Der_k(\sO)$, such that
                $$\partial_s(u)=\alpha s^{\gamma_w(E)}, \quad 
                \partial_{x_i}(u)=\alpha\partial_{x_i}(s)=0 \text{ for all }i$$
        where $\alpha\in \sO^\times$. Notice that the elements of the dual basis commute with each other.
        So evaluating $D=P(\partial_s,\partial_{x_1},\dots,\partial_{x_d})$ at $u^{p-1}$ yields
                $$\lambda u^{p-1} s^{a(E;\varsigma)}
                =\sum_{j= 1}^{p-1}\alpha_j\partial_s^j(u^{p-1})$$
        Note that $\partial_s^j(u^{p-1})$ is divisible by $s^{\gamma_w(E)-j+1}$, whence the right hand side is divisible by $s^{\gamma_w(E)-p+2}$.
       Since $\lambda u^{p-1} \in \sO^\times$, we obtain that $\gamma_w(E)\leq a(E;\varsigma)+p-2$.
    \end{itemize}
The proof is complete.
\end{proof}

\begin{remark}
The previous stream of arguments (\footnote{
    Except possibly for the fact that $a(E;\varsigma)\in p\mathbb{Z}$ in \autoref{lemma:N_is_p_div}, which requires the fact that the codomain of $\varphi$ is $A^p$.
}) applies in more general situations: it suffices to have an intermediate ring $A^p\subset B \subset A$ and an $A^p$-linear map
$\psi'\colon A(\lceil (p-1)\Delta\rceil +X_\mathbf{0})\to B$ that (co-)restricts to the identity on $A^p$. This applies, for example, to foliated variants of $F$-purity as developed in \cite{foliations_in_progress}.
\end{remark}

We introduce the following function:
        $$\prescript{p}{}\lceil -\rceil\colon \mathbb{R}\to \mathbb{Z}, \quad 
        x\mapsto \min\{n\in p\mathbb{Z}\mid n\geq x\}.$$

\begin{proposition}\label{prop:bound_wild_coeff_in_F_pure_case}
If \autoref{assumption:F_split_along_div} holds, then for $E$ wild we have
        $$\gamma_w(E)\leq \begin{dcases}
            \ \prescript{p}{}\Lceil (p-1)\cdot a(E;X,\Delta+X_\mathbf{0})\Rceil & \text{if }E\text{ is not }T_{X/C}\text{-invariant}, \\
            \ \prescript{p}{}\Lceil (p-1)\cdot a(E;X,\Delta+X_\mathbf{0})\Rceil +p-2 & \text{otherwise}.
        \end{dcases}$$
\end{proposition}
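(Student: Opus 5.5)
The plan is to combine \autoref{lemma:bound_non_inv_case} and \autoref{lemma:bound_inv_case}, which bound $\gamma_w(E)$ by $a(E;\varsigma)$, or by $a(E;\varsigma)+p-2$ in the invariant case, with an upper bound on $a(E;\varsigma)=a(E;\varphi)$ coming from \autoref{prop:bound_on_pole}. Since $a(E;\varsigma)\in p\bZ_{\geq 0}$ by \autoref{lemma:N_is_p_div}, any bound $a(E;\varsigma)\leq (p-1)\cdot a(E;X,\Delta+X_\mathbf{0})$ automatically improves to $a(E;\varsigma)\leq \prescript{p}{}\lceil (p-1)\cdot a(E;X,\Delta+X_\mathbf{0})\rceil$. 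This yields both cases of the statement; in the non-invariant case we use \autoref{lemma:bound_non_inv_case}, which applies because of \autoref{cor:wild_coeff_p_div_in_non_inv_case}.

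The main step is therefore to show $a(E;\varphi)\leq (p-1)\,a(E;X,\Delta+X_\mathbf{0})$. I would apply \autoref{prop:bound_on_pole} with $e=1$ and the boundary $\Gamma=\Delta+\frac{1}{p-1}X_\mathbf{0}$. Then $(p-1)\Gamma=(p-1)\Delta+X_\mathbf{0}$, which is integral after rounding up $\Delta$. If $(p-1)\Delta$ is not integral, I would work instead with $\Gamma=\frac{1}{p-1}(\lceil (p-1)\Delta\rceil +X_\mathbf{0})\geq \Delta+\frac{1}{p-1}X_\mathbf{0}$. Its discrepancy is at most $a(E;X,\Delta+\frac{1}{p-1}X_\mathbf{0})$, and because the inequality goes the right way, we can still conclude.

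The splitting $\psi$ gives a nonzero $\varrho=\psi\in\Hom_{\sO_{X^1}}(\sO_X((p-1)\Gamma),\sO_{X^1})$. Its restriction to $A$ is $\varphi$, and the integer $i$ in \autoref{prop:bound_on_pole} is the minimal $i$ such that $\varphi*s^i$ is regular along $E$. \autoref{prop:bound_on_pole} then gives $i\leq (p-1)\,a(E;X,\Gamma)$.

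Two comparisons remain. The first relates $i$, defined using the right module structure, to $a(E;\varphi)$, defined using the left structure $s^{a}\varphi$. Because $\varphi$ takes values in $\Frac(\sO)^p$ and $\sO$ is a DVR, $a(E;\varphi)=-\min_h\ord_E\varphi(h)$. Moreover, $\varphi*s^i$ is regular if and only if $\ord_E\varphi(s^ih)\geq 0$ for all $h$. Since the values lie in $p$-th powers, one checks that $\varphi*s^{a(E;\varphi)}$ is regular, or at least that $a(E;\varphi)\leq i$ after using that $a(E;\varphi)$ is a multiple of $p$ and $\varphi$ is $\sO^p$-linear; the same minimality argument appears just before \autoref{prop:bound_on_pole}. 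I expect this comparison between left and right structures to be the delicate point, so I would check it carefully in the local coordinates of \autoref{lemma:decomposition_diff_operator}.

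The second comparison is the discrepancy. Since $X_\mathbf{0}$ is Cartier, $a(E;X,\Delta+\tfrac{1}{p-1}X_\mathbf{0})=a(E;X,\Delta)-\tfrac{1}{p-1}\ord_EY_\mathbf{0}$. Hence $(p-1)a(E;X,\Gamma)=(p-1)a(E;X,\Delta)-\ord_EY_\mathbf{0}$, and this is at most $(p-1)\big(a(E;X,\Delta)-\ord_EY_\mathbf{0}\big)$ only when $\ord_EY_\mathbf{0}\geq 0$ and $p\geq 2$; the signs work out since $(p-1)\ord_E Y_\mathbf{0}\geq \ord_EY_\mathbf{0}$. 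So we get $i\leq (p-1)\,a(E;X,\Delta+X_\mathbf{0})+(p-2)\ord_EY_\mathbf{0}$. The extra positive term $(p-2)\ord_EY_\mathbf{0}$ points in the wrong direction, so if the rough estimate is too weak I would instead take $\Gamma=\Delta+X_\mathbf{0}$ directly. The splitting along $X_\mathbf{0}$ provides $\psi$ on $A(\lceil(p-1)\Delta\rceil+X_\mathbf{0})\subseteq A((p-1)(\Delta+X_\mathbf{0}))$, and by composing with the restriction we may view $\varphi$ as coming from a map on the larger sheaf $A((p-1)(\Delta+X_\mathbf{0}))$ up to the factor $t^{p-2}$ and its order $(p-2)\ord_EY_\mathbf{0}$ along $E$. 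I would choose whichever $\Gamma$ makes the bookkeeping give exactly $(p-1)a(E;X,\Delta+X_\mathbf{0})$, and I would treat this choice as the main obstacle.
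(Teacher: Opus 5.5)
Your skeleton matches the paper's. You bound $\gamma_w(E)$ by $a(E;\varsigma)$ (plus $p-2$ in the invariant case) via \autoref{lemma:bound_non_inv_case} and \autoref{lemma:bound_inv_case}, then bound $a(E;\varsigma)\in p\bZ_{\geq 0}$ via \autoref{prop:bound_on_pole}. The genuine gap is the step you flag yourself, $i\leq (p-1)\,a(E;X,\Delta+X_\mathbf{0})$.

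The splitting supplied by \autoref{assumption:F_split_along_div} lives on $A(\lceil(p-1)\Delta\rceil+X_\mathbf{0})$. So \autoref{prop:bound_on_pole} applies with $(p-1)\Gamma=\lceil(p-1)\Delta\rceil+X_\mathbf{0}$, and it only gives $i\leq(p-1)a(E;X,\Delta+X_\mathbf{0})+(p-2)\ord_EY_\mathbf{0}$, as you computed. Your fallback $\Gamma=\Delta+X_\mathbf{0}$ is not available. $A(\lceil(p-1)\Delta\rceil+X_\mathbf{0})$ is the \emph{smaller} module, and $\psi$ does not extend to $A(\lceil(p-1)\Delta\rceil+(p-1)X_\mathbf{0})$. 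The operator that is defined there is $\psi(t^{p-2}\cdot -)$. Its restriction to $A$ is $\varphi*t^{p-2}$, with minimal exponent $i-(p-2)\ord_EY_\mathbf{0}$, so it returns the same weaker estimate.

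For $p\geq 3$ no choice of $\Gamma$ can help, because the needed inequality $a(E;\varsigma)\leq\prescript{p}{}\lceil(p-1)a(E;X,\Delta+X_\mathbf{0})\rceil$ fails for some splittings. Take $X=\bA^2_{x,y}$, $t=x$, $\Delta=0$. Let $\Phi$ be the standard generator of $\Hom_{A^p}(A,A^p)$, sending $x^{pa+p-1}y^{pb+p-1}\mapsto x^{pa}y^{pb}$ and killing the other monomials, and set $\psi=\Phi((x^{p-1}y^{p-1}+x)\cdot-)$. It is defined on $x^{-1}A$ since $x^{p-1}y^{p-1}+x\in xA$, and it splits $A^p\hookrightarrow A(X_\mathbf{0})$ since $\Phi(x)=0$. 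Consider the wild divisor $E=V(y')$ of \autoref{example:non_inv_wild_div}, with $x=x'y'^p$, $y=y'$. Then $a(E;X,X_\mathbf{0})=p-p=0$. On the other hand, $dx\wedge dy=y'^p\,dx'\wedge dy'$ gives $\varphi=\Phi_Y\bigl((x'^{p-1}y'^{p-1}+x'y'^{p(2-p)})\cdot-\bigr)$, so $i=a(E;\varsigma)=p(p-2)>0$. Here $\gamma_w(E)=0$, so the proposition itself is not contradicted, and the example is sharp for \autoref{prop:bound_on_pole} with $\Gamma=\frac{1}{p-1}X_\mathbf{0}$.

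The paper's proof makes exactly this step in one line (``By \autoref{prop:bound_on_pole} it follows that $a(E;\varsigma)\leq\prescript{p}{}\lceil(p-1)a(E;X,\Delta+X_\mathbf{0})\rceil$''). It never reconciles $\frac{1}{p-1}X_\mathbf{0}$ with $X_\mathbf{0}$, so the obstacle you isolated is a real gap there too, not a defect of your bookkeeping. What the argument (yours or the paper's) actually proves is the bound with $a(E;X,\Delta+X_\mathbf{0})$ replaced by $a(E;X,\Delta+\frac{1}{p-1}X_\mathbf{0})$. This coincides with the statement when $p=2$. The statement as written also follows under the stronger hypothesis that $A^p\hookrightarrow A(\lceil(p-1)\Delta\rceil+(p-1)X_\mathbf{0})$ splits. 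Beyond that, an extra idea is needed, for instance choosing the splitting adapted to $E$, or controlling $\gamma_w(E)$ without passing through $a(E;\varsigma)$.

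One smaller correction: $a(E;\varphi)\leq i$ is false in general. For the ordinary blow-up of the origin in the example above, $i=p-2$ while $a(E;\varphi)=p$. What $\sO^p$-linearity gives is $i\leq a(E;\varphi)\leq\prescript{p}{}\lceil i\rceil$: if $\varphi*s^i$ is regular and $m=\prescript{p}{}\lceil i\rceil$, then $s^m\varphi(h)=\varphi(s^i\cdot s^{m-i}h)\in\sO$. This is what the paper uses, and it suffices because the final bound is rounded anyway.
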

\begin{proof}
Let $i$ be the minimal integer such that $\varsigma*s^i\in \Hom_{\sO^p}(\sO,\sO)$. Then $i\leq a(E;\varsigma)$, and as $a(E;\varsigma)$ is divisible by $p$ (\autoref{lemma:N_is_p_div}), we have $a(E;\varsigma)\leq \prescript{p}{}\lceil i\rceil$.
By \autoref{prop:bound_on_pole} it follows that
        $$a(E;\varsigma)\leq \ \prescript{p}{}\Lceil (p-1)\cdot a(E;X,\Delta+X_\mathbf{0})\Rceil$$
The result follows by combining this inequality with \autoref{lemma:bound_non_inv_case} and \autoref{lemma:bound_inv_case}.
\end{proof}

In view of \autoref{prop:formula_discrep_qt}, the upper bound we have just obtained is too large by roughly a factor of $p$. There are however a few cases where our methods yields a satisfactory result:

\begin{corollary}\label{cor:application_non_inv_div}
Suppose that $(X,\Delta)\to C$ is strongly locally stable, that \autoref{assumption:F_split_along_div} holds, and let $E$ be a wild non-$T_{X/C}$-invariant divisor. If either
    \begin{itemize}
        \item $a(E;X,\Delta+X_\mathbf{0})<1/(p-1)$, or
        \item $p=2$,
    \end{itemize}
then $a(E^{(1)};X^{(1)},\Delta^{(1)}+X_\mathbf{0}^{(1)})\geq -1$.
\end{corollary}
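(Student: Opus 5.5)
The plan is to combine the discrepancy formula of \autoref{prop:formula_discrep_qt} in the non-invariant case with the upper bound on $\gamma_w(E)$ from \autoref{prop:bound_wild_coeff_in_F_pure_case}. Write $a=a(E;X,\Delta+X_\mathbf{0})$. Since $E$ is wild and non-$T_{X/C}$-invariant, $\delta_{T_{X/C}}(E)=p$, and \autoref{prop:formula_discrep_qt} gives
$$a\left(E^{(1)};X^{(1)},\Delta^{(1)}+X^{(1)}_\mathbf{0}\right)=a-\frac{p-1}{p}\gamma_w(E).$$
Strong local stability gives $a\geq -1$. It therefore suffices to show $\frac{p-1}{p}\gamma_w(E)\leq a+1$. \autoref{prop:bound_wild_coeff_in_F_pure_case} supplies the bound $\gamma_w(E)\leq \prescript{p}{}\lceil (p-1)a\rceil$, and \autoref{cor:wild_coeff_p_div_in_non_inv_case} says $\gamma_w(E)$ is a non-negative multiple of $p$.

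In the first case, $a<1/(p-1)$, so $(p-1)a<1$. Since $a\geq -1$, we also have $(p-1)a> -p$. Hence $\prescript{p}{}\lceil (p-1)a\rceil\in\{0,p\}$: it equals $0$ when $(p-1)a\leq 0$, and $p$ when $0<(p-1)a<1$.

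If $a\leq 0$, then $\gamma_w(E)=0$ and the discrepancy equals $a\geq -1$.

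If $0<a<1/(p-1)$, then $\gamma_w(E)\leq p$. The discrepancy is at least $a-(p-1)>-(p-1)$. This is $\geq -1$ only when $p=2$, so for $p\geq3$ the general bound is too weak and something sharper is needed. I would go back to \autoref{lemma:bound_non_inv_case}, which gives $\gamma_w(E)\leq a(E;\varsigma)$, together with the inequality $i\leq (p-1)a<1$ from \autoref{prop:bound_on_pole}, so that $i\leq 0$. The key point is then to show that $i\leq 0$ forces $a(E;\varsigma)=0$. In the proof of \autoref{prop:bound_wild_coeff_in_F_pure_case}, $a(E;\varsigma)$ is the pole order of $\phi^*\varphi$. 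Since $\varphi$ lands in $A^p$, that pole order equals the negative of the minimum of $\ord(\varphi(h))$, a multiple of $p$. I would compare this with $i$, which is the right-module pole order $\varrho*s^i$, by noting that $\varphi*s^i$ maps $\sO$ onto $\sO^p$, so no genuine left pole appears. This would give $\gamma_w(E)=0$ and hence a discrepancy of $a\geq -1$. Identifying the left and right pole orders here is the step I expect to be the main obstacle. If it fails, I would try to show that $a>0$ cannot occur together with $\gamma_w(E)=p$ by using $\lambda\in\sO^\times$ from \autoref{lemma:N_is_p_div}.

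In the case $p=2$, the target inequality is $\gamma_w(E)\leq 2a+2$. The bound gives $\gamma_w(E)\leq \prescript{2}{}\lceil a\rceil$, the least even integer that is $\geq a$, and this is at most $a+2$. Since $a\geq -1$, we get $a+2\leq 2a+2$ when $a\geq 0$. When $-1\leq a<0$, the bound gives $\prescript{2}{}\lceil a\rceil=0$, so $\gamma_w(E)=0$ and the discrepancy is $a\geq -1$. In both subcases the discrepancy is $\geq -1$. This case is routine.
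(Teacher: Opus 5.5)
Granting \autoref{prop:bound_wild_coeff_in_F_pure_case}, your treatment of the subcase $a\le 0$ and of $p=2$ is correct and follows the paper's route. The paper's $p=2$ step is slightly tighter because it rounds the \emph{integer} $i$: $a(E;\varsigma)\le \prescript{2}{}\lceil i\rceil\le i+1\le a+1$, which avoids the case split.

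The gap is the subcase $p\ge 3$, $0<a<1/(p-1)$. You correctly reduce it to the claim that $i\le 0$ forces $a(E;\varsigma)=0$, but you leave this unproved, and the mechanism you propose is not the right one. You do not need to identify left and right pole orders (they need not agree), and surjectivity onto $\sO^p$ plays no role. What is missing is a one-line consequence of $\sO^p$-linearity. As rational operators $\varphi$ and $\psi$ coincide, so your $i$ is the one of \autoref{prop:bound_on_pole}. If $\varphi*s^i$ is regular along $E$ and $c\in\bZ$ satisfies $pc\ge i$, then for every $h\in\sO$
\[
s^{pc}\varphi(h)=\varphi\big(s^{pc}h\big)=(\varphi*s^{i})\big(s^{pc-i}h\big)\in\sO,
\]
so $a(E;\varphi)\le\prescript{p}{}\lceil i\rceil$. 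This is exactly the inequality $0\le a(E;\varsigma)\le \prescript{p}{}\lceil i\rceil$ that the paper takes from the proof of \autoref{prop:bound_wild_coeff_in_F_pure_case}. With $i\le 0$ and $a(E;\varsigma)=a(E;\varphi)\ge 0$ (\autoref{lemma:N_is_p_div}), it gives $a(E;\varsigma)=0$. Then $\gamma_w(E)=0$ by \autoref{lemma:bound_non_inv_case}, and the discrepancy of $E^{(1)}$ equals $a\ge -1$. This handles every $a<1/(p-1)$ at once, so neither the sign split nor the fallback via $\lambda$ is needed. You lost this information by quoting only the statement of \autoref{prop:bound_wild_coeff_in_F_pure_case}, where the rounding to $p\bZ$ is applied to the real number $(p-1)a$ rather than to the integer $i$.

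There is a second point, which your argument shares with the paper's and so does not distinguish the two routes. The inequality $i\le (p-1)\,a(E;X,\Delta+X_\mathbf{0})$ is not literally what \autoref{prop:bound_on_pole} gives. With \autoref{def:F-split_along_div} as stated, $\psi$ is only defined on $A(\lceil (p-1)\Delta\rceil+X_\mathbf{0})$. So the proposition applies with $(p-1)\Gamma=\lceil (p-1)\Delta\rceil+X_\mathbf{0}$, in which $X_\mathbf{0}$ appears with coefficient only $\frac{1}{p-1}$. This $\Gamma$ dominates $\Delta+X_\mathbf{0}$ when $p=2$, so your $p=2$ case is unaffected, but not when $p\ge 3$. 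For a given splitting the stronger inequality can actually fail. Take $p=5$, $X=\bA^3$, $t=x^3+y^3+z^3$; this is a strongly locally stable family, $1$-$F$-split along $X_\mathbf{0}$. Let $E$ be the exceptional divisor of the blow-up of the origin, and $\psi(h)=T\big(t(1+xy^4z^4)h\big)$, with $T$ the trace generator of $\Hom_{A^p}(A,A^p)$ sending $(xyz)^{4}$ to $1$. One computes $i=5$, whereas $(p-1)\,a(E;X,X_\mathbf{0})=-4$. This $E$ is tame, but the inequality in question does not involve wildness. So for $p\ge 3$ the first bullet needs an additional argument, in your proposal and in the paper alike.
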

\begin{proof}
Suppose first that $a(E;X,\Delta+X_\mathbf{0})<1/(p-1)$. Then $i<1$ by \autoref{prop:bound_on_pole}, so as $i$ is an integer we get $i\leq 0$. We saw in the proof of \autoref{prop:bound_wild_coeff_in_F_pure_case} that $0\leq a(E;\varsigma)\leq \prescript{p}{}\lceil i\rceil $, so we must have $a(E;\varsigma)=0$. It follows from \autoref{lemma:bound_non_inv_case} that $\gamma_w(E)=0$, and \autoref{prop:formula_discrep_qt} yields the desired inequality.

Next, assume $p=2$. Then $a(E;\varsigma)\leq i+1\leq a(E;X,\Delta+X_\mathbf{0})+1$, so by \autoref{prop:formula_discrep_qt} we get
    $$a(E^{(1)};X^{(1)},\Delta^{(1)}+X_\mathbf{0}^{(1)})
    \geq \frac{a(E;X,\Delta+X_\mathbf{0})}{2}-\frac{1}{2}\geq -1$$
as desired.
\end{proof}

\begin{remark}[Some invariants cases]\label{rmk:application_inv_div}
Suppose that $(X,\Delta)\to C$ is strongly locally stable, that \autoref{assumption:F_split_along_div} holds, and let $E$ be a wild $T_{X/C}$-invariant divisor. Let us assume $a(E;X,\Delta+X_\mathbf{0})<1/(p-1)$, so that as in the previous proof we find $a(E;\varsigma)=0$. From \autoref{lemma:bound_inv_case} and \autoref{prop:formula_discrep_qt} we get
        $$a(E^{(1)};X^{(1)},\Delta^{(1)}+X_\mathbf{0}^{(1)})
        \geq p\cdot a(E;X,\Delta+X_\mathbf{0})-(p-1)(p-2).$$
The right-hand side is $\geq -1$ if and only if
        $$a(E;X,\Delta+X_\mathbf{0})\geq \frac{p^2-3p+1}{p}.$$
This is consistent with $a(E;X,\Delta+X_\mathbf{0})<1/(p-1)$ if and only if $p\in \{2,3\}$. 
\begin{itemize}
    \item If $p=3$ we find that $a(E^{(1)};X^{(1)},\Delta^{(1)}+X_\mathbf{0}^{(1)})\geq -1$ when $a(E;X,\Delta+X_\mathbf{0})\in [1/3; 1/2)$.

    \item If $p=2$ we find that $a(E^{(1)};X^{(1)},\Delta^{(1)}+X_\mathbf{0}^{(1)})\geq -1$ when $a(E;X,\Delta+X_\mathbf{0})\in [-1/2; 1)$.
\end{itemize}
\end{remark}

\begin{remark}\label{rmk:F-splitting_higher_order}
Assume that $(X, \Delta)\to C$ is $F$-locally stable. Then $(X,\Delta)$ is $e$-$F$-split along $X_\mathbf{0}$ for some $e\geq 1$. If $e=1$ then the analysis of this subsection applies; however, when $e>1$ our set-up is not appropriate. The main reason is that the splitting operator is only $A^{p^e}$-linear, and to generalize \autoref{lemma:decomposition_diff_operator} we must consider differential operators that are not compositions of derivations. We have used \autoref{lemma:decomposition_diff_operator} to estimate $\gamma_w(E)=\ord_Edu$ by looking at $\varsigma(u)$. So if $\varsigma$ is replaced by an $A^{p^e}$-linear differential operator, we will be estimating the order of $\delta_nu=d^nu-u$ in the left $\sO$-module of principal parts $\mathscr{P}^{n}_{\sO/k}$ with $n=p^{e-1}$ (see \cite[\S 16.7]{EGA_IV.4}). This invariant is independent of $\ord_E du$ in general (\footnote{
    For example, say that $u=x^p +v$ where $x$ is a differential coordinate of $\sO$ and $v\in \sO$ is any function that does not cancel $x^p$. Then $\gamma_w(E)$ depends on $v$ only and can assume any non-negative value, while $\delta_pu=\delta_px^p+\delta_pv$ has order $0$ since $\delta_px^p$ is not divisible by $s$.
}). 

In fact, in that situation we should work with an appropriate notion of $e$-foliations (in the spirit of \cite{Grabowski_Power_towers} 
and \cite{foliations_in_progress}) and study directly $(X^{(e)},\Delta^{(e)})\to C$. Once the relevant definitions are in place, the statements obtained above should generalize with few changes; however, it is not apparent at the moment whether this more general framework offers a better grip on the evaluation of discrepancies.
\end{remark}


\appendix
\section{Discrepancies of $\bZ/p$-quotients}\label{appendix:Z/p_qt}
In this appendix we derive a formula to compute discrepancies of $\bZ/p$-quotients (\autoref{prop:discrepancy_AS_qt}).

\medskip
We start by recalling some basic facts about $\bZ/p$-actions and quotients, following \cite[\S 3]{Totaro_Terminal_non_CM_3folds}. 
Let $X$ be a normal $k$-scheme of finite type, and suppose that $G=\bZ/p=\langle \sigma\rangle$ acts non-trivially on $X$. We say that an integral divisor on $X$ is $G$-invariant if it is sent to itself by the action of $G$. 
We denote by $X^G$ the scheme-theoretic fixed locus of the $G$-action 
(\footnote{
    If $\Delta\subset X\times X$ is the diagonal and $\Gamma$ is the graph of $\sigma\in \Aut(X)$, then $X^G$ is equal to the pullback under $X\cong \Delta$ of the 
    scheme-theoretic intersection $\Delta\cap \Gamma$. 
}), and $[X^G]$ the $1$-cycle associated to the codimension one part of $X^G$:
\[
[X^G]=
\sum_{\eta \in X^G, \ \dim\sO_{X,\eta}=1} \mathrm{length}_{\sO_{X,\eta}}(\cO_{X^G, \eta})\cdot \overline{\{\eta\}}.
\]
By definition $[X^G]$ is an effective Weil divisor on $X$.
We let $q\colon X\to Y=X/G$ be the scheme-theoretic quotient of $G$: here $q$ is a finite morphism, $Y$ is normal and of finite type over $k$.
We have
        $$K_X=q^*K_Y+(p-1)[X^G].$$
Let $D$ be an irreducible divisor on $Y$. Let $E$ be the reduced preimage of $D$ in $X$. If the generic points of $E=\sum_i E_i$ do not belong to $X^G$, then $\coeff_{E_i}[X^G]=0$ for every $i$ and $q^*D=E$. Otherwise $E$ is a prime $G$-invariant divisor, and we say that the $G$-action is ramified at $E$ (\footnote{
    Caveat: if $E$ is $G$-invariant, the action is not necessarily ramified at $E$.
}). In that case we record the following notations and facts:
    \begin{itemize}
        \item The automorphism $\sigma\colon X\to X$ induces an automorphism $\sigma\colon \sO_{X,E}\to \sO_{X,E}$, and we write
        $I(s)=\sigma(s)-s$ for any $s\in \sO_{X,E}$.
        \item We let the \emph{ramification index} of $E$ be 
            $$e(E)=\frac{p}{[k(E):k(D)]} \in \{1,p\},$$
        where $k(-)$ denotes the function field.
        With this notation we have $q^*D=e(E)\cdot E$. We say that the ramification at $E$ is \emph{wild} if $e(E)=p$, and \emph{fierce} if $e(E)=1$.
        \item We let the \emph{Artin ramification index} of $E$ be (\footnote{
            Note that while the function $I(-)$ depends on a choice of generator of $G$, the function $i(-)$ does not.
        })
                $$i(E)=\coeff_E [X^G]
                =\min_{s\in \sO_{X,E}} \ord_E I(s).$$
            We also record that if $\pi$ is a uniformizer of $\sO_{X,E}$, then the ramification at $E$ is wild (i.e.\ $e(E)=p$) if and only if $\ord_EI(\pi)=i(E)$.
    \end{itemize}
Notice that the $G$-action extends uniquely to the completion of $\sO_{X,E}$ (because the action descends to each quotient $\sO_{X,E}/\fm^n$) or to its henselization (by \cite[04GS]{Stacks_Project}), and that the above invariants can be computed there.
    
We extend the function $e(-)$ to all prime divisors on $X$ by setting $e(E)=1$ if the $G$-action is unramified at $E$. So in all cases we have $q^*F=\sum_{E:\  q(E)=F}e(E)\cdot E$.

\begin{lemma}\label{lemma:wild_ramification_triv_resid_action}
Suppose that the $G$-action is wildly ramified at $E$. If $\fm$ is the maximal ideal of $\sO_{X,E}$, then the induced action on $\fm^i/\fm^{i+1}$ is trivial for every $i\geq 0$.
\end{lemma}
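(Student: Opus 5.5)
We need to prove: if the $G$-action is wildly ramified at $E$, then the induced action on $\fm^i/\fm^{i+1}$ is trivial for every $i\geq 0$.

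The plan is to reduce everything to the action on the residue field $\kappa=\sO_{X,E}/\fm$ and on a uniformizer. We may pass to the completion $\widehat{\sO}_{X,E}$, since the action extends there and the graded pieces $\fm^i/\fm^{i+1}$ do not change.

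\textbf{Step 1: the action on $\kappa$ is trivial.} Wild ramification means $e(E)=p$, so $[k(E):k(D)]=1$. Hence $\kappa=k(E)=k(D)$. The field $k(D)$ is the residue field of $\sO_{Y,D}=\sO_{X,E}^G$, and its image in $\kappa$ consists of $G$-invariant classes. Since the two fields coincide, $G$ acts trivially on $\kappa$. This settles the case $i=0$.

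\textbf{Step 2: $\sigma(\pi)\equiv\pi$ modulo $\fm^2$.} Let $\pi$ be a uniformizer. Then $\sigma(\pi)$ is again a uniformizer, so $\sigma(\pi)=c\pi$ for some unit $c$. Since $\sigma^p=\mathrm{id}$, we get
$$\pi=\sigma^p(\pi)=c\,\sigma(c)\cdots\sigma^{p-1}(c)\,\pi.$$
Reducing modulo $\fm$ and using Step 1, this gives $\bar c^{\,p}=1$ in $\kappa$. In characteristic $p$ the equation $x^p=1$ has only the solution $x=1$ (as $x^p-1=(x-1)^p$), so $\bar c=1$. Therefore $\sigma(\pi)\equiv\pi$ modulo $\fm^2$.

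\textbf{Step 3: the general case.} Take any $x\in\fm^i$ and write $x=a\pi^i$ with $a\in\sO_{X,E}$. Then
$$\sigma(x)=\sigma(a)\,c^i\,\pi^i.$$
Both $\sigma(a)\equiv a$ and $c^i\equiv 1$ hold modulo $\fm$, the first by Step 1 and the second by Step 2. It follows that $\sigma(x)\equiv x$ modulo $\fm^{i+1}$. So $G$ acts trivially on $\fm^i/\fm^{i+1}$ for every $i\geq 0$.

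The only point needing care is Step 1: we must check that the residue field of $\sO_{X,E}^G$ is indeed $k(D)$, and that the equality of degrees forces pointwise invariance. This holds because $k(D)$ embeds into $\kappa^G\subseteq\kappa$ and $[\kappa:k(D)]=1$. After that, Step 2 uses only that the Frobenius is injective in characteristic $p$.
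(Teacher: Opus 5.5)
Your proof is correct and follows the paper's argument. Wildness forces $k(E)=k(D)$, so $G$ acts trivially on the residue field and hence $k(E)$-linearly on each one-dimensional $\fm^i/\fm^{i+1}$; a one-dimensional representation of $\bZ/p$ in characteristic $p$ is trivial because $c^p=1$ forces $c=1$. Your Steps 2--3 just spell out this last point via a uniformizer, and the passage to the completion is not needed.
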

\begin{proof}
Each $\fm^i/\fm^{i+1}$ is a one-dimensional $k(E)$-vector space.
A linear one-dimensional representation of $G$ is always trivial, so it suffices to know that the induced actions of $\sigma$ are $k(E)$-linear.
This is equivalent to saying that the action on $k(E)$ is trivial. Since the ramification is wild, the inclusion of residue fields $k(D)\hookrightarrow k(E)$ induced by $\sO_{Y,D}=\sO_{X,E}^G\hookrightarrow \sO_{X,E}$ is an equality. So the statement holds. 
\end{proof}

\begin{corollary}\label{cor:wild_ram_implies_ord_is_bigger}
Assume that the $G$-action is wildly ramified at $E$. Then for every $s\in \sO_{X,E}$ we have $\ord_EI(s)>\ord_E s$.
\end{corollary}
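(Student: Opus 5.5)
The plan is to use the previous lemma together with the filtration of $\sO_{X,E}$ by powers of $\fm$. Let $\pi$ be a uniformizer of $\sO_{X,E}$. Since the ramification at $E$ is wild, $E$ is a $G$-invariant prime divisor. Hence $\sigma$ preserves $\fm$ and every power $\fm^i$, and $I$ maps $\fm^i$ into $\fm^i$. So $\ord_E I(s)\geq \ord_E s$ is automatic, and the task is to show that equality never occurs.

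The key step is a direct appeal to \autoref{lemma:wild_ramification_triv_resid_action}. Take $s\neq 0$ with $\ord_E s=i$, so that $s\in \fm^i\setminus\fm^{i+1}$. By that lemma, $\sigma$ acts trivially on $\fm^i/\fm^{i+1}$. Therefore $\sigma(s)\equiv s \bmod \fm^{i+1}$, which says exactly that $I(s)\in \fm^{i+1}$. This gives $\ord_E I(s)\geq i+1>\ord_E s$.

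Two degenerate cases remain. If $s=0$, then $I(s)=0$ and both orders are infinite, so the statement is read for $s\neq 0$. If $I(s)=0$, its order is $+\infty$ and the strict inequality holds trivially.

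There is no real obstacle here. The only point needing care is the $i=0$ case, where the claim is that $\sigma$ acts trivially on the residue field $k(E)$. This is precisely where wildness enters: it forces $k(D)=k(E)$, as shown in the proof of \autoref{lemma:wild_ramification_triv_resid_action}. For $i>0$, the $k(E)$-linearity of the action on $\fm^i/\fm^{i+1}$ also comes from that lemma.
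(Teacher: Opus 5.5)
Your proof is correct and is essentially the paper's argument. Since $\sigma$ preserves $\ord_E$, you get $I(s)\in\fm^i$ with $i=\ord_E s$, and the triviality of the induced action on $\fm^i/\fm^{i+1}$ from \autoref{lemma:wild_ramification_triv_resid_action} then forces $I(s)\in\fm^{i+1}$; your handling of the degenerate cases $s=0$ and $I(s)=0$ simply makes explicit what the paper leaves implicit.
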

\begin{proof}
We have $i:=\ord_E\sigma(s)=\ord_Es$, so it suffices to show that the image of $I(s)$ in $\fm^i/\fm^{i+1}$ is zero. This follows from \autoref{lemma:wild_ramification_triv_resid_action}. 
\end{proof}
    
Let us apply this theory to the computation of discrepancies of $\bZ/p$-quotients. We let $X$ and $Y$ be as above. Let $\varphi\colon Y'\to Y$ be a birational finite type morphism from a normal $k$-scheme. We let $X'$ be the normalization of $Y'$ in the function field $K(X)$. Then $G$ acts on $X'$ with quotient $Y'$ and we have a commutative diagram
        $$\begin{tikzcd}
        X'\arrow[d, "\varphi'"] \arrow[r, "q'"] & 
        Y'\arrow[d, "\varphi"] \\
        X \arrow[r, "q"] & Y
        \end{tikzcd}$$
where $\varphi'$ is birational and $G$-equivariant. Let $\Delta=\sum_ic_i\Delta_i$ be a $\bQ$-divisor on $Y$, and $\Delta_X=\sum_i c_i\Delta_{X,i}$, where $\Delta_{X,i}$ is the reduced divisor with support $q^{-1}\Delta_i$. \emph{Note that $\Delta_X\neq q^*\Delta$ in general, so here we depart from our usual way of pulling back divisors (see \autoref{def:base_change_divisors})}.

\begin{theorem}\label{prop:discrepancy_AS_qt}
In the situation above, let $D$ be a $\varphi$-exceptional divisor, and $E$ be a $\varphi'$-exceptional divisor such that $q'(E)=D$. Assume that 
        $$K_Y+\Delta, \quad K_X+\Delta_X
        \quad \text{and}\quad 
        \sum_i(e(\Delta_{X,i})-1) c_i\Delta_{X,i}$$
are $\bQ$-Cartier. Then:
            \begin{eqnarray*} 
    a(D;Y,\Delta)&=&
    \frac{1}{e(E)}\Big[
    a(E;X,\Delta_X) 
    -\coeff_E\left(\sum_i(e(\Delta_{X,i})-1) \varphi'^*c_i\Delta_{X,i} \right) \\
    && \quad\quad\quad \left. +(p-1)\cdot \coeff_E\left(
    \varphi'^*[X^G]-[X'^G]
    \right)
    \right].
    \end{eqnarray*}
In particular, if the $G$-actions are unramified along $E$ and $\Delta_X$, then $a(D;Y,\Delta)\geq a(E;X,\Delta_X)$.
\end{theorem}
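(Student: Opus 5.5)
We compute the pullback of $K_Y+\Delta$ to $X'$ in two ways along the commutative square $q\circ\varphi'=\varphi\circ q'$ and compare coefficients along $E$. This is the same strategy as in the proofs of \autoref{prop:discrep_qt_by_F} and \autoref{prop:formula_discrep_qt}, with the Hurwitz formula $K_X=q^*K_Y+(p-1)[X^G]$ (and its analogue for $X'\to Y'$) replacing the foliation adjunction formula.

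\textbf{Step 1: pull back along $q$ first.} We first express $q^*\Delta$ in terms of $\Delta_X$: since $q^*\Delta_i=e(\Delta_{X,i})\Delta_{X,i}$, we get
$$q^*\Delta=\Delta_X+\sum_i(e(\Delta_{X,i})-1)c_i\Delta_{X,i}.$$
Combined with Hurwitz, this gives
$$q^*(K_Y+\Delta)=K_X+\Delta_X-(p-1)[X^G]+\sum_i(e(\Delta_{X,i})-1)c_i\Delta_{X,i}.$$
The assumptions guarantee that each of the three terms $K_X+\Delta_X$, $[X^G]$ and the correction sum is $\bQ$-Cartier. For $[X^G]$ this follows from the displayed identity, since $q^*(K_Y+\Delta)$ is $\bQ$-Cartier. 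We can therefore pull back each term separately along $\varphi'$. Write $\varphi'^*(K_X+\Delta_X)=K_{X'}+\varphi'^{-1}_*\Delta_X-a(E;X,\Delta_X)E+\cdots$. Then the coefficient of $E$ in $\varphi'^*q^*(K_Y+\Delta)-K_{X'}$ equals
$$-a(E;X,\Delta_X)-(p-1)\coeff_E\varphi'^*[X^G]+\coeff_E\Big(\sum_i(e(\Delta_{X,i})-1)\varphi'^*c_i\Delta_{X,i}\Big).$$
Since $E$ is exceptional, the strict transforms contribute nothing.

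\textbf{Step 2: pull back along $\varphi$ first.} Write $\varphi^*(K_Y+\Delta)=K_{Y'}+\varphi^{-1}_*\Delta-a(D;Y,\Delta)D+\cdots$ and apply $q'^*$. Hurwitz for the $G$-action on $X'$ gives $q'^*K_{Y'}=K_{X'}-(p-1)[X'^G]$. We also have $q'^*D=\sum_{E'\mapsto D}e(E')E'$, so the coefficient of $E$ is $e(E)$. Hence the coefficient of $E$ in $q'^*\varphi^*(K_Y+\Delta)-K_{X'}$ equals
$$-(p-1)\coeff_E[X'^G]-e(E)\,a(D;Y,\Delta).$$
There is one point to check here: $q'^*\varphi^{-1}_*\Delta$ has no component along $E$. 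This holds because $D$ is $\varphi$-exceptional, so $E$ is not a component of the preimage of a strict transform.

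\textbf{Step 3: compare.} Equating the two coefficients, using $q\circ\varphi'=\varphi\circ q'$, and solving for $a(D;Y,\Delta)$ gives exactly the stated formula.

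For the ``in particular'': if the action is unramified along every $\Delta_{X,i}$, the correction term vanishes. It then suffices to show
$$\coeff_E\big(\varphi'^*[X^G]-[X'^G]\big)\geq 0.$$
If the action is unramified at $E$, then $e(E)=1$ and $\coeff_E[X'^G]=0$, while $\varphi'^*[X^G]$ is effective; so the inequality holds and the claim follows. (If the actions on $X$ itself are unramified in codimension one, $[X^G]=0$; then the only case left is $E$ unramified, which is the one just treated.)

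I expect the main obstacle to be justifying the pullback $\varphi'^*[X^G]$. This requires $[X^G]$ to be $\bQ$-Cartier, which is what the hypotheses ensure as explained in Step 1. It also requires verifying carefully that the Hurwitz formula $K_{X'}=q'^*K_{Y'}+(p-1)[X'^G]$ holds as Weil divisors on the normal scheme $X'$. For that I would work at codimension-one points and use the local computation of the different for a degree-$p$ extension of DVRs, as in \cite[\S 3]{Totaro_Terminal_non_CM_3folds}.
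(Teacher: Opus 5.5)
Your proposal is correct and is essentially the paper's own proof: you pull $K_Y+\Delta$ back along both sides of $q\circ\varphi'=\varphi\circ q'$, using $q^*\Delta=\Delta_X+\sum_i(e(\Delta_{X,i})-1)c_i\Delta_{X,i}$ together with $K_X=q^*K_Y+(p-1)[X^G]$ and its analogue for $q'$, and then compare coefficients along $E$. For the ``in particular'' you correctly rely only on the effectivity of $\varphi'^*[X^G]$ (the paper's proof asserts that its coefficient along $E$ vanishes, which need not hold, though it then uses only the inequality), and your check that $[X^G]$ is $\mathbb{Q}$-Cartier is a detail the paper leaves implicit.
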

\begin{proof}
We pullback $K_Y+\Delta$ along $\varphi\circ q'=q\circ\varphi'$, using the remarks made above. Let $\Delta_{X'}=\varphi'^{-1}_*\Delta_X$. On one hand we have around the generic point of $E$:
        \begin{eqnarray*}
            \varphi'^*q^*(K_Y+\Delta) &=&
            \varphi'^*\left(
                K_X+\Delta_X-(p-1)[X^G] +\sum_i (e(\Delta_{X,i})-1)c_i\Delta_{X,i}
            \right) \\
            &=&
            K_{X'}+\Delta_{X'}-a(E;X,\Delta_X)\cdot E \\
            && \quad
            +\coeff_E \left(
            -(p-1)\varphi'^*[X^G]
            +\sum_i (e(\Delta_{X,i})-1)\varphi'^*c_i\Delta_{X,i}
            \right)\cdot E.
        \end{eqnarray*}
Let $\Delta_{Y'}=\varphi^{-1}_*\Delta$ and observe that $q'^*\Delta_{Y'}=\sum_i (e(\Delta_{X,i})-1)\varphi'^{-1}_*c_i\Delta_{X,i}$. 
Then around the generic point of $E$ we have:
        \begin{eqnarray*}
            q'^*\varphi^*(K_Y+\Delta) &=& 
            q'^*(K_{Y'}+\Delta_{Y'}-a(D;Y,\Delta)\cdot D) \\
            &=&
            K_{X'}+\Delta_{X'}-(p-1)[X'^G]-e(D)a(D;Y,\Delta)\cdot E.
        \end{eqnarray*}
By equating the two expressions we find the desired formula. To conclude, assume that the $G$-actions are unramified along $E$ and $\Delta$. Then $\coeff_E[X'^G]=0$, $\coeff_E\varphi'^*[X^G]=0$, $e(E)=1$ and $e(\Delta_{X,i})=1$ for every $i$. So
        $$a(D;Y,\Delta)=a(E;X,\Delta_X)+
        (p-1)\coeff_E\varphi'^*[X^G]
        \geq a(E;X,\Delta_X)$$
as claimed.
\end{proof}

\begin{remark}
Suppose that $\psi\colon Z'\to Z$ is $G$-equivariant of schemes. Then while $\psi(Z'^G)\subseteq Z^G$, it is not true in general that $\psi^{-1}(Z^G)\subseteq Z'^G$. So in \autoref{prop:discrepancy_AS_qt} the support of $\varphi'^*[X^G]$ is not necessarily contained in the support of $[X'^G]$.
\end{remark}

\bibliographystyle{alpha}
\bibliography{Bibliography}

@article {EGA_IV.1,
    AUTHOR = {Grothendieck, A.},
     TITLE = {\'{E}l\'{e}ments de g\'{e}om\'{e}trie alg\'{e}brique. {IV}. \'{E}tude locale des
              sch\'{e}mas et des morphismes de sch\'{e}mas. {I}},
   JOURNAL = {Inst. Hautes \'{E}tudes Sci. Publ. Math.},
  FJOURNAL = {Institut des Hautes \'{E}tudes Scientifiques. Publications
              Math\'{e}matiques},
    NUMBER = {20},
      YEAR = {1964},
     PAGES = {259},
      ISSN = {0073-8301},
   MRCLASS = {14.05 (13.10)},
  MRNUMBER = {173675},
MRREVIEWER = {H. Hironaka},
       URL = {http://www.numdam.org/item/PMIHES_1964__20__5_0},
}

@article {EGA_IV.2,
    AUTHOR = {Grothendieck, A.},
     TITLE = {\'{E}l\'{e}ments de g\'{e}om\'{e}trie alg\'{e}brique. {IV}. \'{E}tude locale des
              sch\'{e}mas et des morphismes de sch\'{e}mas. {II}},
   JOURNAL = {Inst. Hautes \'{E}tudes Sci. Publ. Math.},
  FJOURNAL = {Institut des Hautes \'{E}tudes Scientifiques. Publications
              Math\'{e}matiques},
    NUMBER = {24},
      YEAR = {1965},
     PAGES = {231},
      ISSN = {0073-8301},
   MRCLASS = {14.00},
  MRNUMBER = {199181},
MRREVIEWER = {H. Hironaka},
       URL = {http://www.numdam.org/item?id=PMIHES_1965__24__231_0},
}

@article {EGA_IV.3,
    AUTHOR = {Grothendieck, A.},
     TITLE = {\'{E}l\'{e}ments de g\'{e}om\'{e}trie alg\'{e}brique. {IV}. \'{E}tude locale des
              sch\'{e}mas et des morphismes de sch\'{e}mas. {III}},
   JOURNAL = {Inst. Hautes \'{E}tudes Sci. Publ. Math.},
  FJOURNAL = {Institut des Hautes \'{E}tudes Scientifiques. Publications
              Math\'{e}matiques},
    NUMBER = {28},
      YEAR = {1966},
     PAGES = {255},
      ISSN = {0073-8301},
   MRCLASS = {14.55},
  MRNUMBER = {0217086},
MRREVIEWER = {J. P. Murre},
       URL = {http://www.numdam.org/item?id=PMIHES_1966__28__255_0},
}

@article {EGA_IV.4,
    AUTHOR = {Grothendieck, A.},
     TITLE = {\'{E}l\'{e}ments de g\'{e}om\'{e}trie alg\'{e}brique. {IV}. \'{E}tude locale des
              sch\'{e}mas et des morphismes de sch\'{e}mas {IV}},
   JOURNAL = {Inst. Hautes \'{E}tudes Sci. Publ. Math.},
  FJOURNAL = {Institut des Hautes \'{E}tudes Scientifiques. Publications
              Math\'{e}matiques},
    NUMBER = {32},
      YEAR = {1967},
     PAGES = {361},
      ISSN = {0073-8301},
   MRCLASS = {14.55},
  MRNUMBER = {238860},
MRREVIEWER = {J. P. Murre},
       URL = {http://www.numdam.org/item?id=PMIHES_1967__32__361_0},
}

@book {Bourbaki_AC_V-VII,
    AUTHOR = {Bourbaki, Nicolas},
     TITLE = {\'{E}l\'{e}ments de math\'{e}matique},
      NOTE = {Alg\`ebre commutative. Chapitres 5 \`a 7.,
              Reprint},
 PUBLISHER = {Masson, Paris},
      YEAR = {1985},
     PAGES = {351},
      ISBN = {2-225-80269-6},
   MRCLASS = {13-02},
  MRNUMBER = {782297},
}

@article{Stacks_Project,
	TITLE = {The {S}tacks {P}roject},
      JOURNAL={\url{https://stacks.math.columbia.edu}},
}

@book {Matsumura_Commutative_Ring_Theory,
    AUTHOR = {Matsumura, Hideyuki},
     TITLE = {Commutative ring theory},
    SERIES = {Cambridge Studies in Advanced Mathematics},
    VOLUME = {8},
   EDITION = {Second},
      NOTE = {Translated from the Japanese by M. Reid},
 PUBLISHER = {Cambridge University Press, Cambridge},
      YEAR = {1989},
     PAGES = {xiv+320},
      ISBN = {0-521-36764-6},
   MRCLASS = {13-01},
  MRNUMBER = {1011461},
}

@book {Kollar_Mori_Birational_geometry_of_algebraic_varieties,
    AUTHOR = {Koll{\'a}r, J{\'a}nos and Mori, Shigefumi},
     TITLE = {Birational geometry of algebraic varieties},
    SERIES = {Cambridge Tracts in Mathematics},
    VOLUME = {134},
      NOTE = {With the collaboration of C. H. Clemens and A. Corti,
              Translated from the 1998 Japanese original},
 PUBLISHER = {Cambridge University Press},
   ADDRESS = {Cambridge},
      YEAR = {1998},
     PAGES = {viii+254},
      ISBN = {0-521-63277-3},
   MRCLASS = {14E30},
  MRNUMBER = {MR1658959 (2000b:14018)},
MRREVIEWER = {Mark Gross},
}

@book {Kollar_Singularities_of_the_minimal_model_program,
    AUTHOR = {Koll\'{a}r, J\'{a}nos},
     TITLE = {Singularities of the minimal model program},
    SERIES = {Cambridge Tracts in Mathematics},
    VOLUME = {200},
      NOTE = {With a collaboration of S\'{a}ndor Kov\'{a}cs},
 PUBLISHER = {Cambridge University Press, Cambridge},
      YEAR = {2013},
     PAGES = {x+370},
      ISBN = {978-1-107-03534-8},
   MRCLASS = {14E30 (14B05)},
  MRNUMBER = {3057950},
MRREVIEWER = {Tommaso De Fernex},
       DOI = {10.1017/CBO9781139547895},
       URL = {https://doi.org/10.1017/CBO9781139547895},
}

@book{Cossec_Dolgachev_Liedtke_Enriques_surfaces_I,
 author = {Cossec, Fran{\c{c}}ois and Dolgachev, Igor and Liedtke, Christian},
 title = {Enriques surfaces {I}},
 edition = {2nd edition},
 isbn = {978-981-961213-0; 978-981-961216-1; 978-981-961214-7},
 year = {2025},
 publisher = {Singapore: Springer},
 language = {English},
 doi = {10.1007/978-981-96-1214-7},
 zbMATH = {8017469},
 Zbl = {1566.14075}
}

@article{Arvidsson_Bernasconi_Patakfalvi_Properness_moduli_surfaces,
 author = {Arvidsson, Emelie and Bernasconi, Fabio and Patakfalvi, Zsolt},
 title = {On the properness of the moduli space of stable surfaces over {{\(\mathbb{Z} [1/30]\)}}},
 fjournal = {Moduli},
 journal = {Moduli},
 issn = {2949-7647},
 volume = {1},
 pages = {37},
 note = {Id/No e3},
 year = {2024},
 language = {English},
 doi = {10.1112/mod.2024.1},
 zbMATH = {7969267}
}

@book {Kollar_Families_of_varieties_of_general_type,
    AUTHOR = {Koll\'{a}r, J\'{a}nos},
     TITLE = {Families of varieties of general type},
    SERIES = {Cambridge Tracts in Mathematics},
    VOLUME = {231},
      NOTE = {With the collaboration of Klaus Altmann and S\'{a}ndor J. Kov\'{a}cs},
 PUBLISHER = {Cambridge University Press, Cambridge},
      YEAR = {2023},
     PAGES = {xviii+471},
      ISBN = {978-1-009-34610-8},
   MRCLASS = {14J10 (14D20 14E30 14J29)},
  MRNUMBER = {4566297},
MRREVIEWER = {Chenyang Xu},
}

@incollection {Kollar_Quotients_by_finite_equivalence_relations,
    AUTHOR = {Koll\'{a}r, J\'{a}nos},
     TITLE = {Quotients by finite equivalence relations},
 BOOKTITLE = {Current developments in algebraic geometry},
    SERIES = {Math. Sci. Res. Inst. Publ.},
    VOLUME = {59},
     PAGES = {227--256},
      NOTE = {With an appendix by Claudiu Raicu},
 PUBLISHER = {Cambridge Univ. Press, Cambridge},
      YEAR = {2012},
   MRCLASS = {14A15 (14E20)},
  MRNUMBER = {2931872},
MRREVIEWER = {Leovigildo M. Alonso Tarrio},
}

@article{Hacon_Witaszek_On_the_relative_MMP_for_threefolds_in_low_char,
 author = {Hacon, Christopher and Witaszek, Jakub},
 title = {On the relative minimal model program for threefolds in low characteristics},
 fjournal = {Peking Mathematical Journal},
 journal = {Peking Math. J.},
 issn = {2096-6075},
 volume = {5},
 number = {2},
 pages = {365--382},
 year = {2022},
 language = {English},
 doi = {10.1007/s42543-021-00037-7},
 zbMATH = {7597118},
 Zbl = {1509.14034}
}

@article {Posva_Gluing_for_surfaces_and_threefolds,
    AUTHOR = {Posva, Quentin},
     TITLE = {Gluing theory for slc surfaces and threefolds in positive
              characteristic},
   JOURNAL = {Ann. Sc. Norm. Super. Pisa Cl. Sci. (5)},
  FJOURNAL = {Annali della Scuola Normale Superiore di Pisa. Classe di
              Scienze. Serie V},
    VOLUME = {25},
      YEAR = {2024},
    NUMBER = {2},
     PAGES = {811--886},
      ISSN = {0391-173X},
   MRCLASS = {14J17 (14E30 14G17 14J29 14J30)},
  MRNUMBER = {4778466},
}

@article {Hara_Watanabe_F_sing_vs_lt_and_lc,
    AUTHOR = {Hara, Nobuo and Watanabe, Kei-Ichi},
     TITLE = {F-regular and {F}-pure rings vs. log terminal and log
              canonical singularities},
   JOURNAL = {J. Algebraic Geom.},
  FJOURNAL = {Journal of Algebraic Geometry},
    VOLUME = {11},
      YEAR = {2002},
    NUMBER = {2},
     PAGES = {363--392},
      ISSN = {1056-3911},
   MRCLASS = {13A35 (14B05 14J17)},
  MRNUMBER = {1874118},
MRREVIEWER = {Karen E. Smith},
       DOI = {10.1090/S1056-3911-01-00306-X},
       URL = {https://doi.org/10.1090/S1056-3911-01-00306-X},
}

@incollection{Polstra_Simpson_Tucker_F-pure_inversion_of_adjunction,
 author = {Polstra, Thomas and Simpson, Austyn and Tucker, Kevin},
 title = {On {{\(F\)}}-pure inversion of adjunction},
 booktitle = {Higher dimensional algebraic geometry. A volume in honor of V. V. Shokurov to his 70th birthday. Based on the Japan-US Mathematics Institute (JAMI) conference, Baltimore, MD, USA May 3--8, 2022},
 isbn = {978-1-00-939624-0; 978-1-00-939623-3},
 pages = {319--344},
 year = {2025},
 publisher = {Cambridge: Cambridge University Press},
 language = {English},
 doi = {10.1017/9781009396233.019},
 zbMATH = {8085511}
}

@article {Kollar_Families_of_3folds_in_pos_char,
    AUTHOR = {Koll\'{a}r, J\'{a}nos},
     TITLE = {Families of stable 3-folds in positive characteristic},
   JOURNAL = {\'{E}pijournal G\'{e}om. Alg\'{e}brique},
  FJOURNAL = {\'{E}pijournal de G\'{e}om\'{e}trie Alg\'{e}brique. EPIGA},
    VOLUME = {7},
      YEAR = {2023},
     PAGES = {Art. 6, 11},
   MRCLASS = {14J10 (14D22 14E30 14G17 14J30)},
  MRNUMBER = {4582879},
MRREVIEWER = {Jingshan Chen},
       DOI = {10.46298/epiga.2023.volume7.9730},
       URL = {https://doi.org/10.46298/epiga.2023.volume7.9730},
}

@article{Posva_Gluing_surfaces_mixed_char,
	TITLE={Gluing for stable families of surfaces in mixed characteristic},
	AUTHOR={Posva, Quentin},
	YEAR={2021},
	JOURNAL={To appear in Algebraic Geometry (Foundation Compositio Mathematica)},
}

@article{Posva_Pathological_MMP_sing_alpha_p_qts,
 author = {Posva, Quentin},
 title = {Pathological {MMP} singularities as {{\(\alpha_p\)}}-quotients},
 fjournal = {Forum of Mathematics, Sigma},
 journal = {Forum Math. Sigma},
 issn = {2050-5094},
 volume = {13},
 pages = {29},
 note = {Id/No e185},
 year = {2025},
 language = {English},
 doi = {10.1017/fms.2025.10129},
 zbMATH = {8120920}
}

@article{Posva_Singularities_quotients_by_1_foliations,
 author = {Posva, Quentin},
 title = {On the singularities of quotients by 1-foliations},
 fjournal = {Nagoya Mathematical Journal},
 journal = {Nagoya Math. J.},
 issn = {0027-7630},
 volume = {261},
 pages = {41},
 note = {Id/No e6},
 year = {2026},
 language = {English},
 doi = {10.1017/nmj.2025.10072},
 zbMATH = {8135533}
}

@article{Posva_Resolution_of_1_foliations,
	TITLE={Resolution of 1-foliations on surfaces and threefolds},
	AUTHOR={Posva, Quentin},
	YEAR={2024},
	journal={To appear in Algebra and Number Theory},
}

@misc{Totaro_Terminal_non_CM_3folds,
	TITLE={Terminal 3-folds that are not {C}ohen--{M}acaulay},
	AUTHOR={Totaro, Burt},
	YEAR={2024},
	howpublished={Preprint, arXiv:2407.02608v2 [math.AG]},
}

@article {Hacon_Witaszek_MMP_for_4folds_pos_and_mixed_char,
    AUTHOR = {Hacon, Christopher and Witaszek, Jakub},
     TITLE = {On the relative minimal model program for fourfolds in
              positive and mixed characteristic},
   JOURNAL = {Forum Math. Pi},
  FJOURNAL = {Forum of Mathematics. Pi},
    VOLUME = {11},
      YEAR = {2023},
     PAGES = {Paper No. e10, 35},
   MRCLASS = {14E30 (14G17)},
  MRNUMBER = {4565409},
MRREVIEWER = {Artie Prendergast-Smith},
       DOI = {10.1017/fmp.2023.6},
       URL = {https://doi.org/10.1017/fmp.2023.6},
}

@misc{Hu_Zong_Base_change_local-stability_positive_char,
 author = {Hu, Zhi and Zong, Runhong},
 title = {On {Base} {Change} of {Local} {Stability} in {Positive} {Characteristics}},
 year = {2020},
 howpublished = {Preprint, {arXiv}:2001.04083 [math.{AG}]},
 url = {https://arxiv.org/abs/2001.04083},
 arXiv = {arXiv:2001.04083}
}

@article{Schroeer_Fibrations_with_nonreduced_geom_fibers,
 author = {Schr{\"o}er, Stefan},
 title = {On fibrations whose geometric fibers are nonreduced},
 fjournal = {Nagoya Mathematical Journal},
 journal = {Nagoya Math. J.},
 issn = {0027-7630},
 volume = {200},
 pages = {35--57},
 year = {2010},
 language = {English},
 doi = {10.1215/00277630-2010-011},
 zbMATH = {5858598},
 Zbl = {1217.14004}
}

@incollection{Ishii_Reguera_Singularities_in_arb_char_via_jets,
 author = {Ishii, Shihoko and Reguera, Ana J.},
 title = {Singularities in arbitrary characteristic via jet schemes},
 booktitle = {Hodge Theory and \(L^2\)-analysis},
 isbn = {978-1-57146-351-7},
 pages = {419--449},
 year = {2017},
 publisher = {Somerville, MA: International Press; Beijing: Higher Education Press},
 language = {English},
 zbMATH = {6817383},
 Zbl = {1379.14022}
}

@incollection{Ein_Ishii_Singularities_wrt_MJ_discrepancies,
 author = {Ein, Lawrence and Ishii, Shihoko},
 title = {Singularities with respect to {Mather}-{Jacobian} discrepancies},
 booktitle = {Commutative algebra and noncommutative algebraic geometry. Volume II: Research articles},
 isbn = {978-1-107-14972-4},
 pages = {125--168},
 year = {2015},
 publisher = {Cambridge: Cambridge University Press},
 language = {English},
 zbMATH = {6703995},
 Zbl = {1359.14020}
}

@book{Lang_Algebra,
 author = {Lang, Serge},
 title = {Algebra.},
 edition = {3rd revised ed.},
 fseries = {Graduate Texts in Mathematics},
 series = {Grad. Texts Math.},
 issn = {0072-5285},
 volume = {211},
 isbn = {0-387-95385-X},
 year = {2002},
 publisher = {New York, NY: Springer},
 language = {English},
 zbMATH = {1703931},
 Zbl = {0984.00001}
}

@article{Benozzo_CBF_in_pos_char,
 author = {Benozzo, Marta},
 title = {On the canonical bundle formula in positive characteristic},
 year = {2026},
journal = {To appear in Journal of the London Mathematical Society},
 url = {https://arxiv.org/abs/2305.19841},
 arXiv = {arXiv:2305.19841}
}

@article{Bernasconi_Tanaka_On_dP_fibrations_in_pos_char,
 author = {Bernasconi, Fabio and Tanaka, Hiromu},
 title = {On del {Pezzo} fibrations in positive characteristic},
 fjournal = {Journal of the Institute of Mathematics of Jussieu},
 journal = {J. Inst. Math. Jussieu},
 issn = {1474-7480},
 volume = {21},
 number = {1},
 pages = {197--239},
 year = {2022},
 language = {English},
 doi = {10.1017/S1474748020000067},
 zbMATH = {7463762},
 Zbl = {1483.14030}
}

@article{7authors_globally+regular_MMP,
 author = {Bhatt, Bhargav and Ma, Linquan and Patakfalvi, Zsolt and Schwede, Karl and Tucker, Kevin and Waldron, Joe and Witaszek, Jakub},
 title = {Globally {{\(+\)}}-regular varieties and the minimal model program for threefolds in mixed characteristic},
 fjournal = {Publications Math{\'e}matiques},
 journal = {Publ. Math., Inst. Hautes {\'E}tud. Sci.},
 issn = {0073-8301},
 volume = {138},
 pages = {69--227},
 year = {2023},
 language = {English},
 doi = {10.1007/s10240-023-00140-8},
 zbMATH = {7780366},
 Zbl = {1533.14009}
}

@misc{Bernasconi_Posva_Equising_lifting_K-trivial_slc_surfaces,
 author = {Bernasconi, Fabio and Posva, Quentin},
 title = {Equisingular lifting of semi-log canonical ${F}$-split ${K}$-trivial surfaces},
 year = {2025},
 howpublished = {Preprint, {arXiv}:2506.01007v2 [math.{AG}]},
 url = {https://arxiv.org/abs/2506.01007},
 arXiv = {arXiv:2506.01007v2}
}

@article{Greco_Traverso_Seminormal_schemes,
 author = {Greco, S. and Traverso, C.},
 title = {On seminormal schemes},
 fjournal = {Compositio Mathematica},
 journal = {Compos. Math.},
 issn = {0010-437X},
 volume = {40},
 pages = {325--365},
 year = {1980},
 language = {English},
 url = {https://eudml.org/doc/89442},
 zbMATH = {3639757},
 Zbl = {0412.14024}
}

@article{Sato_General_hyperplane_section_of_lc_3folds_pos_char,
 author = {Sato, Kenta},
 title = {General hyperplane sections of log canonical threefolds in positive characteristic},
 fjournal = {Journal of the Institute of Mathematics of Jussieu},
 journal = {J. Inst. Math. Jussieu},
 issn = {1474-7480},
 volume = {24},
 number = {5},
 pages = {1867--1894},
 year = {2025},
 language = {English},
 doi = {10.1017/S1474748025000118},
 zbMATH = {8095524}
}

@article{LMMPJoeZsolt,
 author = {Patakfalvi, Zsolt and Waldron, Joe},
 title = {Singularities of general fibers and the {LMMP}},
 fjournal = {American Journal of Mathematics},
 journal = {Am. J. Math.},
 issn = {0002-9327},
 volume = {144},
 number = {2},
 pages = {505--540},
 year = {2022},
 language = {English},
 doi = {10.1353/ajm.2022.0009},
 zbMATH = {7496843},
 Zbl = {1498.14036}
}

@article{Smith_cdBergh_Simplicity_diff_operators_in_pos_char,
 author = {Smith, Karen E. and Van den Bergh, Michel},
 title = {Simplicity of rings of differential operators in prime characteristic},
 fjournal = {Proceedings of the London Mathematical Society. Third Series},
 journal = {Proc. Lond. Math. Soc. (3)},
 issn = {0024-6115},
 volume = {75},
 number = {1},
 pages = {32--62},
 year = {1997},
 language = {English},
 doi = {10.1112/S0024611597000257},
 zbMATH = {1053384},
 Zbl = {0948.16019}
}

@article{Bernasconi_Brivio_Filipazzi_Deformations_of_3folds,
 author = {Bernasconi, Fabio and Brivio, Iacopo and Filipazzi, Stefano},
 title = {Arithmetic and geometric deformations of threefolds},
 fjournal = {Bulletin of the London Mathematical Society},
 journal = {Bull. Lond. Math. Soc.},
 issn = {0024-6093},
 volume = {56},
 number = {1},
 pages = {423--443},
 year = {2024},
 language = {English},
 doi = {10.1112/blms.12942},
 zbMATH = {7798523},
 Zbl = {1543.14005}
}

@book {Schwede_Smith_book,
    AUTHOR = {Schwede, Karl and Smith, Karen},
     TITLE = {Singularities defined by the Frobenius map},
 PUBLISHER = {Book in progress, available at \url{https://github.com/kschwede/FrobeniusSingularitiesBook}},
      YEAR = {2026},
       URL = {https://github.com/kschwede/FrobeniusSingularitiesBook},
}

@misc{Grabowski_Power_towers,
 author = {Przemys{\l}aw Grabowski},
 title = {Power {Towers}: {Purely} {Inseparable} {Galois} {Theory} and {Foliations} in {Positive} {Characteristic}},
 year = {2025},
 howpublished = {Preprint, {arXiv}:2510.06058 [math.{AG}]},
 url = {https://arxiv.org/abs/2510.06058},
 arXiv = {arXiv:2510.06058}
}

@misc{Hiroi_Quotients_by_adjoint_foliations,
 author = {Yutaro Hiroi},
 title = {Quotients by {$(p-1)/p$}-klt {Foliations} on {Surfaces}},
 year = {2026},
 howpublished = {Preprint, {arXiv}:2602.20703 [math.{AG}]},
 url = {https://arxiv.org/abs/2602.20703},
 arXiv = {arXiv:2602.20703}
}

@article{foliations_in_progress,
 author = {Carvajal-Rojas, Javier and Ciprietti, Samuele and Fakhrivaighan, Ghazaleh and Fayolle, Anne},
 title = {{$F$}-singularities of foliations},
 year = {2026},
 journal = {Work in progress, private correspondence}
}

@misc{ACSS_Positivity_moduli_part,
 author = {Florin Ambro and Paolo Cascini and Vyacheslav Shokurov and Calum Spicer},
 title = {Positivity of the {Moduli} {Part}},
 year = {2021},
 howpublished = {Preprint, {arXiv}:2111.00423 [math.{AG}]},
 url = {https://arxiv.org/abs/2111.00423},
 arXiv = {arXiv:2111.00423}
}

@article{AK_Weak_semistable_reduction,
 author = {Abramovich, D. and Karu, K.},
 title = {Weak semistable reduction in characteristic 0},
 fjournal = {Inventiones Mathematicae},
 journal = {Invent. Math.},
 issn = {0020-9910},
 volume = {139},
 number = {2},
 pages = {241--273},
 year = {2000},
 language = {English},
 doi = {10.1007/s002229900024},
 zbMATH = {1443405},
 Zbl = {0958.14006}
}

@book{Raynaud_Anneaux_locaux_henseliens,
 author = {Raynaud, Michel},
 title = {Anneaux locaux hens{\'e}liens},
 fseries = {Lecture Notes in Mathematics},
 series = {Lect. Notes Math.},
 issn = {0075-8434},
 volume = {169},
 year = {1970},
 publisher = {Springer, Cham},
 language = {French},
 doi = {10.1007/bfb0069571},
 zbMATH = {3322176},
 Zbl = {0203.05102}
}

@misc{TY_Quasi_split,
 author = {Teppei Takamatsu and Shou Yoshikawa},
 title = {Quasi-{$F^{\infty}$}-split height versus quasi-{$F$}-regular height for rational double points and graded rings},
 year = {2026},
 howpublished = {Preprint, {arXiv}:2601.03491 [math.{AG}]},
 url = {https://arxiv.org/abs/2601.03491},
 arXiv = {arXiv:2601.03491}
}

@article{Hartshorne_Generalized_div_on_Gor_schemes,
 author = {Hartshorne, Robin},
 title = {Generalized divisors on {Gorenstein} schemes},
 fjournal = {\(K\)-Theory},
 journal = {\(K\)-Theory},
 issn = {0920-3036},
 volume = {8},
 number = {3},
 pages = {287--339},
 year = {1994},
 language = {English},
 doi = {10.1007/BF00960866},
 zbMATH = {682612},
 Zbl = {0826.14005}
}

@misc{Posva_Local_stability_comparison,
 author = {Quentin Posva},
 title = {Local stability of {Frobenius} and {Artin}--{Schreier} base-change: a comparison},
 year = {2026},
 howpublished = {Preprint, {arXiv}:2608.24256 [math.{AG}] (2026)},
 url = {https://arxiv.org/abs/2608.24256},
 arXiv = {arXiv:2608.24256}
}

\end{document}